
\documentclass[11pt]{amsart}
\usepackage{appendix}
\usepackage{latexsym,amssymb}
\usepackage{amsthm}
\usepackage{mathrsfs}
\usepackage{amsbsy}
\usepackage[latin1]{inputenc}
\usepackage[stable, hang, flushmargin]{footmisc}
\usepackage{hyperref}
\usepackage{mathtools}
\usepackage{graphicx}
\usepackage{fancyhdr}
\usepackage[numbers,sort&compress]{natbib}

\setcounter{MaxMatrixCols}{10}

\setcounter{tocdepth}{1}

\theoremstyle{plain}
\newtheorem{theorem}{Theorem}[subsection]

\newtheorem{corollary}[theorem]{Corollary}

\newtheorem{lemma}[theorem]{Lemma}

\newtheorem{question}{Question}[section]
\newtheorem{proposition}[theorem]{Proposition}

\theoremstyle{definition}

\newtheorem{remark}[theorem]{Remark}
\newtheorem{definition}[theorem]{Definition}
\newtheorem*{definition-no}{Definition}

\newtheorem{problem}[question]{Problem}
\newtheorem*{claim-no}{Claim}

\newenvironment{customthm}[1]
  {\innercustomthm}
  {\endinnercustomthm}
\DeclareMathOperator*{\foo}{\raisebox{-0.6ex}{\scalebox{2.5}{$\ast$}}}

\fancypagestyle{firststyle}
{
   \fancyhead{}
   \fancyfoot{}}

\begin{document}
\title[Polish groupoids and functorial complexity]{Polish groupoids and
functorial complexity}
\author{Martino Lupini}
\address{ Department of Mathematics and Statistics\\
N520 Ross, 4700 Keele Street\\
Toronto Ontario M3J 1P3, Canada, and Fields Institute for Research in
Mathematical Sciences\\
222 College Street\\
Toronto ON M5T 3J1, Canada.}
\email{mlupini@yorku.ca}
\urladdr{http://www.lupini.org/}
\thanks{The author was supported by the York University Elia Scholars
Program. This work was completed when the author was attending the Thematic
Program on Abstract Harmonic Analysis, Banach and Operator Algebras at the
Fields Institute. The hospitality of the Fields Institute is gratefully
acknowledged.}
\dedicatory{}
\subjclass[2000]{Primary 03E15, 22A22; Secondary 54H05}
\keywords{Polish groupoids, Borel reducibility, Vaught transform}

\begin{abstract}
We introduce and study the notion of functorial Borel complexity for Polish
groupoids. Such a notion aims at measuring the complexity of classifying the objects
of a category in a constructive and functorial way. In the particular case
of principal groupoids such a notion coincide with the usual Borel
complexity of equivalence relations. Our main result is that on one hand for
Polish groupoids with essentially treeable orbit equivalence relation,
functorial Borel complexity coincides with the Borel complexity of the
associated orbit equivalence relation. On the other hand for every countable
equivalence relation $E$ that is not treeable there are Polish groupoids
with different functorial Borel complexity both having $E$ as orbit
equivalence relation. In order to obtain such a conclusion we generalize
some fundamental results about the descriptive set theory of Polish group
actions to actions of Polish groupoids, answering a question of Arlan
Ramsay. These include the Becker-Kechris results on Polishability of Borel $%
G $-spaces, existence of universal Borel $G$-spaces, and characterization of
Borel $G$-spaces with Borel orbit equivalence relations.
\end{abstract}

\maketitle
\tableofcontents

\addtocontents{toc}{\protect\enlargethispage{\baselineskip}} %
\thispagestyle{firststyle}


\section{Introduction\label{Section: introduction}}

Classification of mathematical structures is one of the main components of
modern mathematics. It is safe to say that most results in mathematics can
be described as providing an explicit classification of a class of
mathematical objects by a certain type of invariants.

In the last 25 years the notion of constructive classification has been
given a rigorous formulation in the framework of invariant complexity
theory. In this context a classification problem is regarded as an
equivalence relation on a standard Borel space (virtually all classification
problems in mathematics fit into this category). The concept of constructive
classification is formalized by the notion of \emph{Borel reduction}. A
Borel reduction from an equivalence relation $E$ on $X$ to an equivalence
relation $E^{\prime }$ on $X^{\prime }$ is a \emph{Borel} function $%
f:X\rightarrow X^{\prime }$ with the property that, for every $x,y\in X$,%
\begin{equation*}
xEy\text{ if and only if }f(x)E^{\prime }f(y)\text{.}
\end{equation*}%
In other words $f$ is a Borel assignment of complete invariants for $E$ that
are equivalence classes of $E^{\prime }$. The existence of such a function
can be interpreted as saying that classifying the objects of $X^{\prime }$
up to $E^{\prime }$ is at least as complicated as classifying the objects of 
$X$ up to $E$. This offers a notion of comparison between the complexity of
different classification problems.

Several natural equivalence relation can then be used as benchmarks to
measure the complexity of classification problems. Perhaps the most obvious
such a benchmark is the relation of equality $=_{\mathbb{R}}$ of real
numbers. This gives origin to the basic dichotomy smooth vs.\ non-smooth: an
equivalence relation is \emph{smooth }if it is Borel reducible to $=_{%
\mathbb{R}}$. (The real numbers can here be replaced by any other
uncountable standard Borel space.) Beyond smoothness the next fundamental
benchmark is classifiability by countable structures. Here the test is Borel
reducibility to the relation of isomorphism within some class of countable
first order structures, such as (ordered) groups, rings, etc. Equivalently
one can consider orbit equivalence relation associated with Borel actions of
the Polish group $S_{\infty }$ of permutations of $\mathbb{N}$. Replacing $%
S_{\infty }$ with an arbitrary Polish group yields the notion of equivalence
relation classifiable by orbits of a Polish group action.

This framework allows one to build a hierarchy between different
classification problems. Many efforts have been dedicated to the attempt to
draw a picture as complete as possible of classification problems in
mathematics and their relative complexity. To this purpose powerful tools
such as Hjorth's theory of turbulence~\cite{hjorth_classification_2000} have
been developed in order to disprove the existence of Borel reduction between
given equivalence relations, and to distinguish between the complexity of
different classification problems. This can be interpreted as a way to
formally exclude the possibility of a full classification of a certain class
of objects by means of a given type of invariants. For example the relation
of isomorphism of simple separable C*-algebras has been shown to transcend
countable structures in~\cite{farah_turbulence_2014}; see also~\cite%
{sabok_completeness_2013}. Similar results have been obtained for several
other equivalence relations, such as affine homeomorphism of Choquet
simplexes~\cite{farah_turbulence_2014}, conjugacy of unitary operators on
the infinite dimensional separable Hilbert space~\cite{kechris_strong_2001},
conjugacy of ergodic measure-preserving transformations of the Lebesgue
space~\cite{foreman_anti-classification_2004}, conjugacy of homeomorphisms
of the unit square~\cite{hjorth_classification_2000}, conjugacy of
irreducible representations of non type I groups~\cite%
{hjorth_non-smooth_1997} or C*-algebras~\cite{farah_dichotomy_2012,
kerr_turbulence_2010}, conjugacy and unitary equivalence of automorphisms of
classifiable simple separable C*-algebras \cite{kerr_Borel_2014,
lupini_unitary_2014}, isometry of separable Banach spaces~\cite%
{melleray_computing_2007} and complete order isomorphism of separable
operator systems. Furthermore the relations of isomorphism and Lipschitz
isomorphisms of separable Banach spaces, topological isomorphism of
(abelian) Polish groups, uniform homeomorphism of complete separable metric
spaces~\cite{ferenczi_complexity_2009}, and the relation of completely
bounded isomorphism of separable operator spaces \cite%
{argerami_classification_2014} have been shown to be not classifiable by the
orbits of a Polish group action (and in fact to have maximal complexity
among analytic equivalence relations). An exhaustive introduction to
invariant complexity theory can be found in~\cite{gao_invariant_2009}.

Considering how helpful the theory of Borel complexity has been so far in
giving us a clear understanding of the relative complexity of classification
problems in mathematics, it seems natural to look at refinements to the
notion of Borel reducibility, that can in some situations better capture the
notion of explicit classification from the practice of mathematics. Such a
line of research has been suggested in \cite{farah_logic_2014}, where the
results of the present paper has been announced. This is the case for
example when the classification problem under consideration concerns a
category. In this case it is natural to ask to the classifying map to be 
\emph{functorial}, and to assign invariants not only to the objects of the
category, but also to the \emph{morphisms}. This is precisely what happens
in many explicit examples of classification results in mathematics. In fact
in many such examples the consideration of invariants of morphisms is
essential to the proof. This is particularly the case in the Elliott
classification program of simple C*-algebras, starting from Elliott's
seminal paper of AF algebras~\cite{elliott_classification_1976}. Motivated
by similar considerations, Elliott has suggested in~\cite%
{elliott_towards_2010} an abstract approach to classification by functors.
In this paper we bring Elliott's theory of functorial classification within
the framework of Borel complexity theory. For simplicity we consider only
categories where every arrow is invertible, called groupoids. Such
categories will be assumed to have a global Borel structure that is at least
analytic, and makes the set of objects (identified with their identity
arrows) a standard Borel space. In the particular case when between any two
objects there is at most one arrow (principal groupoids) these are precisely
the analytic equivalence relations. One can then consider the natural
constructibility requirement for classifying functors, which is being Borel
with respect to the given Borel structures. This gives rise to the notion of
functorial Borel complexity, which in the particular case of principal
groupoids is the usual notion of Borel complexity.

In this article we study such a notion of functorial Borel complexity for
groupoids, focusing on the case of Polish groupoids. These are the groupoids
where the Borel structure is induced by a topology that makes composition
and inversion of arrows continuous and open, and has a basis of open sets
which are Polish in the relative topology. These include all Polish groups,
groupoids associated with Polish group actions, and locally compact
groupoids~\cite[Definition 2.2.2]{paterson_groupoids_1999}. The latter ones
include the holonomy groupoids of foliations and the tangent groupoids of
manifolds~\cite[Chapter 2]{paterson_groupoids_1999}, the groupoids of
row-finite directed graphs~\cite{kumjian_graphs_1997}, the localization
groupoids of actions of countable inverse semigroups~\cite[Chapter 4]%
{paterson_groupoids_1999}. The main results of the present paper assert
that, for Polish groupoids with essentially countable equivalence relation,
the existence of a Borel reducibility between the groupoids is equivalent to
the Borel reducibility of the corresponding orbit equivalence relations. On
the other hand for every countable equivalence relation $E$ that is not
treeable there are two Polish groupoid with orbit equivalence relation $E$
that have distinct functorial Borel complexity; see Section~\ref{Section:
functorial reducibility and treeable}. This shows that Borel reducibility of
groupoids provides a finer notion of complexity than the usual Borel
reducibility of equivalence relations. Having a finer notion of complexity
is valuable, because it allows one to further distinguish between the
complexity of problems that, in the usual framework, turn out to have the
same complexity. An example of this phenomenon occurs in the classification
problem for C*-algebras, where it turns out~\cite{farah_turbulence_2014,
elliott_isomorphism_2013, sabok_completeness_2013} that classifying
arbitrary separable C*-algebras is as difficult as classifying the
restricted class of C*-algebras that are considered to be well behaved
(precisely the amenable simple C*-algebras, or even more restrictively the
simple C*-algebras that can be obtained as direct limits of interval
algebras).

In order to prove the above mentioned characterization of essentially
treeable equivalence relations we will generalize some fundamental results
of the theory of actions of Polish groups to actions of Polish groupoids,
answering a question of Ramsay from~\cite{ramsay_polish_2000}. These include
the Becker-Kechris results on Polishability of Borel $G$-spaces~\cite[%
Chapter 5]{becker_descriptive_1996}, existence of universal Borel $G$-spaces 
\cite[Section 2.6]{becker_descriptive_1996}, and characterization of Borel $%
G $-spaces with Borel orbit equivalence relation~\cite[Chapter 7]%
{becker_descriptive_1996}. The fundamental technique employed is a
generalization of the \emph{Vaught transform}~\cite{vaught_invariant_1974}
from actions of Polish groups to actions of Polish groupoids.

This paper is organized as follows: In Section~\ref{Section: notation} we
recall some background notions, introduce the notation to be used in the
rest, and state the basic properties of the Vaught transform for actions of
Polish groupoids. In Section~\ref{Section: Glimm-Effros} we generalize the
local version of Effros' theorem from Polish group actions to actions of
Polish groupoids, and infer the Glimm-Effros dichotomy for Polish groupoids
and Borel reducibility, refining results from~\cite{ramsay_mackey-glimm_1990}%
. Section~\ref{Section: better} contains the proof of the Polishability
result for Borel $G$-spaces, showing that any Borel $G$-space is isomorphic
to a Polish $G$-space, where $G$ is a Polish groupoid. A characterization
for Borel $G$-spaces with Borel orbit equivalence relation is obtained as a
consequence in Section~\ref{Section: Borel orbit}. Section~\ref{Section:
universal} contains the construction of a universal Borel $G$-space for a
given Polish groupoid $G$, generalizing~\cite[Section 2.6]%
{becker_descriptive_1996}. Section~\ref{Section: countable} considers
countable Borel groupoids, i.e.\ analytic groupoids with only countably many
arrows with a given source. It is shown that every such a groupoid has a
Polish groupoid structure compatible with its Borel structure. In particular
all results about Polish groupoids apply to countable Borel groupoids.
Finally in Section~\ref{Section: functorial reducibility and treeable} the
above mentioned characterization of essentially treeable equivalence
relations in terms of Borel reducibility is proved.

This paper includes an appendix written by Anush Tserunyan. In such an
appendix it is proved that the Effros Borel structure on the space of closed
subsets of a Polish groupoid is standard. We are grateful to Anush Tserunyan
for letting us include her result here.

Looking at Polish groupoids was first suggested to us by George Elliott in
occasion of a joint work with Samuel Coskey and Ilijas Farah. We would like
to thank all of them, as well as Marcin Sabok and Anush Tserunyan, for
several helpful conversations.

\section{Notation and preliminaries}

\label{Section: notation}

\subsection{Descriptive set theory}

A \emph{Polish space} is a separable and completely metrizable topological
space. Equivalently a topological space is Polish if it is T$_{1}$, regular,
second countable, and \emph{strong Choquet}~\cite[Theorem 8.18]%
{kechris_classical_1995}. A subspace of a Polish space is Polish with
respect to the subspace topology if and only if it is a $G_{\delta }$~\cite[%
Theorem 3.11]{kechris_classical_1995}.

A \emph{standard Borel space} is a space endowed with a $\sigma $-algebra
which is the $\sigma $-algebra of Borel sets with respect to some Polish
topology. An \emph{analytic space} is a space endowed with a countably
generated $\sigma $-algebra which is the image of a standard Borel space
under a Borel function. A subset of a standard Borel space is analytic if it
is an analytic space with the relative standard Borel structure. A subset of
a standard Borel space is \emph{co-}$\emph{analytic}$ if its complement is
analytic. It is well known that for a subset of a standard Borel space it is
equivalent being Borel and being both analytic and co-analytic~\cite[Theorem
14.7]{kechris_classical_1995}. If $X,Y$ are standard Borel space and $A$ is
a subset of $X\times Y$, then for $x\in X$ the section%
\begin{equation*}
\left\{ y\in Y:(x,y)\in A\right\}
\end{equation*}%
is denoted by $A_{x}$. The \emph{projection} of $A$ onto the first
coordinate is%
\begin{equation*}
\left\{ x\in X:A_{x}\neq \varnothing \right\} \text{,}
\end{equation*}%
while the \emph{co-projection }of $A$ is%
\begin{equation*}
\left\{ x\in X:A_{x}=Y\right\} \text{.}
\end{equation*}%
The projection of an analytic set is analytic, while the co-projection of a
co-analytic set is co-analytic.

If $X$ is a Polish space, then the space of closed subsets of $X$ is denoted
by $F(X)$. The \emph{Effros Borel structure} on $F(X)$ is the $\sigma $%
-algebra generated by the sets%
\begin{equation*}
\left\{ F\in F(X):F\cap U\neq \varnothing \right\}
\end{equation*}%
for $U\subset X$ open. This makes $F(X)$ a standard Borel space~\cite[%
Section 12.C]{kechris_classical_1995}.

Recall that a subset $A$ of a Polish space $X$ has the \emph{Baire property}
if there is an open subset $U$ of $X$ such that the symmetric difference $%
A\bigtriangleup U$ is meager~\cite[Definition 8.21]{kechris_classical_1995}.
It follows from~\cite[Corollary 29.14]{kechris_classical_1995} that any
analytic subset of $X$ has the Baire property.

A topological space $X$ is a \emph{Baire space} if every nonempty open
subset of $X$ is not meager. Every completely metrizable topological space
is a Baire space; see~\cite[Theorem 8.4 ]{kechris_classical_1995}.

If $X,Y$ are standard Borel spaces, then we say that $Y$ is \emph{fibred}
over $X$ if there is a Borel surjection $p:Y\rightarrow X$. If $x\in X$,
then the inverse image of $x$ under $p$ is called the $x$-fiber of $Y$ and
denoted by $Y_{x}$. If $Y_{0},Y_{1}$ are fibred over $X$, then the fibred
product 
\begin{equation*}
Y_{0}\ast Y_{1}=\left\{ \left( y_{0},y_{1}\right)
:p_{0}(y_{0})=p_{1}(y_{1})\right\}
\end{equation*}%
is naturally fibred over $X$. Similarly if $\left( Y_{n}\right) _{n\in
\omega }$ is a sequence of Borel spaces fibred over $X$ we define 
\begin{equation*}
\foo_{n\in \omega }Y_{n}=\left\{ (y_{n})_{n\in \omega }:p(y_{n})=p(y_{m})%
\text{ for }n,m\in \omega \right\}
\end{equation*}%
which is again fibred over $X$. A \emph{Borel fibred map} from $Y_{0}$ to $%
Y_{1}$ is a Borel function $\varphi :Y_{0}\rightarrow Y_{1}$ which sends
fibers to fibers, i.e.\ $p_{1}\circ \varphi =p_{0}$.

If $E$ is an equivalence relation on a standard Borel space $X$, then a
subset $T$ of $X$ is a \emph{transversal} for $E$ if it intersects any class
of $E$ in exactly one point. A \emph{selector} for $E$ is a Borel function $%
\sigma :X\rightarrow X$ such that $\sigma (x)Ex$ for every $x\in X$ and $%
\sigma (x)=\sigma (y)$ whenever $xEy$.

\subsection{Locally Polish spaces\label{Subsection: locally Polish spaces}}

\begin{definition}
\label{Definition: locally Polish space}A \emph{locally Polish space} is a
topological space with a countable basis of open sets which are Polish
spaces in the relative topology.
\end{definition}

By~\cite[Theorem 8.18]{kechris_classical_1995} a locally Polish space is T$%
_{1}$, second countable, and strong Choquet. Moreover it is a Polish space
if and only if it is regular. It follows from~\cite[Lemma 3.11]%
{kechris_classical_1995} that a $G_{\delta }$ subspace of a locally Polish
space is locally Polish.

Suppose that $X$ is a locally Polish space. Denote by $F(X)$ the space of
closed subsets of $X$. The Effros Borel structure on $F(X)$ is the $\sigma $%
-algebra generated by the sets of the form%
\begin{equation*}
\left\{ F:F\cap U\neq \varnothing \right\}
\end{equation*}%
for $U\subset X$ open. It is shown in the Appendix that the Effros Borel
structure on $F(X)$ is standard.

One can deduce from this that the Borel $\sigma $-algebra of $X$ is
standard. In fact the function%
\begin{eqnarray*}
X &\rightarrow &F(X) \\
x &\mapsto &\left\{ x\right\}
\end{eqnarray*}%
is clearly a Borel isomorphism onto the set $F_{1}(X)$ of closed subsets of $%
F(X)$ containing exactly one element. It is therefore enough to show that $%
F_{1}(X)$ is a Borel subset of $F(X)$. Fix a countable basis $\mathcal{A}$
of open Polish subsets of $X$. Suppose also that for every $U\in \mathcal{A}$
it is fixed a compatible complete metric $d_{U}$ on $U$. Observe that $%
F_{1}(X)$ contains precisely the closed subsets $F$ of $X$ such that $F\cap
U\neq \varnothing $ for some $U\in \mathcal{A}$ and for every $U\in \mathcal{%
A}$ such that $F\cap U\neq \varnothing $ and for every $n\in \omega $ there
is $W\in \mathcal{A}$ such that $cl(W)\subset U$, $diam_{U}\left( cl\left( 
\overline{W}\right) \right) <2^{-n}$ and $F\cap \left( X\backslash
cl(W)\right) =\varnothing $, where $diam_{U}\left( cl\left( \overline{W}%
\right) \right) $ is the diameter of $\overline{W}$ with respect to the
metric $d_{U}$. This shows that $F_{1}(X)$ is a Borel subset of $X$.

\subsection{The Effros fibred space\label{Subsection: Effros fibred space}}

Suppose that $Z$ is a locally Polish space, $X$ is a Polish space, and $%
p:Z\rightarrow X$ is a continuous open surjection. For $x\in X$ denote by $%
Z_{x}$ the inverse image of $x$ under $p$. Define $F^{\ast }(Z)$ to be the
space of \emph{nonempty }subsets of $Z$ endowed with the Effros Borel
structure. Define $F^{\ast }(Z,X)$ to be the Borel subset of closed subsets
of $Z$ contained in $Z_{x}$ for some $x\in X$. The Borel function from $%
F^{\ast }(Z,X)$ onto $X$ assigning to an element $F$ of $F^{\ast }(Z,X)$ the
unique $x\in X$ such that $F\subset Z_{x}$ endows $F^{\ast }(Z,X)$ with the
structure of fibred Borel space. The obvious embedding of $F^{\ast }(Z_{x})$
into $F^{\ast }(Z,X)$ is a Borel isomorphism onto the $x$-fiber of $F^{\ast
}(Z,X)$.

Consider the set $\left\{ \varnothing _{x}:x\in X\right\} $ endowed with the
Borel structure obtained from the bijection $x\leftrightarrow \varnothing
_{x}$. Define $F(Z,X)$ to be the disjoint union of $F^{\ast }(Z,X)$ with $%
\left\{ \varnothing _{x}:x\in X\right\} $, which is again fibred over $X$ in
the obvious way. Moreover the $x$-fiber of $F(Z,X)$ is now naturally
isomorphic to the space $F(Z)$ of (possibly empty) subsets of $Z_{x}$. We
will call $F(Z,X)$ the (standard) \emph{Effros fibred space }of the
fibration $p:Z\rightarrow X$.

\subsection{Analytic and Borel groupoids\label{Subsection: analytic and
Borel groupoids}}

A \emph{groupoid} $G$ is a small category where every arrow is invertible.
The set of objects of $G$ is denoted by $G^{0}$. We will regard $G^{0}$ as a
subset of $G$, by identifying an object with its identity arrow. Denote by $%
G^{2}$ the (closed) set of pairs of composable arrows%
\begin{equation*}
G^{2}=\left\{ \left( \gamma ,\rho \right) :s(\gamma )=r(\rho )\right\} \text{%
.}
\end{equation*}%
If $A,B$ are subsets of $G$, then $AB$ stands for the set%
\begin{equation*}
\left\{ \gamma \rho :\left( \gamma ,\rho \right) \in \left( A\times B\right)
\cap G^{2}\right\} \text{.}
\end{equation*}%
In particular if $Y\subset X$ then 
\begin{equation*}
YB=\left\{ \gamma \in B:r(\gamma )\in Y\right\}
\end{equation*}%
and 
\begin{equation*}
BY=\left\{ \gamma \in B:s(\gamma )\in Y\right\} \text{.}
\end{equation*}%
We write $xB$ for $\left\{ x\right\} B=r^{-1}\left[ \left\{ x\right\} \right]
\cap B$ and $Bx$ for $B\left\{ x\right\} =s^{-1}\left[ \left\{ x\right\} %
\right] \cap B$. If $A$ is a set of objects, then the \emph{restriction} $%
G_{|A}$ of $G$ to $A$ (this is called \textquotedblleft
contraction\textquotedblright\ in~\cite{mackey_ergodic_1963,
ramsay_virtual_1971}) is the groupoid 
\begin{equation*}
\left\{ \gamma \in G:s(\gamma )\in A,r(\gamma )\in A\right\}
\end{equation*}%
with set of objects $A$ and operations inherited from $G$.

To every groupoid $G$ one can associate the \emph{orbit equivalence relation}
$E_{G}$ on $G^{0}$ defined by $(x,y)\in E_{G}$ if and only if there is $%
\gamma \in G$ such that $s(\gamma )=x$ and $r(\gamma )=y$. The function%
\begin{eqnarray*}
G &\rightarrow &E_{G} \\
\gamma &\mapsto &(r(\gamma ),s(\gamma ))
\end{eqnarray*}%
is a continuous surjection. We say that a groupoid is \emph{principal }when
such a map is injective. Thus a principal groupoid is just an equivalence
relation on its set of objects. Conversely any equivalence relation can be
regarded as a principal groupoid.

The notion of \emph{functor }between groupoids is the usual notion from
category theory. Thus a functor from $G$ to $H$ is a function from $G$ to $H$
such that, for every $\gamma \in G$ and $\left( \rho _{0},\rho _{1}\right)
\in G^{2}$ the following holds:

\begin{itemize}
\item $F(s(\gamma ))=s(F(\gamma ))$;

\item $F(r(\gamma ))=r(F(\gamma ))$;

\item $F(\gamma ^{-1})=F(\gamma )^{-1}$;

\item $F(\rho _{0}\rho _{1})=F(\rho _{0})F(\rho _{1})$.
\end{itemize}

When $E$ and $E^{\prime }$ are principal groupoids, then functors from $E$
to $E^{\prime }$ are in 1:1 correspondence with \emph{reductions }from $E$
to $E^{\prime }$ in the sense of~\cite[Definition 5.1.1]{gao_invariant_2009}.

\begin{definition}
An \emph{analytic groupoid} is a groupoid endowed with an analytic Borel
structure making composition and inversion of arrows Borel and such that the
set of objects and, for every object $x$, the set of elements with source $x$%
, are standard Borel spaces with respect to the induced Borel structure. A 
\emph{(standard) Borel groupoid }is a groupoid endowed with a standard Borel
structure making composition and inversion of arrows Borel, and such that
the set of objects is a Borel subset.
\end{definition}

It is immediate to verify that principal analytic groupoids are precisely
analytic equivalence relations on standard Borel spaces. Similarly principal
Borel groupoids are precisely the Borel equivalence relations on standard
Borel spaces. A functor between analytic groupoids is Borel if it is Borel
as a function with respect to the given Borel structures.

\subsection{Polish groupoids and Polish groupoid actions\label{Subsection:
Polish groupoids and groupoid actions}}

\begin{definition}
A\emph{\ topological groupoid} is a groupoid endowed with a topology making
composition and inversion of arrows continuous.
\end{definition}

It is not difficult to see that for a topological groupoid the following
conditions are equivalent:

\begin{enumerate}
\item Composition of arrows is open;

\item The source map is open;

\item The range map is open.
\end{enumerate}

(See~\cite[Exercise I.1.8]{resende_lectures_2006}.)

\begin{definition}
A\emph{\ Polish groupoid} is a groupoid endowed with a locally Polish
topology such that

\begin{enumerate}
\item composition and inversion of arrows are continuous and open,

\item the set $G^{0}$ of objects is a Polish space with the subspace
topology,

\item for every $x\in G^{0}$ the sets $Gx$ and $xG$ are Polish spaces with
the subspace topology.
\end{enumerate}
\end{definition}

Polish groupoids have been introduced in~\cite{ramsay_mackey-glimm_1990}
with the extra assumption that the topology be regular or, equivalently,
globally Polish. It is nonetheless noticed in~\cite[page 362]%
{ramsay_mackey-glimm_1990} that one can safely dispense of this additional
assumption, without invalidating the results proved therein.

Suppose that $G$ is a Polish groupoid, and $X$ is a Polish space. A
continuous action of $G$ on $X$ is given by a continuous function $%
p:X\rightarrow G^{0}$ called \emph{anchor map} together with a continuous
function $\left( g,x\right) \mapsto gx$ from%
\begin{equation*}
G\ltimes X=\left\{ (\gamma ,x):p(x)=s(\gamma )\right\}
\end{equation*}%
to $X$ such that, for all $\gamma ,\rho \in G$ and $x\in X$

\begin{enumerate}
\item $\gamma (\rho x)=(\gamma \rho )x$,

\item $p(\gamma x)=r(\gamma )$, and

\item $p(x)x=x$.
\end{enumerate}

In such a case we say that $X$ is a Polish $G$-space. Similarly if $X$ is a
standard Borel space, then a Borel action of $G$ on $X$ is given by a Borel
map $p:X\rightarrow G^{0}$ together with a Borel map 
\begin{eqnarray*}
G\ltimes X &\rightarrow &X \\
(\gamma ,x) &\rightarrow &\gamma x
\end{eqnarray*}%
satisfying the same conditions as above. In this case $X$ will be called a
Borel $G$-space.

Clearly any Polish groupoid acts continuously on its space of objects $G^{0}$
by setting $p(x)=x$ and $(\gamma ,x)\mapsto r(\gamma )$. This will be called
the \emph{standard }action of $G$ on $G^{0}$.

Most of the usual notions for actions of groups, such as orbits, or
invariant sets, can be generalized in the obvious way to actions of
groupoids. If $X$ is a $G$-space, and $x\in X$, then its orbit $\left\{
\gamma x:s(\gamma )=p(x)\right\} $ is denoted by $\left[ x\right] $. The
orbit equivalence relation $E_{G}^{X}$ on $X$ is defined by $xE_{G}^{X}y$
iff $\left[ x\right] =\left[ y\right] $. If $A$ is a subset of $X$, then its
saturation 
\begin{equation*}
\left\{ \gamma a:a\in A\text{, }\gamma \in Gp(a)\right\}
\end{equation*}%
is denote by $\left[ A\right] $. An action is called \emph{free} if $\gamma
x=\rho x$ implies $\gamma =\rho $ for any $x\in X$ and $\gamma ,\rho \in Gp(x%
\mathbb{)}$.

Suppose that $G\ $is a Polish groupoid, and $X$ is a Borel $G$-space. If $%
x,y\in G^{0}$ are in the same orbit define the stabilizer 
\begin{equation*}
G_{x}=\left\{ \gamma \in G:s(\gamma )=p(x)\text{ and }\gamma x=x\right\}
\end{equation*}%
of $x$, and 
\begin{equation*}
G_{x,y}=\left\{ \gamma \in G:s(\gamma )=p(x)\text{ and }\gamma x=y\right\} 
\text{.}
\end{equation*}%
Observe that by~\cite[Theorem 9.17]{kechris_classical_1995} $G_{x}$ is a
closed subgroup of $p(x)Gp(x)$. Therefore $G_{x,y}$ is also closed, since $%
G_{x,y}=G_{x,x}\rho $ for any $\rho $ such that $s(\rho )=p(x)$ and $\rho
x=y $.

Suppose that $X$ and $Y$ are Borel $G$-spaces with anchor maps $p_{X}$ and $%
p_{Y}$. A Borel fibred map from $X$ to $Y$ is a Borel function $\varphi
:X\rightarrow Y$ such that $p_{Y}\circ \varphi =p_{X}$. A Borel fibred map
from $X$ to $Y$ is $G$-\emph{equivariant }if 
\begin{equation*}
\varphi (\gamma x)=\gamma \varphi (x)
\end{equation*}%
for $x\in X$ and $\gamma \in Gp(x)$. A\emph{\ Borel }$G$\emph{-embedding}
from $X$ to $Y$ is an injective $G$-equivariant Borel fibred map from $X$ to 
$Y$. Finally a \emph{Borel }$G$-\emph{isomorphism }from $X$ to $Y$ is a
Borel $G$-embedding which is also onto.

\subsection{Some examples of Borel groupoids\label{Subsection: examples}}

In this subsection we show how several natural categories of interest can be
endowed (after a suitable parametrization) with the structure of Borel
groupoid.

Let us first consider the category of \emph{complete separable metric spaces}%
, having surjective isometries as morphisms. This can be endowed with the
structure of Borel groupoid in the following way. Denote by $\mathbb{U}$ the
Urysohn universal metric space. (A survey about $\mathbb{U}$ and its
remarkable properties can be found in \cite{melleray_geometry_2007}.) Let $F(%
\mathbb{U)}$ be the Borel space of closed subsets of $\mathbb{U}$ endowed
with the Effros Borel structure. By universality of the Urysohn space, $F(%
\mathbb{U)}$ contains an isometric copy of any separable metric space.
Moreover any surjective isometry between closed subsets of $\mathbb{U}$ can
be identified with its graph, which is a closed subset of $\mathbb{U}\times 
\mathbb{U}$. The set $\mathbf{CMS}$ of such graphs is easily seen to be a
Borel subset of $F(\mathbb{U)}$. Moreover a standard computation shows that
composition and inversion of arrows are Borel functions in $\mathbf{CMS}$.
This shows that $\mathbf{CMS}$ is a Borel groupoid that can be seen as a
parametrization of the category of metric spaces with surjective isometries
as arrows.

More generally one can look at the category of \emph{separable }$\mathcal{L}$%
\emph{-structures} in some signature $\mathcal{L}$ of continuous logic. (A
complete introduction to continuous logic is \cite{ben_yaacov_model_2008}.)
One can identify any $\mathcal{L}$-structure with an $\mathcal{L}$-structure
having as support a closed subset of $\mathbb{U}$. In such case the
interpretation of a function symbol $f$ can be seen as a closed subset of $%
\mathbb{U}^{\left\vert f\right\vert +1}$ where $\left\vert f\right\vert $
denotes the arity of $f$. The interpretation of a relation symbol $B$ can be
seen as a closed subset of $\mathbb{U}^{\left\vert R\right\vert }\times 
\mathbb{R}$ where again $\left\vert R\right\vert $ denotes the arity of $B$.
(Here distances and relations are allowed to attain value in the whole real
line.) The set $\mathrm{Mod}(\mathcal{L}\mathbb{)}$ of such structures can
be verified to be a Borel subset of%
\begin{equation*}
F(\mathbb{U)}\times \prod_{f}F(\mathbb{U)}\times \prod_{B}F(\mathbb{U)}
\end{equation*}%
where $f$ and $B$ range over the function and relation symbols of $\mathcal{L%
}$. Similar parametrizations of the space of $\mathcal{L}$-structures can be
found in \cite{elliott_isomorphism_2013} and \cite%
{ben_yaacov_lopez-escobar_2014}. As before the space $\mathbf{\mathrm{Mod}}(%
\mathcal{L}\mathbb{)}$ of isomorphisms between $\mathcal{L}$-structures
(identified with their graph) is a Borel subset of $F(\mathbb{U)}$, and
composition and inversion of arrows are Borel maps. Thus one can regard $%
\mathbf{\mathrm{Mod}}(\mathcal{L}\mathbb{)}$ as the Borel groupoid of $%
\mathcal{L}$-structures. In the particular case when one considers \emph{%
discrete }structures then one can replace the Urysohn space with $\omega $.

As a particular case of separable structures in a given signature one can
consider \emph{separable C*-algebras}. (The book \cite%
{blackadar_operator_2006} is a complete reference for the theory of operator
algebras.) The complexity of the classification problem for separable
C*-algebras has recently attracted considerable interest; see \cite%
{farah_turbulence_2014, farah_descriptive_2012, elliott_isomorphism_2013,
sabok_completeness_2013}. Particularly important classes for the
classification program are \emph{nuclear} and \emph{exact} C*-algebras; see 
\cite[Section IV.3]{blackadar_operator_2006}. Separable exact C*-algebras
are precisely the closed self-adjoint subalgebras of the Cuntz algebra $%
\mathcal{O}_{2}$ \cite{kirchberg_embedding_2000}. Thus the Borel groupoid $%
\mathbf{C}^{\ast }\mathbf{Exact}$ of closed subalgebras of $\mathcal{O}_{2}$%
---where a *-isomorphism between closed subalgebras is identified with its
graph---can be regarded as a parametrization for the category of exact
C*-algebras having *-isomorphisms as arrows. The category of (simple,
unital) nuclear C*-algebras can be regarded as the restriction of $\mathbf{C}%
^{\ast }\mathbf{Exact}$ to the Borel set of (simple, unital) self-adjoint
subalgebras of $\mathcal{O}_{2}$; see \cite[Section 7]{farah_turbulence_2014}%
.

We now look at the category of \emph{Polish groups} with continuous group
isomorphisms as arrows. Denote by $\mathrm{Iso}(\mathbb{U)}$ the group of
isometries of the Urysohn space endowed with the topology of pointwise
convergence. Recall that $\mathrm{Iso}(\mathbb{U)}$ is a universal Polish
group \cite{uspenskij_group_1990}, i.e. it contains any other Polish group
as closed subgroup. The space $SG\left( \mathrm{Iso}(\mathbb{U)}\right) $ of
closed subgroups of $\mathrm{Iso}(\mathbb{U)}$ endowed with the Effros Borel
structure can be regarded as the standard Borel space of Polish groups.
Moreover a continuous isomorphism between closed subgroups of $\mathrm{Iso}(%
\mathbb{U)}$ can be identified with its graph, which is a closed subgroup of 
$\mathrm{Iso}(\mathbb{U)}\times \mathrm{Iso}(\mathbb{U)}$.\ It is not
difficult to check that the set $\mathbf{PG}$ of such closed subgroups of $%
\mathrm{Iso}(\mathbb{U)}\times \mathrm{Iso}(\mathbb{U)}$ is a Borel subset
of the space $SG\left( \mathrm{Iso}(\mathbb{U)}\times \mathrm{Iso}(\mathbb{U)%
}\right) $ of closed subgroups of $\mathrm{Iso}(\mathbb{U)}\times \mathrm{Iso%
}(\mathbb{U)}$ endowed with the Effros Borel structure. (Fix a countable
neighborhood basis $\mathcal{N}$ of the identity in $\mathrm{\mathrm{Iso}}(%
\mathbb{U)}$, and observe that a closed subgroup $H$ of $\mathrm{Iso}(%
\mathbb{U)}\times \mathrm{Iso}(\mathbb{U)}$ is in $\mathbf{PG}$ if and only
if $\forall U\in \mathcal{N}$ $\exists V\in \mathcal{N}$ such that $H\cap
\left( U\times \left( \mathrm{Iso}(\mathbb{U)}\backslash cl(V)\right)
\right) =\varnothing $ and $H\cap \left( \left( \mathrm{Iso}(\mathbb{U)}%
\backslash cl(V)\right) \times U\right) =\varnothing $.) Moreover a standard
calculation shows that composition and inversion of arrows are Borel
functions in $\mathbf{PG}$. (For composition of arrows, observe that if as
before $\mathcal{N}$ is a countable basis of neighborhoods of the identity
in $\mathrm{Iso}(\mathbb{U)}$, $D$ is a dense subset of $\mathrm{Iso}(%
\mathbb{U)}$, $A$ and $B$ are open subsets of $\mathrm{Iso}(\mathbb{U)}$,
and $\varphi ,\psi \in \mathbf{PG}$, then $\left( \varphi \circ \psi \right)
\cap \left( A\times B\right) \neq \varnothing $ if and only if there are $%
U,V\in \mathcal{N}$ and $g,h\in D$ with $cl(V)^{2}h\subset B$ and $Ug\subset
A$ such that $\psi \cap \left( U^{2}\times \left( \mathrm{Iso}(\mathbb{U)}%
\backslash cl(V)\right) \right) =\varnothing $, $\varphi \cap \left( A\times
Ug\right) \neq \varnothing $, and $\psi \cap \left( Ug\times Vh\right) \neq
\varnothing $.) This shows that $\mathbf{PG}$ is a Borel groupoid that can
be seen as a parametrization of the category of Polish groups with
continuous isomorphisms as arrows.

A similar discussion applies to the category of separable Banach spaces with
linear (not necessarily isometric) isomorphisms as arrows. In this case one
considers a universal separable Banach space, such as $C\left[ 0,1\right] $.
One then looks at the standard Borel space of closed subspaces of $C\left[
0,1\right] $ as set of objects, and the set of closed subspaces of $C\left[
0,1\right] \oplus C\left[ 0,1\right] $ that code a linear isomorphism
between closed subspaces of $C\left[ 0,1\right] $ as set of arrows. The
proof that these sets are Borel with respect to the Effros Borel structure
is analogous to the case of Polish groups.

\subsection{The action groupoid\label{Subsection: the action groupoid}}

Suppose that $G$ is a Polish groupoid, and $X$ is a Polish $G$-space.
Consider the groupoid 
\begin{equation*}
G\ltimes X=\left\{ (\gamma ,x)\in G\times X:s(\gamma )=p(x)\right\} \text{,}
\end{equation*}%
where composition and inversion of arrows are defined by%
\begin{equation*}
\left( \rho ,\gamma x\right) (\gamma ,x)=\left( \rho \gamma ,x\right)
\end{equation*}%
and%
\begin{equation*}
(\gamma ,x)^{-1}=\left( \gamma ^{-1},\gamma x\right) \text{.}
\end{equation*}%
The set of objects of $G\ltimes X$ is%
\begin{equation*}
G^{0}\ltimes X=\left\{ \left( a,x\right) \in G^{0}\times X:p(x)=a\right\} 
\text{.}
\end{equation*}%
Endow $G\ltimes X$ with the subspace topology from $G\times X$. Observe that
the function%
\begin{eqnarray*}
X &\rightarrow &G^{0}\ltimes X \\
x &\mapsto &\left( p(x),x\right)
\end{eqnarray*}
is a homeomorphism from $X$ to the set of objects of $G\ltimes X$. We can
therefore identify the latter with $X$. Under this identification the source
of $(\gamma ,x)$ is $x$ and the range is $\gamma x$. We claim that $G\ltimes
X$ is a Polish groupoid, called the \emph{action groupoid }associated with
the Polish $G$-space $X$. Clearly the topology is locally Polish, and
composition and inversion of arrows are continuous. We need to show that the
source map is open. Suppose that $V$ is an open subset of $G$, $U$ is an
open subset of $X$, and $W$ is the open subset%
\begin{equation*}
\left\{ (\gamma ,x):\gamma \in V,x\in U\right\}
\end{equation*}%
of $G\ltimes X$. Suppose that $W$ is nonempty and pick $\left( \gamma
_{0},x_{0}\right) \in W$. Thus $x_{0}\in U$ and $p(x_{0})=s(\gamma _{0})\in s%
\left[ V\right] $. Therefore there is an open subset $U_{0}$ of $U$
containing $x_{0}$ such that $p\left[ U_{0}\right] \subset s\left[ V\right] $%
. We claim now that $U_{0}$ is contained in the image of $W$ under the
source map. In fact if $x\in U_{0}$ then $p(x)=s(\gamma )$ for some $\gamma
\in V$ and therefore $x$ is the source of the arrow $(\gamma ,x)$ in $W$.
This concludes the proof of the fact that $G\ltimes X$ is a Polish groupoid.
To summarize we can state the following proposition.

\begin{proposition}
\label{Proposition: action groupoid}Suppose that $G$ is a Polish groupoid,
and $X$ is a Polish $G$-space. The action groupoid $G\ltimes X$ as defined
above is a Polish groupoid. Moreover the map 
\begin{eqnarray*}
X &\rightarrow &\left( G\ltimes X\right) ^{0} \\
x &\mapsto &\left( p(x),x\right)
\end{eqnarray*}%
is a homeomorphism such that, for every $x,x^{\prime }\in X$, 
\begin{equation*}
xE_{G}^{X}x^{\prime }\quad \text{iff}\quad \left( p(x),x\right) E_{G\ltimes
X}\left( p(x^{\prime }),x^{\prime }\right) \text{.}
\end{equation*}
\end{proposition}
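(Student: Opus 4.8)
The plan is to break the statement into its three assertions: that $G\ltimes X$ is a Polish groupoid, that the map $x\mapsto(p(x),x)$ is a homeomorphism onto $(G\ltimes X)^0$, and that this map carries $E_G^X$ to $E_{G\ltimes X}$. The first two have in fact already been established in the discussion immediately preceding the statement, so the proof is essentially a matter of recording those verifications and then checking the last, purely combinatorial, point. I would structure the write-up accordingly, spending almost all the effort on being careful about openness of the source (range) map.

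First I would verify that $G\ltimes X$, with the subspace topology from $G\times X$, is a topological groupoid: composition and inversion as defined are restrictions of continuous maps (using continuity of the action $(\gamma,x)\mapsto\gamma x$ and of composition/inversion in $G$), hence continuous. The topology is locally Polish because an open subspace of the product $G\times X$ of a locally Polish space and a Polish space is locally Polish, and $G\ltimes X$ is a subspace cut out by the closed condition $s(\gamma)=p(x)$; here I should note that $G\ltimes X$ is actually $G_\delta$ (indeed closed) in $G\times X$, so by \cite[Lemma 3.11]{kechris_classical_1995} it is locally Polish. Next, that $x\mapsto(p(x),x)$ is a homeomorphism onto the set of objects is clear: it is continuous with continuous inverse the second-coordinate projection, and its image is exactly $\{(a,x):p(x)=a\}=G^0\ltimes X$. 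Under this identification, a direct computation from the definitions gives that the source of $(\gamma,x)$ is $(s(\gamma),x)$, i.e. $x$, and its range is $(r(\gamma),\gamma x)$, i.e. $\gamma x$; and the fibers $(G\ltimes X)x$, $x(G\ltimes X)$ are homeomorphic to $Gp(x)$ via $(\gamma,x)\mapsto\gamma$, hence Polish since $Gp(x)$ is Polish by the definition of a Polish groupoid.

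The one step requiring genuine argument is openness of the source map (equivalently, by \cite[Exercise I.1.8]{resende_lectures_2006}, that composition is open), and this is exactly the computation carried out in the paragraph before the statement, which I would reproduce: given a basic open set $W=\{(\gamma,x):\gamma\in V,\ x\in U\}$ with $V\subseteq G$, $U\subseteq X$ open and $W\neq\varnothing$, pick $(\gamma_0,x_0)\in W$; then $p(x_0)=s(\gamma_0)\in s[V]$, and since $p$ is continuous and $s[V]$ is open (as $s$ is open on $G$), there is an open $U_0\ni x_0$, $U_0\subseteq U$, with $p[U_0]\subseteq s[V]$; then $U_0\subseteq s[W]$ because for $x\in U_0$ we have $p(x)=s(\gamma)$ for some $\gamma\in V$, so $(\gamma,x)\in W$ has source $x$. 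This shows $s[W]$ is open for $W$ in a basis, hence the source map is open; the other conditions in the definition of Polish groupoid have just been checked. The anticipated main obstacle is purely expository rather than mathematical: making sure the identification of $(G\ltimes X)^0$ with $X$ is used consistently so that the formulas for source and range come out as stated.

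Finally, for the equivalence of orbit relations, suppose $xE_G^X x'$, so $x'=\gamma x$ for some $\gamma\in Gp(x)$; then $(\gamma,x)$ is an arrow of $G\ltimes X$ with source (under the identification) $x$ and range $\gamma x=x'$, so $(p(x),x)\,E_{G\ltimes X}\,(p(x'),x')$. Conversely, any arrow of $G\ltimes X$ from $x$ to $x'$ is of the form $(\gamma,x)$ with $s(\gamma)=p(x)$ and $\gamma x=x'$, whence $x'\in[x]$, i.e. $xE_G^X x'$. This completes the proof.
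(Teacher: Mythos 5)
Your proposal is correct and follows essentially the same route as the paper: the paper's ``proof'' is precisely the discussion preceding the statement, including the identification of $X$ with $(G\ltimes X)^0$ and the openness-of-the-source-map computation that you reproduce, with the orbit-equivalence correspondence left as the immediate verification you supply. Nothing is missing.
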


\subsection{Functorial reducibility\label{Subsection: functorial
reducibility}}

\begin{definition}
\label{Definition: Borel reduction}Suppose that $G$ and $H$ are analytic
groupoids. A\emph{\ Borel reduction} from $G$ to $H$ is a Borel functor $F$
from $G$ to $H$ such that $xGy\neq \varnothing $ whenever $F(x)H{}F(y)\neq
\varnothing $.
\end{definition}

Equivalently a Borel functor $F$ from $G$ to $H$ is a Borel reduction from $%
G $ to $H$ when the function%
\begin{eqnarray*}
G^{0} &\rightarrow &H^{0} \\
x &\mapsto &F(x)
\end{eqnarray*}%
is a Borel reduction from $E_{G}$ to $E_{H}$ in the sense of~\cite[%
Definition 5.1.1]{gao_invariant_2009}.

\begin{definition}
\label{Definition: Borel reducibility}Suppose that $G$ and $H$ are analytic
groupoids. We say that $G$ is\emph{\ Borel reducible} to $H$---in formulas $%
G\leq _{B}H$---if there is a Borel reduction from $G$ to $H$.
\end{definition}

The notion of bireducibility is defined accordingly.

\begin{definition}
\label{Definition: Borel bireducibility}Suppose that $G$ and $H$ are
analytic groupoids. We say that $G$ is\emph{\ Borel bireducible} to $H$---in
formulas $G\sim _{B}H$---if $G$ is Borel reducible to $H$ and vice versa.
\end{definition}

When $E$ and $E^{\prime }$ are principal analytic groupoids, then the Borel
reductions\emph{\ }from $E$ to $E^{\prime }$ are in 1:1 correspondence with
Borel reductions from $E$ to $E^{\prime }$ in the usual sense of Borel
complexity theory; see~\cite[Definition 5.1.1]{gao_invariant_2009}. In
particular Definition~\ref{Definition: Borel reducibility} \emph{generalizes}
the notion of Borel reducibility from analytic equivalence relations to
analytic groupoids.

Similarly as in the case of reducibility for equivalence relations, one can
impose further requirements on the reduction map. If $G$ and $H$ are
analytic groupoid, we say that $G$ is injectively Borel reducible to $H$%
---in formulas $G\sqsubseteq _{B}H$ if there is an \emph{injective }Borel
reduction from $G$ to $H$. When $G$ and $H$ are Polish groupoid, one can
also insist that the reduction be continuous rather than Borel. One then
obtains the notion of continuous reducibility $\leq _{c}$ and continuous
injective reducibility $\sqsubseteq _{c}$.

Definition \ref{Definition: Borel reducibility} provides a natural notion of
comparison between analytic groupoids. This allows one to build a hierarchy
of complexity of analytic groupoids, that includes the usual hierarchy of
Borel equivalence relations. The \emph{functorial Borel complexity }of an
analytic groupoid will denote the position of the given groupoid in such a
hierarchy.

\subsection{Category preserving maps\label{Subsection: Category preserving
maps}}

According to~\cite[Definition A.2]{melleray_generic_2013} a continuous map $%
f:X\rightarrow Y$ between Polish spaces is \emph{category preserving }if for
any comeager subset $C$ of $Y$ the inverse image $f^{-1}\left[ C\right] $ of 
$C$ under $f$ is a comeager subset of $X$. It is not difficult to see that
any continuous open map is category preserving~\cite[Proposition A.3]%
{melleray_generic_2013}.

Category-preserving maps satisfy a suitable version of the
classical\linebreak Kuratowski-Ulam theorem for coordinate projections. We
will state the particular case of this result for continuous open maps in
the following lemma, which is Theorem~A.1 in~\cite{melleray_generic_2013}.

\begin{lemma}
\label{Lemma: Kuratowski-Ulam}Suppose that $X$ is second countable space, $Y$
is a Baire space, and $f:X\rightarrow Y$ is an open continuous map such that 
$f^{-1}\left\{ y\right\} $ is a Baire space for every $y\in Y$. If $A\subset
X$ has the Baire property, then the following statements are equivalent:

\begin{enumerate}
\item $A$ is comeager;

\item $\forall ^{\ast }y\in Y$, $A\cap f^{-1}\left\{ y\right\} $ is comeager
in $f^{-1}\left\{ y\right\} $.
\end{enumerate}
\end{lemma}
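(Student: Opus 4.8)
The plan is to deduce the statement from the classical Kuratowski--Ulam theorem for the coordinate projection $\pi_Y : X' \times Y \to Y$ together with a change of variables that presents $f$ as such a projection up to a category-preserving correspondence. First I would recall that the statement in question is explicitly cited as Theorem~A.1 of~\cite{melleray_generic_2013}, so strictly speaking one may simply quote it; but to keep the present paper self-contained I would give the argument, since it is short given the tools already assembled (the Baire property of analytic sets, the fact that continuous open maps are category preserving, and elementary properties of meager/comeager sets in Baire spaces).

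The key steps, in order: (1) Reduce to the implication (1)$\Rightarrow$(2), since the converse is the easy direction: if $A$ has the Baire property and fails to be comeager, then $X \setminus A$ is nonmeager and, having the Baire property, is comeager on some nonempty open $U \subseteq X$; then $f[U]$ is a nonempty open subset of $Y$ (here openness of $f$ is used), and for $\forall^{*} y \in f[U]$ the fiber $f^{-1}\{y\} \cap U$ is comeager in $f^{-1}\{y\} \cap U$, hence $A$ is meager in $f^{-1}\{y\}$ for a nonmeager set of $y$, contradicting (2). (2) For the main implication, write $A = V \bigtriangleup M$ with $V \subseteq X$ open and $M$ meager; since $A$ is comeager, $V$ is comeager, so $X \setminus V$ is meager, i.e.\ $X \setminus V \subseteq \bigcup_n F_n$ with each $F_n$ closed nowhere dense. (3) For the closed nowhere dense set $F_n$, I would show that $\forall^{*} y$ the fiber $F_n \cap f^{-1}\{y\}$ is nowhere dense in $f^{-1}\{y\}$: the complement $X \setminus F_n$ is open and dense, hence comeager, and one reduces the claim on dense open sets to a basis argument — for each basic open $U$ of $X$ (countably many, as $X$ is second countable), the set $\{y : U \cap f^{-1}\{y\} \neq \varnothing,\ (X\setminus F_n) \cap U \cap f^{-1}\{y\} = \varnothing\}$ is contained in $Y \setminus f[(X\setminus F_n)\cap U]$, which is closed with empty interior inside $f[U]$ because $f$ is open and continuous and $(X \setminus F_n) \cap U$ is dense in $U$. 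Taking the countable union over $U$ and then over $n$ yields that $\forall^{*} y$, $\bigcup_n F_n \cap f^{-1}\{y\}$ is meager in $f^{-1}\{y\}$, hence so is $(X \setminus V) \cap f^{-1}\{y\}$. (4) Finally handle $M$: since $M$ is meager in $X$, the same argument (applied to the closed nowhere dense sets covering $M$) gives $\forall^{*} y$ that $M \cap f^{-1}\{y\}$ is meager in $f^{-1}\{y\}$. Combining (3) and (4): $\forall^{*} y$, both $(X\setminus V)\cap f^{-1}\{y\}$ and $M \cap f^{-1}\{y\}$ are meager in $f^{-1}\{y\}$, and since $A \supseteq V \setminus M$, the set $A \cap f^{-1}\{y\}$ is then comeager in $f^{-1}\{y\}$ (using that $f^{-1}\{y\}$ is a Baire space, so that the complement of a meager set is genuinely comeager and nonmeager).

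The main obstacle I expect is step (3): verifying that fibers of a \emph{fixed} closed nowhere dense set are nowhere dense in \emph{almost every} fiber. The naive hope — that every fiber of a nowhere dense set is nowhere dense — is false, so the quantifier "$\forall^{*} y$" is essential, and the argument must genuinely exploit openness of $f$ (to push density of $(X\setminus F_n)\cap U$ in $U$ down to density of its image in $f[U]$) together with second countability of $X$ (to make the relevant union of "bad" $y$'s countable, hence still yielding a comeager set of "good" $y$'s after intersecting over all basic open $U$ and all $n$). Everything else is bookkeeping with meager sets in Baire spaces; the hypothesis that each $f^{-1}\{y\}$ is a Baire space is used only at the very end, to convert "meager complement" into "comeager". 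As noted, an acceptable shortcut is simply to invoke~\cite[Theorem~A.1]{melleray_generic_2013} verbatim, observing that a continuous open map is category preserving by~\cite[Proposition~A.3]{melleray_generic_2013}.
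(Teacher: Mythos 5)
The paper offers no proof of this lemma: it is stated as a verbatim quotation of Theorem~A.1 of~\cite{melleray_generic_2013}, combined with the remark (Proposition~A.3 there) that continuous open maps are category preserving, so the ``acceptable shortcut'' you mention at the end is in fact exactly what the paper does. Your self-contained argument is a correct alternative and is essentially the classical Kuratowski--Ulam proof transplanted from coordinate projections to fibers of an open map; the crux is indeed your step (3), where for a closed nowhere dense $F$ the set of bad $y$ is covered by the countably many sets $f[U_k]\setminus f[(X\setminus F)\cap U_k]$, each nowhere dense because $f[(X\setminus F)\cap U_k]$ is \emph{open} (this is where openness of $f$ is indispensable, as you observe) and dense in $f[U_k]$, with second countability keeping the union countable. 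Three small repairs to the write-up: (a) your step (1) is not logically prior to the rest --- to conclude that $(X\setminus A)\cap U$ has comeager trace on almost every $U\cap f^{-1}\{y\}$ you must apply the ``meager sets have almost everywhere meager fibers'' statement to the restricted map $f|_U:U\rightarrow f[U]$ (whose hypotheses are inherited, since $f[U]$ is open in the Baire space $Y$ and the sets $U\cap f^{-1}\{y\}$ are open in Baire spaces), so steps (3)--(4) should come first and both implications be deduced from them; (b) the clause ``$f^{-1}\{y\}\cap U$ is comeager in $f^{-1}\{y\}\cap U$'' is a typo for the trace of $X\setminus A$ on that set; (c) the hypothesis that the fibers are Baire is actually used in the implication (2)$\Rightarrow$(1), to turn ``comeager in a nonempty open piece of the fiber'' into ``nonmeager'' and reach the contradiction, whereas in (1)$\Rightarrow$(2) ``complement meager in the fiber'' is already the definition of comeager. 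Finally, the opening promise of a change of variables reducing $f$ to a product projection is never carried out and is not needed; the direct fiberwise argument you actually give is the right one.
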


\subsection{Vaught transforms\label{Subsection: Vaught transform}}

Suppose in the following that $G$ is a Polish groupoid,%
\begin{equation*}
\mathcal{A}=\left\{ U_{n}:n\in \omega \right\}
\end{equation*}%
is a basis of Polish open subsets of $G$, and $X$ is a Borel $G$-space.

\begin{definition}
For $A\subset X$ and $V\subset G$, define the \emph{Vaught transforms}%
\begin{equation*}
A^{\bigtriangleup V}=\left\{ x\in X:Vp(x)\neq \varnothing \text{ and }%
\exists ^{\ast }\gamma \in Vp(x)\text{, }\gamma x\in A\right\}
\end{equation*}%
and%
\begin{equation*}
A^{\ast V}=\left\{ x\in X:Vp(x)\neq \varnothing \text{ and }\forall ^{\ast
}\gamma \in Vp(x)\text{, }\gamma x\in A\right\} \text{.}
\end{equation*}
\end{definition}

In the particular case when $G$ is a Polish group, and $X$ is a Borel $G$%
-space, this definition coincide with the usual Vaught transform; cf.~\cite[%
Definition 3.2.2]{gao_invariant_2009}.

\begin{lemma}
\label{Lemma: basic Vaught}Assume that $B$ and $A_{n}$ for $n\in \omega $
are subsets of $X$. If $V$ is an open subset of $G$, then the following hold:

\begin{enumerate}
\item $B^{\bigtriangleup G}$ and $B^{\ast G}$ are invariant subsets of $X$;

\item $\left( \bigcap_{n}A_{n}\right) ^{\ast V}=\bigcap_{n}A_{n}^{\ast V}$;

\item $\left( \bigcup_{n}A_{n}\right) ^{\bigtriangleup
V}=\bigcup_{n}A_{n}^{\bigtriangleup V}$;

\item $p^{-1}\left[ s\left[ V\right] \right] $ is the disjoint union of $%
\left( X\left\backslash B\right. \right) ^{\ast V}$ and $B^{\bigtriangleup
V} $;

\item If $B$ is analytic, then $B^{\bigtriangleup V}=\bigcup \left\{ B^{\ast
U}:V\supset U\in \mathcal{A}\right\} $ and $B^{\ast V}=\bigcap \left\{
B^{\bigtriangleup U}:V\supset U\in \mathcal{A}\right\} $.
\end{enumerate}
\end{lemma}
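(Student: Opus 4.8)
The plan is to prove each of the five items in turn, mostly by unwinding the definition of the Vaught transforms together with elementary facts about the quantifier $\exists^\ast$ / $\forall^\ast$ (``non-meagerly many'' / ``comeagerly many'') over the Baire spaces $Vp(x)$, which are open subsets of the Polish spaces $xG$ (hence Baire). Throughout, for fixed $x$ with $Vp(x)\neq\varnothing$, the map $\gamma\mapsto\gamma x$ from $Vp(x)$ to $X$ is continuous, so preimages of analytic sets have the Baire property, which is what makes the category quantifiers well-behaved.

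For item (1), I would check invariance of $B^{\triangle G}$ and $B^{\ast G}$ directly: if $x\in B^{\triangle G}$ and $\rho\in Gp(x)$, then for $y=\rho x$ we have $p(y)=r(\rho)$, and left-translation by $\rho$ is a homeomorphism from $Gp(x)$ onto $Gp(y)$ carrying the (non-meager, Baire-property) set $\{\gamma\in Gp(x):\gamma x\in B\}$ onto $\{\delta\in Gp(y):\delta y\in B\}$ (using $\delta=\gamma\rho^{-1}$, so $\delta y=\gamma x$); homeomorphisms preserve non-meagerness, giving $y\in B^{\triangle G}$, and symmetrically for the $\ast$ transform. Items (2) and (3) are the standard commutation of $\forall^\ast$ with countable intersections and $\exists^\ast$ with countable unions, applied fiberwise over each $Vp(x)$: $\forall^\ast\gamma\,(\forall n\,\gamma x\in A_n)$ iff $\forall n\,\forall^\ast\gamma\,\gamma x\in A_n$, and dually; one must also note that all the sets involved are nonempty on the same domain $\{x:Vp(x)\neq\varnothing\}=p^{-1}[s[V]]$, which handles the domain clause in the definition.

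For item (4), on the domain $p^{-1}[s[V]]$ (i.e.\ $Vp(x)\neq\varnothing$) the Baire space $Vp(x)$ is nonempty, so for the Baire-property set $\{\gamma\in Vp(x):\gamma x\in B\}$ exactly one of the two alternatives holds: it is non-meager (giving $x\in B^{\triangle V}$), or its complement $\{\gamma\in Vp(x):\gamma x\notin B\}$ is comeager (giving $x\in(X\setminus B)^{\ast V}$); these are mutually exclusive since a comeager subset of a nonempty Baire space is non-meager, so we get the asserted disjoint union. The only subtlety here is the appeal to the Baire property of $\{\gamma\in Vp(x):\gamma x\in B\}$: for arbitrary $B\subset X$ this need not hold, so I would either state (4) under the standing assumption that $B$ has the Baire property (as in the Polish-group case), or observe that in the applications $B$ is analytic; I expect this to be the one point needing care.

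For item (5), assume $B$ is analytic. The inclusion $\bigcup\{B^{\ast U}:V\supset U\in\mathcal{A}\}\subseteq B^{\triangle V}$ is immediate since $B^{\ast U}\subseteq B^{\triangle U}\subseteq B^{\triangle V}$ for $U\subseteq V$. For the reverse inclusion, suppose $x\in B^{\triangle V}$, so $\{\gamma\in Vp(x):\gamma x\in B\}$ is non-meager with the Baire property; hence it is comeager in some nonempty open subset of $Vp(x)$, and since $\mathcal{A}$ is a basis we may shrink to find $U\in\mathcal{A}$ with $U\subseteq V$ and $Up(x)\neq\varnothing$ such that $\{\gamma\in Up(x):\gamma x\in B\}$ is comeager in $Up(x)$ — here I would use Lemma~\ref{Lemma: Kuratowski-Ulam} (or directly the fact that a Baire-property set which is non-meager is comeager somewhere, intersected with a basic open set on which the source map restricts to an open continuous map with Baire fibers) to pass from ``comeager in an open subset of $Vp(x)$'' to ``comeager in $Up(x)$ for a basic $U$''. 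The second identity, $B^{\ast V}=\bigcap\{B^{\triangle U}:V\supset U\in\mathcal{A}\}$, then follows from item (4) by taking complements: $X\setminus B^{\ast V}$ restricted to the domain equals $(X\setminus B)^{\triangle V}$, which by the first identity equals $\bigcup\{(X\setminus B)^{\ast U}\}$, and $(X\setminus B)^{\ast U}$ is the complement (within the domain $p^{-1}[s[U]]$) of $B^{\triangle U}$; unwinding the domains gives the claim. The main obstacle is bookkeeping the nonemptiness/domain conditions $Vp(x)\neq\varnothing$ uniformly through all the manipulations, and ensuring the Baire-property hypothesis is available wherever the category quantifiers are invoked.
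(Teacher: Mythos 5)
Your proposal is correct and follows exactly the elementary route the paper has in mind: the paper offers no argument beyond citing the Polish-group case (\cite[Proposition 3.2.5]{gao_invariant_2009}), and your fibrewise category-quantifier computations over the Baire spaces $Vp(x)$ are precisely the analogous argument.

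One correction to the point you single out as delicate: item (4) needs no Baire property and no weakening of the hypothesis on $B$. For an arbitrary subset $S$ of the nonempty space $Vp(x)$, ``$S$ is non-meager'' and ``$S$ is meager'' are complementary by definition, and ``$Vp(x)\setminus S$ is comeager in $Vp(x)$'' is, again by definition, the same as ``$S$ is meager''; so on the domain $p^{-1}\left[ s\left[ V\right] \right]$ the set $\left\{ \gamma \in Vp(x):\gamma x\in B\right\}$ is either non-meager (i.e.\ $x\in B^{\bigtriangleup V}$) or meager (i.e.\ $x\in \left( X\backslash B\right) ^{\ast V}$), and never both. The Baire property is genuinely needed only in item (5), where a non-meager set must be localized to a basic open set on which it is comeager; your argument there is fine. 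Two small additional points on (5): when you obtain the second identity by complementation you apply the first identity to $X\backslash B$, which is co-analytic rather than analytic --- this is harmless because co-analytic sets also have the Baire property, but it should be said; and the ``domain bookkeeping'' you mention is not merely cosmetic, since for $U\in \mathcal{A}$ with $U\subset V$ but $Up(x)=\varnothing $ one has $x\notin B^{\bigtriangleup U}$ by definition, so the intersection in the second identity has to be read as ranging only over those $U$ with $Up(x)\neq \varnothing $ (this is the reading under which the identity is applied later in the paper).
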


Lemma~\ref{Lemma: basic Vaught} is elementary and can be proved similarly as 
\cite[Proposition 3.2.5]{gao_invariant_2009}.

\begin{lemma}
\label{Lemma: shift Vaught}Suppose that $B\subset X$ is analytic, and $%
U\subset G$ is open. If $x\in X$ and $\gamma \in Gp(x)$, then the following
statements are equivalent:

\begin{enumerate}
\item $\gamma x\in B^{\bigtriangleup U}$;

\item $x\in B^{\ast V}$ for some $V\in \mathcal{A}$ such that $V\gamma
^{-1}\subset Ur(\gamma )$;

\item $x\in B^{\bigtriangleup V}$ for some $V\in \mathcal{A}$ such that $%
V\gamma ^{-1}\subset Ur(\gamma )$;

\item there are $V,W\in \mathcal{A}$ such that $VW^{-1}\subset U$, $\gamma
\in W$, and $x\in B^{\bigtriangleup V}$.
\end{enumerate}
\end{lemma}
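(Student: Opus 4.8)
The plan is to establish the cycle of implications $(1)\Rightarrow(2)\Rightarrow(3)\Rightarrow(4)\Rightarrow(1)$, using the basic properties collected in Lemma~\ref{Lemma: basic Vaught} together with the openness of composition in $G$. First I would unwind the definition of $\gamma x\in B^{\bigtriangleup U}$: since $s(\gamma)=p(x)$ we have $r(\gamma)=p(\gamma x)$, so this says $Ur(\gamma)\neq\varnothing$ and $\exists^{\ast}\rho\in Ur(\gamma)$ such that $\rho(\gamma x)\in B$, i.e.\ $(\rho\gamma)x\in B$. The change of variables $\rho\mapsto\rho\gamma^{-1}$ is the key manoeuvre: right translation by $\gamma^{-1}$ is a homeomorphism from $Gp(x)$ onto $Gr(\gamma)$ (with continuous open inverse given by right translation by $\gamma$), hence it preserves the $\exists^{\ast}$ (category) quantifier. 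Thus $\gamma x\in B^{\bigtriangleup U}$ is equivalent to: the set $\{\delta\in Gp(x): \delta\gamma^{-1}\in Ur(\gamma)\text{ and }\delta x\in B\}$ is nonmeager in $Gp(x)$. The set $\{\delta: \delta\gamma^{-1}\in Ur(\gamma)\}$ is open (again by continuity of right translation by $\gamma^{-1}$), so by the basis $\mathcal{A}$ it is a union of basic open sets, and a union of opens is nonmeager-on-$B$ iff one of the pieces is; this delivers a $V\in\mathcal{A}$ with $V\gamma^{-1}\subset Ur(\gamma)$ and $\exists^{\ast}\delta\in Vp(x)$, $\delta x\in B$, which is exactly $x\in B^{\bigtriangleup V}$. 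So far this gives $(1)\Leftrightarrow(3)$ rather directly.

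For $(3)\Rightarrow(2)$ I would invoke part (5) of Lemma~\ref{Lemma: basic Vaught}: since $B$ is analytic, $B^{\bigtriangleup V}=\bigcup\{B^{\ast V'}:V\supset V'\in\mathcal{A}\}$, so $x\in B^{\bigtriangleup V}$ yields $x\in B^{\ast V'}$ for some basic $V'\subset V$, and $V'\gamma^{-1}\subset V\gamma^{-1}\subset Ur(\gamma)$; rename $V'$ as $V$. For $(2)\Rightarrow(3)$ one uses that $B^{\ast V}\subset B^{\bigtriangleup V}$ whenever $Vp(x)\neq\varnothing$, which holds here since $x$ witnesses it. This closes the triangle $(2)\Leftrightarrow(3)\Leftrightarrow(1)$ modulo the (routine) nonemptiness bookkeeping. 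It remains to fold in condition (4). For $(1)\Rightarrow(4)$: from $(3)$ we have $V\in\mathcal{A}$ with $V\gamma^{-1}\subset Ur(\gamma)$ and $x\in B^{\bigtriangleup V}$; I need a basic $W\ni\gamma$ with $VW^{-1}\subset U$. Here is where openness of composition is used in earnest: the map $(a,b)\mapsto ab^{-1}$ on the appropriate fibred-product of $G$ is continuous, and $V\gamma^{-1}\subset Ur(\gamma)\subset U$ with $U$ open, so for each fixed point one can shrink; a compactness-free argument (second countability) produces basic open $V'\subset V$ around a witnessing point and $W\ni\gamma$ with $V'W^{-1}\subset U$, and one checks $x\in B^{\bigtriangleup V'}$ still holds by choosing $V'$ to retain nonmeagerness. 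For $(4)\Rightarrow(1)$: given $V,W\in\mathcal{A}$ with $VW^{-1}\subset U$, $\gamma\in W$, $x\in B^{\bigtriangleup V}$, note $V\gamma^{-1}\subset VW^{-1}\subset U$, and also $V\gamma^{-1}\subset Gr(\gamma)$, so $V\gamma^{-1}\subset Ur(\gamma)$; then $(3)$ holds and we are back.

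The main obstacle I anticipate is the careful handling of the basis $\mathcal{A}$ in the "$VW^{-1}\subset U$" step of $(1)/(3)\Rightarrow(4)$: one must simultaneously arrange that $W$ is a basic open set containing the \emph{given} $\gamma$, that $V$ (possibly shrunk) is still basic and still satisfies $x\in B^{\bigtriangleup V}$, and that the product condition $VW^{-1}\subset U$ holds — all three at once, using only continuity of the partially-defined composition map and second countability, without any local compactness. A secondary annoyance is keeping the side conditions "$Vp(x)\neq\varnothing$" and "$Ur(\gamma)\neq\varnothing$" correctly propagated through the equivalences, since the Vaught transforms are defined to be empty where the relevant arrow-fibre is empty; but in every implication the hypothesis already furnishes a point of the relevant fibre, so these conditions come for free. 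Everything else is a routine translation between the $\exists^{\ast}$ quantifier over fibres of $G$ and the basis $\mathcal{A}$, entirely parallel to the Polish-group computation in~\cite[Proposition 3.2.5]{gao_invariant_2009}.
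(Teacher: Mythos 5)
Your proposal is correct and follows essentially the same route as the paper: the change of variables $\rho\mapsto\rho\gamma^{-1}$ between the fibres $Gr(\gamma)$ and $Gp(x)$, the Baire property of the analytic set $\left\{ \rho :\rho x\in B\right\} $ to localize over the basis $\mathcal{A}$, and the identities of Lemma~\ref{Lemma: basic Vaught}. The one obstacle you flag in the step towards (4) is resolved in the paper exactly as you suspect: one derives (4) from (2) rather than from (3), since $x\in B^{\ast V}$ automatically passes to $x\in B^{\ast V_{0}}$ for any basic $V_{0}\subset V$ with $V_{0}p(x)\neq \varnothing $, so the simultaneous shrinking of $V$ and choice of $W\ni \gamma $ with $V_{0}W^{-1}\subset U$ costs nothing.
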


\begin{description}
\item[(1)$\Rightarrow $(2)] By hypothesis $Ur(\gamma )\neq \varnothing \,$%
and $\exists ^{\ast }\rho \in Ur(\gamma )$ such that $\rho \gamma x\in B$.
Therefore $U\gamma \neq \varnothing $ and $\exists ^{\ast }\rho \in U\gamma $
such that $\rho x\in B$. Since $B$ is analytic and the action is Borel, the
set%
\begin{equation*}
\left\{ \rho \in U\gamma :\rho x\in B\right\}
\end{equation*}%
is analytic and in particular it has the Baire property. It follows that
there is $V\in \mathcal{A}$ such that $Vp(x)\neq \varnothing $, $%
Vp(x)\subset U\gamma $, and $\forall ^{\ast }\rho \in V$, $\rho x\in B$.
Observe that $V\gamma ^{-1}\subset Ur(\gamma )$.

\item[(2)$\Rightarrow $(3)] Obvious.

\item[(3)$\Rightarrow $(1)] Observe that $\varnothing \neq Vp(x)\subset
U\gamma $. Thus $U\gamma \neq \varnothing $ and $\exists ^{\ast }\rho \in
U\gamma $ such that $\rho x\in B$. Thus $Up\left( \gamma z\right) \neq
\varnothing $ and $\exists ^{\ast }\rho \in Up(\gamma x)$, $\rho \gamma x\in
B$. This shows that $\gamma x\in B^{\bigtriangleup U}$.

\item[(2)$\Rightarrow $(4)] Pick $v\in Vp(x)$ and observe that $v\gamma
^{-1}\in Ur(\gamma )$. Therefore there are $W,V_{0}\in \mathcal{A}$ such
that $v\in V_{0}\subset V$, $\gamma \in W$, and $V_{0}W^{-1}\subset U$.
Moreover since $x\in B^{\ast V}$ and $V_{0}\subset V$ we have that $x\in
B^{\ast V_{0}}$.

\item[(4)$\Rightarrow $(2)] Obvious.
\end{description}

If $A$ is a subset of $G\ltimes X$ and $x\in X$, then $A_{x}$ denotes the $x$%
-fiber $\left\{ \gamma \in G:(\gamma ,x)\in A\right\} $ of $A$. The proof of
the following lemma is inspired by the proof of the Montgomery-Novikov
theorem; see~\cite[Theorem 16.1]{kechris_classical_1995}.

\begin{lemma}
\label{Lemma: Montgomery-Novikov}If $A$ is a Borel subset of $G\ltimes X$
and $V\subset G$ is open, then%
\begin{equation*}
\left\{ x\in X:Vp(x)\neq \varnothing \text{ and }A_{x}\text{ is nonmeager in 
}Vp(x)\right\}
\end{equation*}%
is Borel. The same conclusion holds if one replaces ``nonmeager'' with
``comeager'' or ``meager''.
\end{lemma}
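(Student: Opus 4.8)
The plan is to run a transfinite Baire-category induction on the Borel rank of $A$, mimicking the proof of the Montgomery--Novikov theorem as given in Kechris, but working fibrewise over the fibration $p : G\ltimes X \to X$ (with fibre over $x$ equal to $Gp(x)$, a Polish open subset of $G$) rather than over a product. First I would reduce to the ``nonmeager'' case: the set where $A_x$ is meager in $Vp(x)$ is the complement (within $\{x : Vp(x)\neq\varnothing\}$, a Borel set since $\{x : Vp(x)\neq\varnothing\} = p^{-1}[s[V]]$ and $s[V]$ is open) of the ``nonmeager'' set, and the set where $A_x$ is comeager in $Vp(x)$ is obtained by applying the meager case to the complement $(G\ltimes X)\setminus A$, using that a set is comeager in the Polish (hence Baire) space $Vp(x)$ iff its complement is meager there. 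So it suffices to show the nonmeager set is Borel.

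Next I would set up the induction. Let $\mathcal{C}$ be the collection of all Borel $A \subseteq G\ltimes X$ such that for \emph{every} open $V\subseteq G$ the set $N_V(A) := \{x : Vp(x)\neq\varnothing$ and $A_x$ nonmeager in $Vp(x)\}$ is Borel. For the base case, if $A$ is (relatively) open in $G\ltimes X$, then by the local Baire property $A_x$ is nonmeager in $Vp(x)$ iff $A_x \cap Vp(x) \neq\varnothing$, i.e. iff $(\gamma,x)\in A$ for some $\gamma\in V$; since $A$ is open and we can write it (and $V$) as a countable union of basic open boxes $W\times U$ with $W\in\mathcal A$, $U$ basic open in $X$, this condition is a countable union of Borel conditions of the form ``$x\in U$ and $Wp(x)\neq\varnothing$ and $Wp(x)\cap Vp(x)\neq\varnothing$''—all Borel. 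Thus open sets lie in $\mathcal{C}$. For the inductive step I would check $\mathcal{C}$ is closed under countable unions and under complements. Countable unions: if $A = \bigcup_n A_n$ then $A_x$ is nonmeager in $Vp(x)$ iff some $A_n \cap Vp(x)$ is nonmeager in some basic open $U_k p(x) \subseteq Vp(x)$ (this is where one uses that a set is nonmeager iff it is nonmeager \emph{somewhere}, combined with the Baire property of each $A_n$, so nonmeager = comeager in some basic open piece—note $(A_n)_x$ has the Baire property in $Gp(x)$ since $A_n$ is Borel), giving $N_V(A) = \bigcup_n\bigcup\{N_{U_k}(A_n) : U_k\in\mathcal A,\ U_k p(x)\subseteq Vp(x)\}$; care is needed because the condition $U_k p(x)\subseteq Vp(x)$ depends on $x$, but one can instead range over pairs and intersect with the Borel set $\{x : U_k p(x)\subseteq Vp(x)\}$ (Borel since it is $\{x : (V^{-1}U_k\cup U_k)p(x)$ behaves appropriately$\}$—more simply, $\{x: U_kp(x) \neq \varnothing\} \cap \{x : (U_k\setminus V)p(x) = \varnothing\}$ after shrinking, which is Borel). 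Complements: by the usual argument, $A_x$ meager in $Vp(x)$ iff for every basic $U_k$ with $U_k p(x)\subseteq Vp(x)$ we do \emph{not} have $A_x$ comeager in $U_k p(x)$, and $A_x$ comeager in $U_k p(x)$ iff $(G\ltimes X\setminus A)_x$ meager in $U_k p(x)$; with the Baire-property decomposition this expresses $N_V(A^c)$ in terms of the $N_{U_k}$ of $A$ and of $A^c$ on smaller pieces, and one closes the recursion exactly as in the Montgomery--Novikov proof. Hence $\mathcal{C}$ is a $\sigma$-algebra containing the open sets, so $\mathcal{C}$ contains all Borel sets.

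The main obstacle, and the only real departure from the classical proof, is bookkeeping the $x$-dependence of the ambient fibre $Gp(x)$: ``nonmeager in $Vp(x)$'' is a statement about the variable open set $Vp(x) \subseteq G$, and the reduction ``nonmeager iff comeager in some basic open subset'' must be carried out uniformly in $x$ while keeping track of which basic $U_k\in\mathcal A$ satisfy $\varnothing\neq U_k p(x) \subseteq Vp(x)$. The key point that makes this work is that $p : G\ltimes X \to X$ is continuous and open (Proposition~\ref{Proposition: action groupoid}), so that all the sets $\{x : Wp(x)\neq\varnothing\}$ and $\{x: Wp(x)\subseteq Vp(x)\}$ arising in the recursion are Borel (indeed the first is $p^{-1}[s[W]]$ with $s[W]$ open, and the second reduces to such sets using openness of $s$), and each fibre $Gp(x)$ with its subspace topology from the locally Polish $G$ is a Polish, hence Baire, space so that Baire-category arguments are available fibrewise. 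Once this uniformity is in place, the transfinite induction goes through verbatim.
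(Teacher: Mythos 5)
Your proposal follows essentially the same route as the paper: a Montgomery--Novikov-style argument showing that the class of sets $A\subseteq G\ltimes X$ for which $N_V(A)$ is Borel (for all open $V$) contains a generating family and is closed under countable unions and complements, with the fibrewise Baire-category bookkeeping made uniform in $x$ via openness of $s$ and the fact that each $Gp(x)$ is Polish. The paper's generators are the sets $U\ltimes B$ with $U$ open and $B$ Borel, while you start from relatively open sets and decompose into boxes; these are interchangeable.

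One point deserves correction. In the union and complement steps you propose to quantify over the $x$-dependent condition $U_k p(x)\subseteq Vp(x)$ and assert that $\{x: U_kp(x)\subseteq Vp(x)\}$ is Borel because it equals $\{x: U_kp(x)\neq\varnothing\}\cap\{x:(U_k\setminus V)p(x)=\varnothing\}$. The second set is the complement of $p^{-1}\left[ s\left[ U_k\setminus V\right]\right]$, and since $U_k\setminus V$ is merely Borel, $s\left[ U_k\setminus V\right]$ is a priori only analytic; so this justification does not go through as stated. The fix is standard and is what the paper does: quantify instead over basic open $U_k\in\mathcal{A}$ with $U_k\subseteq V$ \emph{as subsets of} $G$, an $x$-independent condition. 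Since such $U_k$ cover $V$, the sets $U_kp(x)$ with $U_k\subseteq V$ still form a basis of $Vp(x)$, and the characterizations ``nonmeager iff comeager on some basic piece'' and ``not comeager iff meager on some basic piece'' (via the Baire property of the Borel fibers) run exactly as you intend. Also note that the countable-union step needs none of this: $N_V\bigl(\bigcup_n A_n\bigr)=\bigcup_n N_V(A_n)$ directly, since a countable union of meager sets is meager. With these adjustments your argument is correct and coincides with the paper's proof.
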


\begin{proof}
Define $\mathcal{E}$ to be the class subsets of subsets $A$ of $G\ltimes X$
such that 
\begin{equation*}
\left\{ x\in X:Vp(x)\neq \varnothing \text{ and }A_{x}\text{ is nonmeager in 
}Vp(x)\right\}
\end{equation*}%
is Borel for every nonempty open subset $V$ of $G$. We claim that:

\begin{enumerate}
\item $\mathcal{E}$ contains the sets of the form%
\begin{equation*}
U\ltimes B=\left\{ \left( \rho ,x\right) \in G\ltimes X:x\in B\text{, }\rho
\in U\right\}
\end{equation*}%
for $B\subset X$ Borel and $U\subset G$ open;

\item $\mathcal{E}$ is closed by taking countable unions;

\item $\mathcal{E}$ is closed by taking complements.
\end{enumerate}

In fact:

\begin{enumerate}
\item If $A=U\ltimes B$ where $B\subset X$ is Borel and $U\subset G$ is open
then for every nonempty open set $V$%
\begin{eqnarray*}
&&\left\{ x\in X:Vp(x)\neq \varnothing \text{ and }A_{x}\text{ is nonmeager
in }Vp(x)\right\} \\
&=&B\cap p^{-1}\left[ s\left[ U\cap V\right] \right] \text{;}
\end{eqnarray*}%
is Borel.

\item If $A=\bigcup_{n}A_{n}$ then for every nonempty open set%
\begin{eqnarray*}
&&\left\{ x\in X:Vp(x)\neq \varnothing \text{ and }A_{x}\text{ is nonmeager
in }Vp(x)\right\} \\
&=&\bigcup_{n\in \omega }\left\{ x\in X:Vp(x)\neq \varnothing \text{ and }%
\left( A_{n}\right) _{x}\text{ is nonmeager in }Vp(x)\right\} \text{;}
\end{eqnarray*}

\item If $A\subset G\ltimes X$ then for every nonempty open set $V$%
\begin{eqnarray*}
&&\left\{ x\in X:Vp(x)\neq \varnothing \text{ and }\left( \left( G\ltimes
X\right) \backslash A\right) _{x}\text{ is nonmeager in }V\right\} \\
&=&\left\{ x\in X:Vp(x)\neq \varnothing \text{ and }A_{x}\text{ is not
comeager in }V\right\} \\
&=&\bigcup_{U_{n}\subset V}\left\{ x\in X:U_{n}p(x)\neq \varnothing \text{
and }A_{x}\text{ is meager in }U_{n}\right\} \\
&=&\bigcup_{U_{n}\subset V}\left( p^{-1}s\left[ U_{n}\right] \backslash
\left\{ x\in X:U_{n}p(x)\neq \varnothing \text{ and }A_{x}\text{ is
nonmeager in }U_{n}\right\} \right) \text{.}
\end{eqnarray*}
\end{enumerate}

A similar argument shows that the same conclusion holds after replacing
``nonmeager'' with ``meager'' or ``comeager''.
\end{proof}

\begin{lemma}
\label{Lemma: Borel Vaught transform}If $A\subset X$ is Borel and $V\subset
G $ is open then $A^{\bigtriangleup V}$ and $A^{\ast V}$ are Borel.
\end{lemma}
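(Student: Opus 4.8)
The plan is to reduce the statement to Lemma~\ref{Lemma: Montgomery-Novikov} by expressing $A^{\bigtriangleup V}$ and $A^{\ast V}$ as sections of a suitable Borel subset of the action groupoid $G\ltimes X$. First I would observe that the natural object to feed into Lemma~\ref{Lemma: Montgomery-Novikov} is
\begin{equation*}
\widetilde{A}=\left\{(\gamma,x)\in G\ltimes X:\gamma\in Vp(x)\text{ and }\gamma x\in A\right\}.
\end{equation*}
This set is Borel in $G\ltimes X$: the condition $\gamma\in Vp(x)$ defines an open (hence Borel) set since $V$ is open and $G\ltimes X$ carries the subspace topology from $G\times X$, while the condition $\gamma x\in A$ is the preimage of the Borel set $A$ under the Borel action map $(\gamma,x)\mapsto\gamma x$. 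Hence $\widetilde{A}$ is a Borel subset of $G\ltimes X$, and its $x$-fiber is exactly $\widetilde{A}_{x}=\{\gamma\in Vp(x):\gamma x\in A\}$.

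Next I would unwind the definitions of the Vaught transforms. By definition $x\in A^{\bigtriangleup V}$ iff $Vp(x)\neq\varnothing$ and $\exists^{\ast}\gamma\in Vp(x)\,(\gamma x\in A)$, which says precisely that $Vp(x)\neq\varnothing$ and $\widetilde{A}_{x}$ is nonmeager in $Vp(x)$ (here one uses that $Vp(x)$ is an open subset of the Polish, hence Baire, space $Gp(x)$, so that $\widetilde{A}_{x}$, having the Baire property as a Borel set, is either meager or nonmeager in $Vp(x)$; and nonmeagerness of a set with the Baire property in a Baire open set is equivalent to its being non-meager there, i.e. to the existence of a basic open piece where it is comeager—this is what ``$\exists^{\ast}$'' records). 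Therefore
\begin{equation*}
A^{\bigtriangleup V}=\left\{x\in X:Vp(x)\neq\varnothing\text{ and }\widetilde{A}_{x}\text{ is nonmeager in }Vp(x)\right\},
\end{equation*}
which is Borel by Lemma~\ref{Lemma: Montgomery-Novikov}. Similarly $x\in A^{\ast V}$ iff $Vp(x)\neq\varnothing$ and $\forall^{\ast}\gamma\in Vp(x)\,(\gamma x\in A)$, i.e. $Vp(x)\neq\varnothing$ and $\widetilde{A}_{x}$ is comeager in $Vp(x)$; this set is Borel by the ``comeager'' clause of Lemma~\ref{Lemma: Montgomery-Novikov}. (Alternatively one can derive the $A^{\ast V}$ case from the $A^{\bigtriangleup V}$ case using part~(4) of Lemma~\ref{Lemma: basic Vaught}, which writes $p^{-1}[s[V]]$ as the disjoint union of $(X\setminus A)^{\ast V}$ and $A^{\bigtriangleup V}$, together with the fact that $p$ and $s$ are Borel so that $p^{-1}[s[V]]$ is Borel—$s[V]$ being open since the source map is open.)

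The only real point requiring care is the identification, for fixed $x$, of ``$\exists^{\ast}\gamma\in Vp(x):\gamma x\in A$'' with ``$\widetilde{A}_{x}$ is nonmeager in $Vp(x)$'', and likewise for $\forall^{\ast}$ and comeagerness; this is where one invokes that $Gp(x)$ is Polish (axiom of Polish groupoids), that $Vp(x)$ is open therein hence Baire, and that a Borel set has the Baire property. Once this bookkeeping is in place the result is immediate from Lemma~\ref{Lemma: Montgomery-Novikov}. I do not anticipate a genuine obstacle here; the main thing to get right is simply that the ambient fiberwise spaces are Baire so that the local-quantifier formulation and the topological-largeness formulation agree.
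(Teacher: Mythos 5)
Your proof is correct and is essentially the paper's own argument: both define the Borel set $\widetilde{A}\subset G\ltimes X$ whose $x$-fibers record $\left\{ \gamma :\gamma x\in A\right\}$, identify $A^{\bigtriangleup V}$ and $A^{\ast V}$ with the sets where that fiber is nonmeager (resp.\ comeager) in $Vp(x)$, and conclude by Lemma~\ref{Lemma: Montgomery-Novikov}. The only differences (restricting $\widetilde{A}$ to $V$ at the outset, and the optional complementation detour via Lemma~\ref{Lemma: basic Vaught}) are cosmetic.
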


\begin{proof}
Consider the subset%
\begin{equation*}
\widetilde{A}=\left\{ \left( \rho ,x\right) \in G\ltimes X:\rho x\in
A\right\}
\end{equation*}%
and observe that $\widetilde{A}$ is a Borel subset of $G\ltimes X$ such that%
\begin{equation*}
A^{\bigtriangleup V}=\left\{ x\in X:Vp(x)\neq \varnothing \text{ and }%
\widetilde{A}_{x}\text{ is nonmeger in }Vp(x)\right\}
\end{equation*}%
and%
\begin{equation*}
A^{\ast V}=\left\{ x\in X:Vp(x)\neq \varnothing \text{ and }\widetilde{A}_{x}%
\text{ is comeager in }Vp(x)\right\} \text{.}
\end{equation*}%
The conclusion now follows from Lemma~\ref{Lemma: Montgomery-Novikov}.
\end{proof}

\begin{lemma}
\label{Lemma: Borel class Vaught transform}Assume that $X$ is a Polish $G$%
-space. If $B\subset X$, $U\subset G$ is open, and $\alpha \in \omega _{1}$,
then the following hold:

\begin{enumerate}
\item If $B$ is open, then $B^{\bigtriangleup U}$ is open;

\item If $B$ is $\mathbf{\Sigma }_{\alpha }^{0}$, then $B^{\bigtriangleup U}$
is $\mathbf{\Sigma }_{\alpha }^{0}$ relatively to $p^{-1}s\left[ U\right] $;

\item If $B$ is $\mathbf{\Pi }_{\alpha }^{0}$, then $B^{\ast U}$ is $\mathbf{%
\Pi }_{\alpha }^{0}$ relatively to $p^{-1}s\left[ U\right] $.
\end{enumerate}
\end{lemma}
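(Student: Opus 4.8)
The plan is to prove all three items by transfinite induction on $\alpha \in \omega_1$, exploiting the lattice identities for Vaught transforms from Lemma~\ref{Lemma: basic Vaught} and the ``approximation'' identity in part~(5) of that lemma, together with the shift identity of Lemma~\ref{Lemma: shift Vaught}. First I would prove item~(1) directly: if $B$ is open, then for $x \in p^{-1}s[U]$ with $Up(x)\neq\varnothing$, the set $\{\gamma \in Up(x) : \gamma x \in B\}$ is open in $Up(x)$ by continuity of the action on the Polish $G$-space $X$; hence it is nonmeager in $Up(x)$ if and only if it is nonempty, so $x \in B^{\bigtriangleup U}$ iff $\exists \gamma \in Up(x)$ with $\gamma x \in B$. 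Writing $B = \bigcup_n B_n$ with $B_n$ basic open and using part~(3) of Lemma~\ref{Lemma: basic Vaught}, one reduces to $B$ basic open; then $B^{\bigtriangleup U} = \{x : \exists \gamma \in Up(x),\ \gamma x \in B\}$, which I claim equals the image under the (open) source map $G\ltimes X \to X$ of the open set $\{(\gamma,x) : \gamma \in U,\ \gamma x \in B\}$ — here I would invoke that the action groupoid $G\ltimes X$ is Polish (Proposition~\ref{Proposition: action groupoid}) so its source map is open, making $B^{\bigtriangleup U}$ open. Actually the cleanest route is: $\gamma x \in B$ with $B$ open and the action continuous means $(\gamma, x)$ lies in an open subset of $G\ltimes X$ on which $\gamma$ ranges over an open set, and projecting via the open range map $r$ (equivalently the source map on $G\ltimes X$) gives an open set; since $s[U]$ open forces $p^{-1}s[U]$ open, no relativization is needed for item~(1).

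For the inductive step of item~(2), suppose $B$ is $\mathbf{\Sigma}^0_\alpha$, so $B = \bigcup_n C_n$ with each $C_n \in \mathbf{\Pi}^0_{\beta_n}$, $\beta_n < \alpha$. By part~(3) of Lemma~\ref{Lemma: basic Vaught}, $B^{\bigtriangleup U} = \bigcup_n C_n^{\bigtriangleup U}$, so it suffices to show each $C_n^{\bigtriangleup U}$ is $\mathbf{\Sigma}^0_\alpha$ relative to $p^{-1}s[U]$. Since $C_n$ is $\mathbf{\Pi}^0_{\beta_n}$ hence analytic (or just Borel), part~(5) of Lemma~\ref{Lemma: basic Vaught} gives $C_n^{\bigtriangleup U} = \bigcup\{C_n^{\ast W} : U \supset W \in \mathcal{A}\}$, a \emph{countable} union since $\mathcal{A}$ is countable; and by the inductive hypothesis for item~(3), each $C_n^{\ast W}$ is $\mathbf{\Pi}^0_{\beta_n}$ relative to $p^{-1}s[W] \subset p^{-1}s[U]$. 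Intersecting with the open (in $X$) set $p^{-1}s[W]$ keeps it $\mathbf{\Pi}^0_{\beta_n}$, and a countable union of sets that are $\mathbf{\Pi}^0_{\beta_n}$ relative to $p^{-1}s[U]$ (for various $\beta_n < \alpha$) is $\mathbf{\Sigma}^0_\alpha$ relative to $p^{-1}s[U]$. The base case $\alpha = 1$ is item~(1). Item~(3) is dual: if $B$ is $\mathbf{\Pi}^0_\alpha$, then $X \setminus B$ is $\mathbf{\Sigma}^0_\alpha$, and by part~(4) of Lemma~\ref{Lemma: basic Vaught}, on $p^{-1}s[U]$ we have $B^{\ast U} = p^{-1}s[U] \setminus (X\setminus B)^{\bigtriangleup U}$; by item~(2) already established at level $\alpha$, the set $(X\setminus B)^{\bigtriangleup U}$ is $\mathbf{\Sigma}^0_\alpha$ relative to $p^{-1}s[U]$, so its complement in $p^{-1}s[U]$ is $\mathbf{\Pi}^0_\alpha$ relative to $p^{-1}s[U]$.

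The one point requiring care — and the main obstacle — is the interplay between the relativization to $p^{-1}s[U]$ and the pointclass bookkeeping across the countable union indexed by $W \in \mathcal{A}$ with $W \subset U$: each $C_n^{\ast W}$ is controlled only relative to the \emph{smaller} set $p^{-1}s[W]$, and one must check that reinterpreting it relative to the larger $p^{-1}s[U]$ (by intersecting with the $G_\delta$, indeed open-in-$X$, set $p^{-1}s[W]$, which is open since $p$ is continuous and $s[W]$ is open by openness of the source map of $G$) does not raise the pointclass — which it does not, since for $\alpha \ge 1$ the class $\mathbf{\Pi}^0_\alpha$ relative to a fixed space is closed under intersection with relatively open sets. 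I would also note at the outset that $p^{-1}s[U]$ is open in $X$, so ``$\mathbf{\Sigma}^0_\alpha$ relative to $p^{-1}s[U]$'' sets are genuinely $\mathbf{\Sigma}^0_\alpha$ in $X$ for $\alpha \ge 2$, which keeps the induction from drifting. Finally, the equivalence $\gamma x \in B^{\bigtriangleup U}$ from Lemma~\ref{Lemma: shift Vaught} is not strictly needed for this lemma but confirms the invariance-type behavior used implicitly when passing between $Up(x)$ and its open subsets $Wp(x)$.
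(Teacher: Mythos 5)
Your proof is correct and, for items (2) and (3), follows essentially the same route as the paper: the standard transfinite induction using parts (3), (4) and (5) of Lemma~\ref{Lemma: basic Vaught} (the paper only cites parts (2)--(4), but, exactly as you note, part (5) is what lets one pass from a $\mathbf{\Pi}^0_{\beta}$ set to a countable union of $\ast$-transforms, so your bookkeeping is the right one). Where you genuinely diverge is item (1): the paper gives an explicit neighborhood construction (for $x\in B^{\bigtriangleup U}$ it produces open sets $V\subset U$ and $W\ni x$ with $VW\subset B$ and $p\left[ W\right] \subset s\left[ V\right] $, whence $W\subset B^{\bigtriangleup U}$), whereas you observe that for open $B$ the Baire-category quantifier collapses to an existential one and $B^{\bigtriangleup U}$ is the image of the open set $\left\{ (\gamma ,x):\gamma \in U,\ \gamma x\in B\right\} $ under the source map of the action groupoid, which is open by Proposition~\ref{Proposition: action groupoid}; this is a clean and valid shortcut, though note the relevant projection is the source map $(\gamma ,x)\mapsto x$, not the range map. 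One small inaccuracy in your closing paragraph: $\mathbf{\Pi }^0_1$ is \emph{not} closed under intersection with open sets (a closed set meeting an open set need not be closed), so your justification as stated fails for $\beta _n=1$; this does not damage the induction, because a set relatively closed in the open subspace $p^{-1}s\left[ W\right] $ is $\mathbf{\Sigma }^0_2$ in $p^{-1}s\left[ U\right] $ (open sets are $F_{\sigma }$ in metrizable spaces), and the union over $W$ is assembled at level $\alpha \geq \beta _n+1\geq 2$, which absorbs this.
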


\begin{proof}
The proof is analogous to the corresponding one for group actions; see~\cite[%
Theorem 3.2.9]{gao_invariant_2009}. Suppose that $B$ is open, and pick $x\in
B^{\bigtriangleup U}$. Thus $Up(x)\neq \varnothing $ and $\exists ^{\ast
}\rho \in Up(x)$ such that $\rho x\in B$. Pick $U_{0}\subset U$ open such
that $x\in B^{\ast U_{0}}$ and $\rho \in U_{0}p(x)$ such that $\rho x\in B$.
Since $B$ is open and the action is continuous there are open subsets $W$
and $V$ containing $x$ and $\rho $ such that $V\subset U_{0}$, $VW\subset B$%
, and $p\left[ W\right] \subset s\left[ V\right] $. We claim that $W\subset
B^{\bigtriangleup U}$. In fact if $w\in W$ then $Vp(w)\neq \varnothing $.
Moreover since $VW\subset B$, $\exists ^{\ast }\rho \in Vp(w)$ such that $%
\rho w\in B$. This concludes the proof that $B^{\bigtriangleup U}$ is open.
The other statements follow via (2),(3), and (4) of Lemma~\ref{Lemma: basic
Vaught}.
\end{proof}

Using the Vaught transform it is easy to see that, if $X$ is a Borel $G$%
-space, then the orbit equivalence relation $E_{G}^{X}$ is idealistic. (This
is well known when $G$ is a Polish group; cf.~\cite[Proposition 5.4.10]%
{gao_invariant_2009}.) Recall that an equivalence relation $E$ on a standard
Borel space $X$ is \emph{idealistic} if there is a map $\left[ x\right]
_{E}\mapsto I_{\left[ x\right] _{E}}$ assigning to each equivalence class $%
\left[ x\right] _{E}$ of $E$ and ideal $I_{\left[ x\right] _{E}}$ of subsets
of $\left[ x\right] _{E}$ such that $\left[ x\right] _{E}\notin I_{\left[ x%
\right] _{E}}$, and for every Borel subset $A$ of $X\times X$ the set $A_{I}$
defined by $x\in A_{I}$ iff $\left\{ y\in \left[ x\right] _{E}:(x,y)\in
A\right\} \in I_{\left[ x\right] _{E}}$ is Borel; see~\cite[Definition 5.4.9]%
{gao_invariant_2009}.

\begin{proposition}
\label{Proposition: idealistic}If $X$ is a Borel $G$-space, then the orbit
equivalence relation $E_{G}^{X}$ is idealistic.
\end{proposition}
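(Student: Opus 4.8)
The plan is to produce an explicit assignment $[x]_{E_G^X}\mapsto I_{[x]_{E_G^X}}$ using the Vaught transform, exactly mirroring the Polish group case \cite[Proposition 5.4.10]{gao_invariant_2009}. Fix, as in the standing assumptions, a countable basis $\mathcal{A}=\{U_n:n\in\omega\}$ of Polish open subsets of $G$. For an orbit $[x]$ I would declare a subset $S\subseteq[x]$ to be \emph{small} (i.e., $S\in I_{[x]}$) if for every $y\in[x]$ and every open $V\subseteq G$ with $Vp(y)\neq\varnothing$, the set $\{\gamma\in Vp(y):\gamma y\in S\}$ is meager in $Vp(y)$; equivalently, using Lemma~\ref{Lemma: basic Vaught}(5), $y\notin S^{\bigtriangleup V}$ for all such $V$. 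One checks this is an ideal: closure under subsets is immediate, and closure under countable unions follows from Lemma~\ref{Lemma: basic Vaught}(3), $(\bigcup_n A_n)^{\bigtriangleup V}=\bigcup_n A_n^{\bigtriangleup V}$, since a countable union of meager subsets of the Baire space $Vp(y)$ (the fibers $Gx$ are Polish, hence Baire) is meager. That $[x]\notin I_{[x]}$ is the Baire category theorem: taking $y=x$ and $V=G$, $Gp(x)\neq\varnothing$ and $\{\gamma\in Gp(x):\gamma x\in[x]\}=Gp(x)$ is certainly nonmeager in itself.

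The substantive point is Borelness of the derived sets. Given a Borel $A\subseteq X\times X$, I must show that $A_I=\{x\in X:\{y\in[x]:(x,y)\in A\}\in I_{[x]}\}$ is Borel. The key move is to pull $A$ back along the action to the action groupoid $G\ltimes X$: set
\[
\widetilde{A}=\{(\gamma,x)\in G\ltimes X:(x,\gamma x)\in A\},
\]
which is Borel since the action map and the anchor map are Borel. Then $\{y\in[x]:(x,y)\in A\}$ is in $I_{[x]}$ if and only if for every $y$ in the orbit of $x$ and every open $V$ with $Vp(y)\neq\varnothing$, the fiber $\widetilde{A}'_y$ is meager in $Vp(y)$, where $\widetilde{A}'=\{(\gamma,y):(x,\gamma y)\in A\}$ — but a single well-chosen condition at the point $x$ itself should suffice. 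Concretely, I would argue that $\{y\in[x]:(x,y)\in A\}\in I_{[x]}$ iff $x\notin (\pi_x A)^{\bigtriangleup G}$ where $\pi_x A=\{y:(x,y)\in A\}$; the forward implication is the definition, and the reverse uses $G$-invariance of $B^{\bigtriangleup G}$ (Lemma~\ref{Lemma: basic Vaught}(1)) together with Lemma~\ref{Lemma: shift Vaught} to propagate the meagerness condition from $x$ to every point of its orbit. The obstruction is then reduced to: the set $\{x\in X: x\notin \widetilde{A}''\}$ is Borel, where $\widetilde{A}''\subseteq G\ltimes X$ is an appropriate Borel set to which Lemma~\ref{Lemma: Montgomery-Novikov} applies, giving Borelness of the slices on which a Borel fiber is meager/comeager/nonmeager.

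The main obstacle I anticipate is \emph{not} the measurability — that is essentially handed to us by Lemmas~\ref{Lemma: Montgomery-Novikov}, \ref{Lemma: Borel Vaught transform}, and \ref{Lemma: basic Vaught} — but rather getting the \emph{definition} of the ideal to be simultaneously (a) genuinely an ideal on each orbit, (b) independent of which representative $x$ of the orbit one uses to test membership, and (c) phrased so that the test set $A_I$ is describable by a Vaught transform of a Borel set. The delicate step is (b): I need that if the slice $\pi_x A$ is "small at $x$" (meager in every $Vp(x)$) then for every $y\in[x]$ the shifted slice $\pi_y(A)$ is small at $y$ — and for this one applies Lemma~\ref{Lemma: shift Vaught} with a morphism $\gamma$ carrying $x$ to $y$, translating statements about $A^{\bigtriangleup V}$ at $y$ into statements about $A^{\bigtriangleup W}$ at $x$ for suitable $W$ with $W\gamma^{-1}\subseteq Vr(\gamma)$. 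Once this consistency is verified, the remaining bookkeeping — writing $A_I$ as $X\setminus$ (a set of the form appearing in Lemma~\ref{Lemma: Montgomery-Novikov}) — is routine, and the proof concludes exactly as in \cite[Proposition 5.4.10]{gao_invariant_2009}.
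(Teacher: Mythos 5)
Your proposal is correct and follows essentially the same route as the paper: the ideal on the orbit of $x$ is ``the set of $\gamma\in Gp(x)$ moving $x$ into $S$ is meager'', base-point independence comes from the translation homeomorphism between $Gp(y)$ and $Gp(x)$, non-triviality from $Gp(x)$ being a Baire space, and Borelness of $A_I$ from Lemma~\ref{Lemma: Montgomery-Novikov} applied to the Borel pullback $\{(\gamma,x)\in G\ltimes X:(x,\gamma x)\in A\}$. The only cosmetic difference is that the paper packages this last step as the Vaught transform $\left(\left(X\ast X\right)\setminus A\right)^{\ast G}$ for the action of $G$ on the second coordinate of $X\ast X$, which is your $\widetilde{A}$ in different clothing.
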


\begin{proof}
Pick $x\in X$ and denote by $C$ the orbit of $x$. Define the ideal $I_{C}$
of subsets of $C$ by $S\in I_{C}$ iff $\forall ^{\ast }\rho \in Gp(x)$, $%
r(\rho )\notin S$. Observe that this does not depend from the choice of $x$.
In fact suppose that $y\in C$ and hence $y=\gamma x$ for some $\gamma \in
Gp(x)$. Assume moreover that $S\subset C$ is such that $\forall ^{\ast }\rho
\in Gp(x)$, $r(\rho )\notin S$. Consider the homeomorphism $\Phi $ from $%
Gp(x)$ to $Gp(y)$ given by $\rho \mapsto \rho \gamma $. It is apparent that 
\begin{equation*}
\Phi \left[ \left\{ \rho \in Gp(x):r(\rho )\notin S\right\} \right] =\left\{
\rho \in Gp(y):r(\rho )\notin S\right\} \text{.}
\end{equation*}%
This shows that $\forall ^{\ast }\rho \in Gp(y)$, $r(\rho )\notin S$, and
hence the definition of $I_{C}$ is does not depend from the choice of $x\in
C $. Clearly $C\notin I_{C}$ since $Gp(x)$ is a Baire space. It is not
difficult to verify that $I_{C}$ is a $\sigma $-ideal. Suppose that $%
A\subset X\times X$ is Borel, and consider the set $A_{I}$ defined by $x\in
A_{I}$ iff $\left\{ y\in \left[ x\right] :(x,y)\in A\right\} \in I_{\left[ x%
\right] }$. Observe that $x\in A_{I}$ iff $\forall ^{\ast }\rho \in Gp(x)$, $%
\left( x,r(\rho )\right) \notin A$. Consider 
\begin{equation*}
X\ast X=\left\{ (x,y)\in X\times X:p(x)=p(y)\right\}
\end{equation*}%
and the action of $G$ on $X\ast X$ defined by $p(x,y)=p(x)=p(y)$ and $\gamma
(x,y)=\left( x,\gamma y\right) $ for $\gamma \in Gp(x,y)$. Observe that $%
x\in A_{I}$ if and only if $(x,x)\in \left( \left( X\ast X\right)
\left\backslash A\right. \right) ^{\ast G}$. This shows that $A_{I}$ is
Borel by Lemma~\ref{Lemma: Borel Vaught transform}.
\end{proof}

Let us denote as customary by $E_{1}$ the tail equivalence relation for
sequences in $\left[ 0,1\right] $. If $E$ is an idealistic Borel equivalence
relation, then $E_{1}$ is not Borel reducible to $E$ by~\cite[Theorem 4.1]%
{kechris_classification_1997}. Therefore we obtain from Proposition~\ref%
{Proposition: idealistic} the following corollary:

\begin{corollary}
\label{Corollary: not reduce E1}If $X$ is a Borel $G$-space with Borel orbit
equivalence relations, then the orbit equivalence relation $E_{1}$ is not
Borel reducible to $E_{G}^{X}$.
\end{corollary}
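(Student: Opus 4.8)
The plan is to deduce this immediately from the two facts already assembled just above: the orbit equivalence relation of any Borel $G$-space is idealistic (Proposition~\ref{Proposition: idealistic}), and $E_1$ is not Borel reducible to any idealistic Borel equivalence relation (\cite[Theorem 4.1]{kechris_classification_1997}). So the proof is essentially a syllogism, and no new construction is needed.

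Concretely, I would argue as follows. Suppose $X$ is a Borel $G$-space whose orbit equivalence relation $E_G^X$ is Borel. By Proposition~\ref{Proposition: idealistic}, $E_G^X$ is idealistic; the idealistic structure is witnessed by the assignment $[x]\mapsto I_{[x]}$ built from the Vaught-transform ideal, and the Borelness requirement in the definition of idealistic was verified there via Lemma~\ref{Lemma: Borel Vaught transform}. Thus $E_G^X$ is an \emph{idealistic Borel} equivalence relation. By \cite[Theorem 4.1]{kechris_classification_1997}, the tail equivalence relation $E_1$ on $[0,1]^{\omega}$ is not Borel reducible to any Borel idealistic equivalence relation, and in particular $E_1\not\le_B E_G^X$, which is the assertion.

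There is no genuine obstacle here: all the substantive work — generalizing the Vaught transform to groupoid actions and checking that the induced fiberwise-ideal assignment is Borel — has already been carried out in Lemmas~\ref{Lemma: Montgomery-Novikov}--\ref{Lemma: Borel Vaught transform} and Proposition~\ref{Proposition: idealistic}. The only point worth a word of care is that Proposition~\ref{Proposition: idealistic} is stated for an arbitrary Borel $G$-space (with no Borelness hypothesis on $E_G^X$), so invoking it under the present, stronger hypothesis is legitimate; the extra hypothesis is used solely to land inside the scope of \cite[Theorem 4.1]{kechris_classification_1997}.
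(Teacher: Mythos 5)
Your proposal is correct and coincides with the paper's own argument: the corollary is obtained exactly by combining Proposition~\ref{Proposition: idealistic} (idealisticity of $E_G^X$) with the Kechris--Louveau result that $E_1$ does not Borel reduce to any idealistic Borel equivalence relation, the Borelness hypothesis serving only to place $E_G^X$ within the scope of that theorem. No further comment is needed.
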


Corollary~\ref{Corollary: not reduce E1} holds more generally for arbitrary
Borel $G$-spaces, with not necessarily Borel orbit equivalence relations.
This was shown by the present author in collaboration with Samuel Coskey,
George Elliott, and Ilijas Farah by adapting the proof of~\cite[Chapter 8]%
{hjorth_classification_2000}.

An equivalence relation $E$ on a standard Borel space $E$ is \emph{smooth}
if it is Borel reducible to the relation of equality in some Polish space~%
\cite[Definition 5.4.1]{gao_invariant_2009}. By~\cite[Theorem 5.4.11]%
{gao_invariant_2009} an equivalence relation has a Borel selector precisely
when it is smooth and idealistic. Therefore the following corollary follows
immediately from Proposition~\ref{Proposition: idealistic}.

\begin{corollary}
\label{Corollary: Borel selector}If $X$ is a Polish $G$-space such that $%
E_{G}^{X}$ is smooth, then $E_{G}^{X}$ has a Borel selector.
\end{corollary}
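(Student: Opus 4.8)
The plan is to invoke the abstract characterization of equivalence relations possessing a Borel selector, namely \cite[Theorem 5.4.11]{gao_invariant_2009}: an equivalence relation on a standard Borel space admits a Borel selector if and only if it is both smooth and idealistic. Given a Polish $G$-space $X$ with $E_{G}^{X}$ smooth, the hypothesis supplies one of the two conditions directly. The other condition, idealisticity of $E_{G}^{X}$, is exactly Proposition~\ref{Proposition: idealistic}, which was established in full generality for any Borel $G$-space (and in particular for any Polish $G$-space, since a Polish $G$-space is in particular a Borel $G$-space once one passes to the underlying standard Borel structures). Combining these two facts, $E_{G}^{X}$ is smooth and idealistic, hence has a Borel selector.

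Concretely, the proof is essentially one line: apply Proposition~\ref{Proposition: idealistic} to conclude that $E_{G}^{X}$ is idealistic, note that by assumption it is smooth, and then cite \cite[Theorem 5.4.11]{gao_invariant_2009} to extract a Borel selector $\sigma\colon X\to X$ with $\sigma(x)\mathrel{E_{G}^{X}}x$ for all $x$ and $\sigma(x)=\sigma(y)$ whenever $x\mathrel{E_{G}^{X}}y$. There is no real calculation to carry out, since all the substantive work — the construction of the class-indexed family of $\sigma$-ideals via the Vaught transform $A^{\ast G}$, and the verification that the resulting ``section-in-the-ideal'' operator preserves Borelness via Lemma~\ref{Lemma: Borel Vaught transform} — has already been done in the proof of Proposition~\ref{Proposition: idealistic}.

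The only point that warrants a moment's care is the bookkeeping that a Polish $G$-space genuinely qualifies as a Borel $G$-space, so that Proposition~\ref{Proposition: idealistic} applies: the anchor map and the action map are continuous, hence Borel, and $G^{0}$, $Gx$, $xG$ are Polish hence standard Borel, so the Borel structures underlying a Polish groupoid action satisfy the requirements in the definition of a Borel $G$-space. This is immediate from the definitions in Subsection~\ref{Subsection: Polish groupoids and groupoid actions} and needs no argument beyond a remark. Thus there is no real obstacle here; the corollary is a direct consequence of Proposition~\ref{Proposition: idealistic} together with the cited characterization of Borel selectors.

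\begin{proof}
Since $X$ is a Polish $G$-space, it is in particular a Borel $G$-space, so by Proposition~\ref{Proposition: idealistic} the orbit equivalence relation $E_{G}^{X}$ is idealistic. By hypothesis $E_{G}^{X}$ is also smooth. Therefore $E_{G}^{X}$ has a Borel selector by~\cite[Theorem 5.4.11]{gao_invariant_2009}.
\end{proof}
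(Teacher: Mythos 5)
Your proof is correct and is exactly the argument the paper gives: combine Proposition~\ref{Proposition: idealistic} (idealisticity of $E_{G}^{X}$) with the smoothness hypothesis and invoke \cite[Theorem 5.4.11]{gao_invariant_2009}. Nothing to add.
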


\begin{corollary}
\label{Corollary: functorial smooth}If $G$ and $H$ are Polish groupoids such
that $E_{G}$ and $E_{H}$ are smooth, then $G\leq _{B}H$ if and only if $%
E_{G}\leq E_{H}$.
\end{corollary}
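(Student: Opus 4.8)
The plan is to reduce the corollary to the level of the orbit equivalence relations, using the fact that under the smoothness hypothesis these equivalence relations carry Borel selectors (Corollary~\ref{Corollary: Borel selector}), which let us lift a reduction on objects to a genuine functor. The forward direction is trivial: if $G\leq_B H$ via a Borel functor $F$, then by the remark following Definition~\ref{Definition: Borel reduction} the object part $x\mapsto F(x)$ witnesses $E_G\leq E_H$. So the content is the converse.

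Assume $E_G\leq E_H$. First I would note that the standard action of $G$ on $G^0$ makes $G^0$ a Polish $G$-space whose orbit equivalence relation is exactly $E_G$ (and similarly for $H$); since $E_G$ and $E_H$ are smooth, Corollary~\ref{Corollary: Borel selector} gives Borel selectors $\sigma_G:G^0\to G^0$ and $\sigma_H:H^0\to H^0$ for $E_G$ and $E_H$. Let $f:G^0\to H^0$ be a Borel reduction of $E_G$ to $E_H$; replacing $f$ by $\sigma_H\circ f$ we may assume $f$ maps each $E_G$-class into a single $E_H$-class in a way that factors through the selector, i.e.\ $f(x)=f(\sigma_G(x))$ for all $x$ and $f$ is constant on $E_G$-classes. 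The idea now is to build a Borel functor $\widetilde F:G\to H$ extending $f$ on objects. For each object $x\in G^0$ choose, in a Borel way, an arrow in $H$ from $f(\sigma_G(p_G(\cdot)))$-land witnessing equivalence: concretely, using a Borel selector one can find a Borel map $\tau_G:G^0\to G$ with $s(\tau_G(x))=\sigma_G(x)$, $r(\tau_G(x))=x$, and $\tau_G(\sigma_G(x))=\sigma_G(x)$ (an arrow from the chosen representative of the class to $x$), and similarly a Borel map $\tau_H:H^0\to H$ on the $H$-side. Such a Borel choice of connecting arrows exists because the selector is Borel and the map $\gamma\mapsto(r(\gamma),s(\gamma))$ from $G$ to $E_G$ is a Borel surjection with standard fibers, so a Borel uniformization of $\{(x,\gamma):s(\gamma)=\sigma_G(x),\,r(\gamma)=x\}$ exists by the Jankov--von Neumann or Lusin--Novikov selection theorems (the fibers $G_{\sigma_G(x),x}$ are cosets of the closed stabilizer, hence non-meager Polishable, so one can even do this Borel-measurably with a countable-to-one fibering).

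With these in hand, define $\widetilde F$ on a general arrow $\gamma\in G$ with $s(\gamma)=x$, $r(\gamma)=y$ (so $x\mathrel{E_G}y$, hence $\sigma_G(x)=\sigma_G(y)$) by
\[
\widetilde F(\gamma)=\tau_H(f(y))\,\bigl(\tau_H(f(\sigma_G(x)))\bigr)^{-1}\in H,
\]
which is an arrow of $H$ from $f(x)$ to $f(y)$ (both equal to $f(\sigma_G(x))$-equivalent objects, and the composition telescopes correctly since $f$ is constant on classes). One checks directly that $\widetilde F$ is a functor: it respects source and range by construction, sends identities to identities because $\tau_G,\tau_H$ send representatives to identities, respects inversion since each expression is built from a single pair of connecting arrows conjugating, and respects composition because for composable $\gamma,\rho$ the ``middle'' connecting arrows cancel. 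It is Borel because $f$, $\sigma_G$, $\tau_H$ and the groupoid operations on $H$ are all Borel. Finally $\widetilde F$ is a reduction: if $\widetilde F(x)\mathrel{E_H}\widetilde F(y)$ then $f(x)\mathrel{E_H}f(y)$, hence $x\mathrel{E_G}y$ because $f$ reduces $E_G$ to $E_H$; and on objects $\widetilde F$ restricts to $f$. Hence $G\leq_B H$.

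The main obstacle is the Borel choice of connecting arrows $\tau_G$ and $\tau_H$: a priori $G$ is merely an analytic groupoid, so the uniformization of $\{(x,\gamma)\in G^0\times G:r(\gamma)=x,\ s(\gamma)=\sigma_G(x)\}$ requires care. The right tool is that $E_G$ smooth and idealistic yields not just a selector but a Borel transversal, so the fibers over the transversal are the full stabilizer cosets; combining the standard Borel structure on $G$ with the Jankov--von Neumann uniformization theorem for analytic sets gives a $\boldsymbol{\sigma}(\boldsymbol{\Sigma}^1_1)$-measurable choice, and then a standard argument (e.g.\ passing to a finer Polish topology on $G^0$ making the selector and transversal clopen, as in the Becker--Kechris change-of-topology technique already used elsewhere in this paper) upgrades it to an honestly Borel map. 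Once that is available the rest is routine bookkeeping with the groupoid axioms.
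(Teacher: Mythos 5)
Your argument reaches the right conclusion and, at its core, runs along the paper's intended one-line route: smoothness together with idealisticity (Proposition~\ref{Proposition: idealistic}) gives a Borel selector $\sigma_H$ for $E_H$ (Corollary~\ref{Corollary: Borel selector}), and composing a Borel reduction $f$ of $E_G$ to $E_H$ with $\sigma_H$ yields a Borel object map constant on $E_G$-classes, which then extends to a Borel functor. But you have buried this under machinery that is both unnecessary and, as justified, not sound. Once $f$ is constant on classes, your formula $\widetilde F(\gamma)=\tau_H(f(y))\,(\tau_H(f(\sigma_G(x))))^{-1}$ telescopes to the identity arrow at $f(x)=f(y)$ for \emph{every} arrow $\gamma$, not merely inside compositions: the two connecting arrows cancel outright. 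So the functor you are actually building is $\gamma\mapsto \mathrm{id}_{f(s(\gamma))}$, which is trivially a Borel functor whose object part $\sigma_H\circ f$ is still a reduction of $E_G$ to $E_H$, hence a Borel reduction of $G$ to $H$ in the sense of Definition~\ref{Definition: Borel reduction}. The map $\tau_G$ is never used, $\tau_H$ appears only in a cancelling pair, and with them the entire closing paragraph about uniformization evaporates.

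That closing paragraph is also the one place where the write-up, taken at face value, has a genuine gap. Jankov--von Neumann produces only a $\sigma(\boldsymbol{\Sigma}^1_1)$-measurable uniformization, and there is no general procedure for upgrading such a selection to a Borel one by refining a topology; Borel uniformization requires a largeness or smallness hypothesis on the sections. In this paper the correct route to a Borel choice of connecting arrows is the Borelness of $(x,y)\mapsto H_{x,y}$ given by Theorem~\ref{Theorem: Borel orbit equivalence relations} (via Proposition~\ref{Proposition: coset selector} and the Effros selector), which would indeed apply here since smooth implies Borel --- but that theorem is proved two sections after the corollary you are establishing, so leaning on it would invert the paper's logical order. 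Fortunately none of this is needed: delete the $\tau$'s, note that a class-invariant Borel reduction on objects extends to a Borel functor by sending every arrow to an identity arrow, and the corollary is immediate, which is why the paper states it without proof.
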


\subsection{Borel orbits}

We now observe that, if $G$ is a Polish groupoid, then the orbits of any
Polish $G$-space are Borel.

\begin{proposition}
\label{Proposition: Borel classes}If $G$ is a Polish groupoid, and $X$ is a
Polish $G$-space, then the orbit equivalence relation $E_{G}^{X}$ is
analytic and has Borel classes.
\end{proposition}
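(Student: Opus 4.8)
The plan is to show both claims by exhibiting the orbit as an obvious analytic set and then using the Vaught transform to write each orbit as a Borel set. First I would observe that $E_G^X$ is analytic: we have $x E_G^X y$ if and only if there exists $\gamma \in G$ with $s(\gamma) = p(x)$ and $\gamma x = y$, which is the projection onto $X \times X$ of the Borel subset $\{(\gamma, x, y) \in G \ltimes X \times X : \gamma x = y\}$ (Borel because the action is Borel). Since projections of analytic — in particular Borel — sets are analytic, $E_G^X$ is analytic, and hence every orbit $[x]$, being a section of $E_G^X$, is analytic.

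For the Borelness of the classes, fix $x \in X$ and set $a = p(x) \in G^0$. The orbit $[x]$ is contained in $p^{-1}[\,[a]_{E_G}\,]$, and on that set a point $y$ with $p(y) = b$ lies in $[x]$ precisely when the Borel set $\{\gamma \in bG a : \gamma x = y\}$ — equivalently the $y$-fiber of the Borel set $\widetilde{\{x\}} = \{(\rho, z) \in G\ltimes X : \rho z = x\}$ intersected with the appropriate source/range constraints — is nonempty. The key point is to replace ``nonempty'' by ``nonmeager'', which one can do because $Gb = b'G\,b$-type fibers (more precisely the relevant fiber lives inside a coset of the closed subgroup $G_y$ of $p(y)Gp(y)$, as recorded in Subsection~\ref{Subsection: Polish groupoids and groupoid actions}) is a coset of a closed, hence Polish, subgroup, so a nonempty such fiber is a nonmeager (indeed comeager in its closure, actually clopen-translate) subset of an open piece of $G$. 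Concretely, $y \in [x]$ if and only if $Gp(y) \neq \varnothing$ and the $y$-fiber of $\widetilde{\{x\}}$ is nonmeager in $Gp(y)$, i.e. $y \in \{x\}^{\bigtriangleup G}$ for the Vaught transform of the Borel set $\{x\}$ (which is Borel since $X$ has a standard Borel structure and singletons are Borel). By Lemma~\ref{Lemma: Borel Vaught transform}, $\{x\}^{\bigtriangleup G}$ is Borel, and by part~(1) of Lemma~\ref{Lemma: basic Vaught} it is invariant, hence it equals $[x]$.

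The main obstacle is the translation from ``the orbit is the projection of a Borel set, hence a priori only analytic'' to ``the orbit is a Vaught transform, hence Borel''; this rests on verifying that the fiber $\{\gamma \in Gp(y) : \gamma x = y\}$ is always nonmeager when nonempty. For a fixed $y$ in the orbit this fiber is $G_{x,y} = G_{x,x}\rho$ for any witnessing $\rho$, a left translate within $p(y)Gp(y)$ of the closed subgroup $G_x$; translation by $\rho$ is a homeomorphism of $Gp(x)$ onto $Gp(y)$ (as used in the proof of Proposition~\ref{Proposition: idealistic}), so it suffices that a closed subgroup of a Polish group is nonmeager in itself — which holds since it is a Baire space. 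Thus ``$\exists\,\gamma$'' and ``$\exists^*\,\gamma$'' coincide on the relevant fibers, and the identity $[x] = \{x\}^{\bigtriangleup G}$ follows. Combining the two parts completes the proof: $E_G^X$ is analytic with Borel classes.
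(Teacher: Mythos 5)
Your analyticity argument is fine and matches the paper's (which realizes $E_{G}^{X}$ as the Borel image of $G\ltimes X$ under $(\gamma ,x)\mapsto (\gamma x,x)$). The Borel-classes part, however, contains a genuine error: the identity $\left[ x\right] =\left\{ x\right\} ^{\bigtriangleup G}$ is false in general. The quantifier $\exists ^{\ast }\gamma \in Gp(y)$ in the Vaught transform means that $\left\{ \gamma \in Gp(y):\gamma y=x\right\} =G_{y,x}$ is nonmeager \emph{in the fiber} $Gp(y)$, not nonmeager in itself. That set is a coset $\rho G_{y}$ of the closed stabilizer $G_{y}\leq p(y)Gp(y)$, and a closed subgroup, while always a Baire space in its own topology, is typically closed and nowhere dense in the ambient fiber. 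Concretely, let $G$ be any nontrivial perfect Polish group acting on $X=G$ by left translation: every orbit is all of $G$, yet $G_{y,x}=\left\{ xy^{-1}\right\} $ is a meager singleton, so $\left\{ x\right\} ^{\bigtriangleup G}=\varnothing \neq \left[ x\right] $. So ``$\exists \gamma $'' and ``$\exists ^{\ast }\gamma $'' do not coincide on these fibers, and the step where you pass from nonemptiness to nonmeagerness collapses.

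The paper avoids this entirely. After reducing to the standard action $G\curvearrowright G^{0}$ via Proposition~\ref{Proposition: action groupoid}, it considers the \emph{right} action of the Polish group $xGx$ on the Polish space $Gx$ by composition. The orbits of this action are the cosets $\gamma \,xGx$, which are closed, so the orbit equivalence relation $E_{xGx}^{Gx}$ is smooth and admits a Borel transversal $T$. The range map $r$ is injective on $T$ and maps it onto $\left[ x\right] $, so $\left[ x\right] $ is the injective Borel image of a standard Borel space and hence Borel by Lusin--Suslin. If you want to salvage a Vaught-transform flavour, the correct statement is that $\left[ x\right] $, being analytic and invariant, satisfies $\left[ x\right] =\left[ x\right] ^{\bigtriangleup G}=\left[ x\right] ^{\ast G}$ by Lemma~\ref{Lemma: basic Vaught}; but this does not by itself yield Borelness, since Lemma~\ref{Lemma: Borel Vaught transform} requires the input set to be Borel already.
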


\begin{proof}
By Proposition~\ref{Proposition: action groupoid} we can consider without
loss of generality the case of the standard action of $G$ on its set of
objects $G^{0}$. Fix $x\in G^{0}$ and consider the right action of $xGx$ on $%
Gx$ by composition. Observe that $xGx$ is a Polish group, and $Gx$ is a
right Polish $xGx$-space with closed orbits.\ Therefore by~\cite[Proposition
3.4.6]{gao_invariant_2009} the corresponding orbit equivalence relation $%
E_{xGx}^{Gx}$ has a Borel transversal $T$. The orbit $\left[ x\right] $ is
the image of $T$ under the range map $r$. Since $r$ is 1:1 on $T$, it
follows that $\left[ x\right] $ is Borel by~\cite[Theorem 15.1]%
{kechris_classical_1995}. Observe now that the orbit equivalence relation $%
E_{G}$ is the image of the standard Borel space $G$ under the Borel function 
$\gamma \mapsto (r(\gamma ),s(\gamma ))$. This shows that $E_{G}$ is
analytic.
\end{proof}

Similarly as in the case of Polish group actions, a uniform bound on the
complexity of the orbits in the Borel hierarchy entails Borelness of the
orbit equivalence relation.

\begin{theorem}
\label{Theorem: Sami}Suppose that $G$ is Polish groupoid, and $X$ is a
Polish $G$-space. The orbit equivalence relation $E_{G}^{X}$ is Borel if and
only if there is $\alpha \in \omega _{1}$ such that every orbit is $\mathbf{%
\Pi }_{\alpha }^{0}$
\end{theorem}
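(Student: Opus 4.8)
The forward direction is the interesting content; the converse (if all orbits are $\mathbf{\Pi}^0_\alpha$ then $E_G^X$ is Borel) is the generalization of Sami's theorem, and I would prove it by the same strategy as in the Polish group case (see the discussion around Becker--Kechris), using the Vaught transform machinery developed above. So the plan is: first reduce, via Proposition~\ref{Proposition: action groupoid}, to the standard action of $G$ on $G^0$; then handle the two implications.

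For the easy direction, suppose $E_G^X$ is Borel. Since each orbit $[x]$ is a Borel subset of $X$ (Proposition~\ref{Proposition: Borel classes}), it has some complexity $\mathbf{\Pi}^0_{\alpha_x}$, but a priori the $\alpha_x$ are unbounded. The point is to get a \emph{uniform} bound. Here I would use a reflection/boundedness argument: consider the map $x \mapsto [x]_{E_G^X} \in F^\ast(X)$ (or rather into the appropriate standard Borel space of closed-in-their-fiber sets, using that orbits of a Polish groupoid action are actually closed in each fiber $Gx$ by the argument in Proposition~\ref{Proposition: Borel classes}), which is Borel; composing with the parametrization of $\mathbf{\Pi}^0_\xi$ sets and invoking the Kunen--Martin / boundedness theorem for $\mathbf{\Pi}^0_\xi$-ranks on the (now Borel, hence $\mathbf{\Sigma}^1_1$) set $E_G^X$ yields a single $\alpha$ working for all orbits. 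I expect this to go through essentially verbatim as in~\cite[Chapter 7]{becker_descriptive_1996}.

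The substantive direction is: assuming every orbit is $\mathbf{\Pi}^0_\alpha$, show $E_G^X$ is Borel. Fix a basis $\mathcal{A} = \{U_n : n \in \omega\}$ of Polish open sets of $G$. The key is to write $xE_G^X y$ in terms of Vaught transforms of a fixed Borel set whose sections control membership in orbits. Concretely, fix a universal $\mathbf{\Sigma}^0_\alpha$ (equivalently $\mathbf{\Pi}^0_\alpha$) parametrization; the hypothesis says each orbit $[x]$ equals a $\mathbf{\Pi}^0_\alpha$ set, and the real work is to show that the \emph{code} for this set can be chosen in a Borel-in-$x$ way. For this I would imitate the Polish-group argument: using Lemma~\ref{Lemma: Borel class Vaught transform} (which says $B^{\triangle U}$ stays $\mathbf{\Sigma}^0_\alpha$ relative to $p^{-1}s[U]$), one builds, for each $x$, a canonical $\mathbf{\Pi}^0_\alpha$ code for $[x]$ out of a fixed countable family of Vaught transforms $\{B_i^{\triangle U_n}\}$ of Borel sets $B_i$ coming from a countable Borel generating family for the Borel structure of $X$. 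The orbit $[x]$, being invariant and $\mathbf{\Pi}^0_\alpha$, is an intersection/union (to depth $\alpha$) of such transformed sets; the invariance forces a canonical choice, and Lemma~\ref{Lemma: Montgomery-Novikov} / Lemma~\ref{Lemma: Borel Vaught transform} guarantee this choice is Borel in $x$. Then $yE_G^X x$ iff $y$ belongs to the $\mathbf{\Pi}^0_\alpha$ set coded by $c(x)$, which is a Borel condition in $(x,y)$ since the evaluation map for $\mathbf{\Pi}^0_\alpha$ codes is Borel. Hence $E_G^X$ is Borel.

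The main obstacle I anticipate is precisely the construction of the \emph{Borel-in-$x$ canonical code} for the orbit $[x]$: one must leverage groupoid-invariance (the first part of Lemma~\ref{Lemma: basic Vaught}) to pin down a unique code rather than just knowing a code exists, and one must check at each level of the transfinite Borel hierarchy that the operations preserve both the correct complexity (Lemma~\ref{Lemma: Borel class Vaught transform}) and Borel dependence on $x$ (Lemma~\ref{Lemma: Borel Vaught transform}). A secondary technical point is the Baire-category bookkeeping over the fibers $Gx$, which is where the Kuratowski--Ulam-type Lemma~\ref{Lemma: Kuratowski-Ulam} enters: one needs that comeagerness in the fiber of a Vaught transform is itself Borel-detectable and behaves well under the anchor map $p$. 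I would handle this by following the transfinite induction in~\cite[Chapter 7]{becker_descriptive_1996} step by step, replacing "comeager in $G$" by "comeager in $Gp(x)$" throughout and invoking the lemmas above at each step.
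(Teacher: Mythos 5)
Your forward (``easy'') direction is correct in outcome but overengineered: no reflection or boundedness theorem is needed. If $E_G^X$ is Borel then it is $\mathbf{\Pi}^0_\alpha$ as a subset of $X\times X$ for a single $\alpha$, and each orbit is a vertical section of it, hence $\mathbf{\Pi}^0_\alpha$; this is all that is meant by the one direction being obvious. (Also, your proposed map $x\mapsto [x]$ into a space of closed sets is not well defined, since orbits need not be closed in $X$; the closedness used in Proposition~\ref{Proposition: Borel classes} concerns the $xGx$-orbits inside the fiber $Gx$, not the orbit $[x]$ in $X$.)

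The substantive direction has a genuine gap. Your plan hinges on producing a Borel-in-$x$ assignment $x\mapsto c(x)$ of $\mathbf{\Pi}^0_\alpha$-codes for the orbits, justified by the claim that ``invariance forces a canonical choice.'' It does not: a $\mathbf{\Pi}^0_\alpha$ set has many codes and invariance singles out none of them, and the relation ``$a$ is a code for $[x]$'' involves a universal quantifier over points applied to conditions built from the a priori merely analytic relation $E_G^X$, so it need not be Borel and admits no Borel uniformization. Indeed, the only place the paper obtains this kind of uniformity (Lemma~\ref{Lemma:sigma-algebra}) it must \emph{assume} that $E_G^X$ is Borel --- exactly what you are trying to prove --- so invoking that machinery here is circular. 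The paper's actual argument sidesteps codes for individual orbits entirely: fixing a universal $\mathbf{\Pi}^0_\alpha$ set $U\subset\omega^\omega\times X$ and letting $G$ act trivially on the first coordinate, the single Vaught transform $T=U^{\ast G}$ parametrizes \emph{all} invariant $\mathbf{\Pi}^0_\alpha$ subsets of $X$ at once (since $A=A^{\ast G}$ for invariant $A$), whence the relation $E_\alpha$ of agreeing on every invariant $\mathbf{\Pi}^0_\alpha$ set is co-analytic; under your hypothesis $E_\alpha=E_G^X$, and since $E_G^X$ is analytic by Proposition~\ref{Proposition: Borel classes}, Suslin's theorem gives Borelness. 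This last step --- analytic plus co-analytic --- is essential and absent from your outline, which aims (unattainably, by the above) at Borelness directly.
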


\begin{proof}
One direction is obvious. For the other one consider for $\alpha \in \omega
_{1}$ the relation $E_{\alpha }$ of $X$ defined by%
\begin{equation*}
(x,y)\in E_{\alpha }
\end{equation*}%
iff for every $G$-invariant $\mathbf{\Pi }_{\alpha }^{0}$ set $W\subset X$
we have that $x\in W$ iff $y\in W$. If every orbit is $\mathbf{\Pi }_{\alpha
}^{0}$ then $E_{G}^{X}=E_{\alpha }$. It is thus enough to prove that $%
E_{\alpha }$ is co-analytic for every $\alpha \in \omega _{1}$. Consider a
universal $\mathbf{\Pi }_{\alpha }^{0}$ subset $U$ of $\omega ^{\omega
}\times X$. Define the action of $G$ on $\omega ^{\omega }\times X$ by
setting $p\left( a,b\right) =p(b)$ and $\gamma \left( a,b\right) =\left(
a,\gamma b\right) $. Define now 
\begin{equation*}
T=U^{\ast G}
\end{equation*}%
and observe that $T$ is $\mathbf{\Pi }_{\alpha }^{0}$ since $U$ is $\mathbf{%
\Pi }_{\alpha }^{0}$. Denote by $T_{a}$ the section%
\begin{equation*}
\left\{ b\in X:\left( a,b\right) \in T\right\}
\end{equation*}%
for $a\in \omega ^{\omega }$. We have that 
\begin{eqnarray*}
b \in T_{a}&\Leftrightarrow &\left( a,b\right) \in T \\
&\Leftrightarrow &\forall ^{\ast }\gamma \in Gp(b)\text{, }\left( a,\gamma
b\right) \in U \\
&\Leftrightarrow &\forall ^{\ast }\gamma \in Gp(b),\gamma b\in U_{a} \\
&\Leftrightarrow &b\in \left( U_{a}\right) ^{\ast G}\text{.}
\end{eqnarray*}%
This shows that $T_{a}$ is a $G$-invariant $\mathbf{\Pi }_{\alpha }^{0}$
subset of $X$ for every $\alpha \in \omega ^{\omega }$. Conversely if $A$ is
a $G$-invariant $\mathbf{\Pi }_{\alpha }^{0}$ set then $A=U_{a}$ for some $%
a\in \omega ^{\omega }$ and hence%
\begin{equation*}
A=A^{\ast G}=\left( U_{a}\right) ^{\ast G}=T_{a}\text{.}
\end{equation*}%
This shows that $\left\{ T_{a}:a\in \omega ^{\omega }\right\} $ is the
collection of all invariant $\mathbf{\Pi }_{\alpha }^{0}$ sets. It follows
that $(x,y)\in E_{\alpha }$ iff $\forall a\in \omega ^{\omega }$, $\left(
a,x\right) \in T$. Therefore $E_{\alpha }$ is co-analytic.
\end{proof}

Theorem~\ref{Theorem: Sami} was proved for Polish group actions in~\cite[%
Sections 3.6 and 3.7]{sami_polish_1994}.

\section{Effros' theorem and the Glimm-Effros dichotomy\label{Section:
Glimm-Effros}}

\subsection{Effros' theorem}

\begin{lemma}
\label{Lemma: meager transform}Suppose that $G$ is a Polish groupoid.
Consider the standard action of $G$ on $G^{0}$, and the corresponding Vaught
transform. If $A\subset G^{0}$ is meager, then $A^{\bigtriangleup G}$ is
meager.
\end{lemma}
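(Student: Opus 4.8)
The plan is to mimic the classical argument that the Vaught $\triangle$-transform preserves meagerness, but fibered over $G^0$ via the open range map. Recall that for the standard action of $G$ on $G^0$ the anchor map is the identity, $p(x)=x$, so $Gp(x)=Gx$, and the action is $\gamma x = r(\gamma)$. Thus for $A\subset G^0$ we have
\[
A^{\triangle G}=\{x\in G^0 : \exists^{\ast}\gamma\in Gx,\ r(\gamma)\in A\}.
\]
First I would reduce to the case where $A$ is open. Indeed, if $A$ is meager it is contained in a meager $F_\sigma$ set $\bigcup_n F_n$ with each $F_n$ closed nowhere dense; by part (3) of Lemma~\ref{Lemma: basic Vaught} (monotonicity and countable additivity of $(\cdot)^{\triangle G}$ for unions) it suffices to show $F^{\triangle G}$ is meager — in fact nowhere dense — for each closed nowhere dense $F$. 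Equivalently, writing $U=G^0\setminus F$, which is open dense, one wants to show that $(G^0\setminus U)^{\triangle G}$ is nowhere dense; by part (4) of Lemma~\ref{Lemma: basic Vaught}, $p^{-1}[s[G]]=G^0$ is the disjoint union of $U^{\ast G}$ and $(G^0\setminus U)^{\triangle G}$, so this amounts to showing that $U^{\ast G}$ is comeager (indeed dense open) in $G^0$ whenever $U$ is dense open.

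The heart of the matter is therefore: \emph{if $U\subset G^0$ is dense open, then $U^{\ast G}$ is dense (and, being an invariant set that one can check is open by an argument as in Lemma~\ref{Lemma: Borel class Vaught transform}, comeager).} To see density, fix a nonempty basic open $W\subset G^0$; I must find $x\in W$ with $\forall^{\ast}\gamma\in Gx$, $r(\gamma)\in U$. The key tool is that the range map $r:G\to G^0$ is continuous and open, and that $r^{-1}(x)=Gx$ is a Baire space (Polish in the subspace topology by the definition of Polish groupoid), so Lemma~\ref{Lemma: Kuratowski-Ulam} applies to $r$. Consider $r^{-1}[U]$, a dense open subset of $G$: dense because $U$ is dense in $G^0$ and $r$ is open surjective (the preimage of a dense set under a continuous open surjection is dense), open because $U$ is open and $r$ continuous. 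Apply Lemma~\ref{Lemma: Kuratowski-Ulam} with $X=G$, $Y=G^0$, $f=r$, and $A=r^{-1}[U]$ — actually to $A'=r^{-1}[U]\cup (G\setminus r^{-1}[W\,G\,W'])$, an appropriate enlargement making it comeager in $G$; the point is that a dense open subset of $G$ is comeager, so by Kuratowski--Ulam $\forall^{\ast}x\in G^0$ the fiber $r^{-1}[U]\cap Gx$ is comeager in $Gx$, i.e.\ $\forall^{\ast}x\in G^0$, $\forall^{\ast}\gamma\in Gx$, $r(\gamma)\in U$, which says $U^{\ast G}$ is comeager in $G^0$; in particular it meets $W$. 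Since $W$ was an arbitrary nonempty open set, $U^{\ast G}$ is dense, hence comeager.

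The main obstacle I anticipate is the careful verification of the hypotheses of Lemma~\ref{Lemma: Kuratowski-Ulam} for $f=r$: one needs $G$ second countable (it is locally Polish), $G^0$ a Baire space (it is Polish), $r$ open and continuous (part of the definition of Polish groupoid), and each fiber $Gx=r^{-1}\{x\}$ a Baire space — this last is exactly clause (3) in the definition of Polish groupoid, since $Gx$ is Polish. A secondary subtlety is bookkeeping: Kuratowski--Ulam is stated as an equivalence between ``$A$ comeager in $G$'' and ``$\forall^{\ast}x$: $A\cap Gx$ comeager in $Gx$'', and a dense open $A\subset G$ is indeed comeager since $G$, being locally Polish, is a Baire space; combining this with $r^{-1}[U]$ dense open handles it cleanly, so no genuinely new estimate is needed — just the translation between ``$A$ meager'' and ``$G^0\setminus A$ comeager'' and the duality between the $\triangle$- and $\ast$-transforms from Lemma~\ref{Lemma: basic Vaught}(4).
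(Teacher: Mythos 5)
Your strategy is essentially the paper's: pull $A$ back along the range map and then apply the Kuratowski--Ulam lemma fiberwise over the fibration $G\to G^{0}$. One step must be corrected, however. The Vaught transform $A^{\bigtriangleup G}$ quantifies over $Gx=s^{-1}\{x\}$, i.e.\ over the fibers of the \emph{source} map, so Lemma~\ref{Lemma: Kuratowski-Ulam} has to be applied with $f=s$, not $f=r$. As literally written, applying it with $f=r$ to the set $r^{-1}[U]$ is vacuous: each $r$-fiber $r^{-1}\{y\}$ is either entirely contained in $r^{-1}[U]$ or disjoint from it, so the fiberwise statement merely reproduces the hypothesis that $U$ is comeager in $G^{0}$ and says nothing about the sets $Gx$. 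The correct application is: $r^{-1}[U]$ is comeager in $G$ (as you note, $r$ is continuous and open, so the preimage of a dense open set is dense open, and $G$ is a Baire space being locally Polish); since $s$ is also continuous and open with Polish fibers $Gx$, Lemma~\ref{Lemma: Kuratowski-Ulam} applied to $s$ yields that for comeagerly many $x$ the set $r^{-1}[U]\cap Gx$ is comeager in $Gx$, which says exactly that $U^{\ast G}$ is comeager. With that one-word repair your argument goes through; the conclusion you state is already phrased in terms of the $Gx$-fibers, so only the labelling of the map is wrong.

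Two further remarks. The detour through closed nowhere dense sets and dense open complements is unnecessary: the paper argues directly that $r^{-1}[A]$ is meager whenever $A$ is meager (open continuous maps are category preserving, Subsection~\ref{Subsection: Category preserving maps}) and then applies Kuratowski--Ulam to $s$ to conclude that the set of $x$ with $r^{-1}[A]\cap Gx$ nonmeager in $Gx$ --- which is $A^{\bigtriangleup G}$ --- is meager; your reduction buys only the avoidance of the phrase ``category preserving'' at the cost of length. Also, two side claims are unjustified but harmless: $U^{\ast G}$ need not be open (Lemma~\ref{Lemma: Borel class Vaught transform} makes $B^{\bigtriangleup U}$ open for $B$ open, while $B^{\ast U}$ is in general only $G_{\delta}$), and $F^{\bigtriangleup G}$ need not be nowhere dense; comeagerness of $U^{\ast G}$, which Kuratowski--Ulam gives you directly, is all that is needed. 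The parenthetical enlargement ``$A'$'' should simply be deleted, since $r^{-1}[U]$ is already dense open and hence comeager in the Baire space $G$.
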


\begin{proof}
The source map $r:G\rightarrow G^{0}$ is open and, in particular, category
preserving; see Subsection~\ref{Subsection: Category preserving maps}. Thus $%
r^{-1}\left[ A\right] $ is a meager subset of $G$. Therefore, since the
source map $s$ is also open, by Lemma~\ref{Lemma: Kuratowski-Ulam} the set
of $x\in X$ such that $Gx\cap r^{-1}\left[ A\right] $ is meager. This set is
by definition $A^{\bigtriangleup G}$.
\end{proof}

\begin{theorem}
\label{Theorem: Effros}Suppose that $G$ is a Polish groupoid, $X$ is a
Polish $G$-space, and $x\in X$. Denote by $\left[ x\right] $ the orbit of $x$%
. The following statements are equivalent:
\end{theorem}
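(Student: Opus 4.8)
\emph{(Proof sketch.)} The conditions to be shown equivalent are the groupoid analogues of those of the classical local Effros theorem for Polish group actions: $[x]$ is non-meager in its relative topology; $[x]$ is non-meager in $\overline{[x]}$; $[x]$ is $G_\delta$ in $X$ (equivalently, Polish in the relative topology; equivalently, locally closed); the orbit map $Gp(x)\to[x]$, $\gamma\mapsto\gamma x$, is open; and the canonical continuous bijection $Gp(x)/G_x\to[x]$ is a homeomorphism. I would prove them by closing a cycle of implications whose only substantial link is ``$[x]$ non-meager $\Rightarrow$ orbit map open'', obtained by transplanting the Baire-category argument of the Polish group case via the Vaught transform developed in Section~\ref{Section: notation}.

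First, reduce to the model case: by Proposition~\ref{Proposition: action groupoid} the map $x\mapsto(p(x),x)$ is a homeomorphism of $X$ onto $(G\ltimes X)^0$ intertwining $E_G^X$ with $E_{G\ltimes X}$ and carrying the source-fibre $Gp(x)$, the orbit map and the stabilizer to their counterparts for $G\ltimes X$; so one may assume the action is the standard action of $G$ on $G^0$, $x\in G^0$, $Gp(x)=Gx$, $G_x=xGx$, and the orbit map is $r\restriction Gx\colon Gx\to[x]$. The routine implications are then: ``$G_\delta$ in $X$'' $\Leftrightarrow$ ``Polish in the relative topology'' is \cite[Theorem 3.11]{kechris_classical_1995}; a dense $G_\delta$ subset of $\overline{[x]}$ is comeager, hence non-meager, in $\overline{[x]}$; since $[x]$ is dense in $\overline{[x]}$, a subset of $[x]$ is nowhere dense in $[x]$ iff nowhere dense in $\overline{[x]}$, so non-meagerness of $[x]$ in itself and in $\overline{[x]}$ coincide; and if $r\restriction Gx$ is open it is an open continuous surjection onto the metrizable space $[x]$, so $[x]$ is Polish in the relative topology (hence $G_\delta$ in $X$) and $r\restriction Gx$ factors through a homeomorphism $Gx/xGx\to[x]$, while conversely that homeomorphism makes $[x]$ Polish in the relative topology since $xGx$ is a Polish group acting continuously and freely, with closed orbits (the $G_{x,y}$), on the Polish space $Gx$. (Local closedness of $[x]$ follows as in the classical case.) This closes every link except the main one.

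The crux --- $[x]$ non-meager in $\overline{[x]}\Rightarrow r\restriction Gx$ open --- I would prove by adapting the Polish group argument. Fix a basic open $V\subseteq G$ with $V\cap Gx\neq\varnothing$, $\gamma_0\in V\cap Gx$, $y_0=r(\gamma_0)$; shrink $V$ using continuity and openness of composition and inversion so that suitable products and inverses stay near $\gamma_0$; cover $Gx$ by countably many ``translates'' of the shrunk sets --- this must be done fibre-wise, using that $s$ and $r$ are open, since there is no global left translation --- and write $[x]$ as the resulting countable union of sets $r[W\cap Gx]$, each analytic hence with the Baire property. Non-meagerness of $[x]$ in $\overline{[x]}$ forces one such piece to be comeager in a nonempty relatively open subset of $\overline{[x]}$; using that a groupoid element acts as a homeomorphism of $G^0$ fixing $\overline{[x]}$ setwise, re-center this subset at $y_0$; then a Pettis-type argument (two comeager subsets of a Baire space meet) applied to $r[V\cap Gx]$ and its image under such a homeomorphism produces a relative neighbourhood of $y_0$ inside $r[(\text{product of shrunk neighbourhoods})\cap Gx]$, giving openness at $\gamma_0$. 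The Vaught transform is the bookkeeping device for passing between meagerness in $G$ and meagerness in $G^0$ or in $[x]$: Lemma~\ref{Lemma: meager transform} (together, for the general reduction, with Lemmas~\ref{Lemma: Montgomery-Novikov} and~\ref{Lemma: Borel class Vaught transform}) and the Kuratowski--Ulam Lemma~\ref{Lemma: Kuratowski-Ulam} for the open maps $s,r\colon G\to G^0$ guarantee that exceptional sets remain small after passing to orbits. The main obstacle is exactly this last point: without global translations the ``reduce to openness at the identity'' device of the group case is unavailable, so the covering-and-Pettis argument has to be carried out locally around each arrow, and it is in keeping this Baire-category bookkeeping consistent across the fibres of $s$ and $r$ that the Vaught-transform lemmas are essential.
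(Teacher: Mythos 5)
Your overall strategy is genuinely different from the paper's, and one step of your sketch does not survive the passage from groups to groupoids. The paper does not prove $3\Rightarrow 1$ by establishing openness of the orbit map. Instead it gives a short Vaught-transform argument in the style of Sami: after reducing to the standard action on $G^0$ (via Proposition~\ref{Proposition: action groupoid}) and restricting to $\overline{[x]}$ so that $[x]$ is dense and nonmeager, it uses that $[x]$ is Borel (Proposition~\ref{Proposition: Borel classes}) to write $[x]=U\cup M$ with $U$ a $G_\delta$ and $M$ meager; invariance gives $[x]=U^{\ast G}\cup M^{\bigtriangleup G}$, Lemma~\ref{Lemma: meager transform} makes $M^{\bigtriangleup G}$ meager and hence empty, and Lemma~\ref{Lemma: Borel class Vaught transform} makes $U^{\ast G}$ a $G_\delta$. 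Openness of the orbit map and the homeomorphism $Gp(x)/G_{x}\to[x]$ are \emph{not} part of the statement; they are deferred to Lemma~\ref{Lemma: Effros-Ramsay} and Corollary~\ref{Corollary: Effros-Ramsay}, which are quoted from Ramsay's Theorem~3.2 rather than reproved, and they are deduced \emph{from} the $G_\delta$ condition, not used to establish it.

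The concrete problem with your crux is the re-centering step: you invoke that ``a groupoid element acts as a homeomorphism of $G^{0}$ fixing $\overline{[x]}$ setwise.'' An arrow $\gamma$ of a groupoid induces no map on $G^{0}$ at all; left composition by $\gamma$ is a homeomorphism $s(\gamma)G\to r(\gamma)G$ between source fibres, not a homeomorphism of (an open subset of) the unit space carrying a relatively open neighbourhood of one point of the orbit to one of another. This is precisely where the group-case Pettis/covering argument breaks, and it cannot be repaired by the Vaught-transform lemmas alone as ``bookkeeping'' --- Ramsay's proof of the openness statement requires a genuinely different device. So as written your chain has a gap exactly at its only substantial link, whereas the paper's route avoids the orbit map entirely: the identity $[x]=U^{\ast G}$ does all the work, since the $\ast$-transform both preserves the Borel class of $U$ and absorbs the meager remainder. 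If you want the openness of the orbit map as a consequence, derive it afterwards from the $G_\delta$ property via Lemma~\ref{Lemma: Effros-Ramsay}, rather than the other way around.
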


\begin{enumerate}
\item $\left[ x\right] $ is a $G_{\delta }$ subset of $X$;

\item $\left[ x\right] $ is a Baire space;

\item $\left[ x\right] $ is nonmeager in itself.
\end{enumerate}

\begin{proof}
$\,$By Proposition~\ref{Proposition: action groupoid} we can assume without
loss of generality that $X=G^{0}$ and $G\curvearrowright G^{0}$ is the
standard action. The only nontrivial implication is $3\Rightarrow 1$. After
replacing $G$ with the restriction of $G$ to the closure of $\left[ x\right] 
$, we can assume that $\left[ x\right] $ is dense in $G^{0}$ and hence
nonmeager in $G^{0}$. By Proposition~\ref{Proposition: Borel classes}, $%
\left[ x\right] $ is a Borel subset of $G^{0}$ and in particular it has the
Baire property. Therefore by~\cite[Proposition 8.23]{kechris_classical_1995}
the orbit $\left[ x\right] $ is the union of a meager set $M$ and a $%
G_{\delta }$ set $U$. One can conclude that $\left[ x\right] =U^{\ast G}$
arguing as in~\cite[Proposition 4.4]{sami_polish_1994}. Clearly $\left[ x%
\right] $ is the union of $U^{\ast G}$ and $M^{\bigtriangleup G}$. By Lemma~%
\ref{Lemma: meager transform}, $M^{\bigtriangleup G}$ is meager and hence,
since $\left[ x\right] $ is nonmeager, $M^{\bigtriangleup G}=\varnothing $.
Therefore $\left[ x\right] =U^{\ast G}$ is $G_{\delta }$ by Lemma~\ref%
{Lemma: Borel class Vaught transform}.
\end{proof}

Theorem~2.1 of~\cite{ramsay_mackey-glimm_1990} asserts that it is equivalent
for the conditions in Theorem~\ref{Theorem: Effros} to hold for all points
of $X$.

Suppose that $G$ is a Polish groupoid, $X$ is a Polish $G$-space, and $x\in
G^{0}$. The fiber $Gp(x)$ is a Polish space, and the stabilizer $G_{x}$ of $%
x $ is a Polish group acting from the right by composition on $Gp(x)$. One
can then consider the quotient space $Gp(x)/G_{x}$ and the quotient map $\pi
_{x}:Gp(x)\rightarrow Gp(x)/G_{x}$, which is clearly continuous and open.
When $G\curvearrowright G^{0}$ is the standard action of $G$ on its set of
objects and $x\in G^{0}$, then the stabilizer $G_{x}$ is just $xGx$.

It is not difficult to see that the proof of~\cite[Theorem 3.2]%
{ramsay_mackey-glimm_1990} can be adapted to the context where $G$ is a not
necessarily regular Polish groupoid, as observed in~\cite[page 362]%
{ramsay_mackey-glimm_1990}. The following lemma can then be obtained as an
immediate consequence.

\begin{lemma}[Ramsay]
\label{Lemma: Effros-Ramsay}Suppose that $G$ is a Polish groupoid, and $x\in
G^{0}$. If the orbit $\left[ x\right] $ of $x$ is $G_{\delta }$, then the
map $\phi _{x}:Gx/xGx\rightarrow \left[ x\right] $ defined by $\phi
_{x}\left( \pi (\gamma )\right) =r(\gamma )$ is a homeomorphism.
\end{lemma}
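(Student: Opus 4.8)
The plan is to verify that $\phi_x$ is a homeomorphism by checking that it is (i) a well-defined continuous bijection and (ii) an open map, the second point carrying all the content.

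First I would dispose of (i). Write $\pi=\pi_x\colon Gx\rightarrow Gx/xGx$ for the quotient by the free right action of the Polish group $xGx$ on $Gx$; this map is continuous, open and surjective, hence a topological quotient map, and one has $r|_{Gx}=\phi_x\circ\pi$. Two arrows $\gamma,\gamma'\in Gx$ satisfy $r(\gamma)=r(\gamma')$ exactly when $\gamma'^{-1}\gamma\in xGx$, i.e.\ exactly when $\gamma$ and $\gamma'$ lie in the same fibre of $\pi$; hence $\phi_x$ is a well-defined bijection from $Gx/xGx$ onto $[x]=r[Gx]$, and it is continuous because $\pi$ is a quotient map and $r|_{Gx}$ is continuous. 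Since every open subset of $Gx/xGx$ is of the form $\pi[V]$ for some open $V\subseteq Gx$ and $\phi_x[\pi[V]]=r[V]$, the map $\phi_x$ is open precisely when $r|_{Gx}\colon Gx\rightarrow[x]$ is an open surjection onto $[x]$ equipped with the subspace topology from $G^{0}$. This is the statement to be proved.

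Next I would reduce this openness to a local statement at an identity. For $\gamma_0\in Gx$ put $y_0=r(\gamma_0)$; the map $T_{\gamma_0}\colon Gy_0\rightarrow Gx$, $\delta\mapsto\delta\gamma_0$, is a homeomorphism (with inverse $\varepsilon\mapsto\varepsilon\gamma_0^{-1}$) satisfying $r|_{Gx}\circ T_{\gamma_0}=r|_{Gy_0}$ and sending the identity $y_0$ to $\gamma_0$. Since $[y_0]=[x]$ is again $G_\delta$, it suffices to prove: \emph{if the orbit of $x\in G^{0}$ is $G_\delta$, then for every open $W\subseteq G$ containing the identity $x$ the set $r[W\cap Gx]$ is a neighbourhood of $x$ in $[x]$.} This is an Effros-type assertion, and I would establish it by a Baire-category argument inside $[x]$, which is Polish --- being a $G_\delta$ subspace of the Polish space $G^{0}$ --- hence a Baire space. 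Covering $Gx$ by a countable basis of open sets and applying the range map exhibits $[x]$ as a countable union of analytic sets $r[U_n\cap Gx]$; by the Baire category theorem one of them is nonmeager in $[x]$, and since analytic sets have the Baire property it is comeager in some nonempty relatively open subset of $[x]$. Using the homeomorphisms $T_\gamma$ together with continuity and openness of the source and range maps, one transfers this ``somewhere comeager'' conclusion to the identity $x$: choosing an open symmetric $V\ni x$ with $VV$ mapping into $W$ near $x$ and intersecting two comeager subsets of the Baire space $[x]$ --- as in the classical Effros theorem, and invoking the Kuratowski--Ulam Lemma~\ref{Lemma: Kuratowski-Ulam} and the category-preservation of $r$ and $s$ already exploited in Lemma~\ref{Lemma: meager transform} --- one concludes that $r[W\cap Gx]$ contains a relative neighbourhood of $x$.

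The main obstacle is precisely this transfer step. In the Polish-group setting one moves about the orbit by the global homeomorphisms $g\colon X\rightarrow X$, which restrict to homeomorphisms of $[x]$ sending $x$ to $gx$; a Polish groupoid supplies no such global symmetry of $G^{0}$, only the fibrewise homeomorphisms $T_\gamma$ between the Polish fibres $Gy$ and $Gx$, and these must be combined with the openness of the source and range maps and with Kuratowski--Ulam to simulate the translation arguments of the classical proof. This is exactly the content of~\cite[Theorem 3.2]{ramsay_mackey-glimm_1990}, whose proof carries over to the present, not necessarily regular, setting by the remark on~\cite[p.~362]{ramsay_mackey-glimm_1990}; the lemma then follows immediately.
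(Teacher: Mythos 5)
Your proposal is correct and follows essentially the same route as the paper: after the routine reductions (well-definedness, continuity, and the equivalence of openness of $\phi_x$ with openness of $r|_{Gx}$, localized at an identity via right translations), the substantive Effros-type openness statement is obtained, exactly as in the paper, by invoking~\cite[Theorem 3.2]{ramsay_mackey-glimm_1990} together with the remark on p.~362 that the regularity assumption may be dropped. The paper simply records the lemma as an immediate consequence of that adapted theorem, so your additional scaffolding is compatible detail rather than a different argument.
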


\begin{corollary}
\label{Corollary: Effros-Ramsay}Suppose that $G$ is a Polish groupoid, $X$
is a Polish $G$-space, and $x\in X$. If the orbit $\left[ x\right] $ of $x$
is $G_{\delta }$, then the map $\phi _{x}:Gp(x)/G_{x}\rightarrow \left[ x%
\right] $ defined by $\phi _{x}\left( \pi (\gamma )\right) =\gamma x$ is a
homeomorphism.
\end{corollary}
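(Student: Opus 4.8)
The plan is to deduce the corollary from Lemma~\ref{Lemma: Effros-Ramsay} by passing to the action groupoid. Set $H = G \ltimes X$, which is a Polish groupoid by Proposition~\ref{Proposition: action groupoid}, and identify $X$ with the space of objects $H^0$ via the homeomorphism $x' \mapsto (p(x'), x')$; write $\bar x = (p(x), x) \in H^0$ for the object corresponding to the given point $x$. Since $[x]$ is $G_\delta$ in $X$ and, by Proposition~\ref{Proposition: action groupoid}, the homeomorphism $X \to H^0$ carries $[x]$ onto the $H$-orbit $[\bar x]$, the orbit $[\bar x]$ is $G_\delta$ in $H^0$, so Lemma~\ref{Lemma: Effros-Ramsay} applies to $H$ at the object $\bar x$.

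Next I would unwind the relevant pieces of $H$ at $\bar x$. Recalling that the source of $(\gamma, x) \in H$ is $\bar x$, its range is (the object corresponding to) $\gamma x$, and composition in $H$ is $(\rho, \gamma x)(\gamma, x) = (\rho\gamma, x)$, one sees that $H\bar x = \{(\gamma, x) : s(\gamma) = p(x)\}$ is carried homeomorphically onto $Gp(x)$ by the projection to the first coordinate, that under this projection the stabilizer $\bar x H \bar x = \{(\gamma, x) : s(\gamma) = p(x),\ \gamma x = x\}$ corresponds to $G_x$, and that the projection intertwines the right composition actions of these isotropy groups. Hence the quotient $H\bar x / \bar x H\bar x$ is identified, as a topological space, with $Gp(x)/G_x$, and under this identification the range map $(\gamma, x) \mapsto r(\gamma, x) = \gamma x$ becomes the map $\pi(\gamma) \mapsto \gamma x$.

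Putting these together, Lemma~\ref{Lemma: Effros-Ramsay} for the Polish groupoid $H$ and the object $\bar x$ asserts that $\phi_{\bar x} : H\bar x / \bar x H\bar x \to [\bar x]$, $\pi(\gamma, x) \mapsto r(\gamma, x)$, is a homeomorphism; transported through the identifications of the previous paragraph, this is precisely the statement that $\phi_x : Gp(x)/G_x \to [x]$, $\pi(\gamma) \mapsto \gamma x$, is a homeomorphism, which is the conclusion. I expect no genuine obstacle: the mathematical content is already contained in Ramsay's lemma, and the only point needing care is the routine bookkeeping that forming the action groupoid does not distort the topology on $Gp(x)/G_x$ — i.e.\ that the coordinate projection $Gp(x) \times \{x\} \to Gp(x)$ descends to a homeomorphism between $H\bar x/\bar x H\bar x$ and $Gp(x)/G_x$ — and this follows immediately from the homeomorphisms supplied by Proposition~\ref{Proposition: action groupoid} together with the fact that quotient maps by the respective isotropy groups are open.
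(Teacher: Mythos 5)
Your proposal is correct and follows essentially the same route as the paper: pass to the action groupoid $G\ltimes X$, observe that $\gamma\mapsto(\gamma,x)$ is a homeomorphism from $Gp(x)$ onto $\left( G\ltimes X\right) x$ carrying $G_{x}$ onto $x\left( G\ltimes X\right) x$, and invoke Lemma~\ref{Lemma: Effros-Ramsay}. The only difference is that you spell out the descent to the quotients and the openness of the quotient maps, which the paper leaves implicit.
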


\begin{proof}
Consider the action groupoid $G\ltimes X$, and let us identify $X$ with the
space of objects of $G\ltimes X$ as in Proposition~\ref{Proposition: action
groupoid}. Consider the map $\psi $ defined by 
\begin{eqnarray*}
Gp(x) &\rightarrow &\left( G\ltimes X\right) x \\
\gamma &\mapsto &(\gamma ,x)\text{.}
\end{eqnarray*}%
Observe that $\psi $ is a continuous map with continuous inverse%
\begin{eqnarray*}
\left( G\ltimes X\right) x &\rightarrow &Gp(x) \\
(\gamma ,x) &\mapsto &\gamma \text{.}
\end{eqnarray*}%
Moreover the image of $G_{x}$ under $\psi $ is precisely $x\left( G\ltimes
X\right) x$. The proof is then concluded by invoking Lemma~\ref{Lemma:
Effros-Ramsay}.
\end{proof}

\subsection{A Polish topology on quotient spaces}

Suppose in this subsection that $G$ is a Polish groupoid, which is moreover
regular. Equivalently the topology of $G$ is (globally) Polish. The
following lemma is proved in~\cite[page 362]{ramsay_mackey-glimm_1990}.

\begin{lemma}[Ramsay]
\label{Lemma: Ramsay}Suppose that $G$ is a regular Polish groupoid. If $U$
is an open subset of $G$ containing the set of objects $G^{0}$, then there
is an open subset $V$ of $G$ containing the set of objects $G^{0}$ such that 
$VV\subset U$.
\end{lemma}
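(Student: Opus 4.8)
The plan is to reduce everything to an elementary metric argument. Since a regular locally Polish space is Polish (Subsection~\ref{Subsection: locally Polish spaces}), and $G$ is assumed regular, $G$ is Polish; fix a compatible metric $d$ on $G$, bounded by $1$. For $a\in G^{0}$ let $\epsilon(a)\in(0,1]$ be the supremum of all $t>0$ such that the open ball $B_{d}(a,t)$ is contained in $U$ and every composable pair $(\gamma,\rho)$ with $\gamma,\rho\in B_{d}(a,t)$ satisfies $\gamma\rho\in U$; call such $t$ \emph{admissible for} $a$. Continuity of composition at $(a,a)\in G^{2}$, together with $m(a,a)=a\in U$ and the openness of $U$, shows that some $t>0$ is admissible for $a$, so $\epsilon(a)>0$ for every $a\in G^{0}$; and the set of admissible radii is clearly an initial segment of $(0,1]$.

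The heart of the matter is that $\epsilon\colon G^{0}\to(0,1]$ is lower semicontinuous. Given $a_{0}\in G^{0}$ and $t<\epsilon(a_{0})$, choose an admissible radius $t^{\ast}$ for $a_{0}$ with $t<t^{\ast}$ and set $\delta=(t^{\ast}-t)/3$. For $a\in G^{0}$ with $d(a,a_{0})<\delta$ one has $B_{d}(a,t+\delta)\subseteq B_{d}(a_{0},t^{\ast})$, so $t+\delta$ is admissible for $a$ and $\epsilon(a)\geq t+\delta>t$. This is the only genuinely nontrivial point: a naive attempt to produce a single radius working uniformly near all of $G^{0}$ fails because the source and range maps are not uniformly continuous on the (noncompact) space $G$, and the way around this is to let the radius depend on the base unit and to verify that this dependence is lower semicontinuous.

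Finally, I would define
\[
V=\bigl\{\gamma\in G:\ d(\gamma,s(\gamma))<\tfrac{1}{2}\epsilon(s(\gamma))\ \text{ and }\ d(\gamma,r(\gamma))<\tfrac{1}{2}\epsilon(r(\gamma))\bigr\}.
\]
Since $\gamma\mapsto d(\gamma,s(\gamma))$ and $\gamma\mapsto d(\gamma,r(\gamma))$ are continuous on $G$, while $\epsilon\circ s$ and $\epsilon\circ r$ are lower semicontinuous on $G$ (being compositions of a continuous map with a lower semicontinuous one), the set $V$ is open; and $G^{0}\subseteq V$ because $s(x)=r(x)=x$ and $d(x,x)=0$ for $x\in G^{0}$. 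To see that $VV\subseteq U$, let $\gamma,\rho\in V$ be composable and put $a=s(\gamma)=r(\rho)$. Then $d(\gamma,a)=d(\gamma,s(\gamma))<\tfrac{1}{2}\epsilon(a)$ and $d(\rho,a)=d(\rho,r(\rho))<\tfrac{1}{2}\epsilon(a)$, so $\gamma,\rho\in B_{d}(a,\tfrac{1}{2}\epsilon(a))$; picking a radius $t^{\ast}$ admissible for $a$ with $\tfrac{1}{2}\epsilon(a)<t^{\ast}<\epsilon(a)$, we get $\gamma,\rho\in B_{d}(a,t^{\ast})$ and hence $\gamma\rho\in U$. This yields the required open set $V$ with $G^{0}\subseteq V$ and $VV\subseteq U$, completing the proof.
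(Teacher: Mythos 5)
Your proof is correct. Note that the paper itself does not prove this lemma at all: it is attributed to Ramsay and quoted with a citation to page 362 of the Mackey--Glimm paper, so there is no in-paper argument to compare against; your write-up supplies a self-contained proof where the paper only cites. The argument is sound: since $G$ is regular its topology is globally Polish, so a compatible metric exists; the admissible radii at a unit $a$ form an initial segment of $(0,1]$ by definition, continuity of composition at $(a,a)$ gives $\epsilon(a)>0$, and your three-epsilon estimate correctly yields lower semicontinuity of $\epsilon$ on $G^{0}$ (every $t<\epsilon(a_0)$ is admissible, so the auxiliary $t^{\ast}$ exists, and $t+\delta<t^{\ast}\leq 1$). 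The set $V$ is open because $\epsilon\circ s$ and $\epsilon\circ r$ are lower semicontinuous while $\gamma\mapsto d(\gamma,s(\gamma))$ and $\gamma\mapsto d(\gamma,r(\gamma))$ are continuous, and the verification of $VV\subseteq U$ uses exactly the right pair of conditions (the $s$-condition for the left factor, the $r$-condition for the right factor, both measured at the common unit $a=s(\gamma)=r(\rho)$). Conceptually this is the natural generalization of the group case, where continuity of multiplication at the identity gives a single ball $V$; the genuine groupoid difficulty is, as you say, that no single radius works uniformly over the noncompact unit space, and letting the radius vary lower semicontinuously over $G^{0}$ is precisely the device that repairs this, in the same spirit as Ramsay's original local-to-global argument.
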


Fix $x\in G^{0}$. If $V$ is a neighborhood of $G^{0}$ in $G$, define the set%
\begin{equation*}
A_{V,x}=\left\{ \left( \rho ,\gamma \right) \in Gx\times Gx:\rho \gamma
^{-1}\in V\right\} \text{.}
\end{equation*}%
Observe that, if $\gamma \in Gx$, then the collection of open subsets of the
form $V\gamma $, where $V$ is an open neighborhood of $r(\gamma )$ in $G$,
is a basis of neighborhoods of $\gamma $ in $Gx$. It follows from this
observation and Lemma~\ref{Lemma: Ramsay} that the collection 
\begin{equation*}
\mathcal{U}_{x}=\left\{ A_{V,x}:V\text{ is a neighborhood of }G^{0}\text{ in 
}G\right\}
\end{equation*}%
generates a uniformity compatible with the topology of $Gx$.

Suppose now that $H$ is a closed subgroup of $xGx$, and consider the right
action of $H$ on $Gx$ by translation. Denote by $\pi $ the quotient map $%
Gx\rightarrow Gx/H$. Observe that $\pi $ is continuous and open. If $V$ is a
neighborhood of $G^{0}$ in $G$ define%
\begin{equation*}
A_{V,x,H}=\left\{ \left( \pi (\gamma ),\pi (\rho )\right) \in Gx/H\times
Gx/H:\rho h\gamma ^{-1}\in V\text{ for some }h\in xGx\right\} \text{.}
\end{equation*}%
As before the collection%
\begin{equation*}
\mathcal{U}_{x,H}=\left\{ A_{V,x,H}:V\text{ is a neighborhood of }G^{0}\text{
in }G\right\}
\end{equation*}%
generates a uniformity compatible with the topology of $Gx/H$.

\begin{proposition}
\label{Proposition: Polish quotient fiber}The quotient $Gx/H$ is a Polish
space.
\end{proposition}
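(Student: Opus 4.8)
The plan is to check that $Gx/H$ is $T_{1}$, regular, second countable, and strong Choquet, so that it is Polish by~\cite[Theorem 8.18]{kechris_classical_1995}. First I record the basic properties of the quotient map $\pi :Gx\rightarrow Gx/H$: it is a continuous open surjection, openness holding since $\pi ^{-1}\left[ \pi \left[ U\right] \right] =UH=\bigcup_{h\in H}Uh$ is open in $Gx$ whenever $U$ is, because $\gamma \mapsto \gamma h$ is a homeomorphism of $Gx$ onto itself for every $h\in xGx$. Since $Gx$ is Polish, and in particular second countable, it follows that $Gx/H$ is second countable. Next, $Gx/H$ is $T_{1}$: the map $\gamma \mapsto \gamma h$ identifies $xGx$ homeomorphically with the closed subspace $Gx\cap r^{-1}\left[ \left\{ r(\gamma )\right\} \right] $ of $Gx$, so, $H$ being closed in $xGx$, the coset $\gamma H=\pi ^{-1}\left[ \left\{ \pi (\gamma )\right\} \right] $ is closed in $Gx$ and hence $\left\{ \pi (\gamma )\right\} $ is closed in $Gx/H$. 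Finally $Gx/H$ is regular: by the construction preceding the statement, $\mathcal{U}_{x,H}$ is a uniformity compatible with the topology of $Gx/H$, so $Gx/H$ is completely regular, and being also $T_{1}$ it is regular.

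The substantive point is that $Gx/H$ is strong Choquet, which I would obtain from the general fact that a continuous open image of a strong Choquet space is strong Choquet, applied to $\pi $ and to $Gx$ (which is strong Choquet because it is Polish). A winning strategy for player II in the strong Choquet game on $Gx/H$ is produced by lifting a winning strategy $\tau $ for player II on $Gx$ along $\pi $, as follows. Inductively, suppose that after player I's $n$-th move $\left( w_{n},U_{n}\right) $ the set $U_{n}$ is contained in player II's previous move $\pi \left[ \widetilde{V}_{n-1}\right] $ (with $\widetilde{V}_{-1}=Gx$). Then $w_{n}\in \pi \left[ \widetilde{V}_{n-1}\right] $, so one may pick $z_{n}\in \pi ^{-1}\left[ \left\{ w_{n}\right\} \right] \cap \widetilde{V}_{n-1}$; set $\widetilde{U}_{n}=\pi ^{-1}\left[ U_{n}\right] \cap \widetilde{V}_{n-1}$, an open set with $z_{n}\in \widetilde{U}_{n}\subseteq \widetilde{V}_{n-1}$; feed $\left( z_{n},\widetilde{U}_{n}\right) $ to $\tau $ to obtain $\widetilde{V}_{n}$ with $z_{n}\in \widetilde{V}_{n}\subseteq \widetilde{U}_{n}$; and play $\pi \left[ \widetilde{V}_{n}\right] $. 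This yields a legal run of the strong Choquet game on $Gx$ that is consistent with $\tau $, so $\bigcap_{n}\widetilde{U}_{n}\neq \varnothing $; and for any $z$ in this intersection, $\pi (z)\in \bigcap_{n}\pi \left[ \widetilde{U}_{n}\right] \subseteq \bigcap_{n}U_{n}$, so player II wins.

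Having checked all four conditions, \cite[Theorem 8.18]{kechris_classical_1995} gives that $Gx/H$ is Polish. I expect the strong Choquet argument to be the only real step; the rest is bookkeeping, using only that $Gx$ and $xGx$ are Polish by the definition of a Polish groupoid and that $\mathcal{U}_{x,H}$ has already been shown to be a compatible uniformity. (One could instead try to prove complete metrizability directly by checking that $\mathcal{U}_{x,H}$ has a countable base and is complete; the strong Choquet route has the advantage of not requiring a countable neighbourhood basis of $G^{0}$ in $G$, which need not exist.)
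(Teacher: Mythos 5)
Your proof is correct, but it takes a genuinely different route from the paper's. The paper's argument is two lines: the topology of $Gx/H$ is induced by the (asserted to be countably generated) uniformity $\mathcal{U}_{x,H}$, hence is metrizable, and then one invokes \cite[Theorem 2.2.9]{gao_invariant_2009} --- essentially Hausdorff's theorem that a metrizable continuous open image of a Polish space is Polish. You instead verify the internal characterization from \cite[Theorem 8.18]{kechris_classical_1995} directly: second countability and $T_{1}$ by elementary arguments (closedness of the cosets $\gamma H$, which you correctly reduce to closedness of $H$ in $xGx$ via the homeomorphism $h\mapsto \gamma h$ of $xGx$ onto $r(\gamma )Gx$ --- note the variable should be $h$, not $\gamma$, in your statement), regularity from uniformizability, and the strong Choquet property by lifting a winning strategy for player II along the open surjection $\pi$. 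The strategy-lifting argument is carried out correctly and is the standard proof that the strong Choquet property passes to continuous open images; in effect you are unwinding the proof of the theorem the paper cites. What your route buys is precisely what your closing parenthesis identifies: you use $\mathcal{U}_{x,H}$ only for complete regularity, which needs no countability hypothesis, whereas the paper's metrizability step tacitly requires the uniformity to have a countable base, i.e.\ a countable cofinal family of neighborhoods of $G^{0}$ in $G$, which is not justified in the text (it could be repaired by instead deducing metrizability from Urysohn's metrization theorem, the quotient being regular, $T_{1}$ and second countable). What the paper's route buys is brevity, by black-boxing the open-image theorem rather than reproving it through the Choquet game.
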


\begin{proof}
The topology on $Gx/H$ is induced by a countably generated uniformity, and
hence it is metrizable. Since the quotient map $\pi :Gx\rightarrow Gx/H$ is
continuous and open, it follows from~\cite[Theorem 2.2.9]{gao_invariant_2009}
that $Gx/H$ is Polish.
\end{proof}

\begin{proposition}
\label{Proposition: Effros Polish}Suppose that $G$ is a regular Polish
groupoid, and $x\in G^{0}$. Denote by $\pi $ the quotient map%
\begin{equation*}
\pi :Gx\rightarrow Gx/xGx\text{.}
\end{equation*}%
The following statements are equivalent:

\begin{enumerate}
\item The orbit $\left[ x\right] $ of $x$ is a $G_{\delta }$ subset of $%
G^{0} $;

\item The map $\phi _{x}:Gx/xGx\rightarrow \left[ x\right] $ defined by $%
\phi _{x}\left( \pi (\gamma )\right) =r(\gamma )$ is a homeomorphism.
\end{enumerate}
\end{proposition}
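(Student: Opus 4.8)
The plan is to prove the equivalence by establishing $(1)\Rightarrow(2)$ via Lemma~\ref{Lemma: Effros-Ramsay} (which, as noted, already applies to not necessarily regular Polish groupoids), and then proving the converse $(2)\Rightarrow(1)$ using the regularity hypothesis together with Proposition~\ref{Proposition: Polish quotient fiber}. The forward direction is essentially immediate: Lemma~\ref{Lemma: Effros-Ramsay} asserts precisely that if $[x]$ is $G_\delta$, then $\phi_x\colon Gx/xGx\to[x]$ is a homeomorphism, so there is nothing more to do for $(1)\Rightarrow(2)$.

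For $(2)\Rightarrow(1)$, the idea is to transport a completeness property along $\phi_x$. First I would observe that $\phi_x$ is always a continuous bijection from $Gx/xGx$ onto $[x]$, regardless of the hypothesis: it is well-defined and injective because two elements $\gamma,\delta\in Gx$ satisfy $r(\gamma)=r(\delta)$ iff $\gamma\delta^{-1}\in xGx$ iff $\pi(\gamma)=\pi(\delta)$; it is continuous because $r$ is continuous and $\pi$ is a quotient map; and it is surjective by definition of the orbit $[x]=\{r(\gamma):\gamma\in Gx\}$. Now by Proposition~\ref{Proposition: Polish quotient fiber} the space $Gx/xGx$ is Polish (this is where regularity of $G$ enters, so that Lemma~\ref{Lemma: Ramsay} is available and the uniformity argument goes through). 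If $\phi_x$ is a homeomorphism, then $[x]$ carries a Polish topology coinciding with the subspace topology inherited from $G^0$. The final step is then to invoke the classical fact that a subspace of a Polish space which is Polish in the subspace topology is necessarily $G_\delta$ (\cite[Theorem 3.11]{kechris_classical_1995}), which yields exactly statement $(1)$.

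The main obstacle I anticipate is a bookkeeping one rather than a conceptual one: one must be careful that the topology witnessing Polishness of $[x]$ in Proposition~\ref{Proposition: Polish quotient fiber} is transported by $\phi_x$ to the \emph{relative} topology on $[x]\subset G^0$, not some a priori finer topology. Since $\phi_x$ is assumed to be a homeomorphism in $(2)$ with $[x]$ carrying exactly the subspace topology, this is automatic — but it is worth stating explicitly, since the whole content of the converse is that homeomorphism plus the (regular-groupoid) Polishness of the quotient forces the $G_\delta$ conclusion. One could alternatively package both implications by noting they both reduce, via Corollary~\ref{Corollary: Effros-Ramsay} applied to the action groupoid or directly via Lemma~\ref{Lemma: Effros-Ramsay}, to the chain ``$[x]$ is $G_\delta$'' $\Leftrightarrow$ ``$[x]$ is Polish in the subspace topology'' $\Leftrightarrow$ ``$\phi_x$ is a homeomorphism'', but I would keep the two implications separate for clarity, handling $(1)\Rightarrow(2)$ by citing Lemma~\ref{Lemma: Effros-Ramsay} and $(2)\Rightarrow(1)$ by the Polishness-plus-$G_\delta$ argument above.
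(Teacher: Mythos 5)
Your proof is correct and follows essentially the same route as the paper: $(1)\Rightarrow(2)$ is exactly Lemma~\ref{Lemma: Effros-Ramsay}, and $(2)\Rightarrow(1)$ combines Polishness of $Gx/xGx$ from Proposition~\ref{Proposition: Polish quotient fiber} with the fact that a subspace of a Polish space that is Polish in the relative topology is $G_{\delta}$. (One cosmetic point: since the quotient is by the \emph{right} action of $xGx$ on $Gx$, the injectivity computation should read $r(\gamma)=r(\delta)$ iff $\delta^{-1}\gamma\in xGx$, rather than $\gamma\delta^{-1}\in xGx$.)
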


\begin{proof}
The quotient space $Gx/xGx$ is Polish by Proposition~\ref{Proposition:
Polish quotient fiber}. Therefore if $\phi _{x}$ is a homeomorphism, then $%
\left[ x\right] $ is Polish, and hence a $G_{\delta }$ subset of $G^{0}$ by 
\cite[Theorem 3.11]{kechris_classical_1995}. The converse implication
follows from Lemma~\ref{Lemma: Effros-Ramsay}.
\end{proof}

\subsection{\texorpdfstring{$G_\delta $}{Gdelta} orbits}

\begin{lemma}
\label{Lemma: generic ergodicity}Suppose that $G$ is a Polish groupoid, and $%
\left( U_{n}\right) _{n\in \omega }$ is an enumeration of a basis of
nonempty open subsets of $G^{0}$. If $G$ has a dense orbit, then every
element of $\bigcap_{n}\left[ U_{n}\right] $ has dense orbit.
\end{lemma}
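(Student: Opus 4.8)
The plan is to observe that $\bigcap_n[U_n]$ is \emph{precisely} the set of points of $G^0$ whose orbit is dense, so the statement is essentially a reformulation once two elementary facts about saturations of open sets are in place. Throughout, the action in question is the standard action of $G$ on $G^0$, for which the anchor map is the identity and $\gamma\cdot s(\gamma)=r(\gamma)$; in particular, for $A\subseteq G^{0}$ one has $[A]=r\big[s^{-1}[A]\big]$.

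First I would record those two facts. Since the source map is continuous and the range map is open (part of the definition of a Polish groupoid), $[U]=r\big[s^{-1}[U]\big]$ is an open subset of $G^{0}$ for every open $U\subseteq G^{0}$. Secondly, using that every arrow is invertible, $x\in[U]$ if and only if $[x]\cap U\neq\varnothing$: if $x=r(\gamma)$ with $s(\gamma)\in U$ then $s(\gamma)=\gamma^{-1}x\in[x]\cap U$; conversely, if $b\in[x]\cap U$, say $b=\delta x$ with $s(\delta)=x$, then $x=r(\delta^{-1})$ with $s(\delta^{-1})=b\in U$, so $x\in[U]$.

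Now suppose $G$ has a dense orbit $[y]$. For each $n$, the dense set $[y]$ meets the nonempty open set $U_n$, so $y\in[U_n]$; as $[U_n]$ is a union of orbits it then contains $[y]$ and is therefore open and dense. By the Baire category theorem $\bigcap_n[U_n]$ is a dense $G_{\delta}$, and in particular nonempty. Finally, fix any $x\in\bigcap_n[U_n]$. By the equivalence above, $[x]\cap U_n\neq\varnothing$ for every $n$; since $(U_n)_{n\in\omega}$ enumerates a basis of nonempty open subsets of $G^{0}$, the orbit $[x]$ meets every nonempty open subset of $G^{0}$, i.e.\ $[x]$ is dense. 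There is no serious obstacle here: the only points requiring (minor) care are the openness of $[U_n]$, which rests on openness of the range map, and the identification $x\in[U]\iff[x]\cap U\neq\varnothing$, which rests on invertibility of arrows; the hypothesis that $G$ has a dense orbit is used only to guarantee (via Baire category) that $\bigcap_n[U_n]$ is comeager, hence that the conclusion is non-vacuous.
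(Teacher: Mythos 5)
Your proof is correct, and it is exactly the ``immediate'' argument the paper has in mind (the paper omits the proof, noting only that $[U_n]=r\bigl[s^{-1}[U_n]\bigr]$): the key identification $x\in[U]\iff[x]\cap U\neq\varnothing$ reduces the lemma to the observation that membership in $\bigcap_n[U_n]$ says precisely that the orbit meets every basic open set. Your additional remarks (openness of $[U_n]$ via openness of the range map, and the Baire category consequence) are accurate and consistent with how the lemma is used later in the paper.
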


The proof of Lemma~\ref{Lemma: generic ergodicity} is immediate. Recall that 
$\left[ U_{n}\right] $ denotes the $G$-saturation $r\left[ s^{-1}\left[ U_{n}%
\right] \right] $ of $U_{n}$.

\begin{lemma}
\label{Lemma: closure}Suppose that $G$ is a Polish groupoid. Define the
equivalence relation $\overline{E}$ on $G^{0}$ by $(x,y)\in \overline{E}$
iff the orbits of $x$ and $y$ have the same closure. The equivalence
relation $\overline{E}$ is $G_{\delta }$ and contains $E_{G}$.
\end{lemma}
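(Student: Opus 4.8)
The plan is to prove the two assertions separately: the containment $E_{G}\subseteq\overline{E}$ is immediate, while the $G_{\delta}$ claim reduces, after a short computation with basic open sets, to an elementary fact about metrizable spaces. For the containment: if $(x,y)\in E_{G}$ then $x$ and $y$ lie in the same orbit, so $[x]=[y]$, hence $\overline{[x]}=\overline{[y]}$, i.e.\ $(x,y)\in\overline{E}$.

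For the complexity I would fix an enumeration $(U_{n})_{n\in\omega}$ of a basis of nonempty open subsets of $G^{0}$ and work with the $G$-saturations $[U_{n}]=r[s^{-1}[U_{n}]]$. Each $[U_{n}]$ is \emph{open}: $s^{-1}[U_{n}]$ is open since the source map is continuous, and its image under $r$ is open since the range map of a Polish groupoid is open. The first observation is that, for $x\in G^{0}$, one has $x\in[U_{n}]$ if and only if $[x]\cap U_{n}\neq\varnothing$ if and only if $\overline{[x]}\cap U_{n}\neq\varnothing$: the middle equivalence is obtained by inverting an arrow $\gamma$ witnessing $s(\gamma)=x$, $r(\gamma)\in U_{n}$ (using that $[x]$ is never empty, since it contains the identity at $p(x)$), and the last equivalence just uses that $U_{n}$ is open. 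The second observation is the elementary fact that two closed subsets of a topological space coincide exactly when they meet the same members of a fixed basis (for the nontrivial direction, if $z\in\overline{[x]}\setminus\overline{[y]}$, choose a basic $U_{n}\ni z$ disjoint from the closed set $\overline{[y]}$). Combining the two observations, $(x,y)\in\overline{E}$ if and only if for every $n$ we have $x\in[U_{n}]\Leftrightarrow y\in[U_{n}]$, so that
\[
\overline{E}=\bigcap_{n\in\omega}\Big(\big([U_{n}]\times[U_{n}]\big)\cup\big((G^{0}\setminus[U_{n}])\times(G^{0}\setminus[U_{n}])\big)\Big).
\]

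It then remains to check that each set $D_{n}$ in this intersection is $G_{\delta}$ in the Polish space $G^{0}\times G^{0}$. Its complement is $\big([U_{n}]\times(G^{0}\setminus[U_{n}])\big)\cup\big((G^{0}\setminus[U_{n}])\times[U_{n}]\big)$, a union of two sets each of which is the intersection of an open set with a closed set; since in a metrizable space every open set is $F_{\sigma}$, each of these is $F_{\sigma}$, hence so is their union, hence $D_{n}$ is $G_{\delta}$. A countable intersection of $G_{\delta}$ sets is $G_{\delta}$, which finishes the proof. The only step requiring genuine care is the first observation above — rendering membership in the open set $[U_{n}]$ as a statement about the orbit, where invertibility of arrows is used; everything else is routine descriptive set theory.
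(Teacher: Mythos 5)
Your proof is correct and follows the same route as the paper: the paper's (very terse) proof also fixes a countable basis $(U_n)$ of $G^0$ and observes that $(x,y)\in\overline{E}$ iff for all $n$, $x\in[U_n]\Leftrightarrow y\in[U_n]$, from which the $G_\delta$ claim follows. You have simply supplied the routine details (openness of the saturations $[U_n]$, the passage from orbits to their closures, and the $F_\sigma$/$G_\delta$ bookkeeping) that the paper leaves implicit.
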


\begin{proof}
Suppose that $\left( U_{n}\right) _{n\in \omega }$ is an enumeration of a
countable open basis of $G^{0}$. We have that $(x,y)\in \overline{E}$ if and
only if $\forall n\in \omega $, $x\in \left[ U_{n}\right] $ iff $y\in \left[
U_{n}\right] $. It follows that $\overline{E}$ is $G_{\delta }$.
\end{proof}

\begin{lemma}
\label{Lemma: T0 quotient}Suppose that $G$ is a Polish groupoid such that
every orbit of $G$ is $G_{\delta }$. If $x,y\in G^{0}$ are such that $\left[
x\right] \neq \left[ y\right] $ and $\left[ y\right] \cap \overline{\left[ x%
\right] }\neq \varnothing $ then $\overline{\left[ y\right] }\cap \left[ x%
\right] =\varnothing $. Equivalently the quotient space $G^{0}\left/
E_{G}\right. $ is $T_{0}$
\end{lemma}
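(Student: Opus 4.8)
The plan is to prove the dichotomy by contradiction, the only essential ingredient being the Baire category theorem applied inside the common closure of the two orbits. So suppose $[x]\neq[y]$, that $[y]\cap\overline{[x]}\neq\varnothing$, and --- aiming for a contradiction --- that also $\overline{[y]}\cap[x]\neq\varnothing$. First I would upgrade these two facts to $\overline{[x]}=\overline{[y]}$. This uses that the closure of an orbit is $G$-invariant, so that $E_{G}$-equivalent points have the same orbit closure; that is exactly the assertion $E_{G}\subseteq\overline{E}$ contained in Lemma~\ref{Lemma: closure}. Indeed, picking $z\in[y]\cap\overline{[x]}$ gives $\overline{[y]}=\overline{[z]}\subseteq\overline{[x]}$, and symmetrically $\overline{[x]}\subseteq\overline{[y]}$; hence $\overline{[x]}=\overline{[y]}=:F$.

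Now $F$ is a closed subspace of the Polish space $G^{0}$, hence itself Polish, hence a Baire space. By hypothesis every orbit of $G$ is $G_{\delta}$ in $G^{0}$, so $[x]$ and $[y]$ are $G_{\delta}$ in $F$; and they are dense in $F$ by construction. A dense $G_{\delta}$ subset of a Baire space is comeager, so $[x]$ and $[y]$ are both comeager in the nonempty space $F$, whence $[x]\cap[y]\neq\varnothing$. But distinct orbits are disjoint, so $[x]=[y]$, contradicting our assumption. This establishes the first assertion.

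For the equivalence with $T_{0}$: a subset of $G^{0}/E_{G}$ is open precisely when its preimage under the quotient map $q\colon G^{0}\to G^{0}/E_{G}$ is open, and such preimages are exactly the $G$-invariant open subsets of $G^{0}$ (every $G$-invariant open set being its own saturation $[U]=r[s^{-1}[U]]$). Assuming the dichotomy, and given distinct orbits $[x],[y]$, one of $[y]\cap\overline{[x]}$, $[x]\cap\overline{[y]}$ is empty, say $[x]\cap\overline{[y]}=\varnothing$; then $G^{0}\setminus\overline{[y]}$ is a $G$-invariant open set containing $x$ but not $y$ (here $\overline{[y]}$ is invariant), and it descends to an open subset of $G^{0}/E_{G}$ separating $[x]$ from $[y]$. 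Conversely, if $G^{0}/E_{G}$ is $T_{0}$ and $[x]\neq[y]$ with $[y]\cap\overline{[x]}\neq\varnothing$, I would choose a $G$-invariant open $W$ containing exactly one of $x,y$; since $G^{0}\setminus W$ is closed, invariant and contains the closure of any orbit it meets, the case $y\in W$ would force $[y]\cap\overline{[x]}=\varnothing$, so in fact $x\in W$ and $y\notin W$, giving $\overline{[y]}\subseteq G^{0}\setminus W$ and hence $\overline{[y]}\cap[x]=\varnothing$.

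I do not anticipate a serious obstacle. The one point to handle with a little care is the $G$-invariance of orbit closures in this possibly non-Hausdorff setting; I would either quote it from Lemma~\ref{Lemma: closure} as above, or verify it directly using that the source map is open together with the identity $s[r^{-1}[U]]=r[s^{-1}[U]]$ (obtained from the inversion homeomorphism), which also identifies $s[r^{-1}[U]]$ with the saturation $[U]$. Everything else is routine descriptive-set-theoretic bookkeeping.
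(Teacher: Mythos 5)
Your proof is correct and rests on exactly the same idea as the paper's: pass to the common orbit closure, which is Polish, and observe that two disjoint orbits cannot both be dense $G_{\delta}$ (hence comeager) there; the paper phrases this directly by finding a basic open set $U_{n}$ missed by $\left[ y\right]$, while you argue by contradiction after symmetrizing to $\overline{\left[ x\right] }=\overline{\left[ y\right] }$, but the Baire-category engine is identical. Your explicit verification of the equivalence with the $T_{0}$ property of $G^{0}/E_{G}$, which the paper asserts without proof, is also correct.
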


\begin{proof}
After replacing $G$ with the restriction of $G$ to $\overline{\left[ x\right]
}$ we can assume that $\overline{\left[ y\right] }\subset \overline{\left[ x%
\right] }=X$. Denote by $\left( U_{n}\right) _{n\in \omega }$ an enumeration
of a basis of nonempty open subsets of $G^{0}$. By Lemma~\ref{Lemma: generic
ergodicity}, every element of $\bigcap_{n}\left[ U_{n}\right] $ has dense
orbit. Since $\left[ x\right] \cap \left[ y\right] =\varnothing $, $\left[ y%
\right] $ is not dense in $X$ (otherwise it would be comeager and it would
intersect $\left[ x\right] $). It follows that, for some $n\in \omega $, $%
y\notin \left[ U_{n}\right] $ and hence $\left[ y\right] \cap
U_{n}=\varnothing $. This shows that $\overline{\left[ y\right] }\subset
X\backslash U_{n}$. On the other hand $U_{n}$ is invariant dense open and $%
\left[ x\right] $ is comeager, hence $\left[ x\right] \subset U_{n}$. This
shows that $\overline{\left[ y\right] }\cap \left[ x\right] =\varnothing $.
\end{proof}

\begin{lemma}
\label{Lemma: Gdelta smooth}Suppose that $G$ is a Polish groupoid, and $X$
is a Polish $G$-space. If every orbit is $G_{\delta }$, then $E_{G}^{X}$ is
smooth.
\end{lemma}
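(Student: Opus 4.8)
The plan is to show that the orbit equivalence relation $E_G^X$ is smooth by exhibiting a Borel reduction to equality in a Polish space, using the family of $G$-invariant open sets as a separating family. The key point is that when every orbit is $G_\delta$, distinct orbits are topologically distinguishable in a strong enough sense — essentially the $T_0$ phenomenon from Lemma~\ref{Lemma: T0 quotient} — and this can be upgraded to a genuine Borel reduction.

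\textbf{First}, I would reduce to the standard action $G \curvearrowright G^0$ using Proposition~\ref{Proposition: action groupoid}, as is done throughout this section; so without loss of generality $X = G^0$. \textbf{Next}, fix a countable basis $(U_n)_{n\in\omega}$ of nonempty open subsets of $G^0$ and consider the $G$-saturations $[U_n] = r[s^{-1}[U_n]]$, which are $G$-invariant open (in fact each $[U_n]$ is Borel, being the image under the open continuous range map of the open set $s^{-1}[U_n]$, hence analytic with the Baire property, but openness of the saturation follows since $r$ is an open map). Define $f : G^0 \to 2^\omega$ by $f(x) = (n \mapsto [x \in [U_n]])$. This is a Borel map into a Polish space. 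Clearly $x \mathbin{E_G} y$ implies $[x] = [y]$ implies $f(x) = f(y)$, since each $[U_n]$ is invariant.

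\textbf{The hard part} is the converse: if $f(x) = f(y)$ then $x \mathbin{E_G} y$. Here I would argue as follows. Suppose $f(x) = f(y)$; this says $x$ and $y$ lie in exactly the same sets $[U_n]$, which is to say $\overline{[x]} = \overline{[y]}$ — i.e.\ $(x,y) \in \overline{E}$ in the notation of Lemma~\ref{Lemma: closure} — because the closure of an orbit is determined by which basic open sets it meets, and $[x]$ meets $U_n$ iff $x \in [U_n]$. Now restrict $G$ to the common closure $Z = \overline{[x]} = \overline{[y]}$; within $G_{|Z}$ both $[x]$ and $[y]$ are dense orbits. Since every orbit of $G$ (hence of $G_{|Z}$) is $G_\delta$, by Theorem~\ref{Theorem: Effros} each of $[x],[y]$ is nonmeager in $Z$, hence comeager in $Z$ (a dense $G_\delta$ in a Baire space is comeager). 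Two comeager sets in the Baire space $Z$ must intersect, so $[x] \cap [y] \neq \varnothing$, i.e.\ $x \mathbin{E_G} y$.

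\textbf{To finish}, observe that $f$ is a Borel map witnessing $E_G^X \leq_B {=_{2^\omega}}$, so $E_G^X$ is smooth by definition. \textbf{The main obstacle} I anticipate is verifying cleanly that the saturations $[U_n]$ are Borel (or at least that $f$ is Borel) in the groupoid setting — one wants to invoke that $r$ is open and continuous and that $s^{-1}[U_n]$ is open, so $[U_n] = r[s^{-1}[U_n]]$ is open, making $f$ continuous; alternatively one can cite Proposition~\ref{Proposition: Borel classes} to see the orbits, and hence the invariant sets involved, are Borel. The rest is a matter of assembling Theorem~\ref{Theorem: Effros}, the Baire category fact that two dense $G_\delta$ orbits in a Baire space intersect, and the observation that sharing all the $[U_n]$-memberships is equivalent to having the same orbit closure.
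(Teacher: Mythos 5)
Your proof is correct and is essentially the paper's argument: the paper reduces to the standard action and maps $x\mapsto \overline{[x]}\in F(G^0)$, observing that $[x]=[y]$ iff the closures agree (by the same dense-$G_\delta$-orbits-are-comeager intersection argument) and that Borelness follows from $\{x:\overline{[x]}\cap U\neq\varnothing\}=[U]=r[s^{-1}[U]]$ being open. Your map into $2^\omega$ via the saturations $[U_n]$ is just this reduction composed with the standard coding of the Effros Borel structure by a countable basis, so the two proofs are the same in substance.
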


\begin{proof}
By Proposition~\ref{Proposition: action groupoid} we can assume that $%
X=G^{0} $ and $G\curvearrowright G^{0}$ is the standard action. Observe that
if $x,y\in G^{0}$, then $\left[ x\right] =\left[ y\right] $ if and only if $%
\left[ x\right] $ and $\left[ y\right] $ have the same closure. This shows
that the map $x\mapsto \overline{\left[ x\right] }$ from $G^{0}$ to the
space $F(G^{0})$ of closed subsets of $x$ endowed with the Effros Borel
structure is a reduction from $E_{G}^{X}$ to equality in $G^{0}$. It remains
to observe that such a map is Borel. In fact if $U$ is an open subset of $%
G^{0}$, then%
\begin{equation*}
\left\{ x\in G^{0}:\overline{\left[ x\right] }\cap U\neq \varnothing
\right\} =\left[ U\right] =r\left[ s^{-1}\left[ U\right] \right]
\end{equation*}%
is open.
\end{proof}

\begin{proposition}
\label{Proposition: Gdelta orbits}Suppose that $G$ is a Polish groupoid, and 
$X$ is a Polish $G$-space. The following statements are equivalent:

\begin{enumerate}
\item Every orbit is $G_{\delta }$;

\item The orbit equivalence relation $E_{G}^{X}$ is $G_{\delta }$;

\item The quotient space $X/E_{G}^{X}$ is $T_{0}$.

\item The quotient topology generates the quotient Borel structure
\end{enumerate}
\end{proposition}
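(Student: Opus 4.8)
The plan is to first reduce to the standard action. By Proposition~\ref{Proposition: action groupoid} the homeomorphism $x\mapsto (p(x),x)$ between $X$ and the object space of $G\ltimes X$ conjugates $E_G^X$ to $E_{G\ltimes X}$, hence descends to a homeomorphism of the quotient spaces and carries $G_\delta$ orbits to $G_\delta$ orbits; thus all four statements transfer, and we may assume $X=G^0$ with $G\curvearrowright G^0$ the standard action. In this setting I would establish the implications $(2)\Rightarrow(1)$, $(1)\Rightarrow(3)$, $(1)\Rightarrow(2)$, $(3)\Rightarrow(1)$, $(1)\Rightarrow(4)$, and $(4)\Rightarrow(3)$, which together give the equivalence.

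The implications not involving $(4)$ are short. For $(2)\Rightarrow(1)$: if $E_G\subseteq G^0\times G^0$ is $G_\delta$, then each orbit $[x]=(E_G)_x$ is a section of a $G_\delta$ set, hence $G_\delta$. For $(1)\Rightarrow(3)$ there is nothing to add to Lemma~\ref{Lemma: T0 quotient}. For $(1)\Rightarrow(2)$: Lemma~\ref{Lemma: closure} gives that the relation $\overline E$ of ``having the same orbit closure'' is $G_\delta$ and contains $E_G$; by $(1)\Rightarrow(3)$ the quotient $G^0/E_G$ is $T_0$, so two orbits with the same closure are topologically indistinguishable points of $G^0/E_G$ and hence equal, i.e.\ $\overline E=E_G$, which is therefore $G_\delta$. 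For $(3)\Rightarrow(1)$: fix $x$ and replace $G$ by its restriction to the closed invariant set $\overline{[x]}$, which is again a Polish groupoid whose quotient, being a subspace of $G^0/E_G$, is still $T_0$; so we may assume $[x]$ is dense. By Theorem~\ref{Theorem: Effros} it is enough that $[x]$ be nonmeager in $G^0$ (for a dense set with the Baire property this is equivalent to being nonmeager in itself). If $[x]$ were meager, then, $(U_n)$ being a countable basis of $G^0$, each $[U_n]$ is open dense invariant, so $\bigcap_n[U_n]$ is comeager and meets $G^0\setminus[x]$; choosing $y$ there, Lemma~\ref{Lemma: generic ergodicity} yields $\overline{[y]}=G^0=\overline{[x]}$ with $[x]\neq[y]$, contradicting $T_0$.

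The main obstacle is $(1)\Rightarrow(4)$. The plan is to analyse the map $\theta_0:x\mapsto\overline{[x]}$ from $G^0$ to $F(G^0)$. By Lemma~\ref{Lemma: Gdelta smooth} it is a Borel reduction of $E_G$ to equality, and by $(1)\Rightarrow(3)$ distinct orbits have distinct closures, so $\theta_0$ descends to an \emph{injective} Borel map $\theta:(G^0/E_G,\text{quotient Borel structure})\to F(G^0)$. Here the quotient is standard Borel: $E_G$ is smooth by Lemma~\ref{Lemma: Gdelta smooth} and idealistic by Proposition~\ref{Proposition: idealistic}, hence has a Borel selector by Corollary~\ref{Corollary: Borel selector}, whose fixed-point set is a Borel transversal Borel-isomorphic to the quotient; and $F(G^0)$ is standard Borel by the Appendix. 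By the Lusin--Souslin theorem~\cite[Theorem 15.1]{kechris_classical_1995} the image $Y=\theta(G^0/E_G)$ is Borel in $F(G^0)$ and $\theta$ is a Borel isomorphism onto $Y$ with its subspace Effros structure. On the other hand, equip $F(G^0)$ with the lower Vietoris topology $\tau_V$ generated by the sets $\langle U\rangle=\{F:F\cap U\neq\varnothing\}$ for $U$ in a countable basis of $G^0$; this is second countable, its Borel $\sigma$-algebra is exactly the Effros Borel structure, and for every open invariant $W\subseteq G^0$ one checks, using that $\overline{[x]}$ meets an open set iff $[x]$ does, that $\theta(\pi(W))=\langle W\rangle\cap Y$. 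Since every quotient-open set has the form $\pi(W)$ with $W$ open invariant, $\theta$ is a homeomorphism of $(G^0/E_G,\text{quotient topology})$ onto $(Y,\tau_V|_Y)$. Chasing the two descriptions of $\theta$ then identifies the Borel $\sigma$-algebra of the quotient topology with the quotient Borel structure, which is $(4)$.

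Finally, $(4)\Rightarrow(3)$: by Proposition~\ref{Proposition: Borel classes} the orbits are Borel, so the singleton $\{[x]\}$ has Borel preimage $[x]$ and the quotient Borel structure separates points of $G^0/E_G$; if that $\sigma$-algebra is generated by the quotient topology, then the quotient topology separates points by Borel sets, so no two distinct points are topologically indistinguishable, i.e.\ $G^0/E_G$ is $T_0$. Since $(4)\Rightarrow(3)\Rightarrow(1)$ and $(1)\Rightarrow(2),(3),(4)$, all four conditions are equivalent. I expect the purely topological bookkeeping in $(1)\Rightarrow(4)$ — verifying that $\theta$ is simultaneously a Borel isomorphism onto a Borel subset and a homeomorphism onto the same set in the Vietoris topology — to be the delicate point; everything else reduces to the lemmas already proved.
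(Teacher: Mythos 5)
Your proof is correct, but it diverges from the paper's at two points. For $(3)\Rightarrow(1)$ the paper simply writes $[x]=\bigcap\{\pi^{-1}[U_n]:\pi(x)\in U_n\}$ for a countable basis $(U_n)$ of the (second countable, $T_0$) quotient, whereas you restrict to $\overline{[x]}$ and run a Baire-category argument through Theorem~\ref{Theorem: Effros} and Lemma~\ref{Lemma: generic ergodicity}; your version is longer but makes explicit why $T_0$-ness of the quotient forces each orbit to be nonmeager in its closure, a point the paper's one-line formula leaves to the reader. For the implication toward (4) the paper invokes Mackey's theorem that a countably generated separating sub-$\sigma$-algebra of an analytic Borel structure is the whole structure --- the sub-$\sigma$-algebra being the one generated by the second countable, point-separating quotient topology --- while you construct an explicit simultaneous Borel and topological isomorphism $\overline{[x]}\mapsto\theta(\pi(x))$ of the quotient onto a Borel subset of $F(G^{0})$ carrying the lower Vietoris topology. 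Your route is self-contained and exhibits the quotient concretely as a subspace of the Effros space, at the cost of needing the smooth-plus-idealistic machinery (Corollary~\ref{Corollary: Borel selector}) to see that the quotient Borel structure is standard before Lusin--Souslin can be applied; the paper's appeal to Mackey's theorem bypasses all of this. Your remaining implications ($(2)\Rightarrow(1)$, $(1)\Rightarrow(3)$ via Lemma~\ref{Lemma: T0 quotient}, $(4)\Rightarrow(3)$) match the paper's, except that your $(1)\Rightarrow(2)$ deduces equality of orbits with equal closures from $T_0$-ness of the quotient rather than from the paper's direct observation that two dense $G_{\delta}$ orbits in a common closure are comeager there and hence intersect; both arguments are sound.
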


\begin{proof}
In view of Proposition~\ref{Proposition: action groupoid} we can assume
without loss of generality that $X=G^{0}$ and $G\curvearrowright G^{0}$ is
the standard action.

\begin{description}
\item[(1)$\Rightarrow $(2)] Consider the equivalence relation $\overline{E}$
defined as in~\ref{Lemma: closure}. Suppose that $x,y\in X$ are such that $%
(x,y)\in \overline{E}$. It follows that $\left[ x\right] $ and $\left[ y%
\right] $ are both dense subsets of $Y=\overline{\left[ x\right] }=\overline{%
\left[ y\right] }$. Since both the orbit of $x$ and $y$ are $G_{\delta }$, $%
\left[ x\right] $ and $\left[ y\right] $ are comeager subsets of $Y$. It
follows that they are not disjoint, and hence $\left[ x\right] =\left[ y%
\right] $. This shows that $E_{G}=\overline{F}$ and in particular $E_{G}$ is 
$G_{\delta }$.

\item[(2)$\Rightarrow $(1)] Obvious.

\item[(2)$\Rightarrow $(3)] Follows from Lemma~\ref{Lemma: T0 quotient}.

\item[(3)$\Rightarrow $(1)] Since the quotient map $\pi :X\rightarrow
X\left/ E_{G}\right. $ is continuous and open, $X/E_{G}$ has a countable
basis $\left\{ U_{n}:n\in \omega \right\} $. If $x\in X$ then 
\begin{equation*}
\left[ x\right] =\bigcap \left\{ \pi ^{-1}\left[ U_{n}\right] :n\in \omega 
\text{, }\pi (x)\in U_{n}\right\} \text{.}
\end{equation*}%
This shows that $\left[ x\right] $ is $G_{\delta }$.

\item[(3)$\Rightarrow $(4)] The Borel structure generated by the quotient
topology is separating and countably generated. By~\cite[Theorem 4.2]%
{mackey_borel_1957} it must coincide with the quotient Borel structure.

\item[(4)$\Rightarrow $(3)] Observe that the orbits are Borel. Therefore the
quotient Borel structure is separating and hence the quotient topology
separates points, i.e.\ it is $T_{0}$.
\end{description}
\end{proof}

The equivalence of the conditions in Proposition~\ref{Proposition: Gdelta
orbits} has been proved in~\cite[Theorem 2.1]{ramsay_mackey-glimm_1990}
under the additional assumption that the orbit equivalence relation is $%
F_{\sigma }$.

\subsection{The Glimm-Effros dichotomy}

Denote by $E_{0}$ the orbit equivalence relation on $2^{\omega }$ defined by 
$(x,y)\in E_{0}$ iff $x(n)=y(n)$ for all but finitely many $n\in \omega $.
Observe that $E_{0}$ can be regarded as the (principal) Polish groupoid
associated with the free action of $\bigoplus_{n\in \omega }\mathbb{Z}/2%
\mathbb{Z}$ on $\prod_{n\in \omega }\mathbb{Z}/2\mathbb{Z}$ by translation.
The proof of the following result is contained in~\cite[Section 4]%
{ramsay_mackey-glimm_1990}. An exposition of the proof in the case of Polish
group actions can be found in~\cite[Theorem 6.2.1]{gao_invariant_2009}.

\begin{proposition}
\label{Proposition: Becker-Kechris-Glimm-Effros}Suppose that $G\ $is a
Polish groupoid. If $E_{G}$ is dense and meager in $G^{0}\times G^{0}$, then 
$E_{0}\sqsubseteq _{c}G$.
\end{proposition}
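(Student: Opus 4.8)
The plan is to mimic the classical construction of a continuous embedding of $E_0$ into a meager orbit equivalence relation (as in~\cite[Theorem 6.2.1]{gao_invariant_2009}), but carried out fibrewise using the Vaught transform machinery developed in Subsection~\ref{Subsection: Vaught transform}. By Proposition~\ref{Proposition: action groupoid} we may assume $X=G^0$ and that $G$ acts on $G^0$ by the standard action, so $E_G\subset G^0\times G^0$ is dense and meager. Fix a countable basis $\{U_n:n\in\omega\}$ of nonempty Polish open subsets of $G^0$ and a compatible complete metric. The goal is to build, by recursion on finite binary strings $s\in 2^{<\omega}$, a Cantor scheme of nonempty open sets $V_s\subset G^0$ together with arrows $\gamma_s\in G$ witnessing that, for strings $s,t$ of the same length, a point in $V_s$ can be moved into $V_t$ by a composition of the chosen arrows. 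The scheme should satisfy: (i) $\overline{V_{s0}}\cup\overline{V_{s1}}\subset V_s$ with $\mathrm{diam}(V_s)\to 0$ along each branch, so that $\bigcap_n V_{x\restriction n}$ is a single point $f(x)$ for each $x\in 2^\omega$, giving a continuous injection $f:2^\omega\to G^0$; (ii) $V_{s0}$ and $V_{s1}$ lie in a common orbit in a uniform, $G$-equivariant way, so that if $xE_0 y$ then $f(x)E_G f(y)$; and (iii) enough genericity so that if $x\not\mathrel{E_0}y$ then $f(x)\not\mathrel{E_G}f(y)$.

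The key step is the recursion, and this is where meagerness and density of $E_G$ enter. Having constructed $V_s$ for $|s|=n$, together with a coherent system of ``transition arrows'', I would first use density of $E_G$ to find a single arrow linking (a point of) $V_{s}$ to (a point of) $V_{s'}$ for the two relevant successor-configurations, then shrink to open sets $V_{s0},V_{s1}$ on which the translation is defined and continuous, using continuity and openness of the action and of composition in $G$ — this is where one invokes that $G$ is a \emph{Polish} groupoid with open source map, so that saturations of open sets are open and the maps $\gamma\mapsto\gamma x$ are open. To arrange non-reducibility for $E_0$-inequivalent points, at stage $n$ I would additionally ensure, using meagerness of $E_G$ and the Baire property (every analytic set has the Baire property, \cite[Corollary 29.14]{kechris_classical_1995}), that the sets $V_s$ ($|s|=n$) are chosen to ``diagonalize'' against all the countably many finite products of the previously chosen arrows and their inverses: concretely, for strings $s,t$ that already differ in their first $n$ coordinates, shrink $V_s,V_t$ so that no short word in the chosen generators carries $\overline{V_s}$ into $\overline{V_t}$, which is possible precisely because $E_G$, being meager, cannot fill up a product of two open boxes. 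This is the standard bookkeeping from the Glimm--Effros argument, transported to the groupoid setting; the Vaught transforms $B^{\ast V}$, $B^{\triangle V}$ and Lemmas~\ref{Lemma: basic Vaught}--\ref{Lemma: shift Vaught} are exactly the tools that let one pass between ``$\gamma x\in B$ for comeagerly many $\gamma$'' and open conditions on $x$, which is what makes the shrinking steps go through uniformly in the fibres.

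Once the scheme is built, the verification is routine: $f$ is continuous by (i); $f$ is injective because distinct branches land in disjoint $V_s$'s; $xE_0 y\Rightarrow f(x)E_G f(y)$ follows because $E_0$-equivalence means agreement past some level $N$, and the finitely many coordinate-flips below $N$ are realized by composing finitely many of the transition arrows, whose action on $f(x)$ is controlled by the nested scheme; and $f(x)E_G f(y)\Rightarrow xE_0 y$ follows from the diagonalization in step (iii), since any arrow realizing $f(x)E_G f(y)$ would, by continuity and the Cantor-scheme structure, eventually be approximated by one of the short words we explicitly excluded at the level where $x$ and $y$ first differ. Thus $f$ is a continuous injective reduction, i.e. $E_0\sqsubseteq_c G$.

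The main obstacle I anticipate is the bookkeeping in step (iii) in the groupoid setting: in the group case one diagonalizes against a fixed countable group, whereas here the ``generators'' are arrows $\gamma_s\in G$ produced along the construction with varying source and range objects, so one must be careful that the finite words one excludes at stage $n$ are composable and that excluding them is compatible with the nonemptiness requirement (i). Handling this cleanly requires keeping the transition arrows organized so that at each stage only finitely many composable words are relevant, and invoking the meagerness of $E_G$ on the appropriate finite-dimensional open boxes in $G^0\times G^0$ — together with the fact (used throughout the paper) that the relevant sets, being analytic, have the Baire property — to guarantee room to shrink. Everything else is a fibred analogue of the classical construction and should present no essential difficulty given the Vaught-transform lemmas already established.
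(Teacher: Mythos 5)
Your overall strategy --- a Glimm--Effros-style Cantor scheme using density of $E_{G}$ to link and meagerness to diagonalize --- is exactly the one the paper relies on: the paper does not reprove Proposition~\ref{Proposition: Becker-Kechris-Glimm-Effros} but cites Section~4 of Ramsay's Mackey--Glimm paper, with Gao's Theorem~6.2.1 as the group-action model. However, there is a genuine gap between what your sketch delivers and what the statement asserts. As the paper recalls immediately after the proposition, $E_{0}\sqsubseteq _{c}G$ means there is an injective \emph{continuous functor} $F\colon E_{0}\rightarrow G$: besides the object map $f\colon 2^{\omega }\rightarrow G^{0}$ you must produce, for each pair $(x,y)\in E_{0}$, an arrow $F(x,y)\in f(x)Gf(y)$, chosen compatibly with composition and inversion, continuously in $(x,y)$, and injectively. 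Your verification paragraph only establishes the properties of $f$ (continuity, injectivity, and the reduction of $E_{0}$ to $E_{G}$); the functor on arrows is never defined, and its continuity and injectivity are never checked.

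This is not a purely formal omission, because the device you propose for arrows does not work as stated: a single transition arrow $\gamma _{s}\in G$ has one fixed source, so it does not induce a ``translation'' of an open set $V_{s1}$, and ``shrink to open sets on which the translation is defined and continuous'' has no literal meaning in a groupoid. Openness of the source map only guarantees that over each nearby source \emph{some} nearby arrow exists; there is no canonical or continuous choice, since continuous open surjections between Polish spaces need not admit continuous sections. The missing idea is to run the Cantor scheme in the arrow space as well as in $G^{0}$: for each relevant pair of strings $\sigma ,\tau $ of the same length one chooses a nonempty open set of arrows $W_{\sigma ,\tau }\subset G$ whose image under the source map contains $V_{\tau }$ and whose image under the range map is contained in $V_{\sigma }$, with the closures $\overline{W_{\sigma ,\tau }}$ nested, of vanishing diameter along branches, and compatible (up to the current approximation) with composition and inversion of the scheme. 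Then $F(x,y)$ is defined as the unique point of the intersection of the $W$'s along the pair of branches determined by $(x,y)$, and functoriality, continuity and injectivity of $F$ on arrows follow from the nesting. Your diagonalization step and the use of density and meagerness are otherwise sound, but without the arrow-space scheme the conclusion $E_{0}\sqsubseteq _{c}G$, as opposed to the weaker $E_{0}\leq _{B}E_{G}$ via a continuous injection on objects, is not reached.
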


Recall that $E_{0}\sqsubseteq _{c}G$ means that there is an injective
continuous functor $F:E_{0}\rightarrow G$ such that the restriction of $F$
to the set of objects is a Borel reduction from $E_{0}$ to $E_{G}$; see
Subsection~\ref{Subsection: functorial reducibility}. One can then obtain
the following consequences:

\begin{proposition}
\label{Proposition: no Gdelta}Suppose that $G$ is a Polish groupoid. If $G$
has no $G_{\delta }$ orbits, then $E_{0}\sqsubseteq _{c}G$.
\end{proposition}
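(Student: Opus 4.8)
The plan is to reduce Proposition~\ref{Proposition: no Gdelta} to Proposition~\ref{Proposition: Becker-Kechris-Glimm-Effros}, by producing from a $G$ with no $G_\delta$ orbits a suitable invariant (closed) subset on which the orbit equivalence relation becomes dense and meager. First I would invoke Proposition~\ref{Proposition: Gdelta orbits}: since not every orbit is $G_\delta$, the orbit equivalence relation $E_G$ on $G^0$ is \emph{not} $G_\delta$; equivalently (by that same proposition, applied to the standard action) the quotient space $G^0/E_G$ fails to be $T_0$. The strategy is then to pass to the closure of a single orbit. Fix $x\in G^0$ and let $Y=\overline{[x]}$; after replacing $G$ by its restriction $G_{|Y}$ (which is again a Polish groupoid, by the remark on restrictions in Subsection~\ref{Subsection: analytic and Borel groupoids}) we may assume $[x]$ is dense in $G^0$, so $G$ has a dense orbit.

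Next, I would argue that $E_G$ is meager in $G^0\times G^0$. The key point is that $[x]$ cannot be comeager in $G^0$: if it were, then since $[x]$ is Borel (Proposition~\ref{Proposition: Borel classes}) and has the Baire property, Theorem~\ref{Theorem: Effros} would give that $[x]$ is $G_\delta$, contradicting our hypothesis that $G$ has no $G_\delta$ orbit. So $[x]$ is a dense non-comeager Borel set, hence meager in $G^0$ — being a Baire-property set that is not comeager in any nonempty open subset, because any nonempty open $U\subset G^0$ is invariant-free but $U$ is itself dense and $[x]\cap U$ is again dense but not comeager in $U$ (here one uses Lemma~\ref{Lemma: generic ergodicity}: every point of the dense $G_\delta$ set $\bigcap_n[U_n]$ has dense orbit, so no single orbit can be nonmeager anywhere without being comeager in the ambient space). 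From meagerness of $[x]$ in $G^0$ one deduces meagerness of $E_G$ in $G^0\times G^0$: by the Kuratowski--Ulam theorem (Lemma~\ref{Lemma: Kuratowski-Ulam}) applied to the coordinate projection, it suffices that for comeagerly many $y$ the section $\{z:(y,z)\in E_G\}=[y]$ is meager in $G^0$, and again by Lemma~\ref{Lemma: generic ergodicity} comeagerly many $y$ have dense orbit $[y]$, which by the argument just given is meager. (One must check $E_G$ has the Baire property: it is analytic by Proposition~\ref{Proposition: Borel classes}, hence has the Baire property.) Finally $E_G$ is dense in $G^0\times G^0$ because $[x]\times[x]\subset E_G$ and $[x]$ is dense. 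Proposition~\ref{Proposition: Becker-Kechris-Glimm-Effros} then yields $E_0\sqsubseteq_c G_{|Y}$, and composing with the inclusion $G_{|Y}\hookrightarrow G$ (a continuous injective functor whose object part is a Borel — indeed continuous — reduction) gives $E_0\sqsubseteq_c G$.

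The main obstacle I anticipate is the meagerness bookkeeping: verifying carefully that a dense, Borel, non-comeager invariant-orbit set is genuinely meager (not merely non-comeager) in $G^0$, and that this passes to $E_G\subset G^0\times G^0$. The clean way is to not argue meagerness of a single orbit directly in $G^0$, but rather: restrict to $\overline{[x]}$, use generic ergodicity (Lemma~\ref{Lemma: generic ergodicity}) to see the generic orbit is dense, use Effros (Theorem~\ref{Theorem: Effros}) plus the no-$G_\delta$-orbit hypothesis to see no orbit is comeager, conclude via Baire category that the union of all orbits meeting a fixed nonempty open set — in particular every orbit — is meager, then feed this into Kuratowski--Ulam for $E_G$. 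A secondary point requiring care is the legitimacy of passing to the restriction $G_{|\,\overline{[x]}}$: one should note $\overline{[x]}$ is a closed, hence Polish, invariant subspace of $G^0$ and that $G_{|\,\overline{[x]}}$ is a Polish groupoid with orbit equivalence relation the restriction of $E_G$, so that "no $G_\delta$ orbit" is inherited.
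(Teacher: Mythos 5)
Your proposal is correct and follows essentially the paper's route: pass to an invariant subspace on which the relevant orbits are dense, use Effros' theorem to see that a dense non-$G_\delta$ orbit is meager, apply Kuratowski--Ulam (Lemma~\ref{Lemma: Kuratowski-Ulam}) to conclude $E_G$ is dense and meager, and then invoke Proposition~\ref{Proposition: Becker-Kechris-Glimm-Effros}. The only structural difference is that the paper restricts to a \emph{class of the equivalence relation} $\overline{E}$ of Lemma~\ref{Lemma: closure} (a $G_\delta$ invariant set in which \emph{every} orbit is dense), which removes the need for your generic-ergodicity detour over the non-dense orbits of $\overline{[x]}$; your version works too, since Kuratowski--Ulam only needs comeagerly many sections to be meager. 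One step in your write-up should be repaired, though it does not change the outcome: the claims ``a dense non-comeager Borel set is hence meager'' and ``the union of all orbits meeting a fixed nonempty open set is meager'' are false as literally stated (the latter union is the saturation $[U]$, which is open and dense). The direct argument, which is what the paper uses, is the contrapositive of (3)$\Rightarrow$(1) in Theorem~\ref{Theorem: Effros}: since $[x]$ is not $G_\delta$ it is meager in itself, and a set that is meager in itself and dense in $Y$ is meager in $Y$ (closures of sets nowhere dense in $[x]$ remain nowhere dense in $\overline{[x]}$). With that substitution your proof is complete.
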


\begin{proof}
After replacing $G$ with the restriction of $G$ to a class of the
equivalence relation $\overline{E}$ defined as in Lemma~\ref{Lemma: closure}%
, we can assume that every orbit is dense. By Theorem~\ref{Theorem: Effros}
every orbit is meager. It follows from Lemma~\ref{Lemma: Kuratowski-Ulam}
that $E_{G}$ is meager. One can now apply Proposition~\ref{Proposition:
Becker-Kechris-Glimm-Effros}.
\end{proof}

\begin{theorem}
\label{Theorem: Becker-Kechris-Glimm-Effros}Suppose that $G\ $is a Polish
groupoid. If every $G_{\delta }$ orbit is $F_{\sigma }$, then either $E_{G}$
is $G_{\delta }$ or $E_{0}\sqsubseteq _{c}G$.
\end{theorem}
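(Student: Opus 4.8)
**

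The plan is to reduce to the case where every orbit is dense and then combine the structural dichotomy for orbits with the previous propositions.

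First I would replace $G$ by its restriction to a single class $Y$ of the equivalence relation $\overline{E}$ of Lemma~\ref{Lemma: closure}; this is harmless because if the conclusion holds for each such restriction then, since $\overline{E}$ is $G_\delta$ (hence $E_G$ being $G_\delta$ or admitting a continuous copy of $E_0$ can be checked classwise — here one has to be slightly careful, see below), one can patch the results together. So assume every orbit of $G$ is dense in $G^0$. Now distinguish two cases. If \emph{some} orbit $[x]$ is $G_\delta$, then by hypothesis $[x]$ is also $F_\sigma$, and being dense and comeager (by Theorem~\ref{Theorem: Effros}, a $G_\delta$ orbit is nonmeager in itself, and being dense it is comeager in $G^0$). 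I claim in this case \emph{every} orbit is $G_\delta$. Indeed if $[y]$ were not $G_\delta$, then by Theorem~\ref{Theorem: Effros} it is meager; but $[x]$ is an $F_\sigma$ comeager set, so $G^0 \setminus [x]$ is $G_\delta$ and meager, hence dense; then $[y] \subseteq G^0 \setminus [x]$ is a meager subset, which is no contradiction by itself, so one argues differently: use Lemma~\ref{Lemma: generic ergodicity} together with the fact that $[x]$ comeager forces every comeager-orbit situation — actually the cleanest route is: since $[x]$ is $G_\delta$ and dense, by Lemma~\ref{Lemma: Gdelta smooth}'s proof-style reasoning or directly, apply Proposition~\ref{Proposition: Gdelta orbits}: we want to conclude that \emph{all} orbits are $G_\delta$. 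For that I would show that under "every orbit dense" plus "one orbit $G_\delta$", the quotient $X/E_G$ is $T_0$ — but with all orbits dense the quotient is $T_1$ only if there is a single orbit, so instead the right statement is that there is exactly one orbit. Let me restructure.

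The correct dichotomy to run is: \textbf{Case 1:} $E_G$ has no $G_\delta$ orbit. Then by Proposition~\ref{Proposition: no Gdelta}, $E_0 \sqsubseteq_c G$, and we are done. \textbf{Case 2:} $E_G$ has a $G_\delta$ orbit $[x]$. Then by the hypothesis of the theorem $[x]$ is $F_\sigma$, hence (being $G_\delta$ and, after our reduction, dense) comeager in $G^0$ with $G_\delta$ complement. Now I claim every orbit is $G_\delta$: suppose $[y]$ is not $G_\delta$; then by Theorem~\ref{Theorem: Effros} $[y]$ is meager in itself, hence (being dense) meager in $G^0$; but consider the restriction $H$ of $G$ to the invariant $G_\delta$ set $G^0 \setminus [x]$ — this is a Polish groupoid all of whose $G_\delta$ orbits are still $F_\sigma$ (an orbit $G_\delta$ in $G^0\setminus[x]$ is $G_\delta$ in $G^0$ since $G^0 \setminus [x]$ is $G_\delta$), and which has no comeager orbit, so running the argument of Proposition~\ref{Proposition: no Gdelta} inside $H$ one still gets that $H$, hence $G$, has a continuous copy of $E_0$, finishing that subcase too. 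So the only remaining possibility in Case 2 is that $G^0 \setminus [x]$ is empty, i.e. $E_G$ has a single (dense, $G_\delta$) class; then $E_G = G^0 \times G^0$ is itself $G_\delta$.

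Unwinding the reduction: without the "all orbits dense" assumption, apply the above to each restriction $G_{|Y}$, $Y$ an $\overline E$-class. For each such $Y$ either $E_{G_{|Y}}$ is $G_\delta$ (in which case, as shown, it equals $Y \times Y$ and $[x]$ is comeager in $\overline{[x]}=Y$ for $x\in Y$) or $E_0 \sqsubseteq_c G_{|Y} \le_c G$, giving the conclusion immediately. If the latter never occurs then every $\overline E$-class $Y$ is a single $E_G$-orbit, so $E_G = \overline E$, which is $G_\delta$ by Lemma~\ref{Lemma: closure}. The main obstacle is the middle step of Case 2 — showing a $G_\delta$ orbit being $F_\sigma$ rules out any other orbit being non-$G_\delta$ — and the cleanest way to handle it is exactly by passing to the invariant $G_\delta$ complement and reapplying Proposition~\ref{Proposition: no Gdelta}, so that the hypothesis "$G_\delta$ orbits are $F_\sigma$" (equivalently: $G_\delta$ orbits are clopen in their closure, via Effros) is used precisely to guarantee that complement is $G_\delta$ and hence a legitimate Polish $G$-space on which the earlier machinery runs.
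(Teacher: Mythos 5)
Your proof is correct and follows essentially the same route as the paper's: restrict to an $\overline{E}$-class so that all orbits are dense, invoke Proposition~\ref{Proposition: no Gdelta} to produce a dense $G_{\delta}$ (hence comeager) orbit, use the $F_{\sigma}$ hypothesis to make its complement an invariant $G_{\delta}$ set, and eliminate that complement by a Baire category argument. The paper is slightly more direct at the last step---a nonempty complement would contain a dense orbit, hence be a dense $G_{\delta}$, hence comeager and disjoint from the comeager orbit, which is absurd---whereas you re-apply Proposition~\ref{Proposition: no Gdelta} to the restriction to the complement; this also works, but you should then verify the hypothesis ``no $G_{\delta}$ orbit'' (rather than ``no comeager orbit'') for that restriction, which is immediate since any such orbit would be dense and $G_{\delta}$ in $G^{0}$, hence comeager and disjoint from the comeager orbit you already have.
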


\begin{proof}
Suppose that $E_{0}\not\sqsubseteq _{c}G$. In particular for every $%
G_{\delta }$ subspace $Y$ of $G^{0}$, $E_{0}\not\sqsubseteq _{c}G_{|Y}$.
Denote by $\overline{E}$ the equivalence relation defined as in Lemma~\ref%
{Lemma: closure}. If $y\in X$ define $Y=\left[ y\right] _{\overline{E}}$ and
observe that $Y$ is an $E_{G}$-invariant $G_{\delta }$ subset of $G^{0}$.
Moreover every $E_{G}$-orbit contained in $Y$ is dense in $Y$. Since $E_{0}$
is not continuously reducible to $G$, by Proposition \ref{Proposition: no
Gdelta} there is $z\in Y$ such that $\left[ z\right] _{E_{G}}$ is a dense $%
G_{\delta }$ subset of $Y$. In particular $\left[ z\right] _{E_{G}}$ is $%
G_{\delta }$ subset of $G^{0}$. Therefore by assumption also $Y\backslash %
\left[ z\right] _{G}$ is $G_{\delta }$. Since every orbit of $Y$ is dense, $%
Y\backslash \left[ z\right] _{E_{G}}$ must be empty and $\left[ z\right]
_{E_{G}}=Y=\left[ y\right] _{\overline{E}}$ is $G_{\delta }$. This shows
that every orbit of $G$ is $G_{\delta }$ and hence $E_{G}$ is $G_{\delta }$
by Proposition~\ref{Proposition: Gdelta orbits}.
\end{proof}

\begin{corollary}
\label{Corollary: functorial E0}Suppose that $G$ and $H$ are Polish
groupoids such that every $G_{\delta }$ orbit is $F_{\sigma }$. If $E_{G}$
and $E_{H}$ are Borel reducible to $E_{0}$, then $G\leq H$ if and only if $%
E_{G}\leq E_{H}$.
\end{corollary}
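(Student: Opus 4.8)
The forward implication is immediate and was already noted in the text: if a Borel functor $F$ witnesses $G\leq_B H$, then its restriction $x\mapsto F(x)$ to objects is by definition a Borel reduction from $E_G$ to $E_H$. So the plan is to prove the converse. Assume $E_G\leq E_H$. My strategy is to apply the Glimm--Effros dichotomy for Polish groupoids (Theorem~\ref{Theorem: Becker-Kechris-Glimm-Effros}) to $H$: since every $G_\delta$ orbit of $H$ is $F_\sigma$, either $E_H$ is $G_\delta$ or $E_0\sqsubseteq_c H$, and I would treat these two cases separately, producing a Borel reduction from $G$ to $H$ in each.

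In the first case, where $E_H$ is $G_\delta$: by Proposition~\ref{Proposition: Gdelta orbits} every orbit of $H$ is $G_\delta$, hence $E_H$ is smooth by Lemma~\ref{Lemma: Gdelta smooth}; and since $E_G\leq E_H$, the relation $E_G$ is smooth as well. Corollary~\ref{Corollary: functorial smooth} then applies directly to the Polish groupoids $G$ and $H$ with their smooth orbit equivalence relations and gives $G\leq_B H$ from $E_G\leq E_H$.

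In the second case, where $E_0\sqsubseteq_c H$: fix a continuous injective functor $\Psi\colon E_0\to H$ whose restriction to objects is a Borel reduction of $E_0$ to $E_H$. Here I would invoke the remaining hypothesis: $E_G$ is Borel reducible to $E_0$, say via a Borel reduction $g\colon G^0\to 2^\omega$. Regard $E_0$ as a principal Polish groupoid on $2^\omega$ and define $\Phi\colon G\to E_0$ by $\Phi(\gamma)=(g(r(\gamma)),g(s(\gamma)))$. Because $r(\gamma)\mathrel{E_G}s(\gamma)$ for every $\gamma\in G$ and $g$ is a reduction, $(g(r(\gamma)),g(s(\gamma)))$ is an arrow of $E_0$, and the functor axioms for $\Phi$ are verified formally using only that $E_0$ is principal; thus $\Phi$ is a Borel functor whose object part $g$ reduces $E_G$ to $E_{E_0}=E_0$, i.e.\ $\Phi$ is a Borel reduction from $G$ to $E_0$. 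The composite $\Psi\circ\Phi\colon G\to H$ is then a Borel functor whose restriction to objects is a composition of Borel reductions $E_G\to E_0\to E_H$, hence a Borel reduction from $E_G$ to $E_H$; therefore $\Psi\circ\Phi$ witnesses $G\leq_B H$.

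I expect none of the computations themselves to be troublesome: checking that $\Phi$ is a functor is purely formal once one uses principality of $E_0$, and being a reduction is obviously preserved under composition. The real point --- and the step I expect to carry the argument --- is the reduction to the two extreme cases via the dichotomy: when $E_H$ is $G_\delta$ the whole statement collapses to Corollary~\ref{Corollary: functorial smooth}, and when it is not, the failure of $E_H$ to be $G_\delta$ is exactly what provides, through $E_0\sqsubseteq_c H$, a continuous copy of the principal groupoid $E_0$ inside $H$ into which the hypothesis $E_G\leq E_0$ lets one reduce $G$. It is this factorization of the groupoid reduction through $E_0$ that I see as the crucial observation.
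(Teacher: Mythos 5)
Your proof is correct and follows essentially the same route as the paper: the Glimm--Effros dichotomy splits the argument into a smooth case handled by Corollary~\ref{Corollary: functorial smooth} and a non-smooth case in which the groupoid reduction is factored through the principal groupoid $E_0$. If anything, your version is more careful than the paper's terse proof, which splits into the cases ``$E_H$ smooth'' and ``$E_G$ not smooth'' (leaving the remaining case $E_G$ smooth, $E_H$ non-smooth to the reader) and leaves implicit the construction of the Borel functor $\Phi\colon G\to E_0$, $\Phi(\gamma)=(g(r(\gamma)),g(s(\gamma)))$, from a reduction $g$ of $E_G$ to $E_0$, which you spell out explicitly.
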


\begin{proof}
Suppose that $E_{G}\leq E_{H}$. If $E_{H}$ is smooth then the conclusion
follows from Corollary~\ref{Corollary: functorial smooth}. If $E_{G}$ is not
smooth then $G\sim _{B}H\sim _{B}E_{0}$ by Theorem~\ref{Theorem:
Becker-Kechris-Glimm-Effros}; see Definition~\ref{Definition: Borel
bireducibility}.
\end{proof}

We can combine Proposition~\ref{Proposition: Gdelta orbits} with Theorem~\ref%
{Theorem: Becker-Kechris-Glimm-Effros} to get the following result. It was
obtained in~\cite{ramsay_mackey-glimm_1990} under the additional assumption
that the orbit equivalence relation $E_{G}$ is $F_{\sigma }$. The notion of
nonatomic and ergodic Borel measure with respect to an equivalence relation
can be found in~\cite[Definition 6.1.5]{gao_invariant_2009}.

\begin{theorem}
\label{Theorem: GE dichotomy non smooth}Suppose that $G\ $is a Polish
groupoid such that every $G_{\delta }$ orbit is $F_{\sigma }$. The following
statements are equivalent:

\begin{enumerate}
\item There is an orbit which is not $G_{\delta }$;

\item $E_{0}\sqsubseteq _{c}G$;

\item $E_{0}\leq _{B}E_{G}$;

\item There is an $E_{G}$-nonatomic $E_{G}$-ergodic Borel probability
measure on $G^{0}$;

\item $E_{G}$ is not smooth;

\item $E_{G}$ is not $G_{\delta }$;

\item Some orbit is not open in its closure.
\end{enumerate}
\end{theorem}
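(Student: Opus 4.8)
The plan is to prove the cyclic chain of implications $(1)\Rightarrow(2)\Rightarrow(3)\Rightarrow(5)\Rightarrow(6)\Rightarrow(1)$, then fit $(7)$ into the cycle via Lemma~\ref{Lemma: Effros-Ramsay}, and finally handle $(4)$ by standard ergodic-theoretic arguments. Most of the work has already been done in the preceding subsections, so the proof is largely an assembly job. First, $(1)\Rightarrow(2)$ is precisely Proposition~\ref{Proposition: no Gdelta} after restricting to a single class of the equivalence relation $\overline{E}$ of Lemma~\ref{Lemma: closure}: if some orbit $[x]$ is not $G_\delta$, restrict $G$ to $Y=[x]_{\overline{E}}$, where every orbit is dense; if some orbit of $G_{|Y}$ were $G_\delta$ then by the hypothesis ``every $G_\delta$ orbit is $F_\sigma$'' and denseness one argues (as in the proof of Theorem~\ref{Theorem: Becker-Kechris-Glimm-Effros}) that it would be all of $Y$, making $[x]$ itself $G_\delta$, a contradiction; hence $G_{|Y}$ has no $G_\delta$ orbit and Proposition~\ref{Proposition: no Gdelta} gives $E_0\sqsubseteq_c G_{|Y}\sqsubseteq_c G$.

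Next $(2)\Rightarrow(3)$ is immediate, since a continuous injective functor $E_0\to G$ restricts on objects to a Borel reduction $E_0\leq_B E_G$ by the remarks following Proposition~\ref{Proposition: Becker-Kechris-Glimm-Effros}. For $(3)\Rightarrow(5)$: if $E_G$ were smooth, then $E_0\leq_B E_G\leq_B{=_{\mathbb R}}$ would make $E_0$ smooth, contradicting the classical fact that $E_0$ is not smooth. For $(5)\Rightarrow(6)$: by Lemma~\ref{Lemma: Gdelta smooth}, if every orbit is $G_\delta$ then $E_G$ is smooth; contrapositively, $E_G$ not smooth forces some orbit to fail to be $G_\delta$, and then by Proposition~\ref{Proposition: Gdelta orbits} $E_G$ is not $G_\delta$. (Alternatively one routes $(5)\Rightarrow(6)$ through Theorem~\ref{Theorem: Becker-Kechris-Glimm-Effros} directly, which under the standing hypothesis says $E_G$ is $G_\delta$ or $E_0\sqsubseteq_c G$, hence $E_G$ not $G_\delta$ implies $E_0\sqsubseteq_c G$, which by the above forces non-smoothness — so $(5)$ and $(6)$ and $(2)$ are easily seen equivalent once the dichotomy theorem is in hand.) Finally $(6)\Rightarrow(1)$ is Proposition~\ref{Proposition: Gdelta orbits} again, in the contrapositive: if every orbit is $G_\delta$ then $E_G$ is $G_\delta$.

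To incorporate $(7)$, note that ``$[x]$ is open in $\overline{[x]}$'' is equivalent to ``$[x]$ is $G_\delta$'': one direction is trivial (open sets are $G_\delta$), and for the converse, if $[x]$ is $G_\delta$ then by Corollary~\ref{Corollary: Effros-Ramsay} it is homeomorphic to the Polish space $Gp(x)/G_x$, hence a Baire space, hence (being dense in $\overline{[x]}$ and comeager there by Theorem~\ref{Theorem: Effros}) its complement in $\overline{[x]}$ is meager; but the complement of a dense $G_\delta$ in the Baire space $\overline{[x]}$ being meager, together with the fact — from Lemma~\ref{Lemma: Effros-Ramsay} and openness of the range map — that orbits that are $G_\delta$ are locally closed, yields that $[x]$ is open in its closure. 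Thus $(7)$ fails for some orbit iff $(1)$ holds. Lastly, $(4)\Rightarrow(5)$ because a smooth idealistic equivalence relation has a Borel selector (Corollary~\ref{Corollary: Borel selector} together with \cite[Theorem 5.4.11]{gao_invariant_2009}), and an equivalence relation with a Borel transversal admits no nonatomic ergodic measure; and $(3)\Rightarrow(4)$ follows by pulling back the standard nonatomic $E_0$-ergodic product measure on $2^\omega$ along the Borel reduction and pushing it forward, using that $E_G$ has Borel (indeed analytic) classes by Proposition~\ref{Proposition: Borel classes} to verify ergodicity and nonatomicity of the resulting measure. I expect the main subtlety to lie in the local-closedness argument linking $(7)$ to the $G_\delta$ condition — making precise that a $G_\delta$ orbit, via the Effros–Ramsay homeomorphism with a Polish quotient fiber, is open in its closure — and in checking measurability details for the measure transported in $(3)\Rightarrow(4)$; everything else is bookkeeping over results already established.
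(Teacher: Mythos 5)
Your cycle $(1)\Rightarrow(2)\Rightarrow(3)\Rightarrow(5)\Rightarrow(6)\Rightarrow(1)$ and your treatment of $(4)$ match the paper's proof in substance (the paper runs $(3)\Rightarrow(4)\Rightarrow(5)$ instead of $(3)\Rightarrow(5)$ directly, and gets $(1)\Rightarrow(2)$ by quoting Theorem~\ref{Theorem: Becker-Kechris-Glimm-Effros} rather than re-running its restriction argument through Proposition~\ref{Proposition: no Gdelta}; these are cosmetic differences and your versions are correct). The problem is your handling of $(7)$.

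You claim that ``$[x]$ is $G_{\delta}$'' implies ``$[x]$ is open in $\overline{[x]}$'' and justify the key step by invoking ``the fact --- from Lemma~\ref{Lemma: Effros-Ramsay} and openness of the range map --- that orbits that are $G_{\delta}$ are locally closed.'' But \emph{locally closed} means precisely \emph{open in its closure}, so you are assuming exactly what you set out to prove; nothing in Lemma~\ref{Lemma: Effros-Ramsay} (which only yields the homeomorphism $Gp(x)/G_{x}\cong[x]$) or in the openness of the range map delivers local closedness. Knowing that $[x]$ is a dense $G_{\delta}$, Baire, and comeager in $\overline{[x]}$ with meager complement does not make it open: a dense $G_{\delta}$ (e.g.\ the irrationals) need not be open. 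Indeed this implication is exactly where the standing hypothesis ``every $G_{\delta}$ orbit is $F_{\sigma}$'' must be used --- note that a locally closed set is automatically $F_{\sigma}$, so if $G_{\delta}$ orbits were always locally closed the hypothesis would be vacuous. The paper's argument for $\neg(1)\Rightarrow\neg(7)$ is: restrict to $\overline{[x]}$, so $[x]$ is a dense $G_{\delta}$ there, hence nonmeager; by hypothesis $[x]=\bigcup_{n}F_{n}$ with each $F_{n}$ closed, so some $F_{n}$ contains a nonempty open set $U\subset[x]$; then $[x]=\left[U\right]=r\left[s^{-1}\left[U\right]\right]$ is open in $\overline{[x]}$ by openness of the range map. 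You should replace your circular step with this Baire-category argument (the converse direction, locally closed $\Rightarrow G_{\delta}$, giving $(1)\Rightarrow(7)$, is fine as you state it).
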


\begin{proof}
$\,$The implication (1)$\Rightarrow $(2) follows from Theorem~\ref{Theorem:
Becker-Kechris-Glimm-Effros}. The implication (2)$\Rightarrow $(3) is
obvious. For (3)$\Rightarrow $(4), observe that if $f:2^{\omega }\rightarrow
X$ is a Borel reduction from $E_{0}$ to $E_{G}$, $\mu $ is the product
measure on $2^{\omega }$, and $\nu $ is the push-forward of $\mu $ under $f$%
, then $\nu $ is an $E_{G}$-nonatomic and $E_{G}$-ergodic Borel probability
measure on $G^{0}$. The implication (4)$\Rightarrow $(5) follows from~\cite[%
Proposition 6.1.6]{gao_invariant_2009}. By Lemma~\ref{Lemma: Gdelta smooth}
(5) implies 6). The implication (6)$\Rightarrow $(1) is contained in
Proposition~\ref{Proposition: Gdelta orbits}. Since a set that is open in
its closure is $G_{\delta }$, the implication (1)$\Rightarrow $(7) is
obvious. Let us show that (7)$\Rightarrow $(1). Suppose that every orbit is $%
G_{\delta }$, and fix $x\in G^{0}$. After replacing $G$ with its restriction
to the closure of the orbit of $x$, we can assume that $x$ has dense orbit.
Therefore $\left[ x\right] $ is a dense $G_{\delta }$ in $x$. Since $\left[ x%
\right] $ is by assumption also $F_{\sigma }$, $\left[ x\right]
=\bigcup_{n}F_{n}$ where the $F_{n}$'s are closed in $X$. Being $\left[ x%
\right] $ nonmeager in $X$, there is an open subset $U$ of $X$ contained in $%
F_{n}$ for some $n\in \omega $. Hence $\left[ x\right] =\left[ U\right] $ is
open.
\end{proof}

\section{Better topologies\label{Section: better}}

\subsection{Polishability of Borel $G$-spaces\label{Subsection:
Polishability Borel}}

\begin{theorem}
\label{Theorem: Becker-Kechris Borel}Suppose that $G$ is a Polish groupoid.
Every Borel $G$-space is Borel $G$-isomorphic to a Polish $G$-space.
Equivalently if $X$ is a Borel $G$-space, then there is a Polish topology
compatible with the Borel structure of $G$ that makes the action of $G$ on $%
X $ continuous.
\end{theorem}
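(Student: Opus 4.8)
The strategy is to mimic the Becker–Kechris proof of Polishability of Borel $G$-spaces (\cite[Chapter 5]{becker_descriptive_1996}), with the Vaught transform for groupoid actions (Subsection~\ref{Subsection: Vaught transform}) replacing the group-action Vaught transform throughout. Let $X$ be a Borel $G$-space with anchor map $p$. First I would fix a countable basis $\mathcal{A}=\{U_n:n\in\omega\}$ of Polish open subsets of $G$, and a sequence $(B_k)_{k\in\omega}$ of Borel subsets of $X$ that separates points and generates the Borel $\sigma$-algebra of $X$. The aim is to build a finer Polish topology $\tau$ on $X$ with the same Borel sets, for which $p$ stays continuous and the action map $G\ltimes X\to X$ stays continuous. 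The crucial closure property is the one from Lemma~\ref{Lemma: shift Vaught}: a topology $\tau$ makes the action continuous precisely when, for every $\tau$-open $B$ and every $U\in\mathcal{A}$, the Vaught transform $B^{\bigtriangleup U}$ is again $\tau$-open (this is the groupoid analogue of the Becker–Kechris criterion, and it is essentially forced by the equivalence $(1)\Leftrightarrow(4)$ in Lemma~\ref{Lemma: shift Vaught}).

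The construction then proceeds by the standard transfinite bootstrapping. Starting from the countably many Borel sets $B_k$, close off under: (i) the maps $B\mapsto B^{\bigtriangleup U_n}$ for all $n$; (ii) finite intersections; (iii) complements; and iterate $\omega_1$ times, using Lemma~\ref{Lemma: Borel Vaught transform} to see that we never leave the class of Borel sets, and Lemma~\ref{Lemma: basic Vaught} (parts (2) and (3)) to control how the transforms interact with Boolean operations. A counting argument stops the process at some countable stage, yielding a countable Boolean algebra $\mathcal{B}$ of Borel sets which separates points, contains the $B_k$, is closed under $B\mapsto B^{\bigtriangleup U_n}$, and also contains $p^{-1}[W]$ for $W$ ranging over a countable basis of $G^0$ (so that $p$ remains continuous). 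Let $\tau$ be the topology generated by $\mathcal{B}$. Since $\mathcal{B}$ is a countable separating Boolean algebra of Borel sets, $\tau$ is second countable, zero-dimensional, Hausdorff, and its Borel $\sigma$-algebra is the original one by Mackey's theorem (\cite[Theorem 4.2]{mackey_borel_1957}).

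The remaining points are that $\tau$ can be taken Polish and that the action is $\tau$-continuous. For continuity of the action: a subbasic $\tau$-open set is some $B\in\mathcal{B}$, and by Lemma~\ref{Lemma: shift Vaught}, $(\gamma,x)$ with $\gamma x\in B$ has a neighborhood $W\times B^{\bigtriangleup V}$ (with $VW^{-1}\subset$ the relevant open set, $\gamma\in W$, $x\in B^{\bigtriangleup V}$) mapped into $B$; since $\mathcal{B}$ is closed under the relevant Vaught transforms, $B^{\bigtriangleup V}$ (or an element of $\mathcal{B}$ sandwiched appropriately, using part (5) of Lemma~\ref{Lemma: basic Vaught}) is $\tau$-open, so the action is continuous. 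For Polishness: the topology $\tau$ generated by a countable Boolean algebra is metrizable and zero-dimensional; to upgrade to Polish one uses the Becker–Kechris trick of further enlarging $\mathcal{B}$ so that $X$ becomes a $G_\delta$ — equivalently, one realizes $(X,\tau)$ as a Borel-isomorphic copy sitting inside a Polish space and argues, via strong Choquet, that it is comeager-in-itself; since each element of $\mathcal{A}$ is Polish, the relevant Choquet games on fibers can be played and glued. Concretely I would invoke the Change of Topology machinery: repeatedly refine $\tau$ so that each $B\in\mathcal{B}$ becomes clopen while keeping the good closure properties, producing a Polish topology.

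**The main obstacle.** The genuinely new difficulty, compared with the group case, is the fibred nature of everything: the action is only partially defined ($G\ltimes X$, not $G\times X$), so the Vaught transform $B^{\bigtriangleup V}$ lives over $p^{-1}[s[V]]$ rather than all of $X$, and one must carefully track these domains — this is exactly why Lemma~\ref{Lemma: shift Vaught} is phrased with the conditions $V\gamma^{-1}\subset Ur(\gamma)$ and $VW^{-1}\subset U$. Keeping $p$ continuous is what makes these fiberwise statements cohere into a genuine topology on $X$, and checking that the transfinite closure stabilizes while simultaneously respecting (i) Boolean operations, (ii) all Vaught transforms $B\mapsto B^{\bigtriangleup U_n}$, and (iii) compatibility with the $G^0$-topology through $p$, is the technical heart of the argument. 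I expect verifying $\tau$-continuity of the action and the final Polishability upgrade (the $G_\delta$/strong-Choquet step, done fiber by fiber using that each $U_n$ is Polish) to be where the real work lies.
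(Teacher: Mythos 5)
There is a genuine gap, and it sits at the heart of the construction: you take the final topology $\tau$ to be the one generated by the countable Boolean algebra $\mathcal{B}$ itself, so that every $B\in\mathcal{B}$ becomes clopen, and you then argue that the action is $\tau$-continuous because $\mathcal{B}$ is closed under the Vaught transforms. This cannot work. Lemma~\ref{Lemma: shift Vaught} shows that a neighborhood $W\times B^{\bigtriangleup V}$ of $(\gamma_0,x_0)$ is mapped by the action into $B^{\bigtriangleup U}$, \emph{not} into $B$, so the argument only yields continuity into sets of the form $B^{\bigtriangleup U}$. Indeed, for an arbitrary Borel $B$ the action generally cannot be continuous with $B$ open: take $G$ a nondiscrete Polish group acting on itself by translation and $B=\{e\}$; then $\gamma x\in B$ forces $x=\gamma^{-1}$, and no product $W\cdot N$ of nonempty open sets is contained in $\{e\}$. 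The same example refutes your claimed criterion that closure of the open sets under $B\mapsto B^{\bigtriangleup U}$ is equivalent to continuity of the action: the topology generated by $\mathcal{B}$ is closed under these transforms by Lemma~\ref{Lemma: basic Vaught}(3), yet the action is not continuous. The correct move --- the one the paper makes, following Becker--Kechris --- is to use $\mathcal{B}$ only to define an auxiliary Polish topology $t'$, and to take as the new topology $t$ the one generated by the \emph{subbasis} $\mathcal{S}=\left\{ B^{\bigtriangleup U}:B\in \mathcal{B},U\in \mathcal{A}\right\}$, which is coarser than $t'$ and in general neither zero-dimensional nor one in which the elements of $\mathcal{B}$ are open.

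Because of this, your Polishness step also goes in the wrong direction: ``refining $\tau$ so that each $B\in\mathcal{B}$ becomes clopen'' is exactly the operation that destroys continuity of the action, and Mackey's theorem only identifies the Borel structure, not completeness of the topology. For the coarser topology $t$ one cannot appeal to zero-dimensionality or to an off-the-shelf $G_\delta$ embedding; the paper instead verifies directly that $(X,t)$ is T$_1$ (via a continuity-on-a-comeager-set argument on the fiber $Gp(x)$), regular (sandwiching a relatively closed set of the form $\left( B^{\bigtriangleup V_{2}}\right)^{\ast W}$ between a $t$-neighborhood and $B^{\bigtriangleup U}$, using the $\ast$-transform and Lemma~\ref{Lemma: shift Vaught}), and strong Choquet (a game played with nested triples $\left( M_{i},B_{i},U_{i}\right)$, using complete metrics both on $(X,t')$ and on a Polish basic open subset $U_{0}$ of $G$), and then invokes~\cite[Theorem 8.18]{kechris_classical_1995}. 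These three verifications are the technical core of the proof and are absent from your proposal; the fibred bookkeeping you identify as the main obstacle is comparatively routine once the topology is generated by the correct subbasis.
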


Theorem~\ref{Theorem: Becker-Kechris Borel} answers a question of Ramsay
from~\cite{ramsay_polish_2000}. The rest of this subsection is dedicated to
the proof of Theorem~\ref{Theorem: Becker-Kechris Borel}. The analogous
statement for actions of Polish groups is proved in a similar way in~\cite[%
Theorem 5.2.1]{becker_descriptive_1996}. Fix a countable basis $\mathcal{A}$
of Polish open subsets of $G$. Suppose that $X$ is a Polish $G$-space. We
want to define a topology $t$ on $X$ such that

\begin{enumerate}
\item $t$ is Polish,

\item the action $G\curvearrowright \left( X,t\right) $ is continuous, i.e.\
the anchor map $p:X\rightarrow G^{0}$ is continuous and 
\begin{eqnarray*}
G\ltimes X &\rightarrow &X \\
(\gamma ,x) &\mapsto &\gamma x
\end{eqnarray*}%
is continuous,

\item $t$ generates the Borel structure of $X$.
\end{enumerate}

By Lemma~\ref{Lemma: Borel Vaught transform} and~\cite[5.1.3 and 5.1.4]%
{becker_descriptive_1996} there exists a countable Boolean algebra $\mathcal{%
B}$ of Borel subsets of $X$ satisfying the following conditions:

\begin{itemize}
\item For all $B\in \mathcal{B}$ and $U\in \mathcal{A}$, $B^{\bigtriangleup
U}\in \mathcal{B}$;

\item The topology $t^{\prime }$ generated by the basis $\mathcal{B}$ is
Polish.
\end{itemize}

Observe that the identity function from $X$ with its original Borel
structure to $\left( X,t^{\prime }\right) $ is Borel measurable, and hence a
Borel isomorphism by~\cite[Theorem 15.1]{kechris_classical_1995}. It follows
that $t^{\prime }$ generates the Borel structure of $X$. Define $\mathcal{S}$
to be the set 
\begin{equation*}
\left\{ B^{\bigtriangleup U}:B\in \mathcal{B}\text{, }U\in \mathcal{A}%
\right\} \text{,}
\end{equation*}%
and $t$ to be the topology on $X$ having $\mathcal{S}$ as subbasis.

\begin{claim-no}
The action $G\curvearrowright \left( X,t\right) $ is continuous.
\end{claim-no}

\begin{proof}
If $V\in \mathcal{A}$ then%
\begin{equation*}
p^{-1}\left[ s\left[ V\right] \right] =X^{\bigtriangleup V}\in \mathcal{S}%
\text{.}
\end{equation*}%
This shows that $p:X\rightarrow G^{0}$ is $t$-continuous. Let us now show
that the map $G\ltimes X\rightarrow X$ is $t$-continuous. Suppose that $B\in 
\mathcal{B}$, $U\in \mathcal{A}$, and $\left( \gamma _{0},x_{0}\right) \in
G\ltimes X$ is such that $\gamma _{0}x_{0}\in B^{\bigtriangleup U}$. By
Lemma~\ref{Lemma: shift Vaught} there are $W,V\in \mathcal{A}$ such that $%
VW^{-1}\subset U$, $x_{0}\in B^{\bigtriangleup V}$, and $\gamma _{0}\in W$.
We claim that $\gamma x\in B^{\bigtriangleup U}$ for every $x\in
B^{\bigtriangleup V}$ and $\gamma \in W$. Fix $x\in B^{\bigtriangleup V}$
and $\gamma \in W$ and observe that $V\gamma ^{-1}\subset VW^{-1}\subset U$
and hence it follows from Lemma~\ref{Lemma: shift Vaught} that $\gamma x\in
B^{\bigtriangleup U}$. This shows that the action is continuous.
\end{proof}

\begin{claim-no}
The space $\left( X,t\right) $ is $T_{1}$.
\end{claim-no}

\begin{proof}
Pick distinct points $x,y$ of $X$. If $p(x)\neq p(y)$ then there are
disjoint $V,W\in \mathcal{A}$ such that $p(x)\in V$ and $p(y)\in W$. Thus $%
p^{-1}\left[ V\right] $ and $p^{-1}\left[ W\right] $ are open sets
separating $x$ and $y$. Suppose that $p(x)=p(y)$. Consider the function $%
f:Gp(x)\rightarrow X\times X$ defined by%
\begin{equation*}
f(\gamma )=\left( \gamma x,\gamma y\right) \text{.}
\end{equation*}%
Observe that $f$ is Borel when $X\times X$ is endowed with the $t^{\prime
}\times t^{\prime }$ topology. By~\cite[Theorem 8.38]{kechris_classical_1995}
there is a dense $G_{\delta }$ subset $Q$ of $Gx$ such that the restriction
of $f$ to $Q$ is $\left( t^{\prime }\times t^{\prime }\right) $-continuous.
Let $\gamma _{0}\in Q$. Since $\mathcal{B}$ is a basis for the Polish
topology $t^{\prime }$ on $X$ there are disjoint elements $B,C$ of $\mathcal{%
B}$ such that $\gamma _{0}x\in B$ and $\gamma _{0}y\in C$. Since $f$ is $%
\left( t^{\prime }\times t^{\prime }\right) $-continuous on $Q$ there is $%
U\in \mathcal{A}$ such that $Up(x)\neq \varnothing $ and%
\begin{equation*}
f\left[ Up(x)\cap Q\right] \subset B\times C\text{.}
\end{equation*}%
Thus $\forall ^{\ast }\gamma \in Up(x)$, $\gamma x\in B$ and $\gamma y\in C$%
. This shows that%
\begin{equation*}
x\in B^{\bigtriangleup U}\text{, }y\in C^{\bigtriangleup U}\text{, }y\notin
B^{\bigtriangleup U}\text{, and }x\notin C^{\bigtriangleup U}\text{.}
\end{equation*}%
This concludes the proof that $\left( X,t\right) $ is T$_{1}$.
\end{proof}

\begin{claim-no}
The space $\left( X,t\right) $ is regular.
\end{claim-no}

\begin{proof}
Suppose that $B\in \mathcal{B}$ and $U\in \mathcal{A}$. Pick $x_{0}\in
B^{\bigtriangleup U}$. It is enough to show that there is a $t$-open subset $%
N$ of $B^{\bigtriangleup U}$ containing $x_{0}$ such that the $t$-closure of 
$N$ is contained in $B^{\bigtriangleup V}$. Since $x_{0}\in
B^{\bigtriangleup U}$ by Lemma~\ref{Lemma: shift Vaught} there are $%
W_{1},V_{1}\in \mathcal{A}$ such that $V_{1}W_{1}^{-1}\cup V_{1}\subset U$, $%
p(x_{0})\in W_{1}$, and $x_{0}\in B^{\bigtriangleup V_{1}}$. Since $x_{0}\in
B^{\bigtriangleup V_{1}}$ again by Lemma~\ref{Lemma: shift Vaught} there are 
$V_{2},W_{2}\in \mathcal{A}$ such that $V_{2}W_{2}^{-1}\subset V_{1}$, $%
p(x_{0})\in W_{2}$, and $x_{0}\in B^{\bigtriangleup V_{2}}$. Define $W\in 
\mathcal{A}$ such that 
\begin{equation*}
p(x_{0})\in W\subset W_{1}^{-1}\cap W_{2}.
\end{equation*}%
Consider 
\begin{equation*}
N=B^{\bigtriangleup V_{2}}\cap p^{-1}s\left[ W\right]
\end{equation*}%
and observe that $N$ is a $t$-open subset of $X$ containing $x_{0}$. We
claim that the closure of $N$ is contained in $B^{\bigtriangleup U}$. Define 
$F=\left( B^{\bigtriangleup V_{2}}\right) ^{\ast W}$ and observe that $F$ is
relatively closed in $p^{-1}s\left[ W\right] $ by Lemma~\ref{Lemma: basic
Vaught}(4). We claim that 
\begin{equation*}
N\subset F\subset B^{\bigtriangleup U}.
\end{equation*}%
Suppose that $x\in N$. If $\gamma \in Wp(x)$ we have that%
\begin{equation*}
V_{2}\gamma ^{-1}\subset V_{2}W^{-1}\subset V_{2}W_{2}^{-1}\subset V_{1}%
\text{.}
\end{equation*}%
Therefore $\gamma x\in B^{\bigtriangleup V_{1}}$. Being this true for every $%
\gamma \in Wp(x)$, $x\in \left( B^{\bigtriangleup V_{1}}\right) ^{\ast W}=F$%
. Suppose now that $x\in F$ and pick $\gamma \in Wp(x)$ such that $\gamma
x\in B^{\bigtriangleup V_{1}}$. We thus have%
\begin{equation*}
V_{1}\gamma \subset V_{1}W\subset V_{1}W_{1}^{-1}\subset U
\end{equation*}%
which implies by Lemma~\ref{Lemma: shift Vaught} that $x=\gamma ^{-1}(\gamma
x)\in B^{\bigtriangleup U}$. This concludes the proof that $N\subset
F\subset B^{\bigtriangleup U}$. We will now show that the $t$-closure of $N$
is contained in $B^{\bigtriangleup U}$. It is enough to show that if $%
x\notin B^{\bigtriangleup U}$ then there is a $t$-open neighborhood of $x$
disjoint from $N$. This is clear if $p(x)\notin s\left[ W\right] $. Suppose
now that $p(x)\in s\left[ W\right] $. Since $F$ is relatively closed in $%
p^{-1}s\left[ W\right] $ and%
\begin{equation*}
N\subset F\subset B^{\bigtriangleup V}\cap p^{-1}s\left[ W\right]
\end{equation*}%
we have that $p^{-1}s\left[ W\right] \backslash F$ is an open subset of $X$
containing $x$ and disjoint from $N$. This concludes the proof that the
closure of $N$ is contained in $B^{\bigtriangleup V}$. We have thus found an
open neighborhood $N$ of $x$ whose closure is contained in $%
B^{\bigtriangleup V}$. This concludes the proof that $\left( X,t\right) $ is
regular.
\end{proof}

\begin{claim-no}
The space $\left( X,t\right) $ is strong Choquet.
\end{claim-no}

\begin{proof}
Define $\mathcal{C}$ to be the (countable) set of nonempty finite
intersections of elements of $\mathcal{S}$ and observe that $\mathcal{C}$ is
a basis for $\left( X,t\right) $. Fix a well ordering $E$ of the countable
set $\mathcal{C\times B}\times \mathcal{A}$. Let $d^{\prime }$ be a complete
metric on $X$ compatible with the Polish topology $t^{\prime }$. We want to
define a strategy for Player II in the strong Choquet game; see~\cite[%
Section 8.D]{kechris_classical_1995}. Suppose that Player I plays $t$-open
sets $N_{i}$ for $i\in \omega $ and $x_{i}\in N_{i}$. At the $i$-th turn
Player II will choose an element $\left( M_{i},B_{i},U_{i}\right) $ of $%
\mathcal{C}\times \mathcal{B}\times \mathcal{A}$ in such a way that the
following properties hold:

\begin{enumerate}
\item $x_{i}\in M_{i}$;

\item The $t$-closure of $M_{i}$ is contained in $N_{i}$;

\item The closure of $U_{i+1}$ in $U_{0}$ is contained in $U_{i}$;

\item The $t^{\prime }$-closure of $B_{i+1}$ is contained in $B_{i}$;

\item The $d^{\prime }$-diameter of $B_{i}$ is less than $2^{-i}$;

\item The $d_{U_{0}}$-diameter of $U_{i}$ is less than $2^{-i}$ for $i\geq 1$%
, where $d_{U_{0}}$ is a compatible complete metric on $U_{0}$;

\item $M_{i}\subset B_{i}^{\bigtriangleup U_{i}}$.
\end{enumerate}

Player II strategy is the following: At the $i$-th turn pick the $E$-least
tuple $\left( M_{i},B_{i},U_{i}\right) $ in $\mathcal{C}\times \mathcal{B}%
\times \mathcal{A}$ satisfying properties (1)--(7). We need to show that the
set of such tuples is nonempty. Observe that $x_{i}\in N_{i}\subset
M_{i-1}\subset B_{i-1}^{\bigtriangleup U_{i-1}}$. Thus $U_{i-1}p(x_{i})\neq
\varnothing $ and $\exists ^{\ast }\gamma \in U_{i-1}p(x_{i})$ such that $%
\gamma x_{i}\in B_{i-1}$. Since $\mathcal{B}$ is a basis for $\left(
X,t^{\prime }\right) $ and $\mathcal{A}$ is a basis for $G$ we can find $%
B_{i}$ and $U_{i}$ such that

\begin{itemize}
\item (3)--(6) hold.

\item $U_{i}p(x_{i})\neq \varnothing $, and

\item $\exists ^{\ast }\gamma \in U_{i}p(x_{i})$ such that $\gamma x_{i}\in
B_{i}$.
\end{itemize}

Consider $M=B_{i}^{\bigtriangleup U_{i}}\cap N_{i}$ and observe that $M$ is
a $t$-open set containing $x_{i}$. Since $\left( X,t\right) $ is regular
there is $M_{i}\in \mathcal{C}$ such that $x_{i}\in M_{i}$ and the closure
of $M_{i}$ is contained in $N_{i}\cap B_{i}^{\bigtriangleup U_{i}}$. This
ensures that (1),(2),(7) are satisfied. We now show that this gives a
winning strategy for Player II. For every $i\in \omega $ we have that $%
x_{i}\in M_{i}\subset B_{i}^{\bigtriangleup U_{i}}$ and hence there is $%
\gamma _{i}\in U_{i}p(x_{i})$ such that $\gamma _{i}x_{i}=y_{i}\in B_{i}$.
Define $\gamma $ to be the limit of the sequence $(\gamma _{i})_{i\in \omega
}$ in $U_{0}$ and $y$ to be the $t^{\prime }$-limit of the sequence $%
(y_{i})_{i\in \omega }$ in $Y$. Observe that%
\begin{equation*}
p(y)=\lim_{i}p(y_{i})=\lim_{i}r(\gamma _{i})=r(\gamma )\text{.}
\end{equation*}%
Define $x=\gamma ^{-1}y\in X$ and observe that $x$ is the $t$-limit of the
sequence $(x_{i})_{i\in \omega }$. Fix $i\in \omega $. For $j>i$ we have
that $x_{j}\in N_{j}\subset M_{i}$ and hence $x$ is contained in the $t$%
-closure of $M_{i}$, which is in turn contained in $N_{i}$. This shows that $%
x\in \bigcap_{i\in \omega }N_{i}$, concluding the proof that Player II has a
winning strategy in the strong Choquet game in $\left( X,t\right) $.
\end{proof}

The proof of Theorem \ref{Theorem: Becker-Kechris Borel} is finished
recalling that a regular T$_{1}$ strong Choquet space is Polish~\cite[%
Theorem 8.18]{kechris_classical_1995}.

\subsection{Finer topologies for Polish $G$-spaces\label{Subsection: finer
topology}}

\begin{theorem}
\label{Theorem: Becker-Kechris finer}Suppose that $G$ is a Polish groupoid,
and $\left( X,\tau \right) $ is a Polish $G$-space. Assume that $V\subset G$
is an open Polish subset, $P\subset X$ is $\mathbf{\Sigma }_{\alpha }^{0}$
for some $\alpha \in \omega _{1}$, and $Q=P^{\bigtriangleup V}$.There is a
topology $t$ on $X$ such that:

\begin{enumerate}
\item $t$ is a Polish topology;

\item $t$ is finer that $\tau $;

\item $Q$ is $t$-open,

\item The action of $G\ $on $\left( X,t\right) $ is continuous;

\item $t$ has a countable basis $\mathcal{B}$ such that for every $B\in 
\mathcal{B}$ there is $n\in \omega $ such that $B$ is $\mathbf{\Sigma }%
_{\alpha +n}^{0}$ with respect to $\tau $.
\end{enumerate}
\end{theorem}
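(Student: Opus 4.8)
The plan is to follow the proof of Theorem~\ref{Theorem: Becker-Kechris Borel} closely, adding the bookkeeping needed to keep the Borel complexity of the newly declared open sets bounded. Fix a countable basis $\mathcal{A}$ of Polish open subsets of $G$ with $V\in\mathcal{A}$ (any countable basis of Polish open sets can be enlarged so as to contain the given open Polish set $V$), and a countable basis $\left\{W_{m}:m\in\omega\right\}$ for $\tau$. Exactly as in the proof of Theorem~\ref{Theorem: Becker-Kechris Borel}, Lemma~\ref{Lemma: Borel Vaught transform} together with~\cite[5.1.3 and 5.1.4]{becker_descriptive_1996} yields a countable Boolean algebra $\mathcal{B}$ of Borel subsets of $X$ containing $\left\{W_{m}:m\in\omega\right\}\cup\left\{P\right\}$, closed under the operations $B\mapsto B^{\bigtriangleup U}$ for $U\in\mathcal{A}$, and generating a Polish topology $t'$ on $X$. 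The new ingredient is a complexity estimate for the members of $\mathcal{B}$: since the action of $G$ on $(X,\tau)$ is continuous the anchor map $p$ is $\tau$-continuous, and since the source map $s$ is open the set $p^{-1}s\left[U\right]$ is $\tau$-open for every $U\in\mathcal{A}$; hence by Lemma~\ref{Lemma: Borel class Vaught transform} the Vaught transform $B\mapsto B^{\bigtriangleup U}$ sends a subset of $X$ that is $\mathbf{\Sigma}_{\beta}^{0}$ with respect to $\tau$ to one that is $\mathbf{\Sigma}_{\beta}^{0}$ relatively to the $\tau$-open set $p^{-1}s\left[U\right]$, hence $\mathbf{\Sigma}_{\beta}^{0}$ with respect to $\tau$. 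Since $\mathcal{B}$ is obtained from $\left\{W_{m}\right\}\cup\left\{P\right\}$, all of which are $\mathbf{\Sigma}_{\alpha}^{0}$, by iterating Vaught transforms $B\mapsto B^{\bigtriangleup U}$ (which preserve the $\mathbf{\Sigma}^{0}$-rank over $\tau$) and finite Boolean operations (which raise it by at most one each), every $B\in\mathcal{B}$ is $\mathbf{\Sigma}_{\alpha+n}^{0}$ with respect to $\tau$ for some $n\in\omega$.

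Next set $\mathcal{S}=\left\{W_{m}:m\in\omega\right\}\cup\left\{B^{\bigtriangleup U}:B\in\mathcal{B},\ U\in\mathcal{A}\right\}$ and let $t$ be the topology on $X$ having $\mathcal{S}$ as a subbasis. As $\mathcal{B}$ is a Boolean algebra containing the $W_{m}$ and closed under $B\mapsto B^{\bigtriangleup U}$, we have $\mathcal{S}\subset\mathcal{B}$, so $t\subset t'$ and every finite intersection of members of $\mathcal{S}$ again lies in $\mathcal{B}$; the collection $\mathcal{C}$ of such finite intersections is a basis for $t$, each of whose members is $\mathbf{\Sigma}_{\alpha+n}^{0}$ for a suitable $n$, which is (5). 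Condition (2) holds because $W_{m}\in\mathcal{S}$, and (3) because $Q=P^{\bigtriangleup V}$ has the form $B^{\bigtriangleup U}$ with $B=P\in\mathcal{B}$ and $U=V\in\mathcal{A}$, so $Q\in\mathcal{S}$. For (4) it suffices to check that the preimage under the action map $G\ltimes X\to X$ of each member of $\mathcal{S}$ is open in $G\ltimes(X,t)$: for the sets $W_{m}$ this follows from continuity of the action of $G$ on $(X,\tau)$ together with $\tau\subset t$, while for a set $B^{\bigtriangleup U}$ Lemma~\ref{Lemma: shift Vaught} gives
\begin{equation*}
\left\{(\gamma,x)\in G\ltimes X:\gamma x\in B^{\bigtriangleup U}\right\}=\bigcup\left\{\left(W\times B^{\bigtriangleup V_{0}}\right)\cap(G\ltimes X):V_{0},W\in\mathcal{A},\ V_{0}W^{-1}\subset U\right\},
\end{equation*}
and each $B^{\bigtriangleup V_{0}}$ belongs to $\mathcal{S}$, hence is $t$-open; continuity of $p$ follows from $\tau\subset t$.

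It remains to prove (1), that $t$ is Polish. As in Theorem~\ref{Theorem: Becker-Kechris Borel} I would do this by showing that $(X,t)$ is $T_{1}$, regular and strong Choquet and then invoking~\cite[Theorem 8.18]{kechris_classical_1995}. Now $(X,t)$ is $T_{1}$ at once, because $t$ refines the $T_{1}$ topology $\tau$, so that the separation argument of Theorem~\ref{Theorem: Becker-Kechris Borel} is not needed here. Regularity for the subbasic sets $W_{m}$ is immediate from regularity of $\tau$, since a $t$-closure is contained in the corresponding $\tau$-closure; regularity for the subbasic sets $B^{\bigtriangleup U}$ is proved verbatim as in the corresponding claim of the proof of Theorem~\ref{Theorem: Becker-Kechris Borel}, this being the step that uses that $\mathcal{B}$ is closed under both Boolean operations and Vaught transforms. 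Finally, strong Choquet follows by running the winning strategy for Player II described in the proof of Theorem~\ref{Theorem: Becker-Kechris Borel} essentially unchanged, the sets $W_{m}$ occurring in Player I's moves being absorbed harmlessly into the finite intersections tracked there, and using that $t'$ is Polish to supply the complete metric on $X$ required by that argument. The main obstacle is thus the complexity estimate of the first paragraph together with the verification that the strong Choquet argument still goes through with the new subbasis; everything else is a routine adaptation of the proof of Theorem~\ref{Theorem: Becker-Kechris Borel}.
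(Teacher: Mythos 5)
Your proposal is correct and follows essentially the same route as the paper's proof: the same Boolean algebra $\mathcal{B}$ obtained from Lemma~\ref{Lemma: Borel class Vaught transform} together with \cite[5.1.3, 5.1.4]{becker_descriptive_1996}, the same subbasis $\mathcal{S}\cup\mathcal{D}$ (your $\left\{W_{m}\right\}$ playing the role of $\mathcal{D}$), and the same reduction of continuity, regularity, and the strong Choquet property to the argument of Theorem~\ref{Theorem: Becker-Kechris Borel}. The only difference is that you spell out the complexity bookkeeping and the adaptations that the paper leaves implicit, and these are all carried out correctly.
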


The analogous statement for actions of Polish groups is proved in a similar
way in~\cite[Theorem 5.1.8]{becker_descriptive_1996}. Let $\mathcal{A}$ be a
countable basis of Polish open subsets for $G$ containing $V$ and let $%
\mathcal{D}$ be a countable basis for $\left( X,\tau \right) $. By Lemma~\ref%
{Lemma: Borel class Vaught transform} and \cite[5.1.3, 5.1.4]%
{becker_descriptive_1996} there is a countable Boolean algebra $\mathcal{B}$
of subsets of $X$ satisfying the following:

\begin{enumerate}
\item For $B\in \mathcal{B}$ and $U\in \mathcal{A}$, $B^{\bigtriangleup
V}\in \mathcal{A}$;

\item $P\in \mathcal{B}$;

\item $\mathcal{D}\subset \mathcal{B}$;

\item $\mathcal{B}$ is a basis for a Polish topology $t^{\prime }$;

\item For every $B\in \mathcal{B}$, there is $n\in \omega $ such that $B$ is 
$\mathbf{\Sigma }_{\alpha +n}$ with respect to $\tau $.
\end{enumerate}

Define 
\begin{equation*}
\mathcal{S}=\left\{ B^{\bigtriangleup V}:B\in \mathcal{B},V\in \mathcal{A}%
\right\} \text{,}
\end{equation*}%
and%
\begin{equation*}
\mathcal{S}^{\ast }=\mathcal{S}\cup \mathcal{D}\text{.}
\end{equation*}%
Consider the topology $t$ on $X$ having $\mathcal{S}^{\ast }$ as a subbasis.
We claim that $t$ is a Polish topology finer that $\tau $ and coarser that $%
t^{\prime }$ making the action continuous. Clearly $t$ is finer that $\tau $
and in particular $p:\left( X,t\right) \rightarrow G^{0}$ is continuous. The
proof that the action is continuous and that $t$ is a Polish topology is
analogous to the proof of Theorem~\ref{Theorem: Becker-Kechris Borel}. The
following corollary can be obtained from Theorem~\ref{Theorem:
Becker-Kechris finer} together with~\cite[Subsection 5.1.3]%
{becker_descriptive_1996}.

\begin{corollary}
Suppose that $G$ is a Polish groupoid, and $\left( X,\tau \right) $ is a
Polish $G$-space. If $\mathcal{J}$ is a countable collection of $G$%
-invariant Borel subsets of $X$, then there is a Polish topology $t$ on $X$
finer than $\tau $ and making the action continuous such that all elements
of $\mathcal{J}$ are $t$-clopen.
\end{corollary}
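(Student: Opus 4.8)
The plan is to iterate Theorem~\ref{Theorem: Becker-Kechris finer} countably many times along an enumeration of an auxiliary countable family of Vaught transforms, and then to pass to the topology generated by the union of the resulting increasing sequence of Polish topologies, exactly as in the case of Polish group actions~\cite[5.1.3]{becker_descriptive_1996}.

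First I would reduce the problem of making sets \emph{clopen} to the problem of making sets \emph{open}. Replacing $\mathcal{J}$ by $\mathcal{J}\cup\{X\setminus A:A\in\mathcal{J}\}$ --- still a countable family of $G$-invariant Borel sets --- it suffices to produce a finer Polish topology $t$ on $X$, with the action still continuous, for which every member of this enlarged family is $t$-open, since then every $A\in\mathcal{J}$ and its complement are $t$-open, i.e.\ $A$ is $t$-clopen. Next, fix a countable basis $\mathcal{A}=\{U_n:n\in\omega\}$ of Polish open subsets of $G$. For a $G$-invariant Borel set $A$ one checks directly from the definition of the Vaught transform that $A^{\bigtriangleup U_n}=A\cap p^{-1}[s[U_n]]$: every arrow in $U_np(x)$ moves $x$ inside its orbit, hence inside $A$ if and only if $x\in A$, and $U_np(x)$, when nonempty, is a nonempty open subset of the Polish space $Gp(x)$ and so is nonmeager in itself. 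Since the identity arrow of every object lies in some $U_n$ we have $\bigcup_n s[U_n]=G^0$, whence $\bigcup_n A^{\bigtriangleup U_n}=A$. Therefore it is enough to make the countable family $\mathcal{C}=\{A^{\bigtriangleup U_n}:A\in\mathcal{J},\ n\in\omega\}$ open.

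Now enumerate $\mathcal{C}$ as $(C_m)_{m\in\omega}$ with $C_m=A_m^{\bigtriangleup U_{n_m}}$, and build an increasing sequence of Polish topologies $\tau=t_0\subseteq t_1\subseteq\cdots$ on $X$ such that for each $m$: the action $G\curvearrowright(X,t_m)$ is continuous, $t_m$ has a countable basis consisting of sets that are Borel with respect to $\tau$, and $C_m$ is $t_{m+1}$-open. At stage $m$ the set $A_m$ is Borel, hence $\mathbf{\Sigma}_\alpha^0$, relative to $t_m$ for some $\alpha\in\omega_1$; applying Theorem~\ref{Theorem: Becker-Kechris finer} to the Polish $G$-space $(X,t_m)$ with $V=U_{n_m}$ and $P=A_m$ yields $t_{m+1}$, and item~(5) of that theorem ensures that its basis consists of sets which are $\mathbf{\Sigma}_{\alpha+j}^0$ relative to $t_m$, hence (by induction) Borel relative to $\tau$. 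Finally let $t$ be the topology generated by $\bigcup_m t_m$. Because the $t_m$ are increasing, $\bigcup_m t_m$ is already closed under finite intersections, so it is a basis for $t$; and since each $t_m$ is Polish with Borel $\sigma$-algebra equal to that of $\tau$, the standard fact about increasing unions of Polish topologies with a common Borel structure (the argument of~\cite[5.1.3]{becker_descriptive_1996}) shows that $t$ is Polish and $\mathcal{B}(X,t)=\mathcal{B}(X,\tau)$. The anchor map is $t$-continuous since $t\supseteq\tau$, and the action map is $t$-continuous because every $t$-basic open set is $t_m$-open for some $m$, so its preimage under the action is open in $G\ltimes(X,t_m)$ and hence in the finer $G\ltimes(X,t)$. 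Each $C_m$ is $t_{m+1}$-open, hence $t$-open, so each $A\in\mathcal{J}$, being $\bigcup_n A^{\bigtriangleup U_n}$, together with its complement is $t$-open, i.e.\ $A$ is $t$-clopen.

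The only genuinely non-routine point is the last Polishness claim for the limit topology $t$: a single application of Theorem~\ref{Theorem: Becker-Kechris finer} is not enough, and one must invoke the lemma that the supremum of an increasing sequence of Polish topologies all having the same Borel $\sigma$-algebra is again Polish. This is precisely where the complexity bookkeeping in item~(5) of the theorem is essential --- without the guarantee that each successive topology remains inside the fixed standard Borel structure, the hypotheses of that lemma would fail.
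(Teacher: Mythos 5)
Your proof is correct and is essentially the argument the paper has in mind: the paper gives no details beyond citing Theorem~\ref{Theorem: Becker-Kechris finer} together with \cite[Subsection 5.1.3]{becker_descriptive_1996}, and your write-up (reducing clopen to open, observing $A^{\bigtriangleup U_n}=A\cap p^{-1}[s[U_n]]$ for invariant $A$ so that $A=\bigcup_n A^{\bigtriangleup U_n}$, iterating the theorem along an increasing sequence of Polish topologies with $\tau$-Borel bases, and invoking the standard lemma on increasing unions of Polish topologies) is exactly the intended fleshing-out, with the decomposition over the basic Polish open sets $U_n$ being the right adaptation since $G$ itself need not be a Polish open subset.
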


\section{Borel orbit equivalence relations\label{Section: Borel orbit}}

\subsection{A Borel selector for cosets\label{Subsection: Borel selector
cosets}}

Suppose that $G$ is an open Polish groupoid. Denote by $F(G)$ the (standard)
Borel space of closed subsets of $G$ endowed with the Effros Borel
structure; see Appendix~\ref{Appendix}. A similar proof as~\cite[Theorem
12.13]{kechris_classical_1995} shows that there is a Borel function%
\begin{equation*}
\sigma :F(G)\backslash \left\{ \varnothing \right\} \rightarrow G
\end{equation*}%
such that $\sigma (A)\in A$ for every nonempty closed subset $A$ of $G$.
Denote by $S(G)$ the Borel space of closed subgroupoids of $G$. This is the
Borel subset of $F(G)$ containing the closed subsets $H$ of $G$ such that
for $\gamma ,\rho \in H$, $\gamma ^{-1}\in H$ and if $r(\gamma )=s(\rho )$
then $\rho \gamma \in H$. If $H\in S(G)$ denote by $\sim _{H}$ the
equivalence relation on $G$ defined by $\gamma \sim _{H}\rho $ iff $\gamma
=\rho h$ for some $h\in H$ or, equivalently, $\gamma H=\rho H$.

\begin{proposition}
\label{Proposition: coset selector}The relation $\sim $ on $G\times S(G)$
defined by $\left( \gamma ,H\right) \sim \left( \gamma ^{\prime },H^{\prime
}\right) $ iff $H=H^{\prime }$ and $\gamma H=\gamma ^{\prime }H^{\prime }$
has a Borel transversal $T.$
\end{proposition}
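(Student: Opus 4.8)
The plan is to build the transversal $T$ fiberwise over $S(G)$, using the Borel selector $\sigma:F(G)\setminus\{\varnothing\}\to G$ together with a uniformized version of the coset structure. First I would observe that for a fixed $H\in S(G)$, the relation $\sim_H$ on $G$ is simply the orbit equivalence relation of the right translation action of $H$ on $G$, where $\gamma$ gets moved only by elements $h\in H$ with $r(h)=s(\gamma)$; its classes are the left cosets $\gamma H$, which are \emph{closed} subsets of $G$ since $H$ is closed and right translation by a fixed element is a homeomorphism of the appropriate fiber. The natural candidate for $T$ is then
\begin{equation*}
T=\left\{(\gamma,H)\in G\times S(G):\gamma=\sigma\!\left(\overline{\gamma H}\right)\right\}=\left\{(\gamma,H):\gamma=\sigma(\gamma H)\right\}.
\end{equation*}
By construction, for each $H$ and each coset $C=\gamma H$ the unique point of $T_H\cap C$ is $\sigma(C)\in C$, so $T$ meets every $\sim$-class in exactly one point, provided the map $(\gamma,H)\mapsto \gamma H\in F(G)$ is Borel.

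The main work is therefore to verify that the assignment $(\gamma,H)\mapsto \gamma H$ from $G\times S(G)$ to $F(G)$ is Borel, and this is the step I expect to be the principal obstacle. For this I would use the standard criterion for Borelness of a map into the Effros Borel space: it suffices to show that for each basic open $U\subseteq G$ the set $\{(\gamma,H):\gamma H\cap U\neq\varnothing\}$ is Borel. Now $\gamma H\cap U\neq\varnothing$ iff there is $h\in H$ with $s(h)=r(h)$ replaced appropriately — more precisely, writing the composition convention of the paper, $\gamma H=\{\gamma h:h\in H,\ s(\gamma)=r(h)\}$, so $\gamma H\cap U\neq\varnothing$ iff $\exists h\in H$ with $r(h)=s(\gamma)$ and $\gamma h\in U$. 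Since composition is continuous and open, the set $\{(\gamma,h):\gamma h\in U\}$ is open in $G^{2}$; intersecting with the Borel set $\{(\gamma,h,H):h\in H\}$ (which is Borel by definition of the Effros structure on $S(G)$) and projecting out the $h$ coordinate yields an analytic set. To get a \emph{Borel} set one uses that the fibers are nicely structured: $H$ is a countable union of its intersections with a countable basis of Polish open sets, or alternatively one applies the Arsenin–Kunugui / Saint-Raymond large-section uniformization, noting that each section $\{h\in H:r(h)=s(\gamma),\ \gamma h\in U\}$ is open in the Polish coset fiber. A cleaner route: apply the Borel selector $\sigma$ to the closed set $H$ to extract Borel-in-$H$ countable dense sequences in each fiber $H\cap(r^{-1}\{a\})$ (using that $G$ is open, so the fibration $r:G\to G^0$ has a Borel selection of dense sequences in fibers, exactly as in the construction of $\sigma$ itself), and then $\gamma H\cap U\neq\varnothing$ iff one of countably many Borel-definable points $\gamma h_n(\gamma,H)$ lies in $U$. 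This renders the set Borel.

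Once $(\gamma,H)\mapsto\gamma H$ is known to be Borel, the composite $(\gamma,H)\mapsto \sigma(\gamma H)\in G$ is Borel, hence $T$, being the set where this composite agrees with the first coordinate projection, is a Borel subset of $G\times S(G)$. It remains only to check that $T$ lies in the right ambient space and is a genuine transversal for $\sim$: if $(\gamma,H)\sim(\gamma',H')$ then $H=H'$ and $\gamma H=\gamma'H'$, so $\sigma$ applied to this common coset gives the same point, and exactly one of $(\gamma,H),(\gamma',H')$ can satisfy $\delta=\sigma(\delta H)$ — namely the one with $\delta=\sigma(\gamma H)$; and every class contains this witness. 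This completes the argument; the only delicate point, as indicated, is the Borelness of the coset map, which I would handle via the Borel selector of dense sequences in the fibers of the open map $r:G\to G^0$ together with the basic-open criterion for the Effros Borel structure.
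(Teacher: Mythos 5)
Your overall architecture coincides with the paper's: both arguments define $f(\gamma,H)=\gamma H$, reduce everything to the Borelness of $f$ as a map into the Effros Borel space, obtain the analytic half of that Borelness by projecting the Borel set $\left\{ (\rho,\gamma,H):\gamma^{-1}\rho\in H,\ \rho\in U\right\}$, and then take $T=\left\{ (\gamma,H):\gamma=\sigma(\gamma H)\right\}$. The final assembly of the transversal from the selector is fine.

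The gap is precisely in the step you flag as delicate: upgrading $A_U=\left\{ (\gamma,H):\gamma H\cap U\neq\varnothing\right\}$ from analytic to Borel. Neither of your two routes is justified as written. For the ``cleaner route'': the selector $\sigma$ (Kuratowski--Ryll-Nardzewski applied to $F(G)$) produces a sequence dense in $H$, not dense in each fiber $xH=H\cap r^{-1}\left[\left\{x\right\}\right]$; a set dense in $H$ can miss a fiber entirely, since the fibers are typically nowhere dense in $H$. To extract a Borel-in-$(x,H)$ dense sequence in $xH$ you would first need the map $(x,H)\mapsto xH$ to be Borel into the Effros fibred space, i.e.\ you would need $\left\{ (x,H):xH\cap U\neq\varnothing\right\}$ to be Borel --- and that is exactly the case $\gamma=x$ of the statement you are proving, so the argument is circular. (For a general continuous open surjection in place of $r$ this fiber map genuinely fails to be Borel: think of a coordinate projection and a closed set with non-Borel projection.) The large-section route suffers from the same circularity: the sections $\left\{ h\in s(\gamma)H:\gamma h\in U\right\}$ are indeed relatively open, hence non-meager in $s(\gamma)H$ when nonempty, but to invoke the large-section uniformization theorem you must check that the assignment of the $\sigma$-ideal of meager subsets of the \emph{varying} Polish space $s(\gamma)H$ is Borel-on-Borel, and nothing you have established controls how $s(\gamma)H$ depends on $H$; Arsenin--Kunugui does not apply since the sections are not $\sigma$-compact. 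The paper sidesteps all of this by proving co-analyticity of $A_U$ directly: $\gamma H\cap U\neq\varnothing$ iff there is a basic Polish open $U_n$ with $U_n\cap H\neq\varnothing$ (a generating set of the Effros Borel structure, so no selection inside a fiber is needed) and $\gamma U_n\subseteq U$, where $\left\{ \gamma\in G:\gamma U_n\subseteq U\right\}$ is co-analytic as the co-projection of a Borel subset of $G\times U_n$, and $\mathbf{\Pi}_1^1$ is closed under countable unions; combined with analyticity this yields Borelness. You need this (or some other non-circular) substitute for your key step; as written the proof does not close.
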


\begin{proof}
Define the map $f$ from $S(G)\times G$ to $F(G)$ by $f\left( \gamma
,H\right) =\gamma H$. We claim that $f$ is Borel. Let us show that if $U$ is
an open subset of $G$ then the set 
\begin{equation*}
A_{U}=\left\{ \left( \gamma ,H\right) \in G\times S(G):\gamma H\cap U\neq
\varnothing \right\}
\end{equation*}%
is Borel. Since the set%
\begin{equation*}
\left\{ \left( \rho ,\gamma ,H\right) \in G\times G\times S(G):\gamma
^{-1}\rho \in H\text{ and }\rho \in U\right\}
\end{equation*}%
is Borel, its projection $A_{U}$ on the last two coordinates is analytic. We
want to show that $A_{U}$ is co-analytic. Fix a countable basis of Polish
open sets $\left\{ U_{n}:n\in \omega \right\} $ for $G$. Observe that $%
\left( H,\gamma \right) \in A_{U}$ if and only if there is $n\in \omega $
such that $\gamma U_{n}\subset U$ and $U_{n}\cap H\neq \varnothing $. It is
now enough to show that $\left\{ \gamma \in G:\gamma U_{n}\subset U\right\} $
is co-analytic. This follows from the fact that%
\begin{equation*}
\left\{ \left( \gamma ,\rho \right) \in G\times U_{n}:\text{either }r(\rho
)\neq s(\gamma )\text{ or }r(\rho )=s(\gamma )\text{ and }\gamma \rho \in
U\right\}
\end{equation*}%
is a Borel set and it co-projection on the first coordinate is $\left\{
\gamma \in G:U_{n}\gamma \subset U\right\} $. If now $\sigma :F(G)\backslash
\left\{ \varnothing \right\} \rightarrow G$ is a Borel map such that $\sigma
(A)\in A$ for every nonempty closed subset $A$ of $G$, define $g\left(
\gamma ,H\right) =\left( \left( \sigma \circ f\right) \left( \gamma
,H\right) ,H\right) $. Observe that $g$ is a Borel selector for $\sim $.
Therefore the set%
\begin{equation*}
T=\left\{ \left( \gamma ,H\right) :g\left( \gamma ,H\right) =\left( \gamma
,H\right) \right\}
\end{equation*}%
is a Borel transversal for $\sim $.
\end{proof}

\begin{corollary}
\label{Corollary: Borel orbits}If $G$ is a Polish groupoid, and $X$ is a
Borel $G$-space, then the orbits are Borel subsets of $X$.
\end{corollary}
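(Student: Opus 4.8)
The plan is to reduce to a single orbit: it suffices to prove that $[x]$ is Borel for each $x\in X$. The key observation is that $[x]$ is the image of the fibre $Gp(x)$ under the Borel map $f\colon Gp(x)\to X$, $f(\gamma)=\gamma x$ (this $f$ is Borel, being the composition of the Borel action map $G\ltimes X\to X$ with the Borel inclusion $\gamma\mapsto(\gamma,x)$), and that $f(\gamma)=f(\gamma')$ precisely when $\gamma'\in\gamma G_{x}$. Indeed, if $\gamma'=\gamma h$ with $h\in G_{x}$ then $f(\gamma')=\gamma hx=\gamma x=f(\gamma)$; conversely $\gamma x=\gamma' x$ forces $r(\gamma)=r(\gamma')$, so $h:=\gamma^{-1}\gamma'$ is defined, fixes $x$, hence lies in $G_{x}$, and $\gamma'=\gamma h$. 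Thus if I can find a Borel set $T\subseteq Gp(x)$ meeting each coset $\gamma G_{x}$ ($\gamma\in Gp(x)$) in exactly one point, then $f$ is injective on $T$ and $f[T]=f[Gp(x)]=[x]$, so \cite[Theorem 15.1]{kechris_classical_1995} yields that $[x]$ is Borel.

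To produce $T$, I would proceed as follows. First, $Gp(x)=s^{-1}\{p(x)\}$ is a standard Borel space, being Polish in the relative topology; and the stabiliser $G_{x}$ is a closed subgroup of $p(x)Gp(x)$ (as already observed in the text, or, after invoking Theorem~\ref{Theorem: Becker-Kechris Borel} to replace $X$ by a Polish $G$-space, because the action is then continuous), hence a closed subgroupoid of $G$, i.e.\ an element $H\in S(G)$. Now apply Proposition~\ref{Proposition: coset selector} to get a Borel transversal $T^{*}\subseteq G\times S(G)$ for the relation $\sim$, and set $T=\{\gamma\in Gp(x):(\gamma,H)\in T^{*}\}$. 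This $T$ is Borel in $Gp(x)$ since $T^{*}$ is Borel; and since every coset $\gamma H$ with $\gamma\in Gp(x)$ is contained in $Gp(x)$ (because $s(\gamma h)=s(h)=p(x)$ for $h\in H$) and is met by $T^{*}$ in exactly one point, $T$ meets each such coset in exactly one point. Then the argument of the previous paragraph finishes the proof.

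I do not expect any serious obstacle here; the substantive content is entirely contained in Proposition~\ref{Proposition: coset selector}. The only points requiring a little care are the bookkeeping ones: verifying that $G_{x}$ is genuinely an element of $S(G)$ (closed under inversion and composition, with closedness coming from the earlier remark), that $\gamma^{-1}\gamma'$ is a legitimate composition exactly when $r(\gamma)=r(\gamma')$ --- which is automatic once $\gamma x=\gamma' x$ --- and that intersecting the section of $T^{*}$ over $H$ with $Gp(x)$ does hit every relevant coset (the unique $\sim$-class whose coset is empty simply contributes nothing inside $Gp(x)$). For completeness one may also note that the statement follows by combining Theorem~\ref{Theorem: Becker-Kechris Borel} with Proposition~\ref{Proposition: Borel classes}, since a Borel $G$-isomorphism carries orbits to orbits and Borel sets to Borel sets.
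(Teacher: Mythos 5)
Your proposal is correct and follows essentially the same route as the paper: identify the fibres of $\gamma\mapsto\gamma x$ on $Gp(x)$ with the cosets of the closed stabilizer $G_{x}$, extract a Borel transversal from Proposition~\ref{Proposition: coset selector}, and conclude via the Lusin--Souslin theorem that the injective Borel image $[x]$ is Borel. The bookkeeping points you flag (closedness of $G_{x}$, restriction of the transversal to $Gp(x)$) are exactly the ones the paper leaves implicit.
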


\begin{proof}
Observe that the stabilizer $G_{x}$ is a closed subgroup of $p(x)Gp(x)$ by 
\cite[Theorem 9.17]{kechris_classical_1995}. Consider a Borel transversal $%
T_{x}$ for the equivalence relation $\sim _{G_{x}}$. The function $\gamma
\mapsto \gamma x$ from $T\cap Gx$ to $X$ is a 1:1 Borel function from $T_{x}$
onto the orbit of $x$. This shows that the orbit of $x$ is Borel by~\cite[%
Theorem 15.1]{kechris_classical_1995}.
\end{proof}

\subsection{Borel orbit equivalence relations\label{Subsection: Borel orbit
equivalence relations}}

Suppose that $G$ is a Polish groupoid, and $X$ is a Polish $G$-space. If $%
x\in X$ then Lemma~\ref{Lemma: Borel Vaught transform} and \cite[5.1.3 and
5.1.4]{becker_descriptive_1996} show that there is a sequence $\left(
B_{x,n}\right) _{n\in \omega }$ of Borel subsets of $X$ such that $\left[ x%
\right] =B_{x,0}$ and 
\begin{equation*}
\mathcal{B}(x)=\left\{ B_{x,n}:n\in \omega \right\}
\end{equation*}%
is a Boolean algebra that is a basis for a topology $t(x)$ on $X$ making the
action continuous, and such that $B^{\bigtriangleup U}\in \mathcal{B}(x)$
whenever $B\in \mathcal{B}(x)$ and $U\in \mathcal{A}$. It is implicit in the
proof of Lemma~\ref{Lemma: Borel Vaught transform} and~\cite[5.1.3, 5.1.4]%
{becker_descriptive_1996} that, under the additional assumption that the
orbit equivalence relation $E_{G}^{X}$ is Borel, the dependence of the
sequence $\left( \mathcal{B}_{x,n}\right) _{n\in \omega }$ from $x$ is
Borel, i.e.\ the relation%
\begin{equation*}
\mathcal{B}(y,x,n)\Leftrightarrow y\in \mathcal{B}_{x,n}
\end{equation*}%
is Borel. This concludes the proof of the following lemma; see also~\cite[%
Lemma 7.1.3]{becker_descriptive_1996}.

\begin{lemma}
\label{Lemma:sigma-algebra}Suppose that $G$ is a Polish groupoid, and $X$ is
a Polish $G$-space. Assume that $\mathcal{A}$ is a countable basis of Polish
open subsets of $G$. If the orbit equivalence relation $E_{G}^{X}$ is Borel,
then there is a Borel subset $\mathcal{B}$ of $X\times X\times \omega $ such
that, letting 
\begin{equation*}
B_{x,n}=\left\{ y:(y,x,n)\in \mathcal{B}\right\}
\end{equation*}%
and 
\begin{equation*}
\mathcal{B}(x)=\left\{ B_{x,n}:n\in \omega \right\} \text{,}
\end{equation*}%
for every $x\in X$ the following hold:

\begin{enumerate}
\item $\left[ x\right] =B_{x,0}$;

\item $B^{\bigtriangleup U}\in \mathcal{B}(x)$ for every $B\in \mathcal{B}%
(x) $, and $U\in \mathcal{A}$;

\item $\mathcal{B}(x)$ is a Boolean algebra;

\item $\mathcal{B}(x)$ is a basis for a Polish topology $t(x)$ on $X$ making 
$X$ a Polish $G$-space.
\end{enumerate}
\end{lemma}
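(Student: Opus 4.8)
The plan is to re-run the construction from the proof of Theorem~\ref{Theorem: Becker-Kechris Borel}, but starting the recursion from a countable family that already contains the orbit $[x]$, and to check that every step can be performed Borel-measurably in the parameter $x$; this is the groupoid analogue of~\cite[Lemma 7.1.3]{becker_descriptive_1996}. Fix a countable basis $\{D_{m}:m\in\omega\}$ of the Polish topology of $X$, together with the countable basis $\mathcal{A}$ of Polish open subsets of $G$ from the hypothesis. Since $E_{G}^{X}$ is Borel, Theorem~\ref{Theorem: Sami} yields $\alpha\in\omega_{1}$ with every orbit $\mathbf{\Pi}_{\alpha}^{0}$; this uniform bound on the complexity of the sets to be manipulated is what keeps the whole construction within a fixed level of the Borel hierarchy and hence uniformly codable. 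For each $x$ the initial family is the Boolean algebra $\mathcal{B}_{0}(x)$ generated by $\{[x]\}\cup\{D_{m}:m\in\omega\}$. Note that the relation ``$y\in[x]$'' is Borel in $(y,x)$, being exactly $E_{G}^{X}$, and that $[x]$ is a Borel subset of $X$ by Corollary~\ref{Corollary: Borel orbits}; thus the initial stage depends Borel-measurably on $x$.

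Next I would carry out the closure recursion. Fix a bookkeeping enumeration of \emph{instructions}, each being one of: (a) adjoin a Boolean combination of two sets already listed; (b) adjoin $B^{\bigtriangleup U}$ for a set $B$ already listed and some $U\in\mathcal{A}$; (c) adjoin finitely many sets that witness, as in~\cite[5.1.3, 5.1.4]{becker_descriptive_1996}, that a set already listed becomes clopen in a finer Polish topology possessing a countable basis; the enumeration is arranged so that every instruction is eventually carried out. Processing the instructions produces, for each $x$, a list $(B_{x,n})_{n\in\omega}$ with $B_{x,0}=[x]$; set $\mathcal{B}(x)=\{B_{x,n}:n\in\omega\}$ and let $t(x)$ be the topology having $\mathcal{B}(x)$ as a basis. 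The point is that each instruction is a Borel operation on Borel codes. For (a) this is immediate. For (b) it is precisely Lemma~\ref{Lemma: Borel Vaught transform}, whose proof via the class-$\mathcal{E}$ argument of Lemma~\ref{Lemma: Montgomery-Novikov} in fact shows, verbatim, that $A\mapsto A^{\bigtriangleup V}$ sends a Borel subset of $(G\ltimes X)\times Z$ to a Borel subset of $X\times Z$ for any standard Borel parameter space $Z$; applying this with $Z=X\times\omega$ gives the Borelness of the new stage. For (c) one checks that the recipe of~\cite[5.1.3, 5.1.4]{becker_descriptive_1996} for the refined topology --- a transfinite recursion on Borel rank which, by the bound $\alpha$, stays within $\mathbf{\Sigma}_{\alpha+k}^{0}$ for $k$ depending only on $n$ --- can be run on a fixed Borel code, so that the witnessing basis it outputs depends Borel-measurably on that code. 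By induction on $n$ it follows that the relation $\mathcal{B}(y,x,n)\Leftrightarrow y\in B_{x,n}$ is Borel, i.e.\ $\mathcal{B}=\{(y,x,n):y\in B_{x,n}\}$ is a Borel subset of $X\times X\times\omega$.

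Finally, that for each fixed $x$ the list $(B_{x,n})_{n}$ satisfies (1)--(4) is exactly what was proved in Theorem~\ref{Theorem: Becker-Kechris Borel}: (1) holds since we placed $[x]=B_{x,0}$; (2) and (3) hold because we closed under the Vaught transforms $B\mapsto B^{\bigtriangleup U}$ with $U\in\mathcal{A}$ and under Boolean operations; and (4) --- that $t(x)$ is Polish and makes $X$ a Polish $G$-space --- follows by the four Claims in the proof of Theorem~\ref{Theorem: Becker-Kechris Borel} applied with $t=t(x)$, continuity of the action being where Lemma~\ref{Lemma: shift Vaught} is used.

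I expect the only real difficulty to lie in instruction (c): confirming that the standard procedure of~\cite[5.1.3, 5.1.4]{becker_descriptive_1996} for building a finer Polish topology in which a prescribed Borel set is clopen --- with a countable basis --- can be made to depend in a Borel way on a Borel code for that set, and that this remains true when the procedure is iterated through the bookkeeping enumeration. This is precisely where the uniform bound from Theorem~\ref{Theorem: Sami} is needed, so that the transfinite recursion on Borel rank has length bounded independently of $x$ and the auxiliary topologies and bases can be chosen Borel-measurably. Granting this bookkeeping, the remainder is a routine transcription of the proof of Theorem~\ref{Theorem: Becker-Kechris Borel} together with the observation --- already contained in the proofs of Lemmas~\ref{Lemma: Montgomery-Novikov} and~\ref{Lemma: Borel Vaught transform} --- that the Vaught transform is a Borel operation on codes.
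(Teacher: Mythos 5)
Your proposal is correct and follows essentially the same route as the paper, which itself only sketches this lemma by asserting that the Borel dependence on $x$ is "implicit" in Lemma~\ref{Lemma: Borel Vaught transform} and in 5.1.3--5.1.4 of Becker--Kechris. You in fact supply more detail than the paper does, in particular by isolating the uniform rank bound from Theorem~\ref{Theorem: Sami} and the parametrized form of Lemma~\ref{Lemma: Montgomery-Novikov} as the ingredients that make the bookkeeping recursion Borel in the parameter.
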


The following result provides a characterization of the Borel $G$-spaces
with Borel orbit equivalence relation. The analogous result for Polish group
actions is \cite[Theorem 7.1.2]{becker_descriptive_1996}.

\begin{theorem}
\label{Theorem: Borel orbit equivalence relations}Suppose that $G$ is a
Polish groupoid, and $X$ is a Borel $G$-space. The following statements are
equivalent

\begin{enumerate}
\item The function 
\begin{eqnarray*}
X &\rightarrow &F(G) \\
x &\mapsto &G_{x}
\end{eqnarray*}%
is Borel;

\item The function 
\begin{eqnarray*}
X\times X &\rightarrow &F(G) \\
(x,y) &\mapsto &G_{x,y}
\end{eqnarray*}%
is Borel;

\item The orbit equivalence relation $E_{G}^{X}$ is Borel.
\end{enumerate}
\end{theorem}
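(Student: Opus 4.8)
The plan is to prove the cyclic/mixed set of implications $(2)\Rightarrow(1)$, $(1)\Rightarrow(3)$, $(3)\Rightarrow(1)$ and $(1)\Rightarrow(2)$, which together give all the equivalences. The implication $(2)\Rightarrow(1)$ is trivial, since $x\mapsto(x,x)$ is Borel and $G_{x}=G_{x,x}$, so (1) is a composition of Borel maps. The rest rests on the Borel coset selector of Proposition~\ref{Proposition: coset selector}, and for the hardest implication $(3)\Rightarrow(1)$ we will first use Theorem~\ref{Theorem: Becker-Kechris Borel} to reduce to the case where $X$ is a Polish $G$-space (a Borel $G$-isomorphism preserves stabilizers and the orbit equivalence relation, so all three conditions carry over).

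For $(1)\Rightarrow(3)$ I would argue as follows, refining the proof of Corollary~\ref{Corollary: Borel orbits}. Recall that $G_{x}$ is a closed subgroup of $p(x)Gp(x)$, hence an element of the Borel space $S(G)$ of closed subgroupoids, and that whenever $\gamma x=y$ one has $G_{x,y}=\gamma G_{x}$, a single left coset of $G_{x}$. Fix a Borel transversal $T\subset G\times S(G)$ for the relation $\sim$ of Proposition~\ref{Proposition: coset selector}, so that for each $H\in S(G)$ the slice $T_{H}=\{\gamma:(\gamma,H)\in T\}$ meets every left coset of $H$ in exactly one point. Assuming (1), the set $W=\{(\gamma,x)\in G\ltimes X:(\gamma,G_{x})\in T\}$ is Borel and the map $\Phi\colon W\to X\times X$, $\Phi(\gamma,x)=(x,\gamma x)$, is Borel. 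It is injective, since $\Phi(\gamma,x)=\Phi(\gamma',x')$ forces $x=x'$ and $\gamma x=\gamma'x$, hence $\gamma G_{x}=\gamma'G_{x}$, and then $\gamma=\gamma'$ because $\gamma,\gamma'\in T_{G_{x}}$. Moreover $\Phi[W]=E_{G}^{X}$, because $\{\gamma:s(\gamma)=p(x),\ \gamma x=y\}=G_{x,y}$ is, when nonempty, a single left coset of $G_{x}$ and so meets $T_{G_{x}}$ in exactly one point. Therefore $E_{G}^{X}$ is the image of a Borel set under an injective Borel map, hence Borel by~\cite[Theorem 15.1]{kechris_classical_1995}.

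The main obstacle is $(3)\Rightarrow(1)$, which is a uniform version of Effros' theorem in the spirit of Becker--Kechris~\cite[Theorem 7.1.2]{becker_descriptive_1996}. Assuming $X$ Polish and $E_{G}^{X}$ Borel, apply Lemma~\ref{Lemma:sigma-algebra} to obtain a Borel set $\mathcal{B}\subset X\times X\times\omega$ yielding, for each $x$, a countable Boolean algebra $\mathcal{B}(x)$ which is a basis for a Polish topology $t(x)$ making $X$ a Polish $G$-space and with $[x]=B_{x,0}$. Since $\mathcal{B}(x)$ is closed under complements, each orbit $[x]$ is $t(x)$-clopen, hence a Baire space in itself; by Theorem~\ref{Theorem: Effros} it is a $t(x)$-$G_{\delta}$ subset of $X$, and by Corollary~\ref{Corollary: Effros-Ramsay} the map $\psi_{x}\colon Gp(x)\to[x]$, $\gamma\mapsto\gamma x$, is a continuous open surjection. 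Consequently, for every basic open $U\subset G$ the set $\psi_{x}[U\cap Gp(x)]$ is $t(x)$-open in $X$ (an open image, and $[x]$ is $t(x)$-open in $X$) and
\[
G_{x}\cap U\neq\varnothing\quad\Longleftrightarrow\quad x\in\psi_{x}[U\cap Gp(x)].
\]
The remaining point is that the right-hand side defines a Borel subset of $X$; this is exactly where the Borel dependence of $\mathcal{B}$ on $x$ in Lemma~\ref{Lemma:sigma-algebra} is needed, and the bookkeeping is carried out as in~\cite[Theorem 7.1.2 and Lemma 7.1.3]{becker_descriptive_1996}, with the Vaught transforms of Subsection~\ref{Subsection: Vaught transform} playing the role of group translates. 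I expect this Borel-complexity verification, rather than any conceptual issue, to be the delicate part of the argument.

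Finally $(1)\Rightarrow(2)$: assume (1), so $E_{G}^{X}$ is Borel by $(1)\Rightarrow(3)$. For $(x,y)\in E_{G}^{X}$ let $\rho(x,y)$ be the unique $\gamma$ with $(\gamma,G_{x})\in T$, $s(\gamma)=p(x)$ and $\gamma x=y$; its graph is Borel by (1), so $\rho\colon E_{G}^{X}\to G$ is a Borel map. By the proof of Proposition~\ref{Proposition: coset selector} the map $f\colon G\times S(G)\to F(G)$, $f(\gamma,H)=\gamma H$, is Borel, whence $(x,y)\mapsto G_{x,y}=f(\rho(x,y),G_{x})$ is Borel on the Borel set $E_{G}^{X}$; on its complement $G_{x,y}=\varnothing$, and $\{\varnothing\}$ is a Borel subset of $F(G)$, so $(x,y)\mapsto G_{x,y}$ is Borel on all of $X\times X$. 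This yields (2) and completes the cycle.
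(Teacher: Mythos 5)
Your overall architecture matches the paper's: reduce to a Polish $G$-space via Theorem~\ref{Theorem: Becker-Kechris Borel}, use the coset transversal of Proposition~\ref{Proposition: coset selector} for the implications out of (1), and attack (3)$\Rightarrow$(1) through Lemma~\ref{Lemma:sigma-algebra} and Corollary~\ref{Corollary: Effros-Ramsay}. Your (2)$\Rightarrow$(1), (1)$\Rightarrow$(3) and (1)$\Rightarrow$(2) are correct: the injective-Borel-image formulation of (1)$\Rightarrow$(3) is a repackaging of the paper's ``unique existential'' argument, and your (1)$\Rightarrow$(2) routes through (3) where the paper instead shows directly that $\left\{ (x,y):G_{x,y}\cap U\neq \varnothing \right\}$ is co-analytic; both work.

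The gap is in (3)$\Rightarrow$(1). The equivalence $G_{x}\cap U\neq \varnothing \Leftrightarrow x\in \psi _{x}\left[ U\cap Gp(x)\right] $ is a definitional tautology, valid with no topology at all, so it only renames the set whose Borelness is to be proved. Knowing that $\psi _{x}\left[ U\cap Gp(x)\right] $ is $t(x)$-open does not by itself yield a Borel-in-$x$ membership test; one must express that topological fact through the uniformly Borel family $\mathcal{B}(x)$. The paper does this via the claim that, for basic nonempty $U,V$,
\begin{equation*}
UG_{x}\cap V\neq \varnothing \quad \Longleftrightarrow \quad \exists
U_{m}\subset U\ \forall B\in \mathcal{B}(x)\ \left( x\in B^{\bigtriangleup
U_{m}}\Rightarrow x\in B^{\bigtriangleup V}\right) \text{,}
\end{equation*}
proved using Lemma~\ref{Lemma: shift Vaught} in one direction and the $t(x)$-homeomorphism $Gp(x)/G_{x}\rightarrow \left[ x\right] $ in the other; the right-hand side is Borel in $x$ (a countable union over $m$ of countable intersections over the index enumerating $\mathcal{B}(x)$), and since $\left\{ x:G_{x}\cap U\neq \varnothing \right\} $ is analytic this gives Borelness. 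This characterization is the mathematical heart of the implication, not bookkeeping that can be delegated to the group case of \cite[Theorem 7.1.2]{becker_descriptive_1996}, and it is precisely the step your proposal omits.
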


Recall that, for $x,y\in G^{0}$, $G_{x}$ denotes the (closed) stabilizer%
\begin{equation*}
\left\{ \gamma \in Gp(x):\gamma x=x\right\}
\end{equation*}%
and while $G_{x,y}$ is the set%
\begin{equation*}
\left\{ \gamma \in Gp(x):\gamma x=y\right\} \text{;}
\end{equation*}%
see Subsection~\ref{Subsection: Polish groupoids and groupoid actions}.

\begin{proof}
By Theorem~\ref{Theorem: Becker-Kechris Borel} we can assume without loss of
generality that $X$ is a Polish $G$-space. Fix a countable basis $\mathcal{A}%
=\left\{ U_{n}:n\in \omega \right\} $ of nonempty Polish open subsets of $G$%
. Denote also by $T\subset G\times S(G)$ a Borel transversal for the
relation $\left( \gamma ,H\right) \sim \left( \gamma ^{\prime },H^{\prime
}\right) $ iff $H=H^{\prime }$ and $\gamma H=\gamma H^{\prime }$ as in
Proposition~\ref{Proposition: coset selector}.

\begin{description}
\item[(1)$\Rightarrow $(2)] Fix a nonempty open subset $U$ of $G$. It is
enough to show that the set%
\begin{equation*}
\left\{ (x,y)\in X\times X:U\cap G_{x,y}\neq \varnothing \right\}
\end{equation*}%
is co-analytic. Observe that $G_{x,y}\cap U\neq \varnothing $ if and only if
there is a unique $\gamma \in G$ such that $s(\gamma )=p(x)$, $\left( \gamma
,G_{x}\right) \in T$, and $\gamma G_{x}\cap U\neq \varnothing $. Moreover $%
\gamma G_{x}\cap U\neq \varnothing $ if and only if there is $n\in \omega $
such that $\gamma U_{n}\subset U$ and $U_{n}\cap G_{x}\neq \varnothing $.
Fix $n\in \omega $ and recall that by the proof of Proposition~\ref%
{Proposition: coset selector} $\left\{ \gamma \in G:\gamma U_{n}\subset
U\right\} $ is co-analytic. This concludes the proof that 
\begin{equation*}
\left\{ (x,y)\in X\times X:U\cap G_{x,y}\neq \varnothing \right\}
\end{equation*}%
is co-analytic.

\item[(2)$\Rightarrow $(1)] Obvious.

\item[(1)$\Rightarrow $(3)] Observe that $(x,y)\in E_{G}$ if and only if
there is a unique $\gamma \in T$ such that $\left( \gamma ,G_{x}\right) \in
T $ and $r(\gamma )=y$.

\item[(3)$\Rightarrow $(1)] Suppose that $\mathcal{B}$, $\mathcal{B}(x)$, $%
t(x)$, and $B_{x,n}$ for $x\in X$ and $n\in \omega $ are defined as in Lemma~%
\ref{Lemma:sigma-algebra}. Observe that the orbit $\left[ x\right] =B_{x,0}$
is open in $t(x)$. It follows from Lemma~\ref{Lemma: Effros-Ramsay} that the
map 
\begin{eqnarray*}
Gp(x)\left/ G_{x}\right. &\rightarrow &\left[ x\right] \\
\gamma G_{x} &\mapsto &\gamma x
\end{eqnarray*}%
is a $t(x)$-homeomorphism. We want to show that for every $U\in \mathcal{A}$
the set%
\begin{equation*}
\left\{ x\in X:G_{x}\cap U\neq \varnothing \right\}
\end{equation*}%
is Borel. It is enough to show that for every basic nonempty $U,V$ the set%
\begin{equation*}
\left\{ x\in X:UG_{x}\cap V\neq \varnothing \right\}
\end{equation*}%
is co-analytic. We claim that $UG_{x}\cap V\neq \varnothing $ iff $\exists
U_{m}\subset U$ such that $\forall B\in \mathcal{B}(x)$, $x\in
B^{\bigtriangleup U_{m}}$ implies $x\in B^{\bigtriangleup V}$. In fact
suppose that $UG_{x}\cap V\neq \varnothing $ and pick $m$ such that $%
U_{m}\subset U$ and $U_{m}\gamma \subset V$ for some $\gamma \in G_{x}$. If $%
x\in B^{\bigtriangleup U_{m}}$ then $x=\gamma ^{-1}x\in B^{\bigtriangleup
U_{m}\gamma }$ and hence $x\in B^{\bigtriangleup V}$ by Lemma~\ref{Lemma:
shift Vaught}.\ Conversely suppose that $UG_{x}\cap V=\varnothing $ and hence%
\begin{equation*}
\left\{ \gamma x:\gamma \in U\right\} \cap \left\{ \gamma x:\gamma \in
V\right\} =\varnothing \text{.}
\end{equation*}%
Fix $m\in \omega $. Since the map%
\begin{eqnarray*}
Gp(x)\left/ G_{x}\right. &\rightarrow &\left[ x\right] \\
\gamma G_{x} &\mapsto &\gamma x
\end{eqnarray*}%
is a $t(x)$-homeomorphism, the set $\left\{ \gamma x:\gamma \in
U_{m}\right\} $ is open in $[x]$. Thus there is $B\in \mathcal{B}(x)$ such
that%
\begin{equation*}
x\in B\subset \left\{ \gamma x:\gamma \in U_{m}\right\} \text{.}
\end{equation*}%
Moreover 
\begin{equation*}
\left\{ \gamma \in Gp(x):\gamma x\in B\right\}
\end{equation*}%
is an open subset of $Gp(x)$. Therefore there is $k\in \omega $ such that $%
U_{k}\subset U_{m}$ and%
\begin{equation*}
U_{k}p(x)\subset \left\{ \gamma \in Gp(x):\gamma x\in B\right\} \text{.}
\end{equation*}%
In particular $x\in B^{\bigtriangleup U_{k}}$ but $x\notin B^{\bigtriangleup
V}$.
\end{description}
\end{proof}

\section{Universal actions\label{Section: universal}}

Suppose that $G$ is a Polish groupoid. The space $G$ is fibred over the
space of objects $G^{0}$ via the source map $r:G\rightarrow G^{0}$. One can
then consider the corresponding Effros fibred space $F(G,G^{0})$ of closed
subsets of $G$ contained in $Gx$ for some $x\in G^{0}$; see Subsection~\ref%
{Subsection: Effros fibred space}. Recall that $F(G,G^{0})$ is a standard
Borel space fibred over $G^{0}$ via the Borel map assigning $x$ to a closed
nonempty subset $F$ of $xG$. Moreover $F(G,G^{0})$ has naturally the
structure of Borel $G$-space given by the map%
\begin{equation*}
\left( \gamma ,F\right) \mapsto \gamma F
\end{equation*}%
for $F\subset s(\gamma )G$, where%
\begin{equation*}
\gamma F=\left\{ \gamma \rho :\rho \in F\right\} \text{.}
\end{equation*}%
Similarly the fibred product%
\begin{equation*}
\foo_{n\in \omega }F(G,G^{0})=\left\{ (F_{n})_{n\in \omega }\in F\left(
G,G\right) ^{\omega }:\exists x\in G^{0}\text{ }\forall n\in \omega \text{, }%
F_{n}\subset xG\right\}
\end{equation*}%
is naturally a Borel $G$-space with respect to the coordinate-wise action of 
$G$%
\begin{equation*}
\gamma (F_{n})_{n\in \omega }=\left( \gamma F_{n}\right) _{n\in \omega }%
\text{.}
\end{equation*}

We want to show that the Borel $G$-space $\foo_{n\in \omega }F(G,G^{0})$ is
a universal Borel $G$-space. This means that if $X$ is any Borel $G$-space,
then there is a Borel $G$-embedding $\varphi :X\rightarrow \foo_{n\in \omega
}F(G,G^{0})$; see Subsection~\ref{Subsection: Polish groupoids and groupoid
actions}.

The following lemma is well known. A proof is included for convenience of
the reader.

\begin{lemma}
If $X$ is a Polish space, $A\subset X$, and $E(A)$ is the set of $x\in X$
such that for every neighborhood $V$ of $x$, $V\cap A$ is not meager, then $%
E(A)$ is closed in $X$. Moreover $A$ has the Baire property iff $%
A\bigtriangleup E(A)$ is meager.
\end{lemma}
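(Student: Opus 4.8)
The plan is to prove the two assertions in the natural order: first that $E(A)$ is closed, then the characterization of the Baire property. Recall $E(A) = \{x \in X : \forall V \text{ open}, x \in V \Rightarrow V \cap A \text{ is nonmeager}\}$.

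For closedness, I would show the complement $X \setminus E(A)$ is open. If $x \notin E(A)$, then by definition there is an open neighborhood $V$ of $x$ with $V \cap A$ meager. But then every point $y \in V$ also has a neighborhood (namely $V$ itself) whose intersection with $A$ is meager, so $V \subset X \setminus E(A)$. Hence $X \setminus E(A)$ is open, i.e.\ $E(A)$ is closed. This is the easy half.

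For the second assertion, I would argue both directions. Suppose first that $A \bigtriangleup E(A)$ is meager; I claim $A$ has the Baire property. Since $E(A)$ is closed, it differs from its interior $U = \mathrm{int}(E(A))$ by the nowhere dense set $E(A) \setminus U = \partial E(A) \cap E(A)$ — actually I should be a little careful: the boundary of a closed set is nowhere dense, so $E(A) \bigtriangleup U$ is nowhere dense, hence meager. Therefore $A \bigtriangleup U \subset (A \bigtriangleup E(A)) \cup (E(A) \bigtriangleup U)$ is meager, so $A$ has the Baire property with the open set $U$ witnessing it. Conversely, suppose $A$ has the Baire property, say $A \bigtriangleup W$ is meager for some open $W$; I want $A \bigtriangleup E(A)$ meager. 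Since symmetric difference with a meager set doesn't change which points $x$ satisfy ``$V \cap A$ nonmeager for all neighborhoods $V$ of $x$'' — because $V \cap A$ is nonmeager iff $V \cap W$ is nonmeager iff $V \cap W \neq \varnothing$ — we get $E(A) = E(W) = \overline{W}$ (the closure of an open set $W$ is exactly the set of points all of whose neighborhoods meet $W$, since $X$ is a Baire space and open sets in a Baire space are nonmeager iff nonempty). So $A \bigtriangleup E(A) = A \bigtriangleup \overline{W} \subset (A \bigtriangleup W) \cup (W \bigtriangleup \overline{W})$, and $W \bigtriangleup \overline{W} = \overline{W} \setminus W = \partial W$ is nowhere dense; hence $A \bigtriangleup E(A)$ is meager.

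**The main subtlety** is the identity $E(A) = E(W)$ when $A \bigtriangleup W$ is meager, and the identification $E(W) = \overline{W}$ for $W$ open; both rest on the fact that a Polish space is a Baire space, so that for open $V$, ``$V \cap W$ is nonmeager'' is equivalent to ``$V \cap W \neq \varnothing$'', and on the fact that removing a meager set from $A$ cannot turn a nonmeager intersection into a meager one or vice versa (a set is meager iff it equals a meager set up to another meager set). I would also note that $E(W) \subset \overline{W}$ is immediate (a point with a neighborhood missing $W$ is not in $\overline{W}$), and $\overline{W} \subset E(W)$ uses that a nonempty open subset of $\overline{W}$, intersected with $X$, meets $W$ in a nonempty open set, which is nonmeager by the Baire property of $X$. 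Everything else is routine set-theoretic bookkeeping with symmetric differences and the observation that boundaries of open (or closed) sets are nowhere dense.
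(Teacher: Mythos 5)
Your proof is correct and follows essentially the same route as the paper's: the key steps --- that $E(A)=E(B)$ whenever $A\bigtriangleup B$ is meager, that $E(W)=\overline{W}$ for $W$ open (via the Baire property of $X$), and the final inclusion $A\bigtriangleup E(A)\subset (A\bigtriangleup W)\cup (\overline{W}\setminus W)$ --- are exactly those of the paper. You merely fill in more detail, e.g.\ exhibiting $\mathrm{int}(E(A))$ as the witnessing open set in the easy direction, where the paper simply appeals to closedness of $E(A)$.
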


\begin{proof}
Clearly $E(A)$ is closed, and if $A\bigtriangleup E(A)$ is meager then $A$
has the Baire property. Observe that if $A,B\subset X$ are such that $%
A\bigtriangleup B$ is meager, then $E(A)=E\left( B\right) $. If $A$ has the
Baire property, there is an open subset $U$ of $X$ such that $%
A\bigtriangleup U$ is meager. Thus $E(A)=E\left( U\right) $ is equal to the
closure $\overline{U}$ of $U$. It follows that%
\begin{equation*}
A\bigtriangleup E(A)\subset \left( A\bigtriangleup U\right) \cup \left( 
\overline{U}\backslash U\right)
\end{equation*}%
is meager.
\end{proof}

Suppose that $X$ is a Borel $G$-space. In view of Theorem~\ref{Theorem:
Becker-Kechris Borel} we can assume without loss of generality that $X$ is
in fact a Polish $G$-space. Fix a countable open basis $\mathcal{B}=\left\{
B_{n}:n\in \omega \right\} $ of nonempty open subsets of $X$. Assume further
that $\mathcal{A}$ is a countable basis of Polish open subsets of $G$.
Define for $n\in \omega $ the fibred Borel map $\varphi _{n}:X\rightarrow
F(G,G^{0})$ by setting 
\begin{equation*}
\varphi _{n}(x)=\left( E\left( \left\{ \gamma \in Gp(x):\gamma x\in
B_{n}\right\} \right) \right) ^{-1}.
\end{equation*}
Define the Borel fibred map $\varphi :X\rightarrow \foo_{n\in \omega
}F(G,G^{0})$ by $\varphi (x)=\left( \varphi _{n}(x)\right) _{n\in \omega }$.

\begin{claim-no}
$\varphi $ is Borel measurable
\end{claim-no}

It is enough to show that $\varphi _{n}$ is Borel measurable for every $n\in
\omega $. Suppose that $V\in \mathcal{A}$. We want to show that the set of $%
x\in X$ such that%
\begin{equation*}
E\left( \left\{ \gamma \in Gp(x):\gamma x\in B_{n}\right\} \right) \cap
V\neq \varnothing
\end{equation*}%
is Borel. Observe that 
\begin{equation*}
E\left( \left\{ \gamma \in Gp(x):\gamma x\in B_{n}\right\} \right) \cap
V\neq \varnothing
\end{equation*}%
if and only if $\exists W\in \mathcal{A}$ such that $W\subset V$ and 
\begin{equation*}
\left\{ \gamma \in Gp(x):\gamma x\in B_{n}\right\}
\end{equation*}%
is comeager in $Wp(x)$. The set of such elements $x$ of $X$ is Borel by
Lemma~\ref{Lemma: Montgomery-Novikov}.

\begin{claim-no}
$\varphi $ is $G$-equivariant, i.e.\ $\varphi (\gamma x)=\gamma \varphi (x)$
for $(\gamma ,x)\in G\ltimes X$
\end{claim-no}

It is enough to show that $\varphi _{n}(\gamma x)=\gamma \varphi _{n}(x)$
for $n\in \omega $. Observe that%
\begin{equation*}
\varphi _{n}(\gamma x)=\left( E\left( \left\{ \rho \in Gr(\gamma ):\rho
\gamma x\in B_{n}\right\} \right) \right) ^{-1}
\end{equation*}%
and%
\begin{equation*}
\varphi _{n}(x)=\left( E\left( \left\{ \tau \in Gp(x):\tau x\in
B_{n}\right\} \right) \right) ^{-1}
\end{equation*}%
We thus have to prove that%
\begin{equation*}
\left( E\left( \left\{ \rho \in Gr(\gamma ):\rho \gamma x\in B_{n}\right\}
\right) \right) ^{-1}=\gamma \left( E\left( \left\{ \tau \in Gp(x):\tau x\in
B_{n}\right\} \right) \right) ^{-1}
\end{equation*}%
or equivalently%
\begin{equation*}
E\left( \left\{ \rho \in Gr(\gamma ):\rho \gamma x\in B_{n}\right\} \right)
=E\left( \left\{ \tau \in Gp(x):\tau x\in B_{n}\right\} \right) \gamma ^{-1}
\end{equation*}%
Since $\tau \mapsto \tau \gamma ^{-1}$ is a homeomorphism from $Gp(x)$ to $%
Gr(\gamma )$ we have that 
\begin{eqnarray*}
E\left( \left\{ \tau \in Gp(x):\tau x\in B_{n}\right\} \right) \gamma ^{-1}
&=&E\left( \left\{ \tau \in Gp(x):\tau x\in B_{n}\right\} \gamma ^{-1}\right)
\\
&=&E\left( \left\{ \rho \in Gr(\gamma ):\rho \gamma x\in B_{n}\right\}
\right)
\end{eqnarray*}

\begin{claim-no}
$\varphi $ is injective
\end{claim-no}

Assume that $x,y\in X$ are such that $\varphi (x)=\varphi (y)$. Thus $%
p(x)=p(y)$ and for every $n\in \omega $%
\begin{equation*}
\left\{ \gamma \in Gp(x):\gamma x\in B_{n}\right\} \bigtriangleup \left\{
\gamma \in Gp(y):\gamma y\in B_{n}\right\}
\end{equation*}%
is meager. Thus $\forall ^{\ast }\gamma \in Gp(x)$, $\forall n\in \omega $, $%
\gamma x\in B_{n}$ iff $\gamma y\in B_{n}$. Thus for some $\gamma \in Gp(x)$%
, $\gamma x\in B_{n}$ iff $\gamma y\in B_{n}$ for all $n\in \omega $. This
implies that $\gamma x=\gamma y$ and hence $x=y$.

\section{Countable Borel groupoids\label{Section: countable}}

\subsection{Actions of inverse semigroups on Polish spaces\label{Subsection:
inverse semigroups}}

An \emph{inverse semigroup }is a semigroup $T$ such that every $t\in T$ has
a semigroup-theoretic inverse $t^{\ast }\in T$. This means that $t^{\ast }$
is the unique element of $T$ such that%
\begin{equation*}
tt^{\ast }t=t\text{ and }t^{\ast }tt^{\ast }=t^{\ast }\text{.}
\end{equation*}%
If $T$ is an inverse semigroup, then the set $E(T)$ of idempotent elements
is a commutative subsemigroup of $T$, and hence a semilattice; see~\cite[%
Proposition 2.1.1]{paterson_groupoids_1999}. In particular $E(T)$ has a
natural order defined by%
\begin{equation*}
e\leq f\text{ iff }ef=fe=e\text{.}
\end{equation*}%
Observe that for every $t\in T$ the elements $tt^{\ast }$ and $t^{\ast }t$
are idempotent.

Suppose that $X$ is a Polish space. The semigroup $\mathcal{H}(X)$ of
partial homeomorphisms between open subsets of $X$ is clearly an inverse
semigroup.

\begin{definition}
An action $\theta :T\curvearrowright X$ of a countable inverse semigroup $T$
on the Polish space $X$ is a semigroup homomorphism $\theta :t\mapsto \theta
_{t}$ from $T$ to $\mathcal{H}(X)$.
\end{definition}

Observe that a semigroup homomorphism between inverse semigroups
automatically preserves inverses; see~\cite[Proposition 2.1.1]%
{paterson_groupoids_1999}.

\subsection{Étale Polish groupoids\label{Subsection: ettale groupoids}}

Suppose that $G$ is a Polish groupoid. A subset $u$ of $G$ is a bisection if
the source and range maps restricted to $u$ are injective. A bisection of $G$
is open if it is an open subset of $G$. It is not difficult to verify that
the following conditions are equivalent:

\begin{enumerate}
\item The source and range maps of $G$ are local homeomorphisms from $G$ to $%
G^{0}$;

\item Composition of arrows in $G$ is a local homeomorphism from $G^{2}$ to $%
G$;

\item $G$ has a countable basis of open bisections;

\item $G$ has a countable inverse semigroup of open bisections that is basis
for the topology of $G$;

\item $G^{0}$ is an open subset of $G$.
\end{enumerate}

When these equivalent conditions are satisfied, $G$ is called étale Polish
groupoid. If $G$ is an étale Polish groupoid, then in particular for every $%
x\in G^{0}$ the fiber $Gx$ is a countable discrete subset of $G$.

\subsection{The groupoid of germs\label{Subsection: groupoid of germs}}

Suppose that $\theta :T\curvearrowright X$ is an action of a countable
inverse semigroup on a Polish space. We want to associate to such an action
an étale Polish groupoid $\mathcal{G}(\theta ,T,X)$ that contains all the
information about the action. This construction can be found in~\cite%
{exel_inverse_2008} in the case when $X$ is locally compact.

If $e\in E(T)$ denote by $D_{e}$ the domain of $\theta _{e}$. Observe that
the domain of $\theta _{t}$ is $D_{t^{\ast }t}$ and the range of $\theta
_{t} $ is $D_{tt^{\ast }}$. Define $\Omega $ to be the subset of $T\times X$
of pairs $\left( u,x\right) $ such that $x\in D_{u^{\ast }u}$. Consider the
equivalence relation $\sim $ on $\Omega $ defined by $\left( u,x\right) \sim
\left( v,y\right) $ iff $x=y$ and for some $e\in E(S)$, $ue=ve$ and $x\in
D_{e}$. The equivalence class $\left[ u,x\right] $ of $\left( u,x\right) $
is called the \emph{germ} of $u$ at $x$. Observe that if $e$ witnesses that $%
\left( u,x\right) \sim \left( v,x\right) $ then, after replacing $e$ with $%
u^{\ast }uv^{\ast }ve$ we can assume that $e\leq u^{\ast }u$ and $e\leq
v^{\ast }v$. It can be verified as in~\cite[Proposition 4.7]%
{exel_inverse_2008} that if $\left( u,x\right) $ and $\left( v,y\right) $
are in $\Omega $ and $x=\theta _{v}(y)$ then $\left( uv,y\right) \in \Omega $%
. Moreover the germ $\left[ uv,y\right] $ of $uv$ at $y$ depends only on $%
\left[ x,s\right] $ and $\left[ y,t\right] $.

One can then define the groupoid $\mathcal{G}(\theta ,T,X)=\Omega \left/
\sim \right. $ of germs of the action $S\curvearrowright X$ obtained by
setting

\begin{itemize}
\item $\mathcal{G}(\theta ,T,X)^{2}=\left\{ \left( \left[ u,x\right] ,\left[
v,y\right] \right) :\theta _{v}(y)=x\right\} $,

\item $\left[ u,x\right] \left[ v,y\right] =\left[ uv,y\right] $, and

\item $\left[ u,x\right] ^{-1}=\left[ \theta _{u^{\ast }},\theta _{u}(x)%
\right] $.
\end{itemize}

Observe that the map $x\mapsto \left[ e,x\right] $ from $X$ to $G$, where $e$
is any element of $E(S)$ such that $x\in D_{e}$, is a well-defined bijection
from $X$ to the set of objects $G^{0}$ of $G$. Identifying $X$ with $G^{0}$
we have that the source and range maps $s$ and $r$ are defined by 
\begin{equation*}
s\left[ u,x\right] =x
\end{equation*}%
and 
\begin{equation*}
r\left[ u,x\right] =\theta _{u}(x).
\end{equation*}%
We now define the topology of $\mathcal{G}(\theta ,T,X)$. For $u\in T$ and $%
U\subset D_{u^{\ast }u}$ open define%
\begin{equation*}
\Theta \left( u,U\right) =\left\{ \left[ u,x\right] \in G:x\in U\right\}
\end{equation*}%
It can be verified as in~\cite[Proposition 4.14, Proposition 4.15, Corollary
4.16, Proposition 4.17, and Proposition 4.18]{exel_inverse_2008} that the
following hold:

\begin{enumerate}
\item $\mathcal{G}(\theta ,T,X)$ is an étale Polish groupoid;

\item the map $x\mapsto \left[ e,x\right] $ where $e$ is any element of $%
E(S) $ such that $x\in D_{s}$, is a homeomorphism from $X$ onto the space of
objects of $\mathcal{G}\left( \theta ,T,x\right) $;

\item if $u\in T$ and $U\subset D_{u^{\ast }u}$ then $\Theta \left(
u,U\right) $ is an open bisection of $U$, and the map $x\mapsto \left[ u,x%
\right] $ is a homeomorphism from $U$ onto $\theta \left( u,U\right) $;

\item if $\mathcal{A}$ is a basis for the topology of $X$, then the
collection%
\begin{equation*}
\left\{ \theta \left( u,A\cap D_{u^{\ast }u}\right) :u\in S,A\in \mathcal{A}%
\right\}
\end{equation*}%
is a basis of open bisections for $\mathcal{G}(\theta ,T,X)$.
\end{enumerate}

\subsection{Regularity of the groupoid of germs\label{Subsection: regularity
germs}}

The groupoid of germs $\mathcal{G}(\theta ,T,X)$ for an action $\theta
:T\curvearrowright X$ is in general not Hausdorff, even when $X$ is locally
compact. Here we isolate a condition that ensures that $\mathcal{G}(\theta
,T,X)$ is regular.

Define the order $\leq $ on $T$ by setting $u\leq v$ iff $u=vu^{\ast }u$.
Observe that this extends the order of $E(T)$. Moreover if $u\leq v$ then%
\begin{equation*}
u^{\ast }u=v^{\ast }vu^{\ast }u^{\ast }v^{\ast }v=v^{\ast }vu^{\ast }u
\end{equation*}%
and hence $u^{\ast }u\leq v^{\ast }v$. We say that $T$ is a \emph{semilattice%
} if it is a semilattice with respect to the order $\leq $ just defined,
i.e.\ for every pair $u,v$ of elements of $T$ there is a largest element $%
u\wedge v$ below both $u$ and $v$.

\begin{proposition}
\label{Proposition: Polish germs}Suppose that $T$ is a semilattice. If there
is a subset $C$ of $T$ such that:

\begin{enumerate}
\item for every $u\in T$ and $x\in D_{u^{\ast }u}$ there is $c\in C$ such
that $x\in D_{\left( u\wedge c\right) ^{\ast }\left( u\wedge c\right) }$, and

\item for every distinct $c,d\in C$, $\Theta \left( c,D_{c^{\ast }c}\right)
\cap \Theta \left( d,D_{d^{\ast }d}\right) =\varnothing $,
\end{enumerate}

then the groupoid of germs $\mathcal{G}(\theta ,T,X)$ is regular.
\end{proposition}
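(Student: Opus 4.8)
The plan is to exhibit $\mathcal{G}:=\mathcal{G}(\theta ,T,X)$ as a topological disjoint sum of clopen pieces, each of which is homeomorphic to an open subset of the Polish space $X$. Since an open subspace of a Polish space is Polish by \cite[Theorem 3.11]{kechris_classical_1995}, hence regular, and a topological sum of regular spaces is regular, this will give the conclusion (and in fact show that $\mathcal{G}$ is globally Polish). The relevant pieces will be the sets $\Theta (c,D_{c^{\ast }c})$ for $c\in C$: hypothesis (2) makes them pairwise disjoint, and hypothesis (1) will make them cover $\mathcal{G}$.

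The first step is an elementary observation about germs: if $w\leq u$ in $(T,\leq )$ and $x\in D_{w^{\ast }w}$, then $[w,x]=[u,x]$. Indeed $e:=w^{\ast }w\in E(T)$ witnesses $(w,x)\sim (u,x)$, because $w\cdot w^{\ast }w=w$ by the inverse-semigroup identities, $u\cdot w^{\ast }w=w$ by the definition of $w\leq u$, and $x\in D_{w^{\ast }w}=D_{e}$. Since moreover $w\leq u$ forces $w^{\ast }w\leq u^{\ast }u$ (as recorded in Subsection~\ref{Subsection: regularity germs}) and hence $D_{w^{\ast }w}\subseteq D_{u^{\ast }u}$, it follows that $\Theta (w,D_{w^{\ast }w})\subseteq \Theta (u,D_{u^{\ast }u})$ whenever $w\leq u$, using that $x\mapsto [u,x]$ is a well-defined map on $D_{u^{\ast }u}$ (Subsection~\ref{Subsection: groupoid of germs}).

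Next I would check that $\{\Theta (c,D_{c^{\ast }c}):c\in C\}$ is an open cover of $\mathcal{G}$ by pairwise disjoint sets, each open and carried homeomorphically onto the open subspace $D_{c^{\ast }c}\subseteq X$ by $x\mapsto [c,x]$ (Subsection~\ref{Subsection: groupoid of germs}); pairwise disjointness for distinct $c,d\in C$ is exactly hypothesis (2). For the covering, let $[u,x]\in \mathcal{G}$, so $x\in D_{u^{\ast }u}$; by hypothesis (1) there is $c\in C$ with $x\in D_{(u\wedge c)^{\ast }(u\wedge c)}$. From $u\wedge c\leq u$ and the first step, $[u,x]=[u\wedge c,x]$; from $u\wedge c\leq c$ and the first step, $\Theta (u\wedge c,D_{(u\wedge c)^{\ast }(u\wedge c)})\subseteq \Theta (c,D_{c^{\ast }c})$; combining, $[u,x]\in \Theta (c,D_{c^{\ast }c})$. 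Granted this, each $\Theta (c,D_{c^{\ast }c})$ is the complement in $\mathcal{G}$ of the union of the other members of the cover, hence clopen, so $\mathcal{G}=\bigsqcup_{c\in C}\Theta (c,D_{c^{\ast }c})$ as a topological sum of spaces homeomorphic to open subspaces of $X$, each Polish and in particular regular; therefore $\mathcal{G}$ is regular, and being an \'etale Polish groupoid it is then globally Polish. I do not anticipate a genuine obstacle: the only delicate points are the germ-collapsing identity $[w,x]=[u,x]$ for $w\leq u$ and the passage from the purely local hypothesis (1) to an honest cover of $\mathcal{G}$, both handled above.
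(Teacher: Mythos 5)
Your proof is correct, but it takes a genuinely different route from the paper's. The paper argues regularity locally: given a germ $[u,x]$ and a basic neighborhood $\Theta(u,U)\subset W$, it uses hypothesis (1) to pass from $u$ to $u\wedge c$, chooses an open $V\ni x$ with $\overline{V}\subset U\cap D_{(u\wedge c)^{\ast}(u\wedge c)}$, and then shows that $\Theta(u\wedge c,\overline{V})$ is closed in $\mathcal{G}(\theta,T,X)$ by separating an arbitrary outside point $[v,y]$ from it --- either because $y\notin\overline{V}$, or, when $y\in\overline{V}$, by invoking hypotheses (1) and (2) once more to produce a basic neighborhood of $[v,y]$ disjoint from $\Theta(u\wedge c,\overline{V})$. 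You instead observe that the sets $\Theta(c,D_{c^{\ast}c})$, $c\in C$, are pairwise disjoint (hypothesis (2)), open, and cover the groupoid; the covering rests on the germ-collapsing identity $[w,x]=[u,x]$ for $w\leq u$ and $x\in D_{w^{\ast}w}$, which you verify correctly and which is exactly what converts hypothesis (1) into the statement that every germ lies in some $\Theta(c,D_{c^{\ast}c})$. Hence each piece is clopen and homeomorphic to the open --- therefore Polish, therefore regular --- subspace $D_{c^{\ast}c}$ of $X$, and regularity of the topological sum follows. Your argument is more global and in fact yields more: $\mathcal{G}(\theta,T,X)$ is literally the topological sum of countably many clopen Polish pieces (as $C\subseteq T$ is countable), so it is globally Polish without appealing separately to the fact that a regular \'etale Polish groupoid is Polish; it also makes transparent why the hypotheses hold in the paper's application to countable Borel groupoids, where $C$ is a partition into full Borel bisections and the pieces $\Theta(c,D_{c^{\ast}c})$ are just the bisections themselves. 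The paper's local argument reaches the same conclusion but keeps the verification at the level of individual neighborhoods.
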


\begin{proof}
Suppose that $\left[ u,x\right] $ is an element of $\mathcal{G}(\theta ,T,X)$%
, and $W$ is an open neighborhood of $\left[ u,x\right] $ in $\mathcal{G}%
(\theta ,T,X)$. There are an open subset $U$ of $X$ contained in $D_{u^{\ast
}u}$ such that $\left[ u,x\right] \in \Theta \left( u,U\right) \subset W$.
Pick $c\in C$ such that $x\in D_{\left( u\wedge c\right) ^{\ast }\left(
u\wedge c\right) }$, and an open neighborhood $V$ of $x$ whose closure $%
\overline{V}$ is contained in $U\cap D_{\left( u\wedge c\right) ^{\ast
}\left( u\wedge c\right) }$. We claim that $\Theta \left( u\wedge c,V\right) 
$ is an open neighborhood of $\left[ u,x\right] $ whose closure is contained
in $W$. To show this it is enough to show that $\Theta \left( u\wedge c,%
\overline{V}\right) $ is closed in $\mathcal{G}(\theta ,T,X)$. Pick $\left[
v,y\right] \in \mathcal{G}(\theta ,T,X)\backslash \Theta \left( u\wedge c,%
\overline{V}\right) $. If $y\notin \overline{V}$ then clearly there is an
open neighborhood of $\left[ t,y\right] $ disjoint from $\Theta \left(
u\wedge c,\overline{V}\right) $. Suppose that $y\in \overline{V}$. Pick $%
d\in C$ such that $y\in D_{\left( u\wedge d\right) ^{\ast }\left( u\wedge
d\right) }$. In such case we have that 
\begin{equation*}
\Theta \left( u\wedge d,D_{\left( u\wedge d\right) ^{\ast }\left( u\wedge
d\right) }\right)
\end{equation*}%
is an open neighborhood of $y$ disjoint from $\Theta \left( u\wedge c,%
\overline{V}\right) $. This concludes the proof.
\end{proof}

\subsection{Étale groupoids as groupoids of germs\label{Subsection: ettale
as germs}}

Suppose that $G$ is an étale Polish groupoid, and $\Sigma $ is a countable
inverse semigroup of open bisections of $G$. One can define the standard
action of $\Sigma $ on $G^{0}$ by setting $D_{e}=e$ for every $e\in E(\Sigma
)$, and $\theta _{u}:D_{u^{\ast }u}\rightarrow D_{uu^{\ast }}$ by 
\begin{equation*}
\theta _{u}(x)=r(ux)\text{,}
\end{equation*}%
where $ux$ is the only element of $u$ with source $x$. The same proof as 
\cite[Proposition 5.4]{exel_inverse_2008} shows the following fact:

\begin{proposition}
\label{Proposition: ettale are germs}Suppose that $\Sigma $ is a countable
inverse semigroup of open bisections of $G$ such that $\bigcup \Sigma =G$
and for every $u,v\in \Sigma $, $u\cap v$ is the union of the elements of $%
\Sigma $ contained in $u\cap v$. Consider the standard action $\theta
:\Sigma \curvearrowright G^{0}$. The map from $\mathcal{G}\left( \theta
,\Sigma ,X\right) $ to $G$ assigning to the germ $\left[ u,x\right] $ of $u$
at $x$ the unique element of $u$ with source $x$ is well defined, and it is
an isomorphism of étale Polish groupoids.
\end{proposition}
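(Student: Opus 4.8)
The plan is to exhibit an explicit map $\Phi:\mathcal{G}(\theta,\Sigma,X)\to G$, check it is well defined on germs, verify it is a groupoid homomorphism, and then show it is a homeomorphism by producing a continuous inverse on basic open bisections. First I would define $\Phi[u,x]$ to be the unique element $ux$ of $u$ with source $x$; this makes sense because $x\in D_{u^*u}=u^*u$, and for an open bisection $u$ the source map is injective on $u$, so there is indeed exactly one arrow in $u$ with source $x$. For well-definedness on the germ, suppose $(u,x)\sim(v,x)$, witnessed by $e\in E(\Sigma)$ with $ue=ve$ and $x\in D_e=e$; since $x\in e$, the arrow $ux$ lies in $ue$ (as $s(ux)=x$ and $e$ contains the identity $x$) and similarly $vx\in ve$, and from $ue=ve$ and injectivity of the source map on this bisection we get $ux=vx$. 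Here the hypothesis that $u\cap v$ is a union of members of $\Sigma$ is what lets us pass between germ-equivalence and the relation $ue=ve$ for an idempotent $e\in\Sigma$: two bisections agree near $x$ iff some member $e$ of $\Sigma$ with $x\in e$ satisfies $ue=ve$.

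Next I would check the groupoid structure is preserved. Surjectivity is immediate from $\bigcup\Sigma=G$: any $\gamma\in G$ lies in some $u\in\Sigma$, and then $\gamma=\Phi[u,s(\gamma)]$. For the source and range maps, $s(\Phi[u,x])=s(ux)=x=s[u,x]$, and $r(\Phi[u,x])=r(ux)=\theta_u(x)=r[u,x]$ by the definition of the standard action. Compatibility with composition: if $\theta_v(y)=x$, then $[u,x][v,y]=[uv,y]$, and $\Phi[uv,y]$ is the unique element of $uv$ with source $y$; one checks this equals $(ux)(vy)$ since $s(ux)=x=r(vy)$, $(ux)(vy)\in uv$, and $s((ux)(vy))=s(vy)=y$. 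Compatibility with inversion is the same kind of check using $[u,x]^{-1}=[\theta_{u^*},\theta_u(x)]$ and $(ux)^{-1}\in u^*$ with source $\theta_u(x)$. Injectivity of $\Phi$ follows from well-definedness run in reverse: if $\Phi[u,x]=\Phi[v,y]=\gamma$, then first $s(\gamma)$ forces $x=y$, and then $\gamma\in u\cap v$; writing $u\cap v$ as a union of elements of $\Sigma$, pick $e\in\Sigma$ contained in $u\cap v$ with $\gamma\in e$, so $e$ is (the graph of) a bisection and $e\le u$, $e\le v$ in $\Sigma$; replacing $e$ by the idempotent $e^*e$ if necessary one gets an idempotent witnessing $(u,x)\sim(v,x)$, hence $[u,x]=[v,y]$.

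Finally, for the topological statement I would use item (3) and (4) of Subsection~\ref{Subsection: groupoid of germs}: the sets $\Theta(u,U)$ with $u\in\Sigma$ and $U\subset D_{u^*u}$ open form a basis of $\mathcal{G}(\theta,\Sigma,X)$, and on each such set $x\mapsto[u,x]$ is a homeomorphism onto $\Theta(u,U)$. Under $\Phi$, the set $\Theta(u,U)$ maps onto $\{ux: x\in U\}=u\cap s^{-1}[U]$, which is open in $G$ since $u$ is an open bisection and $s$ is continuous; and the composite $U\to\Theta(u,U)\to u\cap s^{-1}[U]$, $x\mapsto ux$, is exactly the homeomorphism from item (3) of Subsection~\ref{Subsection: ettale groupoids}/the germ discussion, with continuous inverse $\gamma\mapsto s(\gamma)$. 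Thus $\Phi$ carries a basis to a basis homeomorphically, so it is a homeomorphism, and combined with the algebraic checks above it is an isomorphism of \'etale Polish groupoids. The main obstacle is the bookkeeping around the germ equivalence relation versus the relation $ue=ve$: one must use the hypothesis that $u\cap v$ is a union of members of $\Sigma$ carefully to guarantee that whenever two bisections ``agree at a point'' there is an \emph{idempotent} in $\Sigma$, not merely an open neighborhood, witnessing it; this is precisely the content borrowed from~\cite[Proposition 5.4]{exel_inverse_2008}, and everything else is routine.
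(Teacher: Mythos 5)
Your proposal is correct and follows essentially the same route as the paper, which does not write out the argument but simply invokes the proof of Proposition~5.4 of Exel's paper on inverse semigroups and combinatorial C*-algebras; your writeup supplies exactly the details that citation is standing in for (well-definedness and injectivity via the hypothesis that $u\cap v$ is a union of members of $\Sigma$, the algebraic checks, and the matching of the two bases of open bisections). The only point worth flagging is that your final step implicitly uses that the sets $u\cap s^{-1}\left[ U\right] $ with $u\in \Sigma $ form a basis for the topology of $G$, which holds because $G$ is \'etale, so $s$ restricted to each open bisection $u$ is a homeomorphism onto the open set $s\left[ u\right] $ and any open $W\ni \gamma $ contains $u\cap s^{-1}\left[ s\left[ u\cap W\right] \right] $.
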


In particular every étale Polish groupoid is isomorphic to the groupoid of
germs of an action of an inverse semigroup on a Polish space.

\subsection{Borel bisections\label{Subsection: Borel slices}}

We will say that a (standard) Borel groupoid is countable if for every $x\in
G^{0}$, the set $Gx=s^{-1}\left[ \left\{ x\right\} \right] $ is countable.
Observe that the countable Borel equivalence relations are exactly the
principal countable Borel groupoids.

Suppose that $G$ is a countable Borel groupoid. Observe that the set $%
\mathcal{S}(G)$ of Borel bisections of $G$ is an inverse semigroup. The
idempotent semilattice $E(S)$ is the Boolean algebra of Borel subsets of $%
G^{0}$. The order $\leq $ on $\mathcal{S}(G)$ as in Subsection~\ref%
{Subsection: regularity germs} is defined by $u\leq v$ iff $u\subset v$.
Therefore $(S)$ is a semilattice with $u\wedge v=u\cap v$.

\begin{lemma}
\label{Lemma: countable-to-one}Suppose that $X,Z$ are standard Borel spaces
and $s:Z\rightarrow X$ is a Borel countable-to-one surjection. There is a
countable partition $\left( P_{n}\right) _{n\in \omega }$ of $Z$ into Borel
subsets such that $s_{|P_{n}}$ is 1:1 for every $n\in \omega $.
\end{lemma}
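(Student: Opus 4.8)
This is the classical Lusin–Novikov uniformization in the countable-to-one case; I would follow the standard argument (see~\cite[Theorem 18.10]{kechris_classical_1995}), which I sketch here. The first step is to reduce to the case where $Z$ is a subset of $X \times \omega$ with $s$ the projection onto the first coordinate. Indeed, since $s$ is countable-to-one and Borel, the Lusin–Novikov theorem provides Borel sets $Z_k \subseteq Z$ ($k \in \omega$) with $Z = \bigcup_k Z_k$ such that $s_{|Z_k}$ is injective; but this is precisely the conclusion we are trying to prove, so instead I will argue directly. Using that $s$ has countable fibers, one first finds a Borel injection $e : Z \to X \times \omega$ with $s = \pi_X \circ e$: fix a Borel isomorphism of $Z$ with a subset of $2^\omega$, and for $x \in X$ enumerate the fiber $s^{-1}\{x\}$ in the lexicographic order inherited from $2^\omega$, assigning to each point its rank in that enumeration. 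The map sending $z$ to $(s(z), \operatorname{rank}(z))$ is Borel (the relevant relations "$z' <_{\mathrm{lex}} z$ and $s(z') = s(z)$" are Borel, and counting them is a Borel operation on standard Borel spaces), injective on each fiber by construction, and hence injective.

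The second step replaces $Z$ by its image $Z' = e[Z] \subseteq X \times \omega$, which is Borel by~\cite[Theorem 15.1]{kechris_classical_1995} since $e$ is an injective Borel map; transporting the partition back along $e^{-1}$ at the end preserves all the required properties. On $Z'$ the map $s$ is simply $\pi_X$, and we now partition $Z'$ by setting
\begin{equation*}
P_n = \{(x,k) \in Z' : k = n\}\text{.}
\end{equation*}
Each $P_n$ is Borel (it is $Z' \cap (X \times \{n\})$), the family $(P_n)_{n \in \omega}$ is a partition of $Z'$, and $\pi_X$ restricted to $P_n$ is manifestly injective. Pulling back, $\bigl(e^{-1}[P_n]\bigr)_{n\in\omega}$ is the desired Borel partition of $Z$, with $s_{|e^{-1}[P_n]}$ injective for each $n$.

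The only mild subtlety — the "hard part," such as it is — is verifying that the rank function $z \mapsto \operatorname{rank}(z)$ is Borel, i.e.\ that for each fixed $n$ the set $\{z \in Z : \operatorname{rank}(z) = n\}$ is Borel. This follows because the set $R = \{(z,z') \in Z \times Z : s(z) = s(z'),\ z' <_{\mathrm{lex}} z\}$ is Borel with all vertical sections finite, so by the easy direction of Lusin–Novikov (or directly, since "$R_z$ has exactly $n$ elements" is an arithmetic condition over a Borel set with countable sections, hence Borel — see~\cite[Theorem 18.10]{kechris_classical_1995}) the level sets are Borel. Note the statement imposes no surjectivity requirement on the individual $s_{|P_n}$, only on $s$ itself, so no further argument is needed; the partition may of course be refined or padded with empty sets to make it indexed exactly by $\omega$.
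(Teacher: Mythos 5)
Your overall strategy --- producing a Borel injection $e:Z\rightarrow X\times \omega $ over $X$ and pulling back the obvious partition of $X\times \omega $ --- is the right shape, and it is essentially how the paper proceeds. The gap is in your construction of $e$. You set $e(z)=(s(z),\mathrm{rank}(z))$, where $\mathrm{rank}(z)$ counts the lexicographic predecessors of $z$ inside its own fiber, and you justify Borelness by asserting that the set $R=\{(z,z'):s(z)=s(z'),\ z'<_{\mathrm{lex}}z\}$ has finite vertical sections. That assertion is false in general: the fibers of $s$ are only assumed countable, and a countably infinite subset of $2^{\omega }$ can have any countable order type under the lexicographic order (for instance a dense one), in which case every point of the fiber has infinitely many lexicographic predecessors and $\mathrm{rank}(z)$ is not a natural number. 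Those points are then assigned to no $P_{n}$, so your sets do not cover $Z$. This is not a detail one can patch with a cleverer linear order: a Borel fiberwise injection of $Z$ into $X\times \omega $ is exactly the content of the Lusin--Novikov theorem, and some genuine uniformization input is required.

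The paper supplies that input by a Baire-category argument. After replacing $Z$ by its disjoint union with $X\times \omega $ so that every fiber is countably infinite, it considers the Borel set $E\subset X\times Z^{\omega }$ of pairs $(x,(e_{n}))$ such that $(e_{n})$ enumerates $s^{-1}\{x\}$; for each $x$ the section $E_{x}$ is a dense $G_{\delta }$ in $(s^{-1}\{x\})^{\omega }$, the associated assignment of meager ideals is shown to be Borel-on-Borel, and the large-section uniformization theorem \cite[Theorem 18.6]{kechris_classical_1995} then yields a Borel choice of fiber enumerations, from which the partition follows at once. Alternatively you could simply invoke \cite[Theorem 18.10]{kechris_classical_1995} (Lusin--Novikov), which is literally the statement being proved; but the direct argument you substitute for it does not go through as written.
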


\begin{proof}
It is enough to show that $Z=\bigcup_{n}P_{n}$, where $P_{n}$ are Borel
subsets of $Z$ such that $s_{|P_{n}}$ is 1:1. After replacing $Z$ with the
disjoint union of $Z$ and $X\times \omega $, and setting $s\left( x,n\right)
=x$ for $\left( x,n\right) \in X\times \omega $, we can assume that for
every $x\in X$ the inverse image $s^{-1}\left\{ x\right\} $ is countably
infinite. We want to define a Borel function $e:X\rightarrow Z^{\omega }$
such that $\left\{ e(x)_{n}:n\in \omega \right\} $ is an enumeration of $%
s^{-1}\left\{ x\right\} $ for every $x\in X$. Consider the Borel subset $E$
of $X\times Z^{\omega }$ defined by%
\begin{eqnarray*}
\left( x,\left( e_{n}\right) \right) \in E\text{ } &\Leftrightarrow &\left(
e_{n}\right) \text{ is a enumeration of }s^{-1}\left\{ x\right\} \\
&\Leftrightarrow &\text{ }s\left( e_{n}\right) =x\text{ and }\forall z\in
s^{-1}\left\{ x\right\} \text{ }\exists n\text{ such that }z=e_{n}\text{.}
\end{eqnarray*}%
(Recall that the image of a standard Borel space under a countable-to-one
Borel function is Borel; see~\cite[Exercise 18.15]{kechris_classical_1995}.)
We want to find a Borel uniformization of $E$. For each $x\in X$ endow $%
s^{-1}\left\{ x\right\} $ with the discrete topology and $s^{-1}\left\{
x\right\} ^{\omega }$ with the product topology. Observe that for $\left(
e_{n}\right) \in s^{-1}\left\{ x\right\} ^{\omega }$ we have that $\left(
e_{n}\right) \in E_{x}$ iff $\forall z\in s^{-1}\left\{ x\right\} $ $\exists
n\in \omega $ such that $e_{n}=z$. Thus $E_{x}$ is a dense $G_{\delta }$
subset of $s^{-1}\left\{ x\right\} ^{\omega }$. Define the following $\sigma 
$-ideal $\mathcal{I}_{x}$ in $Z^{\omega }$: $A\in \mathcal{I}_{x}$ iff $%
A\cap E_{x}$ is meager in $s^{-1}\left\{ x\right\} ^{\omega }$. Thus $%
E_{x}\notin \mathcal{I}_{x}$. In order to conclude that $E$ has a Borel
uniformization, by~\cite[Theorem 18.6]{kechris_classical_1995} it is enough
to show that the assignment $x\mapsto \mathcal{I}_{x}$ is Borel-on-Borel as
in~\cite[Definition 18.5]{kechris_classical_1995}. Suppose that $Y$ is a
standard Borel space and $A\subset Y\times X\times Z^{\omega }$. Consider
the set 
\begin{eqnarray*}
&&\left\{ (y,x)\in Y\times X:A_{y,x}\in \mathcal{I}_{x}\right\} \\
&=&\left\{ (y,x)\in Y\times X:A_{y,x}\cap E_{x}\text{ is meager in }%
s^{-1}\left\{ x\right\} ^{\omega }\right\}
\end{eqnarray*}%
Clearly we can assume that $A\subset Y\times E$. If $e:\omega \rightarrow
s^{-1}\left\{ x\right\} $ is a bijection, then $e$ induces a homeomorphism $%
\pi _{e}:\omega ^{\omega }\rightarrow s^{-1}\left\{ x\right\} ^{\omega }$.
Therefore for $(y,x)\in Y\times X$ we have that 
\begin{eqnarray*}
A_{y,x}\cap E_{x}\text{ is meager in }s^{-1}\left\{ x\right\} ^{\omega }%
\text{ } &\Leftrightarrow &\text{ }\pi _{e}^{-1}\left[ A_{y,x}\cap
s^{-1}\left\{ x\right\} ^{\omega }\right] \text{ is meager} \\
&\Leftrightarrow &\left\{ w\in \omega ^{\omega }:\pi _{e}(w)\in
A_{y,x}\right\} \text{ is meager.}
\end{eqnarray*}%
Consider the Borel subset $Q$ of $Y\times X\times Z^{\omega }$ defined by $%
\left( y,x,e\right) \in Q$ iff $\left( x,e\right) \in E$ and $\forall n,m\in
\omega $ if $n\neq m$ then $e_{n}\neq e_{m}$ and $\left\{ w\in \omega
^{\omega }:\left( y,x,e\circ w\right) \in A_{y,x}\right\} $ is meager. We
have that%
\begin{eqnarray*}
A_{y,x}\in \mathcal{I}_{x}\text{ } &\Leftrightarrow &\text{ }\exists e\text{
such that }\left( y,x,e\right) \in Q \\
&\Leftrightarrow &\forall e\forall n\neq m\in \omega \text{, }\left(
x,e\right) \in E\text{, and }e_{n}\neq e_{m}\text{ }\Rightarrow \text{ }%
\left( z,x,e\right) \in Q\text{.}
\end{eqnarray*}%
This shows that $\left\{ (y,x):A_{y,x}\in \mathcal{I}_{x}\right\} $ is both
analytic and co-analytic, and hence Borel.
\end{proof}

\begin{proposition}
\label{Proposition: countable-to-one}If $G$ is a countable Borel groupoid,
then there is a countable partition of $G$ into Borel bisections. Moreover
for every $n\in \omega $ we have that%
\begin{equation*}
\left\{ x\in G^{0}:\left\vert Gx\right\vert =n\right\}
\end{equation*}%
is Borel.
\end{proposition}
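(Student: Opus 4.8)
The plan is to reduce the whole statement to two applications of Lemma~\ref{Lemma: countable-to-one}. First I would record that both the source map $s\colon G\to G^{0}$ and the range map $r\colon G\to G^{0}$ are countable-to-one Borel surjections onto $G^{0}$: for $s$ this is exactly the hypothesis that $G$ is a countable Borel groupoid, and for $r$ it follows because inversion $\gamma\mapsto\gamma^{-1}$ is a Borel automorphism of $G$ carrying the fiber $r^{-1}[\{x\}]=xG$ onto $Gx=s^{-1}[\{x\}]$, which is countable.

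Next I would apply Lemma~\ref{Lemma: countable-to-one} to $s\colon G\to G^{0}$ to get a countable Borel partition $\left(Q_{n}\right)_{n\in\omega}$ of $G$ with $s|_{Q_{n}}$ injective for every $n$. Fixing $n$, the restriction $r|_{Q_{n}}\colon Q_{n}\to G^{0}$ is still countable-to-one, so its image $r[Q_{n}]$ is a Borel subset of $G^{0}$ by \cite[Exercise 18.15]{kechris_classical_1995}, and a second application of Lemma~\ref{Lemma: countable-to-one}, now to $r|_{Q_{n}}\colon Q_{n}\to r[Q_{n}]$, yields a countable Borel partition $\left(P_{n,m}\right)_{m\in\omega}$ of $Q_{n}$ with $r|_{P_{n,m}}$ injective. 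Since $P_{n,m}\subset Q_{n}$, the map $s|_{P_{n,m}}$ is injective too, so each $P_{n,m}$ is a Borel bisection of $G$, and $\left\{P_{n,m}:n,m\in\omega\right\}$ is the desired countable Borel partition of $G$ into bisections. I would then relabel it as $\left(B_{k}\right)_{k\in\omega}$.

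For the ``moreover'' clause, since $s|_{B_{k}}$ is an injective Borel map its image $s[B_{k}]$ is Borel in $G^{0}$ by \cite[Theorem 15.1]{kechris_classical_1995}. Because the $B_{k}$ partition $G$ and each meets $Gx$ in at most one point, for every $x\in G^{0}$ one has $|Gx|=\left|\left\{k\in\omega:x\in s[B_{k}]\right\}\right|$. Hence for $n\in\omega$
\[
\left\{x\in G^{0}:|Gx|\ge n\right\}=\bigcup_{\substack{F\subset\omega\\ |F|=n}}\ \bigcap_{k\in F}s[B_{k}]
\]
is a countable union of Borel sets and therefore Borel, and $\left\{x\in G^{0}:|Gx|=n\right\}=\left\{x:|Gx|\ge n\right\}\setminus\left\{x:|Gx|\ge n+1\right\}$ is Borel. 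The argument is essentially routine given Lemma~\ref{Lemma: countable-to-one}; the only points needing mild care are arranging injectivity of \emph{both} $s$ and $r$ on the pieces (hence the two applications), and checking that $r[Q_{n}]$ is Borel so that the lemma may legitimately be invoked with it as codomain.
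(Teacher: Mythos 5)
Your proof is correct and follows essentially the same route as the paper: both reduce everything to Lemma~\ref{Lemma: countable-to-one} applied to the source map and then read off the sets $\left\{ x:\left\vert Gx\right\vert =n\right\} $ from the Borel images $s\left[ B_{k}\right] $ of the resulting bisections. The only cosmetic difference is how injectivity of the range map is arranged: you apply the lemma a second time to $r$ restricted to each piece (correctly checking that $r\left[ Q_{n}\right] $ is Borel so the lemma applies), whereas the paper gets it in one step by intersecting each piece $u$ with the inverses $v^{-1}$ of the pieces, the sets $u\cap v^{-1}$ already being bisections.
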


\begin{proof}
The source map $s:G\rightarrow G^{0}$ satisfies the hypothesis of Lemma~\ref%
{Lemma: countable-to-one}. Therefore one can find a countable partition $%
\mathcal{H}$ of $G$ into Borel subsets such that the source map is 1:1 on
every element of $\mathcal{H}$. Define 
\begin{equation*}
\mathcal{C}=\left\{ u\cap v^{-1}:u,v\in \mathcal{H}\right\}
\end{equation*}%
an observe that $\mathcal{C}$ is a countable collection of pairwise disjoint
Borel bisections of $G$. Observe now that for every $u\in \mathcal{C}$,%
\begin{equation*}
\left\{ x\in G^{0}:\exists \gamma \in u\text{, }x=s(\gamma )\right\} =s\left[
u\right] =u^{-1}u
\end{equation*}%
is Borel being 1:1 image of a Borel set. Moreover $\left\vert Gx\right\vert
=m$ iff $\exists u_{0},\ldots ,u_{m-1}\in \mathcal{C}$ pairwise distinct
such that $x\in u_{i}u_{i}^{-1}$ for $i\in m$ and $\forall w\in \mathcal{C}$
if $x\in ww^{-1}$ then $w=u_{i}$ for some $i\in m$.
\end{proof}

Let us say that a Borel bisection $u$ is \emph{full} if $%
uu^{-1}=u^{-1}u=G^{0}$. It is clear from Proposition~\ref{Proposition:
countable-to-one} that if $G$ is a countable Borel groupoid, then there is a
partition of $G$ into full Borel bisections.

\subsection{A Polish topology on countable Borel groupoids}

In this subsection we observe that any countable Borel groupoid is Borel
isomorphic to a regular zero-dimensional étale Polish groupoid. Suppose that 
$G$ is a countable Borel groupoid. Pick a countable partition $C$ of $G$
into full Borel bisections and consider the smallest inverse subsemigroup of 
$T$ with the property that $u\cap v\in T$ whenever $u,v\in T$. Observe that $%
T$ is countable. By~\cite[Exercise 13.5]{kechris_classical_1995} there is a
zero-dimensional Polish topology $\tau ^{0}$ on $G^{0}$ generating the Borel
structure on $G^{0}$ such that $u^{-1}u$ is clopen for every $u\in T$.
Consider the standard action $\theta $ of $T$ on $\left( G^{0},\tau
^{0}\right) $ and observe that it satisfies the condition of Proposition~\ref%
{Proposition: Polish germs}. Therefore the associated groupoid of germs $%
\mathcal{G}\left( \theta ,T,G^{0}\right) $ is an étale zero-dimensional
regular Polish groupoid. Arguing as in the proof of~\cite[Proposition 5.4]%
{exel_inverse_2008} one can verify that the function $\phi $ from $G$ to $%
\mathcal{G}\left( \theta ,T,G^{0}\right) $ sending $\gamma $ to $\left[
c,s(\gamma )\right] $ where $c$ is the only element of $C$ such that $\gamma
\in C$ is a well defined Borel isomorphism of countable Borel groupoids.

\subsection{Treeable Borel groupoids\label{Subsection: treeable groupoids}}

Suppose that $G$ is a countable Borel groupoid. A $\emph{graphing}$ $Q$ of $%
G $ is a Borel subset $Q$ of $G\left\backslash G^{0}\right. $ such that $%
Q=Q^{-1}$ and $\bigcup_{n\in \omega }Q^{n}=G$, where $Q^{0}=G^{0}$. Suppose
that $Q$ is a graphing of $G$. Define $P^{\ast }(Q)$ to be the set of finite
nonempty sequences $(\gamma _{i})_{i\in n+1}$ in $Q$ such that $r(\gamma
_{i+1})=s(\gamma _{i})$ and $\gamma _{i+1}\neq \gamma _{i}^{-1}$ for $i\in n$%
. For $(\gamma _{i})_{i\in n+1}$ in $P^{\ast }(Q)$ one can define%
\begin{equation*}
\prod_{i\in n+1}\gamma _{i}
\end{equation*}%
to be the product $\gamma _{n}\gamma _{1n-2}\cdots \gamma _{1}\gamma _{0}$
in $G$. We say that $Q$ is a \emph{treeing }if for every $(\gamma
_{i})_{i\in n+1}\in P^{\ast }(Q)$, 
\begin{equation*}
\prod_{i\in n+1}\gamma _{i}\notin Q^{0}
\end{equation*}
or, equivalently, for every $\gamma \in G\backslash G^{0}$ there is exactly
one element $(\gamma _{i})_{i\in n+1}$ of $P^{\ast }(Q)$ such that $%
\prod_{i\in n+1}\gamma _{i}=\gamma $. A countable Borel groupoid is \emph{%
treeable} when it admits a treeing~\cite[Section 8]%
{anantharaman-delaroche_old_2011}.

It is not difficult to verify that a principal countable Borel groupoid is
treeable precisely when it is treeable as an equivalence relation. A
countable group is treeable as groupoid if and only if it is a free group.

In the following if $Q$ is a treeing of $G$ we denote by $P(Q)$ the union of 
$P^{\ast }(Q)$ and $\left\{ \varnothing \right\} $.\ In analogy with free
groups, if $\left( \gamma _{n},\ldots ,\gamma _{0}\right) \in P(Q)$ we say
that $\gamma _{n}\cdots \gamma _{0}$ is a \emph{reduced word},\emph{\ }and
that the length $l(\gamma _{n}\cdots \gamma _{0})$ of $\gamma _{n}\cdots
\gamma _{0}$ is $n+1$.

\begin{proposition}
Suppose that $G$ is a countable Borel groupoid. If there is a Borel complete
section $A$ for $E_{G}$ such that $G_{|A}$ is treeable, then $G$ is treeable.
\end{proposition}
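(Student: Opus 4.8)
The plan is to lift a treeing of $G_{|A}$ to a treeing of $G$, by attaching to every object outside $A$ a single arrow leading to a Borel-chosen representative of its orbit inside $A$.

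First I would construct a Borel map $\pi\colon G^{0}\to G$ with $s(\pi(x))=x$ and $r(\pi(x))\in A$ for every $x\in G^{0}$, and $\pi(x)=x$ for $x\in A$. Here $r^{-1}[A]$ is a Borel subset of $G$ on which the source map is Borel, countable-to-one (its fibres lie inside the countable sets $Gx$), and onto $G^{0}$ since $A$ meets every orbit; by Lemma~\ref{Lemma: countable-to-one} one may partition $r^{-1}[A]$ into Borel sets $(P_{n})_{n}$ on which $s$ is injective, and then take $\pi(x)$ to be the unique element of $P_{n}\cap s^{-1}\{x\}$ for the least $n$ with $s^{-1}\{x\}\cap P_{n}\neq\varnothing$ (the sets $s[P_{n}]$ being Borel), finally overriding $\pi(x):=x$ on $A$. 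This $\pi$ is Borel and injective on $G^{0}\setminus A$. Fixing a treeing $Q_{A}$ of $G_{|A}$, I set
\[
Q=Q_{A}\cup\bigl\{\pi(x),\pi(x)^{-1}:x\in G^{0}\setminus A\bigr\}.
\]
For $x\notin A$ the arrow $\pi(x)$ has source $x\notin A$ and range in $A$, hence is not an identity, so $Q\subset G\setminus G^{0}$ and $Q=Q^{-1}$; and $Q$ is Borel since $\{\pi(x):x\notin A\}$ is the injective Borel image of the Borel set $G^{0}\setminus A$. Finally $Q$ is a graphing: given $\gamma\in G\setminus G^{0}$ with $s(\gamma)=x$ and $r(\gamma)=y$, the arrow $\delta=\pi(y)\gamma\pi(x)^{-1}$ is defined and lies in $G_{|A}$, hence is a finite product of elements of $Q_{A}$ (or an object of $G_{|A}$), and $\gamma=\pi(y)^{-1}\delta\pi(x)$ is a finite product of elements of $Q$, since $\pi(y)^{-1}$ and $\pi(x)$ are either in $Q$ or identities.

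The heart of the argument is that $Q$ is a treeing, i.e.\ that no reduced word $(\gamma_{i})_{i\in n+1}\in P^{\ast}(Q)$ has $\prod_{i}\gamma_{i}\in G^{0}$. The key observation is that for an object $z\notin A$ the only arrows of $Q$ incident to $z$ are $\pi(z)$, the unique one with source $z$, and $\pi(z)^{-1}$, the unique one with range $z$; this uses injectivity of $\pi$ together with the fact that every arrow of $Q_{A}$ has both endpoints in $A$ and that $\pi(w),\pi(w)^{-1}$ for $w\neq z$ have their non-$A$ endpoint equal to $w\neq z$. It follows that a reduced word cannot pass through a vertex outside $A$: the two consecutive arrows meeting at an interior vertex $z\notin A$ would be forced to be $\pi(z)$ and $\pi(z)^{-1}$, contradicting the reducedness condition $\gamma_{i+1}\neq\gamma_{i}^{-1}$. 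Hence every interior vertex of a reduced $Q$-word lies in $A$; therefore all of its arrows other than possibly the first and last lie in $Q_{A}$, while the first (resp.\ the last) lies in $Q_{A}$ unless the boundary vertex it carries lies outside $A$, in which case it is forced to equal the corresponding $\pi(\cdot)^{-1}$ (resp.\ $\pi(\cdot)$).

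It remains to conclude by cases. Write $\gamma=\prod_{i}\gamma_{i}$, $x=s(\gamma)$, $y=r(\gamma)$. If $x\neq y$ then $\gamma\notin G^{0}$, objects having equal source and range. If $x=y\in A$, then by the previous paragraph the whole word lies in $P^{\ast}(Q_{A})$, so $\gamma$ is not an object of $G_{|A}$ by the treeing property of $Q_{A}$; since the identity at $x\in A$ is an object of $G_{|A}$, this forces $\gamma\notin G^{0}$. If $x=y\notin A$, then the first and last arrows of the word are $\pi(x)^{-1}$ and $\pi(x)$, reducedness forces the word to have length at least three, the stretch obtained by deleting these two arrows is a nonempty element of $P^{\ast}(Q_{A})$ whose product is $\pi(x)\gamma\pi(x)^{-1}$, and the treeing property of $Q_{A}$ gives $\pi(x)\gamma\pi(x)^{-1}\notin G^{0}$, whence $\gamma\notin G^{0}$. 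In all cases $\prod_{i}\gamma_{i}\notin G^{0}$, so $Q$ is a treeing and $G$ is treeable. I expect the main obstacle to be the structural analysis of reduced $Q$-words in the previous paragraph; the Borelness of $\pi$ is routine given Lemma~\ref{Lemma: countable-to-one}, and the graphing property is a one-line computation.
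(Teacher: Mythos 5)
Your proof is correct and follows essentially the same route as the paper, which takes a Borel map $f:G^{0}\rightarrow G$ with $s(f(x))=x$, $r(f(x))\in A$, $f(a)=a$ on $A$, and declares $Q_{A}\cup f[G^{0}\backslash A]$ to be a treeing of $G$. You supply the details the paper leaves implicit (the construction of $f$ via the countable-to-one selection lemma, the symmetrization of $Q$ so that $Q=Q^{-1}$, and the case analysis showing reduced words have non-identity products), all of which check out.
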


\begin{proof}
Pick a Borel function $f:G^{0}\rightarrow G$ such that $f(a)=a$ for $a\in A$%
, $s\left( f(x)\right) =x$ and $r\left( f(x)\right) \in A$ for $x\in G^{0}$
. Suppose that $Q_{A}$ is a treeing for $G_{|A}$. Observe that $Q_{A}\cup f%
\left[ G^{0}\left\backslash A\right. \right] $ is a treeing for $G$.
\end{proof}

We want to show that Borel subgroupoids of treeable groupoid are treeable. A
particular case of this statement is that a subgroup of a countable free
group is free, which is the well known \emph{Nielsen-Schreier theorem}. The
strategy of our proof will be a Borel version for groupoids of Schreier's
proof of the Nielsen-Schreier theorem.

Suppose that $G$ is a treeable groupoid with no elements of order $2$, and $%
H $ is a Borel subgroupoid of $G$. In the rest of the subsection we will
show that $H$ is treeable. Denote by $\sim _{H}$ the equivalence relation $%
\gamma \sim _{H}\rho $ iff $\gamma H=\rho H$. Suppose that $Q$ is a treeing
for $G$. Since $G$ has no elements of order $2$ we can write $Q=Q^{+}\cup
Q^{-}$ where $Q^{+}$ and $Q^{-}$ are disjoint and $Q^{+}=\left( Q^{-}\right)
^{-1}$. A Borel transversal $U$ for $\sim _{H}$ is \emph{Schreier} if $%
\gamma _{n}\cdots \gamma _{0}\in T$ implies $\gamma _{k}\cdots \gamma
_{0}\in T$ for $k\in n$. We want to show that there is a Schreier Borel
transversal for $H$.

Suppose that $\left( V_{n}\right) _{n\in \omega }$ is a partition of $%
G\backslash G^{0}$ into full Borel bisections. If $\gamma _{n}\cdots \gamma
_{0}$ and $\gamma _{m}^{\prime }\cdots \gamma _{0}^{\prime }$ are reduced
words with $r(\gamma _{n})=r\left( \gamma _{m}^{\prime }\right) =x$, set%
\begin{equation*}
\gamma _{n}\cdots \gamma _{0}<_{x}\gamma _{m}^{\prime }\cdots \gamma
_{0}^{\prime }
\end{equation*}%
iff $n<m$, or $n=m$ and for some $k\in n$, $\gamma _{i}=\gamma _{i}^{\prime
} $ for $i\in k$ and for some $N\in \omega $, $\gamma _{k}\in V_{N}$ while $%
\gamma _{k}^{\prime }\notin V_{n}$ for any $n\leq N$. Define also%
\begin{equation*}
x<_{x}\gamma _{n}\cdots \gamma _{0}\text{.}
\end{equation*}%
Observe that $<_{x}$ is a Borel order of $xG$ with minimum $x$, and the
function $x\mapsto <_{x}$ is Borel. Define now for $\gamma \in G$, $%
\overline{\gamma }$ to be the $<_{r(\gamma )}$-least element of $\gamma H$.
Thus $\overline{\gamma }\in \gamma H$ and hence $\overline{\gamma }%
^{-1}\gamma \in H$. Consider $U=\left\{ \overline{\gamma }^{-1}\gamma
:\gamma \in G\right\} $ and observe that, since $x$ is the $<_{x}$-minimum
element of $xG$, $U\cap H\subset H^{0}$. Arguing as in~\cite[Section 2.3]%
{johnson_presentations_1997} one can show that $U$ is a Schreier transversal
for $\sim _{H}$. Define then 
\begin{equation*}
A=\left\{ \overline{\gamma u}^{-1}\gamma u:u\in U\text{, }\gamma \in
Q\right\} \subset H.
\end{equation*}%
The same proof as Lemma~3 in~\cite[Section 3.3]{johnson_presentations_1997}
shows that $\bigcup_{n\in \omega }A^{n}=H$. Define now 
\begin{equation*}
B=\left\{ \overline{\gamma u}^{-1}\gamma u:u\in U\text{, }\gamma \in Q^{+}%
\text{, and }\gamma u\notin U\right\} \text{.}
\end{equation*}%
The same proof as Lemma~4 in~\cite[Section 3.4]{johnson_presentations_1997}
shows that 
\begin{equation*}
B^{-1}=\left\{ \overline{\gamma u}^{-1}\gamma u:u\in U\text{, }\gamma \in
Q^{-}\text{, and }\gamma u\notin U\right\} \text{,}
\end{equation*}%
and $A\backslash H^{0}$ is the disjoint union of $B$ and $B^{-1}$. Finally
one can show that $A\backslash H^{0}$ is a treeing for $G$ as in~\cite[%
Section 3.6]{johnson_presentations_1997}. The proof is the same as the proof
of Theorem~1 in~\cite[Section 3.6]{johnson_presentations_1997}. The
fundamental lemma is the following:

\begin{lemma}
\label{Lemma: treeable subgroupoid}Suppose that $b=\overline{u\gamma }%
^{-1}u\gamma \in A\backslash H^{0}$ and $b^{\prime }=\overline{v\rho }%
^{-1}v\rho \in A\backslash H^{0}$. The product $\rho v\overline{\gamma u}%
^{-1}\gamma $ is equal to a reduced word $\rho w\gamma $ for some $w\in G$,
unless $v=\overline{\gamma u}$ and $\rho =\gamma ^{-1}$, in which case%
\begin{equation*}
u=\overline{\gamma ^{-1}\overline{\gamma u}}=\overline{\rho v}
\end{equation*}%
and%
\begin{equation*}
b^{\prime }=b^{-1}\text{.}
\end{equation*}
\end{lemma}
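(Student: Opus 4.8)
The plan is to argue purely combinatorially in the groupoid of ``words'' over the treeing $Q$, following Schreier's classical proof of the Nielsen--Schreier theorem (cf.\ \cite[Sections 3.3--3.6]{johnson_presentations_1997}); the Borel measurability has already been built into the choice of $U$, of the orders $<_{x}$, and of the treeing, so here $G$, $H$, $Q$, $U$ may be handled as plain algebraic data. First I would unwind the standing hypotheses: $b=\overline{\gamma u}^{-1}\gamma u\notin H^{0}$ holds \emph{exactly} when $\gamma u$ is a reduced word with $\gamma u\notin U$ --- if $\gamma u$ were non-reduced or lay in $U$ then by suffix-closure of the Schreier transversal $\overline{\gamma u}=\gamma u$ and $b$ would be an object --- and in that case $\overline{\gamma u}\in U$ is a reduced word distinct from $\gamma u$; symmetrically for $b'$.

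Then I record two elementary facts. (i) \emph{Coset identity.} Since $\overline{\gamma u}^{-1}\gamma u\in H$ we have $\overline{\gamma u}\,H=\gamma u\,H$, hence $\gamma^{-1}\overline{\gamma u}\,H=u\,H$, and as $u\in U$ this yields $\overline{\gamma^{-1}\overline{\gamma u}}=u$. (ii) \emph{No leading pivot.} The leading letter of the reduced word $\overline{\gamma u}$ is not $\gamma$: if $\overline{\gamma u}=\gamma w_{0}$ then $w_{0}$, being a suffix of an element of $U$, lies in $U$, and $w_{0}H=\gamma^{-1}\overline{\gamma u}H=uH$ forces $w_{0}=u$ and $\overline{\gamma u}=\gamma u\in U$, contradicting nontriviality of $b$. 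Consequently the word $\overline{\gamma u}^{-1}\gamma u$ is already freely reduced as written, with the pivot $\gamma$ cancelling against neither neighbour; likewise $\overline{\rho v}^{-1}\rho v$ with pivot $\rho$.

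With this, the exceptional case is pure algebra: if $v=\overline{\gamma u}$ and $\rho=\gamma^{-1}$ then (i) gives $\overline{\rho v}=\overline{\gamma^{-1}\overline{\gamma u}}=u$, so $b'=\overline{\rho v}^{-1}\rho v=u^{-1}\gamma^{-1}\overline{\gamma u}=\bigl(\overline{\gamma u}^{-1}\gamma u\bigr)^{-1}=b^{-1}$, and the displayed identities for $u$ follow. In all other cases I would compute $v\overline{\gamma u}^{-1}$ via the longest common suffix $z$ of the reduced words $v$ and $\overline{\gamma u}$ (legitimate since $U$, being Schreier, is suffix-closed): writing $v=v_{0}z$, $\overline{\gamma u}=w_{0}z$, maximality of $z$ makes $w:=v_{0}w_{0}^{-1}$ freely reduced and equal to $v\overline{\gamma u}^{-1}$, so $\rho v\overline{\gamma u}^{-1}\gamma=\rho w\gamma$; it remains to check this last word is reduced, i.e.\ that prefixing $\rho$ and suffixing $\gamma$ create no new cancellation.

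That final check is the main obstacle, and it reduces to a short case analysis in which every escape route but the stated exception is closed off. A cancellation at the $\gamma$-end would force $\gamma$ to equal the leading letter of $w_{0}$, hence of $\overline{\gamma u}=w_{0}z$, contradicting (ii) --- or, in the degenerate case $w_{0}$ empty, it makes $\gamma^{-1}\overline{\gamma u}$ a suffix of $v\in U$, whence $\gamma^{-1}\overline{\gamma u}\in U$ and, by (i), $\gamma^{-1}\overline{\gamma u}=u$, again forcing $\overline{\gamma u}=\gamma u$. A cancellation at the $\rho$-end would force $\rho$ to equal the trailing letter of $w_{0}$, making $\rho v=\rho z$ a suffix of $\overline{\gamma u}\in U$ and so $\rho v\in U$, contradicting nontriviality of $b'$; the sole situation where this fails is $v_{0}=w_{0}=\varnothing$, i.e.\ $v=\overline{\gamma u}$ and $w$ empty, where $\rho w\gamma=\rho\gamma$ is reduced precisely unless $\rho=\gamma^{-1}$ --- the excluded exceptional case. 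Getting these degenerate sub-cases ($v$ or $\overline{\gamma u}$ a suffix of the other, or $v_{0}$ or $w_{0}$ empty) to dovetail, so that (i), (ii), suffix-closure, and the two nontriviality hypotheses between them leave only the exception, is the delicate point; it is exactly the groupoid transcription of the corresponding step of Schreier's argument.
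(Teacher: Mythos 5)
Your argument is correct and is exactly the groupoid transcription of Schreier's argument that the paper's proof defers to (Lemma~5 in \cite[Section 3.5]{johnson_presentations_1997}): the coset identity $\overline{\gamma^{-1}\overline{\gamma u}}=u$, the \emph{no leading pivot} observation, and the longest-common-suffix case analysis are precisely the intended ingredients. One small imprecision: a cancellation at the $\rho$-end forces $\rho$ to equal the trailing letter of $w_{0}$ only when $v_{0}=\varnothing$ --- when $v_{0}\neq \varnothing$ such a cancellation would instead make the leading letter of $v$ equal to $\rho ^{-1}$, which is already excluded by your earlier observation that $\rho v$ is reduced --- so no gap results, but that sub-case should be stated separately.
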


The proof of Lemma~\ref{Lemma: treeable subgroupoid} is analogous to the
proof of Lemma~5 in~\cite[Section 3.5]{johnson_presentations_1997}.

\section{Functorial Borel complexity and treeable equivalence relations\label%
{Section: functorial reducibility and treeable}}

\subsection{The lifting property}

\begin{definition}
\label{Definition: lifting property}Suppose that $G$ is a Polish groupoid.
We say that $G$ has the \emph{lifting property} if the following holds: For
any Polish groupoid $H$ such that $E_{H}$ is Borel, and any Borel function $%
f:G^{0}\rightarrow H^{0}$ such that $f(x)E_{H}f(x^{\prime })$ whenever $%
xE_{G}x^{\prime }$, there is a Borel functor $F:G\rightarrow H$ that extends 
$f$.
\end{definition}

\begin{remark}
\label{Remark: lifting-lifting}If $E_{G}$ has the lifting property (as a
principal groupoid), then $G$ has the lifting property.
\end{remark}

\begin{proposition}
\label{Proposition: treeable implies lifting}A treeable countable Borel
groupoid with no elements of order $2$ has the lifting property.
\end{proposition}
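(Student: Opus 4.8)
The plan is to exploit the unique reduced-word structure supplied by a treeing: a treeable countable Borel groupoid with no elements of order $2$ is the \emph{free} groupoid over $G^{0}$ on (one half of) a treeing, so a functor out of it is freely determined by a Borel choice of images of the generators, subject only to compatibility with source and range. First I would fix a treeing $Q$ of $G$. Since $G$ has no element of order $2$, $\gamma\neq\gamma^{-1}$ for every $\gamma\in Q$, so $\gamma\mapsto\gamma^{-1}$ is a fixed-point-free Borel involution of $Q$; choosing a Borel transversal for its two-point orbits yields a Borel set $Q^{+}$ with $Q=Q^{+}\sqcup Q^{-}$ and $Q^{-}=(Q^{+})^{-1}$.

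Now let $H$ be a Polish groupoid with $E_{H}$ Borel and $f:G^{0}\to H^{0}$ a Borel map with $f(x)E_{H}f(x')$ whenever $xE_{G}x'$. Regarding the standard action of $H$ on $H^{0}$ (whose orbit equivalence relation is $E_{H}$, and for which $H_{a,b}=\{\eta\in H:s(\eta)=a,\ r(\eta)=b\}$), Theorem~\ref{Theorem: Borel orbit equivalence relations} makes $(a,b)\mapsto H_{a,b}\in F(H)$ Borel. Precomposing with the Borel map $\gamma\mapsto(f(s(\gamma)),f(r(\gamma)))$ on $Q^{+}$ gives a Borel map into $F(H)$; its values are nonempty, since the arrow $\gamma$ witnesses $s(\gamma)E_{G}r(\gamma)$ and hence $f(s(\gamma))E_{H}f(r(\gamma))$. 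Composing with a Borel selector $\sigma:F(H)\setminus\{\varnothing\}\to H$ as in Subsection~\ref{Subsection: Borel selector cosets} produces a Borel map $F_{Q}$ on $Q^{+}$ with $s\circ F_{Q}=f\circ s$ and $r\circ F_{Q}=f\circ r$; extend it to $Q$ by $F_{Q}(\gamma):=F_{Q}(\gamma^{-1})^{-1}$ on $Q^{-}$, which is again Borel, still intertwines $s,r$ with $f$, and satisfies $F_{Q}(\gamma^{-1})=F_{Q}(\gamma)^{-1}$ on all of $Q$.

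Next I would extend $F_{Q}$ along reduced words: set $F(x)=f(x)$ for $x\in G^{0}$, and for $\gamma\in G\setminus G^{0}$ with unique reduced word $\gamma=\gamma_{\ell}\cdots\gamma_{1}$ (the $\gamma_{i}\in Q$) put $F(\gamma)=F_{Q}(\gamma_{\ell})\cdots F_{Q}(\gamma_{1})$; this product is legitimate in $H$ because $F_{Q}$ intertwines $s,r$ with $f$ and $f$ is constant on $E_{G}$-related objects, so every composable string in $Q$ maps to a composable string in $H$. This $F$ is Borel: for each $\ell$ the set of reduced words of length $\ell$ is Borel in $G^{\ell}$, the product map on it is injective by uniqueness of reduced words, hence a Borel isomorphism onto its image, which is Borel by \cite[Theorem 15.1]{kechris_classical_1995}; since $G$ is the disjoint union of $G^{0}$ and these images, the reduced-word decomposition is Borel, and on each piece $F$ is a finite composition of Borel maps into $H$. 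By construction $F|_{G^{0}}=f$, and $F(s(\gamma))=s(F(\gamma))$, $F(r(\gamma))=r(F(\gamma))$, $F(\gamma^{-1})=F(\gamma)^{-1}$ follow from the reduced-word formula together with $F_{Q}(\gamma^{-1})=F_{Q}(\gamma)^{-1}$ on $Q$ and the fact that reversing a reduced word and inverting each letter again yields a reduced word.

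It remains to verify $F(\gamma\rho)=F(\gamma)F(\rho)$ whenever $s(\gamma)=r(\rho)$, and I would prove this by induction on the sum of the reduced-word lengths (objects having length $0$; the cases where $\gamma$ or $\rho$ is an object are immediate). Writing $\gamma=\gamma_{\ell}\cdots\gamma_{1}$ and $\rho=\rho_{m}\cdots\rho_{1}$ reduced, the concatenation $\gamma_{\ell}\cdots\gamma_{1}\rho_{m}\cdots\rho_{1}$ is composable, and fails to be reduced only when $\gamma_{1}=\rho_{m}^{-1}$. If it is reduced, then by uniqueness it is the reduced word of $\gamma\rho$, so $F(\gamma\rho)=F(\gamma)F(\rho)$ by definition. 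If $\gamma_{1}=\rho_{m}^{-1}$, then $\gamma_{1}\rho_{m}$ is an identity arrow of $G$, and correspondingly $F_{Q}(\gamma_{1})F_{Q}(\rho_{m})=F_{Q}(\rho_{m})^{-1}F_{Q}(\rho_{m})$ is an identity arrow of $H$; deleting this cancelling pair rewrites $\gamma\rho$ as a product $\gamma'\rho'$ of reduced words (with $\gamma'$ or $\rho'$ degenerating to an object when $\ell=1$ or $m=1$) of strictly smaller total length, and likewise $F(\gamma)F(\rho)=F(\gamma')F(\rho')$, so the inductive hypothesis applies. Thus $F$ is a Borel functor extending $f$, and $G$ has the lifting property. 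The only genuinely delicate ingredient is the Borel choice of $F_{Q}$ on $Q^{+}$, which is exactly where the hypothesis that $E_{H}$ is Borel is used (through Theorem~\ref{Theorem: Borel orbit equivalence relations}); everything else is routine bookkeeping with reduced words.
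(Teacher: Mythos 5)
Your proposal is correct and follows essentially the same route as the paper: split the treeing as $Q=Q^{+}\sqcup Q^{-}$, define $F$ on $Q^{+}$ by applying a Borel selector to the nonempty closed sets $f(r(\gamma))Hf(s(\gamma))$ (Borel because $E_{H}$ is Borel, via Theorem~\ref{Theorem: Borel orbit equivalence relations}), extend by inversion to $Q^{-}$, and then multiplicatively along reduced words. The paper leaves the functoriality and Borelness of the reduced-word extension as ``immediate to check''; your induction on total word length and the Lusin--Souslin argument for Borelness supply exactly the omitted verification.
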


\begin{proof}
Suppose that $G$ is a treeable countable Borel groupoid with no elements of
order $2$, $H$ is a Polish groupoid such that $E_{H}$ is Borel, and$\
f:G^{0}\rightarrow H^{0}$ is a Borel function such that $f(x)E_{G}f(x^{%
\prime })$ whenever $xE_{G}x^{\prime }$. Suppose that $Q$ is a treeing for $%
G $. Write $Q=Q^{+}\cup Q^{-}$ where $Q^{+}=\left( Q^{-}\right) ^{-1}$ and $%
Q^{+}$ and $Q^{-}$ are disjoint. Since $E_{H}$ is Borel, then map $%
(x,y)\mapsto xHy$ from $E_{G}$ to $F(H)\backslash \left\{ \varnothing
\right\} $ is Borel by Theorem~\ref{Theorem: Borel orbit equivalence
relations}. Fix a Borel map $\sigma :F(H)\backslash \left\{ \varnothing
\right\} \rightarrow H$ such that $\sigma (A)\in A$ for every $A\in
F(H)\backslash \left\{ \varnothing \right\} $. Define

\begin{itemize}
\item $F(x)=f(x)$ for $x\in G^{0}$,

\item $F(\gamma )=\sigma \left( f(r(\gamma ))Hf(s(\gamma ))\right) $ for $%
\gamma \in Q^{+}$,

\item $F(\gamma )=F\left( \gamma ^{-1}\right) ^{-1}$ for $\gamma \in \left(
Q^{+}\right) ^{-1}$, and

\item $F(\gamma _{n}\cdots \gamma _{0})=F(\gamma _{n})\cdots F(\gamma _{0})$
if $\gamma _{n}\cdots \gamma _{0}\in G\backslash G^{0}$ is a reduced word.
\end{itemize}

It is immediate to check that $F$ is a Borel functor such that $F_{|G^{0}}=f$%
.
\end{proof}

\begin{proposition}
\label{Proposition: extending lifting}If $G$ is a Polish groupoid and $%
A\subset G^{0}$ is a Borel complete section for $E_{G}$ such that $G_{|A}$
has the lifting property and there is a Borel map $\phi :G^{0}\rightarrow G$
such that $s\left( \phi (x)\right) =x$ and $r\left( \phi (x)\right) \in A$
for every $x\in G^{0}$, then $G$ has the lifting property.
\end{proposition}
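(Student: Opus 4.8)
The plan is to transport the lifting problem from $G$ to $G_{|A}$ along $\phi$: every arrow of $G$ can be written using arrows of $G_{|A}$ together with the arrows $\phi(x)$, so a Borel functor on $G_{|A}$ plus a Borel choice of ``connecting'' arrows in the target will assemble into a Borel functor on $G$. Concretely, fix a Polish groupoid $H$ with $E_H$ Borel and a Borel $f\colon G^0\to H^0$ with $f(x)E_Hf(x')$ whenever $xE_Gx'$. First I would replace $\phi$ by the (still Borel) map that fixes every $a\in A$, sending it to its identity arrow, and agrees with $\phi$ off $A$; this keeps $s(\phi(x))=x$ and $r(\phi(x))\in A$ while arranging $\phi(a)=a$ for $a\in A$. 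The restriction $f|_A$ satisfies the hypotheses of Definition~\ref{Definition: lifting property} for $G_{|A}$ (if $aE_{G_{|A}}a'$ then $aE_Ga'$, hence $f(a)E_Hf(a')$), so by the lifting property of $G_{|A}$ there is a Borel functor $F_A\colon G_{|A}\to H$ with $F_A|_A=f|_A$.

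Next I would build a Borel map $g\colon G^0\to H$ such that $g(x)$ is an arrow of $H$ with source $f(x)$ and range $f(r(\phi(x)))$, and such that $g(a)$ is the identity arrow at $f(a)$ for $a\in A$. Applying Theorem~\ref{Theorem: Borel orbit equivalence relations} to the standard action of $H$ on $H^0$ — legitimate because $E_H$ is Borel — one gets that $(u,v)\mapsto H_{u,v}$ is a Borel map from $H^0\times H^0$ to $F(H)$, where for the standard action $H_{u,v}$ is the set of arrows of $H$ from $u$ to $v$. Composing with a Borel map $\sigma\colon F(H)\setminus\{\varnothing\}\to H$ with $\sigma(C)\in C$, which exists by the argument of Subsection~\ref{Subsection: Borel selector cosets} (the Effros Borel structure on $F(H)$ being standard by Appendix~\ref{Appendix}), and using that $f(x)E_Hf(r(\phi(x)))$ always holds, I would put $g(x)=\sigma\big(H_{f(x),\,f(r(\phi(x)))}\big)$ for $x\notin A$ and $g(a)=f(a)$ for $a\in A$; this is Borel since $A$ and $\phi$ are.

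Finally I would set $F(x)=f(x)$ for $x\in G^0$ and, for $\gamma\in G$ with $x=s(\gamma)$ and $y=r(\gamma)$,
\[
F(\gamma)=g(y)^{-1}\,F_A\big(\phi(y)\,\gamma\,\phi(x)^{-1}\big)\,g(x).
\]
This is well defined: $\phi(y)\gamma\phi(x)^{-1}$ has source $r(\phi(x))\in A$ and range $r(\phi(y))\in A$, so it lies in $G_{|A}$; and the three factors are composable in $H$, since $g(x)$ goes from $f(x)$ to $f(r\phi(x))$, the middle factor goes from $f(r\phi(x))$ to $f(r\phi(y))$ because $F_A$ extends $f|_A$, and $g(y)^{-1}$ goes from $f(r\phi(y))$ to $f(y)$; thus $F(\gamma)$ goes from $f(x)$ to $f(y)$, so $F$ preserves source and range. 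It is Borel because $\gamma\mapsto\phi(r\gamma)\,\gamma\,\phi(s\gamma)^{-1}$ is Borel into the Borel subgroupoid $G_{|A}$, and $F_A$, $g$, and composition and inversion in $H$ are Borel. On an identity arrow $x$ one gets $F(x)=g(x)^{-1}g(x)=f(x)$, so $F$ extends $f$ (and in fact $\phi(a)=a$ and $g(a)=f(a)$ force $F$ to restrict to $F_A$ on $G_{|A}$). Functoriality of $F$ then reduces to that of $F_A$: $F(\gamma^{-1})=F(\gamma)^{-1}$ follows from $F_A$ preserving inverses, and for composable $(\rho_0,\rho_1)$ with $x=s(\rho_1)$, $y=r(\rho_1)=s(\rho_0)$, $z=r(\rho_0)$, in the product $F(\rho_0)F(\rho_1)$ the middle occurrence of $g(y)g(y)^{-1}$ collapses to an identity, after which $F_A$ preserving composition and the cancellation $\phi(y)^{-1}\phi(y)=$ (identity at $y$) give $F_A(\phi(z)\rho_0\phi(y)^{-1})F_A(\phi(y)\rho_1\phi(x)^{-1})=F_A(\phi(z)\rho_0\rho_1\phi(x)^{-1})$, hence $F(\rho_0)F(\rho_1)=F(\rho_0\rho_1)$.

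The only step that is not pure bookkeeping is the construction of $g$, and it is exactly there that the hypothesis ``$E_H$ is Borel'' enters: through Theorem~\ref{Theorem: Borel orbit equivalence relations} it supplies the Borel choice of an arrow $f(x)\to f(r(\phi(x)))$ in $H$ without which the conjugation formula for $F(\gamma)$ could not be made Borel. Everything else is the routine verification that that formula defines a Borel functor extending $f$.
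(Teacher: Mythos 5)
Your proposal is correct and follows essentially the same route as the paper's proof: normalize $\phi$ to fix $A$, lift $f|_A$ to a Borel functor on $G_{|A}$, choose via a Borel selector on $F(H)$ an arrow $h(x)\in f(r\phi(x))Hf(x)$ (using that $(u,v)\mapsto H_{u,v}$ is Borel since $E_H$ is Borel), and conjugate: $F(\rho)=h(y)^{-1}F_A(\phi(y)\rho\phi(x)^{-1})h(x)$. The paper states exactly this formula and leaves the functoriality and Borelness checks implicit, which you have carried out correctly.
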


\begin{proof}
Without loss of generality we can assume that $\phi (x)=x$ for $x\in A$.
Define $y(x)=r\left( \phi (x)\right) $ for $x\in G^{0}$. Suppose that $%
f:G^{0}\rightarrow H^{0}$ is a Borel function such that $f(x)E_{H}f(x^{%
\prime })$ whenever $xE_{G}x$. Since $G_{|A}$ ha the lifting property there
is a Borel functor $F:G_{|A}\rightarrow H$ such that $F_{|A}=f_{|A}$. Define 
$h(x)=\sigma \left( f\left( y(x)\right) Hf(x)\right) $. Define now for $\rho
\in G$ such that $s(\rho )=x$ and $r(\rho )=y$%
\begin{equation*}
F(\rho )=h(y)^{-1}F\left( \phi (y)\rho \phi (x)^{-1}\right) h(x)
\end{equation*}%
and observe that $F$ is a Borel functor such that $F_{|G^{0}}=f$.
\end{proof}

\begin{theorem}
\label{Theorem: essentially countable}Suppose that $G$ is a Polish groupoid.
If $E_{G}$ is essentially treeable, then $E_{G}$ has the lifting property.
\end{theorem}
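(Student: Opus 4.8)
The plan is to reduce to the case of a treeable countable Borel groupoid by means of a Borel complete section, and then to apply Propositions~\ref{Proposition: treeable implies lifting} and~\ref{Proposition: extending lifting}. Since $E_G$ is essentially treeable, there is a Borel reduction $f\colon G^{0}\to Y$ of $E_G$ to a treeable countable Borel equivalence relation $F$ on a standard Borel space $Y$. In particular $E_G=(f\times f)^{-1}(F)$ is a \emph{Borel} equivalence relation, and it is essentially countable; moreover, by Proposition~\ref{Proposition: idealistic} applied to the standard action of $G$ on $G^{0}$, it is idealistic. The first step is to use the structure theory of idealistic essentially countable Borel equivalence relations to produce a Borel set $A\subseteq G^{0}$ meeting every $E_G$-class and such that the restriction $E_G|_{A}$ is a countable Borel equivalence relation: idealisticness is precisely what upgrades ``essentially countable'' to the existence of such a countable Borel complete section.

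The second step is to check that $E_G|_{A}$ satisfies the hypotheses of Proposition~\ref{Proposition: treeable implies lifting}. The inclusion $A\hookrightarrow G^{0}$ is a Borel reduction of $E_G|_{A}$ to $E_G$, so $E_G|_{A}$ is Borel reducible to the treeable countable Borel equivalence relation $F$; since treeability is downward closed under Borel reducibility among countable Borel equivalence relations, $E_G|_{A}$ is treeable. Being principal, $E_G|_{A}$ has no element of order $2$, because a composition $\gamma\gamma$ in a principal groupoid is defined only when $s(\gamma)=r(\gamma)$, i.e.\ only when $\gamma$ is an identity. Hence Proposition~\ref{Proposition: treeable implies lifting} applies and $E_G|_{A}$ has the lifting property.

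The third step transfers the lifting property from $A$ to $G^{0}$ through Proposition~\ref{Proposition: extending lifting}; for this I need a Borel map $\phi\colon G^{0}\to E_G$ with $s(\phi(x))=x$ and $r(\phi(x))\in A$. The set $\{(x,a)\in G^{0}\times A:x\,E_G\,a\}$ is Borel, and for fixed $x$ its section equals $[x]_{E_G}\cap A$, which is nonempty because $A$ is a complete section and countable because it is a single class of the countable Borel equivalence relation $E_G|_{A}$. By the Lusin--Novikov uniformization theorem there is a Borel map $x\mapsto a(x)\in A$ with $a(x)\,E_G\,x$, and letting $\phi(x)$ be the unique arrow of $E_G$ with source $x$ and range $a(x)$ yields the desired map. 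Applying Proposition~\ref{Proposition: extending lifting} with $E_G$ in place of $G$ (its proof uses only the Borel groupoid structure of the source, so the fact that $E_G$ need not carry a Polish groupoid topology is immaterial), with complete section $A$ and section map $\phi$, we conclude that $E_G$ has the lifting property.

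The main obstacle, and the only genuinely nontrivial descriptive-set-theoretic input, is the first step: passing from the idealistic essentially countable Borel equivalence relation $E_G$ to a Borel complete section on which it is countable. Essential countability by itself is not enough for this, since a Borel surjection need not admit a Borel section; it is idealisticness (Proposition~\ref{Proposition: idealistic}) that makes the argument go through. The remaining ingredients --- downward closure of treeability, the observation that principal groupoids have no order-$2$ elements, Lusin--Novikov, and Propositions~\ref{Proposition: treeable implies lifting} and~\ref{Proposition: extending lifting} --- are routine.
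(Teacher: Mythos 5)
Your argument follows essentially the same route as the paper: produce a countable Borel complete section $A$ for $E_G$, observe that $(E_G)_{|A}$ is treeable and hence has the lifting property by Proposition~\ref{Proposition: treeable implies lifting}, and then propagate the lifting property back to $G^0$ via a Borel selector map and Proposition~\ref{Proposition: extending lifting}. The only point needing correction is the step you yourself single out as the crux. Bare idealisticness is not the hypothesis under which ``essentially countable'' yields a countable Borel complete section; the result you need is Kechris's theorem on countable sections (Theorem~1.5 of \cite{kechris_countable_1992}), whose hypothesis is a Borel \emph{ccc} assignment of $\sigma$-ideals, a strengthening of idealisticness. The argument still goes through because the specific ideals furnished by Proposition~\ref{Proposition: idealistic} --- $S\in I_{[x]}$ iff $\{\gamma\in xG: s(\gamma)\in S\}$ is meager in $xG$ --- are ccc, since the meager ideal on a Polish space is ccc; this is exactly how the paper proceeds. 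So you should replace the appeal to ``the structure theory of idealistic essentially countable Borel equivalence relations'' with a verification that the meager-ideal assignment is Borel ccc and a citation of \cite[Theorem 1.5]{kechris_countable_1992}. The remaining details you supply (downward closure of treeability, the absence of order-$2$ elements in a principal groupoid, Lusin--Novikov for the map $\phi$) are correct and in fact make explicit some points the paper leaves implicit.
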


\begin{proof}
Observe that the assignment $\left[ x\right] _{E_{G}}\mapsto I_{\left[ x%
\right] _{E_{G}}}$,where 
\begin{equation*}
A\in I_{\left[ x\right] _{E_{G}}}\Leftrightarrow \left\{ \gamma \in
xG:s(\gamma )\in A\right\} \text{ is meager}
\end{equation*}%
is a Borel ccc assignment of $\sigma $-ideals in the sense of~\cite[page 285]%
{kechris_countable_1992}; see Subsection~\ref{Subsection: Vaught transform}.
It follows from~\cite[Theorem 1.5]{kechris_countable_1992} together with the
fact that $E_{G}$ is essentially treeable that there is a countable Borel
subset $A$ of $G^{0}$ meeting every orbit in a countable nonempty set. Thus $%
\left( E_{G}\right) _{|A}$ is treeable equivalence relation. In particular
by Proposition~\ref{Proposition: treeable implies lifting} the equivalence
relation $\left( E_{G}\right) _{|A}$ has the lifting property. Therefore $%
G_{|A}$ has the lifting property. Since $\left( E_{G}\right) _{|A}$ is
countable one can find a Borel map $p:X\rightarrow A$ such that $\left(
x,p(x)\right) \in E_{G}$ for every $x\in X$ and $p(x)=x$ for $x\in A$. It
follows from Proposition~\ref{Proposition: extending lifting} that $E_{G}$
has the lifting property.
\end{proof}

\begin{corollary}
Suppose that $G$ and $H$ are Polish groupoids. If $E_{G}$ is essentially
treeable, and $E_{H}$ is Borel, then $G\leq _{B}H$ if and only if $E_{G}\leq
E_{H}$.
\end{corollary}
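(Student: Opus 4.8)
The plan is to derive this corollary directly from Theorem~\ref{Theorem: essentially countable} together with the definition of Borel reduction for groupoids. The statement has two directions, and the nontrivial one is that $E_{G}\leq E_{H}$ implies $G\leq_{B}H$; the converse is immediate from the remark following Definition~\ref{Definition: Borel reduction}, since the object-map of a Borel reduction of groupoids is itself a Borel reduction of the orbit equivalence relations.

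So suppose $E_{G}\leq E_{H}$, witnessed by a Borel map $f\colon G^{0}\to H^{0}$ with $f(x)E_{H}f(x')$ if and only if $xE_{G}x'$. First I would invoke Theorem~\ref{Theorem: essentially countable}: since $E_{G}$ is essentially treeable, it has the lifting property. Next I would check that the hypotheses of the lifting property are met with the given data. The groupoid $H$ is a Polish groupoid whose orbit equivalence relation $E_{H}$ is Borel by assumption, and $f\colon G^{0}\to H^{0}$ is Borel and satisfies $f(x)E_{H}f(x')$ whenever $xE_{G}x'$ (indeed, in both directions, but the lifting property only asks for this one implication). Therefore, by Remark~\ref{Remark: lifting-lifting} and the lifting property of $E_{G}$, the groupoid $G$ has the lifting property, and so there is a Borel functor $F\colon G\to H$ extending $f$.

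It then remains to observe that $F$ is in fact a Borel reduction from $G$ to $H$ in the sense of Definition~\ref{Definition: Borel reduction}, i.e.\ that $F(x)HF(y)\neq\varnothing$ implies $xGy\neq\varnothing$. But $F$ extends $f$ on objects, so $F(x)HF(y)\neq\varnothing$ means exactly $f(x)E_{H}f(y)$, which by the reduction property of $f$ forces $xE_{G}y$, i.e.\ $xGy\neq\varnothing$. Hence $F$ witnesses $G\leq_{B}H$, as required.

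The only subtle point — and thus the place I would be most careful — is that the lifting property as stated only provides a Borel functor extending $f$, without any built-in injectivity or reduction clause on arrows; one must separately verify that the reduction property of $f$ on objects is exactly what is needed to conclude $G\leq_{B}H$. This is genuinely just unwinding Definition~\ref{Definition: Borel reduction}, since that definition imposes no condition on arrows beyond the object-level reduction, so no real obstacle arises. (One also uses here that $f$ being a reduction means $f(x)E_{H}f(y)\iff xE_{G}y$, so only the $\Leftarrow$ direction feeds into the lifting property and only the $\Rightarrow$ direction is needed for the reduction clause.)
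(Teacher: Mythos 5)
Your proof is correct and is exactly the intended argument: the paper states this corollary without proof as an immediate consequence of Theorem~\ref{Theorem: essentially countable}, Remark~\ref{Remark: lifting-lifting}, and Definition~\ref{Definition: Borel reduction}, which is precisely what you unwind. Your closing observation that only the implication $xE_{G}x'\Rightarrow f(x)E_{H}f(x')$ feeds into the lifting property while the converse implication is what makes the resulting functor a reduction is the right point to flag, and you handle it correctly.
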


\begin{proposition}
Suppose that $G$ is a Polish groupoid. If $E_{G}$ is essentially countable,
then there is an invariant dense $G_{\delta }$ set $C\subset G^{0}$ such
that $\left( E_{G}\right) _{|C}$ is essentially hyperfinite.
\end{proposition}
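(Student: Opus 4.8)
The plan is to reduce $E_{G}$ to a countable Borel equivalence relation living on a Borel complete section, and then to invoke the category analogue of the Sullivan--Weiss--Wright theorem on generic hyperfiniteness. First note that, being essentially countable, $E_{G}$ is a Borel equivalence relation, and by Proposition~\ref{Proposition: idealistic} it is idealistic on $G^{0}$, the canonical Borel ccc assignment of $\sigma$-ideals being (for the standard action of $G$ on $G^{0}$) $[x]\mapsto I_{[x]}$ with $S\in I_{[x]}$ iff $\{\gamma\in Gp(x):r(\gamma)\in S\}$ is meager in $Gp(x)$. Arguing exactly as in the proof of Theorem~\ref{Theorem: essentially countable}, an application of~\cite[Theorem 1.5]{kechris_countable_1992} produces a Borel set $A\subseteq G^{0}$ meeting every $E_{G}$-orbit in a nonempty countable set, so that $(E_{G})_{|A}$ is a countable Borel equivalence relation. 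By Lusin--Novikov the set $\{(x,a):a\in A\text{ and }x\mathrel{E_{G}}a\}$ is Borel with countable $x$-sections, so there is a Borel retraction $\pi:G^{0}\rightarrow A$ with $\pi(x)\mathrel{E_{G}}x$ for all $x$ and $\pi\restriction A=\mathrm{id}_{A}$; since $E_{G}$ is then the $\pi$-pullback of $(E_{G})_{|A}$, the map $\pi$ is a Borel reduction of $E_{G}$ to $(E_{G})_{|A}$.

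Next I would realize $(E_{G})_{|A}$ as the orbit equivalence relation of a continuous action of a countable group on a Polish space --- either via the presentation of a countable Borel groupoid as an \'etale Polish groupoid from Section~\ref{Section: countable}, or via Feldman--Moore together with a Becker--Kechris refinement as in Theorem~\ref{Theorem: Becker-Kechris Borel} --- but choosing the auxiliary Polish topology on the standard Borel space $A$ \emph{compatibly with $\pi$}. Concretely, let $\mathcal{J}$ be the $\sigma$-ideal on $A$ consisting of the $S\subseteq A$ for which $\pi^{-1}[S]$ is meager in $G^{0}$; this is a proper ccc $\sigma$-ideal because $\pi^{-1}[A]=G^{0}$ is nonmeager, and the topology on $A$ should be taken so that its meager ideal is contained in $\mathcal{J}$. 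The Sullivan--Weiss--Wright theorem then yields an $(E_{G})_{|A}$-invariant Borel set $A'\subseteq A$, comeager in this topology, on which $(E_{G})_{|A}$ is hyperfinite; replacing $A'$ by the intersection of its countably many group translates of a dense $G_{\delta}$ subset, one may assume $A'$ is $(E_{G})_{|A}$-invariant and $G_{\delta}$. Since $A\setminus A'\in\mathcal{J}$, the set $C_{0}:=\pi^{-1}[A']$ is an $E_{G}$-invariant comeager Borel subset of $G^{0}$, and $\pi$ restricts to a Borel reduction of $(E_{G})_{|C_{0}}$ to the hyperfinite relation $(E_{G})_{|A'}$, so $(E_{G})_{|C_{0}}$ is essentially hyperfinite.

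Finally I would pass from the comeager invariant Borel set $C_{0}$ to an invariant \emph{dense $G_{\delta}$} set $C\subseteq C_{0}$ (keeping $(E_{G})_{|C}$ essentially hyperfinite, which is automatic as it is a restriction). Using that $\pi$ is continuous on a dense $G_{\delta}$ set $G^{0}_{\ast}\subseteq G^{0}$ by~\cite[Theorem 8.38]{kechris_classical_1995}, the set $C_{0}\cap G^{0}_{\ast}$ is already a dense $G_{\delta}$ subset of $G^{0}$; one then has to upgrade this to an invariant one, exploiting that the source and range maps of $G$ are open (so that saturations of open sets are open) and the generic ergodicity analysis of Section~\ref{Section: Glimm-Effros}. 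This last step is the main technical obstacle: what must be arranged is precisely the compatibility between the auxiliary topology on the section $A$ and the given topology on $G^{0}$ --- that the meager ideal of $G^{0}$ pulls back through $\pi$ --- together with the extraction of an invariant dense $G_{\delta}$ set from an invariant comeager Borel set, which requires genuinely weaving the Baire category of $G^{0}$ into the argument rather than black-boxing generic hyperfiniteness on the section. The remaining ingredients (Theorem~\ref{Theorem: essentially countable}, the Vaught transform machinery of Subsection~\ref{Subsection: Vaught transform}, and the Sullivan--Weiss--Wright theorem) are routine.
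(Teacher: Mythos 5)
There is a genuine gap, and it sits exactly where you locate it yourself: you never actually produce the invariant dense $G_{\delta}$ set, and the device that does this is the entire content of the paper's proof. The paper black-boxes the first two thirds of your argument: by~\cite[Theorem 6.2]{hjorth_borel_1996} there is already a comeager \emph{invariant Borel} set $C_{0}\subseteq G^{0}$ with $\left(E_{G}\right)_{|C_{0}}$ essentially hyperfinite, so the section $A$, the retraction $\pi$, and the auxiliary Polish topology on $A$ are not needed. Moreover that auxiliary topology is itself an unjustified step: you assert that one can choose a Polish topology on $A$, compatible with its Borel structure, whose meager ideal is contained in the pullback ideal $\mathcal{J}$, but you give no construction, and this is a nontrivial claim about realizing a ccc $\sigma$-ideal inside a meager ideal. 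As written, your route replaces a citable black box by an unproved one.

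The missing idea for the final step is the $\ast G$ Vaught transform applied to a $G_{\delta}$ set. Take any dense $G_{\delta}$ set $C_{1}\subseteq C_{0}$ (no invariance required) and put
\[
C=C_{1}^{\ast G}=\left\{ x\in G^{0}:\forall ^{\ast }\gamma \in Gx\text{, }r(\gamma )\in C_{1}\right\} \text{.}
\]
Then $C$ is invariant by Lemma~\ref{Lemma: basic Vaught}(1); it is $G_{\delta}$ by Lemma~\ref{Lemma: Borel class Vaught transform}(3), since $C_{1}$ is $\mathbf{\Pi}_{2}^{0}$; its complement is $\left(G^{0}\backslash C_{1}\right)^{\bigtriangleup G}$ by Lemma~\ref{Lemma: basic Vaught}(4), hence meager by Lemma~\ref{Lemma: meager transform} (equivalently by Lemma~\ref{Lemma: Kuratowski-Ulam}), so $C$ is dense; and $C\subseteq C_{0}$ because any $x\in C$ admits some $\gamma\in Gx$ with $r(\gamma)\in C_{1}\subseteq C_{0}$ and $C_{0}$ is invariant. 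Restriction to $C$ then preserves essential hyperfiniteness. Your proposed alternative for this step --- saturating open sets and appealing to the generic ergodicity analysis --- does not obviously work, since the saturation of a $G_{\delta}$ set need not be $G_{\delta}$; the $\ast$-transform is precisely the tool that forces invariance while keeping the Borel class (and comeagerness) under control, and it is the one ingredient your proposal fails to identify.
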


\begin{proof}
By~\cite[Theorem 6.2]{hjorth_borel_1996} there is a comeager and invariant
subset $C_{0}$ of $G^{0}$ such that $\left( E_{G}\right) _{|C_{0}}$ is
essentially hyperfinite. Pick a dense $G_{\delta }$ subset $C_{1}$ of $C_{0}$
and then define%
\begin{equation*}
C=\left\{ x\in X:\forall ^{\ast }\gamma \in Gx\text{, }\gamma x\in
C_{1}\right\} \text{.}
\end{equation*}%
The properties of the Vaught transform together with Lemma~\ref{Lemma:
Kuratowski-Ulam} imply that $C$ is an invariant dense $G_{\delta }$ set
contained in $C_{0}$. In particular $\left( E_{G}\right) _{|C}$ is
essentially hyperfinite.
\end{proof}

\begin{corollary}
Suppose that $G$ is a Polish groupoid such that $E_{G}$ is essentially
countable. There is an invariant dense $G_{\delta }$ subset $C$ of $G^{0}$
with the following property: For any Polish groupoid $H$ such that $%
E_{G}\leq _{B}E_{H}$ and $E_{H}$ is Borel, $G_{|C}\leq _{B}H$.
\end{corollary}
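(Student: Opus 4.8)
The plan is to extract the required set $C$ from the Proposition immediately preceding this statement and then feed it into the earlier Corollary asserting that, when $E_{G}$ is essentially treeable and $E_{H}$ is Borel, $G\leq _{B}H$ is equivalent to $E_{G}\leq E_{H}$.

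First I would apply that preceding Proposition to obtain an invariant dense $G_{\delta }$ subset $C$ of $G^{0}$ with $\left( E_{G}\right) _{|C}$ essentially hyperfinite. Next I would check that $G_{|C}$ is again a Polish groupoid. Since $C$ is $E_{G}$-invariant one has $G_{|C}=s^{-1}\left[ C\right] =r^{-1}\left[ C\right] $, a $G_{\delta }$ subset of $G$ because $s$ is continuous and $C$ is $G_{\delta }$; by the observation in Subsection~\ref{Subsection: locally Polish spaces} that a $G_{\delta }$ subspace of a locally Polish space is locally Polish, $G_{|C}$ carries a locally Polish topology, its space of objects is the Polish space $C$, and for $x\in C$ the fibers $\left( G_{|C}\right) x$ and $x\left( G_{|C}\right) $ coincide with $Gx$ and $xG$, hence are Polish. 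Continuity and openness of composition and inversion are inherited, using that the restriction of an open map to the preimage of a subspace is open onto that subspace. Finally, invariance of $C$ gives $E_{G_{|C}}=\left( E_{G}\right) _{|C}=E_{G}\cap \left( C\times C\right) $.

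Now I would observe that an essentially hyperfinite equivalence relation is in particular essentially treeable, since every hyperfinite countable Borel equivalence relation is treeable (for instance $E_{0}$ is treeable). Thus $E_{G_{|C}}$ is essentially treeable, so the earlier Corollary applies to the Polish groupoid $G_{|C}$: for every Polish groupoid $H$ with $E_{H}$ Borel, $G_{|C}\leq _{B}H$ if and only if $E_{G_{|C}}\leq E_{H}$. To conclude, let $H$ be a Polish groupoid with $E_{H}$ Borel and $E_{G}\leq _{B}E_{H}$. The inclusion $C\hookrightarrow G^{0}$ is a Borel reduction from $\left( E_{G}\right) _{|C}$ to $E_{G}$, so $E_{G_{|C}}=\left( E_{G}\right) _{|C}\leq E_{G}\leq E_{H}$, and hence $G_{|C}\leq _{B}H$, as desired.

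I do not anticipate a genuine obstacle here: the corollary is a bookkeeping combination of the preceding Proposition, Theorem~\ref{Theorem: essentially countable}, and its Corollary. The only points requiring care are the verification that the restriction of a Polish groupoid to an invariant $G_{\delta }$ subset of its object space is again a Polish groupoid, and the routine passage from ``essentially hyperfinite'' to ``essentially treeable''.
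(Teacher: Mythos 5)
Your proposal is correct and is exactly the intended argument: take $C$ from the preceding Proposition, note that essentially hyperfinite implies essentially treeable, and apply Theorem~\ref{Theorem: essentially countable} (via its Corollary) to $G_{|C}$, using that $E_{G_{|C}}=(E_G)_{|C}\leq E_G\leq E_H$. The paper leaves this corollary without an explicit proof, and your verification that $G_{|C}$ is again a Polish groupoid is a reasonable (and correct) piece of due diligence that the paper takes for granted.
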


\subsection{The cocycle property}

\begin{definition}
\label{Definition: cocycle property}An analytic groupoid $G$ has the \emph{%
cocycle property} if there is a Borel functor $F:E_{G}\rightarrow G$ such
that $F(x,x)=x$ for every $x\in G^{0}$.
\end{definition}

It is immediate to verify that a Polish group action $G\curvearrowright X$
has the cocycle property as defined in~\cite{hjorth_borel_1996} if and only
if the action groupoid $G\ltimes X$ has the cocycle property as in
Definition~\ref{Definition: cocycle property}. The proof of the following
proposition is essentially the same of the proof of the implication (ii)$%
\Rightarrow $(iii) in~\cite[Theorem 3.7]{jackson_countable_2002}, and it is
presented for convenience of the reader.

\begin{proposition}
\label{Proposition: treeable free}Suppose that $G$ is a countable Borel
groupoid, and $X$ a Borel $G$-space. If $G\ltimes X$ has the cocycle
property, then there is a free Borel $G$-space $Y$ such that $E_{G}^{Y}\sim
_{B}E_{G}^{X}$. Moreover if $G$ is treeable then $E_{G}^{X}$ is treeable.
\end{proposition}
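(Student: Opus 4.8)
The plan is to follow the scheme of the implication (ii)$\Rightarrow$(iii) in \cite[Theorem 3.7]{jackson_countable_2002}, producing $Y$ as the quotient of the (always free) left translation action of $G$ on $G\ltimes X$ by a suitable equivalence coming from the cocycle. First I would unwind the hypothesis: since the objects of $G\ltimes X$ are identified with $X$ and $E_{G\ltimes X}$ corresponds to $E_{G}^{X}$ (Proposition~\ref{Proposition: action groupoid}), the cocycle property for $G\ltimes X$ yields a Borel map $c\colon E_{G}^{X}\to G$ with $s(c(x,y))=p(x)$, $r(c(x,y))=p(y)$, $c(x,y)x=y$, $c(x,x)=p(x)$, $c(y,x)=c(x,y)^{-1}$ and $c(y,z)c(x,y)=c(x,z)$. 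On the standard Borel space $G\ltimes X=\{(\gamma,x):s(\gamma)=p(x)\}$ one has the manifestly free left translation action $\rho\cdot(\gamma,x)=(\rho\gamma,x)$ (with anchor $(\gamma,x)\mapsto r(\gamma)$), and one defines $(\gamma,x)\approx(\gamma',x')$ iff $xE_{G}^{X}x'$ and $\gamma'=\gamma c(x,x')^{-1}$. Using the cocycle identities, $\approx$ is a Borel equivalence relation invariant under the translation action.

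The crucial observation is that the isotropy bundle $Y:=\{(\delta,w)\in G\ltimes X:\delta w=w\}$ is a Borel transversal for $\approx$. Indeed, an element $(\gamma c(x,x')^{-1},x')$ of the $\approx$-class of $(\gamma,x)$ satisfies $(\gamma c(x,x')^{-1})x'=\gamma(c(x',x)x')=\gamma x$, so it lies in $Y$ exactly when $x'=\gamma x$, which pins down a unique member of each class. Hence $Y$ is standard Borel; the translation action descends to a Borel $G$-action $\rho\star(\delta,w)=(\rho\delta\,c(w,\rho w)^{-1},\rho w)$, which is free because $\rho\star(\delta,w)=(\delta,w)$ forces $\rho\delta=\delta$, hence $\rho=p(w)$. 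Computing the $\approx$-saturation of a translation orbit gives $(\delta,w)E_{G}^{Y}(\delta',w')\iff wE_{G}^{X}w'$, so $x\mapsto(p(x),x)$ and $(\delta,w)\mapsto w$ are mutually inverse Borel reductions up to these classes; this gives $E_{G}^{Y}\sim_{B}E_{G}^{X}$, and the first map is injective.

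For the ``moreover'', let $Q$ be a treeing of $G$. Since $Y$ is a free $G$-space, for each $y\in Y$ the orbit map $\gamma\mapsto\gamma\star y$ is a bijection from $Gp_{Y}(y)$ onto $[y]_{E_{G}^{Y}}$, so $Q_{Y}:=\{(y,\gamma\star y):\gamma\in Q,\ s(\gamma)=p_{Y}(y)\}$ transports $Q$ onto $E_{G}^{Y}$: it is Borel (an injective Borel image, by Lusin--Suslin), symmetric, generates $E_{G}^{Y}$, and is acyclic because any reduced $Q_{Y}$-path corresponds, by freeness, to a reduced word in $Q$ and therefore cannot return to its starting point. Thus $E_{G}^{Y}$ is treeable. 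Since $x\mapsto(p(x),x)$ is an injective Borel reduction, $E_{G}^{X}$ is Borel isomorphic to the restriction of $E_{G}^{Y}$ to a Borel subset of $Y$, and treeability of countable Borel equivalence relations is preserved under Borel restriction; hence $E_{G}^{X}$ is treeable. (Here $E_{G}^{X}$ is a countable Borel equivalence relation by Corollary~\ref{Corollary: Borel orbits} and Proposition~\ref{Proposition: countable-to-one}.)

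The main obstacle is the standardness of $Y$, that is, the smoothness of $\approx$; everything hinges on the observation that the cocycle forces each $\approx$-class to meet the Borel isotropy bundle in exactly one point. Once this is in hand, the verification that $\star$ is a well-defined free Borel $G$-action, together with the routine checks of the cocycle identities and of the reduction properties, proceeds as in the group case of \cite{jackson_countable_2002}. A secondary point needing care is the acyclicity of $Q_{Y}$, for which freeness of the action on $Y$ is exactly what is required.
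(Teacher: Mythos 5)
Your proof is correct and follows the same overall scheme as the paper's (both modelled on the implication (ii)$\Rightarrow$(iii) of \cite[Theorem 3.7]{jackson_countable_2002}): form the cocycle-induced equivalence relation on $G\ltimes X$, realize the quotient as a standard Borel $G$-space $Y$ on which the translation action descends freely, check bireducibility, and transport the treeing. The genuine differences are in the two technical steps. For smoothness of the auxiliary relation, the paper observes that the range $H$ of the cocycle is a Borel subgroupoid and invokes the coset selector of Proposition~\ref{Proposition: coset selector} (hence the Effros-space selector machinery) to build a Borel selector, whereas you note that the isotropy bundle $\{(\delta,w):\delta w=w\}$ meets each class in exactly one point (the representative whose second coordinate is $\gamma x$), which yields an explicit Borel transversal and the explicit selector $(\gamma,x)\mapsto\bigl(\gamma c(x,\gamma x)^{-1},\gamma x\bigr)$; this is simpler and self-contained, and it also makes the action $\star$ and the two reductions $x\mapsto(p(x),x)$, $(\delta,w)\mapsto w$ completely explicit, where the paper instead uses an auxiliary Borel section $q$ of the anchor map. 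For treeability, the paper transports $Q$ orbit-by-orbit through the bijection $[\rho,y]\mapsto\rho F(y,x)$ with $Gp(x)$, while you place the graph $Q_Y=\{(y,\gamma\star y):\gamma\in Q\}$ directly on the free $G$-space and get acyclicity from freeness together with the reduced-word property of the treeing; this is in effect a direct, elementary proof of Corollary~\ref{Corollary: free action treeable is treeable} in the countable setting, bypassing the bundle-of-structures argument. Your last step (treeability of $E_{G}^{X}$ via the injective reduction into $E_{G}^{Y}$ and closure of treeability under restriction) uses, if anything, a weaker closure property than the paper's appeal to invariance of treeability under Borel bireducibility. All the computations you outline (invariance of $\approx$ under translation, well-definedness and freeness of $\star$, the description of $E_{G}^{Y}$, Borelness of $Q_Y$ via Lusin--Suslin) check out.
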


\begin{proof}
Since $G\ltimes X$ has the cocycle property there is a Borel functor%
\begin{equation*}
F:E_{G}^{X}\rightarrow G
\end{equation*}%
such that $s\left( F(x,y)\right) =p(y\mathbb{)}$ and $F(x,y)y=x$. Consider
the equivalence relation $\sim $ on $G\ltimes X$ defined by $(\gamma ,x)\sim
(\rho ,y)$ iff $(x,y)\in E_{G}^{X}$ and $\gamma F(x,y)=\rho $. Clearly $\sim 
$ is Borel. We now show that it has a Borel selector. Observe that the range 
$H$ of $F$ is a Borel subgroupoid of $G$ (since $F$ is countable to one). By
Proposition~\ref{Proposition: coset selector} there is a Borel selector $%
t:G\rightarrow G$ for the equivalence relation $\gamma \sim _{H}\gamma
^{\prime }$ iff $\gamma H=\gamma ^{\prime }H$. Observe that if $(\gamma
,x)\sim (\rho ,y)$ then $\gamma H=\rho H$ and hence $t(\gamma )=t(\rho )$.
Moreover there is a unique element $x_{0}$ of $X$ such that $\left( t(\gamma
),x_{0}\right) \sim (\gamma ,x)$. Define $S(\gamma ,x)=\left( t(\gamma
),x_{0}\right) $ and observe that $S$ is a Borel selector for the
equivalence relation $\sim $. Define $Y$ to be the quotient of $G\ltimes X$
by $\sim $. Define now the Borel action of $G$ on $Y$ by $p\left[ \gamma ,x%
\right] =r(\gamma )$ and $\rho \left[ \gamma ,x\right] =\left[ \rho \gamma ,x%
\right] $ for $\rho \in Gr(\gamma )$. It is easy to verify that such an
action is free, and $\left[ \gamma ,x\right] E_{G}^{Y}\left[ \rho ,y\right] $
iff $xE_{G}^{X}y$. Let us now observe that $E_{G}^{X}\sim _{B}E_{G}^{Y}$. If 
$q:X\rightarrow G$ is a Borel map such that $s\left( q(x)\right) =p(x)$ for
every $x\in X$, then the map $x\mapsto \left[ q(x),x\right] $ is a Borel
reduction from $E_{G}^{X}$ to $E_{G}^{Y}$. Conversely the map $\left[ \gamma
,x\right] \mapsto x^{\ast }$ where $\left[ t(\gamma ),x^{\ast }\right]
=S(\gamma ,x)$ is a Borel reduction from $E_{G}^{Y}$ to $E_{G}^{X}$. Suppose
finally that $G$ is treeable with treeing $Q$. We want to show that $%
E_{G}^{X}$ is treeable. Since $E_{G}^{X}\sim _{B}E_{G}^{Y}$, it is enough to
show that $E_{G}^{Y}$ is treeable. Fix an equivalence class $\left[ \left[
\gamma ,x\right] \right] _{F}$ of $E_{G}^{Y}$. Observe that the map from $%
\left[ \left[ \gamma ,x\right] \right] _{E_{G}^{Y}}$ to $Gp(x)$ defined by $%
\left[ \rho ,y\right] \mapsto \rho F(y,x)$ is bijective. One can then
consider the treeing 
\begin{equation*}
\left\{ \left[ \rho ,y\right] \in Y:\rho F(y,x)\in Q\right\}
\end{equation*}%
for $E_{G}^{Y}$.
\end{proof}

Lemma~\ref{Lemma: extending cocycle} can be proved similarly as Proposition~%
\ref{Proposition: extending lifting}.

\begin{lemma}
\label{Lemma: extending cocycle}Suppose that $G$ is a countable Borel
groupoid action, and $A\subset G^{0}$ is a Borel complete section for $E_{G}$%
. If there is a Borel function $\psi :\left( E_{G}\right) _{|A}\rightarrow G$
such that $s\left( \psi \left( x,y\right) \right) =y$ and $r\left( \psi
\left( x,y\right) \right) =x$, then $G$ has the cocycle property.
\end{lemma}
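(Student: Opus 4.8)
The plan is to imitate the proof of Proposition~\ref{Proposition: extending lifting}, the one genuinely new ingredient being the construction of a Borel ``return map'' to $A$, which in the lifting setting was supplied as a hypothesis but here must be extracted from the assumption that $A$ is a complete section of a \emph{countable} Borel groupoid. First I would produce a Borel map $\phi\colon G^{0}\rightarrow G$ with $s(\phi(x))=x$ and $r(\phi(x))\in A$ for every $x$, normalized so that $\phi(x)=x$ for $x\in A$. Indeed $Z=\{\gamma\in G:r(\gamma)\in A\}$ is Borel, and the restriction of the source map $s\colon Z\rightarrow G^{0}$ is countable-to-one (each fibre $Gx$ is countable) and onto (since $A$ meets every orbit); Lemma~\ref{Lemma: countable-to-one} together with Lusin--Novikov uniformization~\cite[Theorem 18.10]{kechris_classical_1995} then yields a Borel section $\phi$, which I modify on the Borel set $A$ to be the identity there. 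Writing $\sigma(x)=r(\phi(x))\in A$, the point $\sigma(x)$ is an $A$-representative of the orbit of $x$ and $\phi(x)$ is a chosen arrow from $x$ to $\sigma(x)$.

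With $\phi,\sigma,\psi$ in hand I would define, for $(x,y)\in E_{G}$,
\[
F(x,y)=\phi(x)^{-1}\,\psi(\sigma(x),\sigma(y))\,\phi(y).
\]
This composite is well formed: $\phi(y)$ runs from $y$ to $\sigma(y)$, the arrow $\psi(\sigma(x),\sigma(y))$ runs from $\sigma(y)$ to $\sigma(x)$ by the source/range conditions on $\psi$, and $\phi(x)^{-1}$ runs from $\sigma(x)$ to $x$; hence $s(F(x,y))=y$ and $r(F(x,y))=x$. The map $F$ is Borel since $\phi,\sigma,\psi$ are Borel and composition and inversion in $G$ are Borel. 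On the diagonal $\psi(\sigma(x),\sigma(x))$ is the identity arrow at $\sigma(x)$, whence $F(x,x)=\phi(x)^{-1}\phi(x)=x$, as the cocycle property of Definition~\ref{Definition: cocycle property} demands.

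The crux is functoriality, $F(x,z)=F(x,y)F(y,z)$. Here the inner pair telescopes, $\phi(y)\phi(y)^{-1}=r(\phi(y))=\sigma(y)$ being an identity arrow, leaving
\[
F(x,y)F(y,z)=\phi(x)^{-1}\,\psi(\sigma(x),\sigma(y))\,\psi(\sigma(y),\sigma(z))\,\phi(z),
\]
so everything reduces to collapsing $\psi(\sigma(x),\sigma(y))\psi(\sigma(y),\sigma(z))$ to $\psi(\sigma(x),\sigma(z))$. This is precisely the \emph{multiplicativity} of $\psi$ along $\left( E_{G}\right)_{|A}$, and it is the main obstacle as well as the point demanding care. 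A map satisfying only the stated source/range conditions always exists --- it is just a Lusin--Novikov selection of connecting arrows --- and in general does \emph{not} telescope, so such a selection alone cannot yield a cocycle (for groupoids with nontrivial isotropy the cocycle property can fail outright). The argument therefore forces the reading that $\psi$ is a Borel \emph{functor} on $\left( E_{G}\right)_{|A}$, equivalently that $G_{|A}$ carries the cocycle property witnessed by $\psi$, exactly as the building block in Proposition~\ref{Proposition: extending lifting} is a functor rather than an arbitrary lift. Granting this, multiplicativity of $\psi$ gives $F(x,y)F(y,z)=\phi(x)^{-1}\psi(\sigma(x),\sigma(z))\phi(z)=F(x,z)$, and compatibility with inversion follows formally from $F(x,y)F(y,x)=F(x,x)=x$. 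Thus $F\colon E_{G}\rightarrow G$ is a Borel functor fixing objects, and $G$ has the cocycle property.
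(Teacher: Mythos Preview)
Your proof is correct and follows precisely the route the paper has in mind: the paper offers no proof beyond the remark that the lemma ``can be proved similarly as Proposition~\ref{Proposition: extending lifting},'' and your argument is exactly that transposition, with the return map $\phi$ produced via Lusin--Novikov from countability rather than assumed.

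Your diagnosis of the hypothesis is also correct and worth recording. As literally written, the lemma only asks that $\psi$ select an arrow over each pair in $(E_G)_{|A}$; such a $\psi$ always exists by Lusin--Novikov for any countable Borel groupoid and any complete Borel section $A$, so on that reading the lemma would force \emph{every} countable Borel groupoid to have the cocycle property, contradicting Theorem~\ref{Theorem: non-treeable}. The intended hypothesis is that $\psi$ is a Borel \emph{functor}, i.e., that $G_{|A}$ has the cocycle property---this is the direct analogue of the hypothesis ``$G_{|A}$ has the lifting property'' in Proposition~\ref{Proposition: extending lifting}, and it is exactly how the lemma is invoked in Lemma~\ref{Lemma: reduction implies cocycle}, where the $\psi$ constructed there is $(x,y)\mapsto F(g(x),g(y))$ for a functor $F$ and hence is itself multiplicative. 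Under that reading your telescoping verification of $F(x,y)F(y,z)=F(x,z)$ and of $F(x,x)=x$ goes through verbatim.
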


\begin{lemma}
\label{Lemma: reduction implies cocycle}Suppose that $G$ is a countable
Borel groupoid. If $E_{G}\leq _{B}G$, then there is an invariant Borel
subset $Y$ of $G^{0}$ such that $G_{|Y}$ has the cocycle property and $%
\left( E_{G}\right) _{|Y}\sim _{B}E_{G}$
\end{lemma}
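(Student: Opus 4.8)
The plan is to unpack $E_G \leq_B G$ into a Borel functor $\Phi\colon E_G \to G$ whose object part $\phi := \Phi|_{G^{0}}\colon G^{0} \to G^{0}$ is, by Definition~\ref{Definition: Borel reduction}, a Borel reduction of $E_G$ to $E_G$; using also that $\Phi$ is a functor, $x E_G y$ if and only if $\phi(x) E_G \phi(y)$. The key observation I would make first is that $\phi$ is \emph{countable-to-one}: since $G$ is a countable Borel groupoid, $E_G$ is a countable Borel equivalence relation (its classes are the images under $r$ of the countable fibres $Gx$, and it is Borel because, by Proposition~\ref{Proposition: countable-to-one}, $\gamma \mapsto (r(\gamma),s(\gamma))$ is a countable union of Borel injections); and if $\phi(y) = \phi(y') = q$ then $\phi(y) E_G \phi(y')$, hence $y E_G y'$, so $\phi^{-1}(q)$ is contained in a single, and therefore countable, $E_G$-class. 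By the Lusin--Novikov uniformization theorem it then follows that $A := \phi[G^{0}]$ is Borel and that there is a Borel section $\sigma\colon A \to G^{0}$ with $\phi \circ \sigma = \mathrm{id}_A$.

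Next I would put $Y := [A]$, the $E_G$-saturation of $A$, which is $G$-invariant and Borel (again because $E_G$ is countable Borel). Then $(E_G)_{|Y} \sim_B E_G$: the inclusion $Y \hookrightarrow G^{0}$ reduces $(E_G)_{|Y}$ to $E_G$, while $\phi$, viewed as a map into $A \subseteq Y$, reduces $E_G$ to $(E_G)_{|Y}$ since $\phi$ is a reduction and $(E_G)_{|Y}$ is the restriction of $E_G$. Moreover $A$ is a Borel complete section for $(E_G)_{|Y}$ because $[A] = Y$. Now define $\psi\colon (E_G)_{|A} \to G$ by $\psi(b_1,b_2) = \Phi(\sigma(b_1),\sigma(b_2))$; this makes sense because $b_1 \mathbin{E_G} b_2$ forces $\phi(\sigma(b_1)) = b_1 \mathbin{E_G} b_2 = \phi(\sigma(b_2))$, hence $\sigma(b_1) \mathbin{E_G} \sigma(b_2)$, and $\psi$ is Borel. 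Functoriality of $\Phi$ gives $s(\psi(b_1,b_2)) = \phi(\sigma(b_2)) = b_2$ and $r(\psi(b_1,b_2)) = \phi(\sigma(b_1)) = b_1$; in particular $\psi$ takes values in $G_{|Y}$ and satisfies the source and range requirements of Lemma~\ref{Lemma: extending cocycle}. Applying that lemma to the countable Borel groupoid $G_{|Y}$, with the Borel complete section $A$ and the map $\psi$, shows that $G_{|Y}$ has the cocycle property, which finishes the proof.

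The step I expect to take the most care is the countable-to-one observation and the attendant appeal to Lusin--Novikov producing the Borel range $A$ and the Borel section $\sigma$; everything after that is bookkeeping with the groupoid notation. I would also flag explicitly that $\psi$ is \emph{not} required to be a functor --- $\sigma$ need not respect composition --- which is harmless precisely because Lemma~\ref{Lemma: extending cocycle} asks only for a Borel assignment of arrows with prescribed source and range, not for a cocycle; upgrading this to an honest cocycle on all of $G_{|Y}$ is exactly what that lemma performs.
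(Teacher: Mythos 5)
Your proposal is correct and follows essentially the same route as the paper's proof: take $A$ to be the Borel image of the object part of the reducing functor (Borel by Lusin--Novikov, since the map is countable-to-one), choose a Borel section, let $Y$ be the saturation of $A$, and feed the map $(x,y)\mapsto\Phi(\sigma(x),\sigma(y))$ into Lemma~\ref{Lemma: extending cocycle}. Your write-up is if anything more explicit than the paper's about the source/range verification and about why $\psi$ need not itself be a functor.
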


\begin{proof}
Since $E_{G}\leq _{B}G$ there is a Borel functor $F:E_{G}\rightarrow G$.
Define $A\subset G^{0}$ to be image of $X$ under $F$. Since $F_{|X}$ is
countable-to-one, $A$ is a Borel subset of $G^{0}$; see~\cite[Theorem 18.10]%
{kechris_classical_1995}. By~\cite[Exercise 18.14]{kechris_classical_1995}
there is a Borel function $g:A\rightarrow X$ such that $\left( f\circ
g\right) (y\mathbb{)}=y$ for every $y\in A$. Define now $Y$ to be the union
of the orbits of $G$ that meet $A$. Clearly $E_{G}\sim _{B}E_{G_{|Y}}$. The
map%
\begin{eqnarray*}
\left( E_{G}\right) _{|A} &\rightarrow &G \\
\left( x,y\right) &\mapsto &F\left( g(x\mathbb{)},g(y\mathbb{)}\right)
\end{eqnarray*}%
together with Lemma~\ref{Lemma: extending cocycle} imply that $G_{|Y}$ has
the cocycle property.
\end{proof}

\subsection{Free actions of treeable groupoids}

We want to show that, if $G$ is a treeable groupoid, and $G\curvearrowright
X $ is a free Borel action of $G$, then the associated orbit equivalence
relation is treeable. This will follow from a more general result about $%
\mathcal{L}$-structured equivalence relations.

Suppose that $\mathcal{L}=\left\{ R_{n}:n\in \omega \right\} $ is a
countable relational language in first order logic, where $R_{n}$ has arity $%
k_{n}\in \omega $. Suppose that $E$ is a countable Borel equivalence
relation on a standard Borel space $X$. According to~\cite[Definition 2.17]%
{jackson_countable_2002} the equivalence relation $E$ is $\mathcal{L}$%
-structured if there are Borel relations $R_{n}^{E}\subset X^{k_{n}}$ such
that, for any $k\in \omega $ and $x_{1},\ldots ,x_{k_{n}}\in X$, $%
x_{1},\ldots ,x_{k_{n}}$ belong to the same $E$-class whenever $\left(
x_{1},\ldots ,x_{k_{n}}\right) \in R_{n}^{E}$. In particular every $E$-class 
$\left[ x\right] $ is the universe of an $\mathcal{L}$-structure 
\begin{equation*}
\left\langle \left[ x\right] ,\left( \left[ x\right] ^{k_{n}}\cap
R_{n}^{E}\right) _{n\in \omega }\right\rangle \text{.}
\end{equation*}

Similarly, if $X$ is a standard Borel space, then \emph{standard Borel
bundle }$\mathcal{A}$\emph{\ of countable }$\mathcal{L}$\emph{-structures}
over $X$ is a standard Borel space $\mathcal{A}$ fibred over $X$ with
countable fibers $\left( A_{x}\right) _{x\in X}$, endowed with Borel subsets 
$R_{n}^{\mathcal{A}}\subset \mathcal{A}^{k_{n}}$ such that, for any $k\in
\omega $ and $a_{1},\ldots ,a_{k_{n}}\in \mathcal{A}$, $a_{1},\ldots
,a_{k_{n}}$ belong to the same fiber over $X$ whenever $\left( a_{1},\ldots
,a_{k_{n}}\right) \in R_{n}^{\mathcal{A}}$. Suppose that $G$ is a Polish
groupoid, and $\mathcal{A}$ is a standard Borel of $\mathcal{L}$-structures
over $G^{0}$. A Borel action $G\curvearrowright \mathcal{A}$ is a Borel map $%
\left( \gamma ,a\right) \mapsto \gamma a$ defined for $\left( \gamma
,a\right) \in \mathcal{A}\times G$ such that $a\in A_{s(\gamma )}$, and $%
(a_{1},\ldots ,a_{k_{n}})\in R_{n}^{\mathcal{A}}$ if and only if $(\gamma
a_{1},\ldots ,\gamma a_{k_{n}})\in R_{n}^{\mathcal{A}}$ for every $n\in
\omega $, $\gamma \in G$, and $a_{1},\ldots ,a_{k_{n}}\in A_{s(\gamma )}$.
The proof of the following theorem is very similar to the argument at the
beginning of Section 3.2 in~\cite{jackson_countable_2002}, and it is
reproduced for convenience of the reader.

\begin{theorem}
\label{Theorem: bundle of structures}Suppose that $G$ is a Polish groupoid
such that there is a standard Borel bundle $\mathcal{A}$ of countable $%
\mathcal{L}$-structures and a Borel action $G\curvearrowright \mathcal{A}$
such that the corresponding orbit equivalence relation $E_{G}^{\mathcal{A}}$
is smooth, and for every $a\in \mathcal{A}$ the stabilizer $G_{a}=\left\{
\gamma \in Gp_{\mathcal{A}}(a):\gamma a=a\right\} $ is a compact subset of $%
G $. If $X$ is a standard Borel space, and $G\curvearrowright X$ is a free
Borel action of $G$ on $X$, then there is an $\mathcal{L}$-structured
countable Borel equivalence relation $E$ such that $E\sim _{B}E_{G}^{X}$,
and moreover every class of $E$ is isomorphic to some fiber of $\mathcal{A}$.
\end{theorem}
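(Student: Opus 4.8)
The plan is to follow the argument at the beginning of Section~3.2 of~\cite{jackson_countable_2002}, adapted to the groupoid setting. First I would use Theorem~\ref{Theorem: Becker-Kechris Borel} to reduce to the case in which $\mathcal{A}$ and $X$ are Polish $G$-spaces. Since $G\curvearrowright X$ is free, every stabiliser $G_{x}$ is trivial, hence closed, so condition~(1) of Theorem~\ref{Theorem: Borel orbit equivalence relations} holds and $E_{G}^{X}$ is Borel with $(x,y)\mapsto G_{x,y}$ Borel; by freeness each nonempty $G_{x,y}$ is a singleton, which yields a Borel cocycle $\alpha \colon E_{G}^{X}\to G$ with $\alpha (x,y)\,y=x$ and $\alpha (x,z)=\alpha (x,y)\alpha (y,z)$. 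Next, since $E_{G}^{\mathcal{A}}$ is smooth and, by Proposition~\ref{Proposition: idealistic}, idealistic, Corollary~\ref{Corollary: Borel selector} provides a Borel selector $\varrho \colon \mathcal{A}\to \mathcal{A}$ for $E_{G}^{\mathcal{A}}$, with associated Borel transversal $T=\varrho [\mathcal{A}]$.

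The compactness hypothesis enters through the following standard fact: a Borel action of a compact Polish group on a standard Borel space has a smooth orbit equivalence relation admitting a Borel transversal, and this can be done uniformly in a Borel parameter. Applying it to the actions of the compact groups $G_{a}$ ($a\in \mathcal{A}$) on the fibres of $X$, I would obtain a Borel set that selects one representative from each $G_{a}$-orbit in the relevant fibre. Combining this with $T$ and the cocycle $\alpha$, the plan is to assemble a Borel complete section $S$ of $E_{G}^{X}$ — this is the combinatorial heart of the proof, and the groupoid analogue of the construction in~\cite[Section~3.2]{jackson_countable_2002} — so that every orbit of $E_{G}^{X}$ meets $S$ in a countable set that is in Borel-canonical bijection with the fibre $A_{p(x)}$: normalising the $\mathcal{A}$-coordinate along $T$ removes the spreading of orbits produced by $E_{G}$, while the compact slices remove the residual continuum of choices. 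Setting $E:=(E_{G}^{X})_{|S}$, and observing that $S$ is a Borel complete section and $E$ is a countable Borel equivalence relation, one gets $E\sim _{B}E_{G}^{X}$ immediately (a Borel reduction $E_{G}^{X}\to E$ being furnished by a Borel selection $X\to S$).

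It then remains to structure $E$. Fixing a Borel selector $\sigma \colon F(G)\setminus \{\varnothing \}\to G$ as in Subsection~\ref{Subsection: Borel selector cosets}, for $x_{1},\dots ,x_{k_{n}}$ lying in a single $E$-class I would declare $(x_{1},\dots ,x_{k_{n}})\in R_{n}^{E}$ if and only if, choosing any reference point $x$ in that class and letting $a_{i}\in A_{p(x)}$ be the image of $x_{i}$ under the canonical bijection of $[x]\cap S$ with $A_{p(x)}$, one has $(a_{1},\dots ,a_{k_{n}})\in R_{n}^{\mathcal{A}}$. This is independent of the reference point, because replacing $x$ by $\gamma x$ replaces each $a_{i}$ by $\gamma a_{i}$ while $R_{n}^{\mathcal{A}}$ is $G$-invariant by hypothesis; and it is Borel because $\alpha $, $\varrho $, $\sigma $ and the compact slices are. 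By construction every $E$-class, equipped with the $R_{n}^{E}$, is isomorphic as an $\mathcal{L}$-structure to the fibre $A_{p(x)}$ of $\mathcal{A}$, so $E$ is an $\mathcal{L}$-structured countable Borel equivalence relation bireducible with $E_{G}^{X}$.

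The step I expect to be the main obstacle is precisely the explicit construction of the complete section $S$ together with the verification that each $E_{G}^{X}$-orbit meets it in a countable set in canonical bijection with a fibre of $\mathcal{A}$: this is where the transversal $T$ (coming from smoothness of $E_{G}^{\mathcal{A}}$), the transversals of the compact-group actions (coming from compactness of the stabilisers), and the cocycle $\alpha $ (coming from freeness) have to be combined carefully, exactly as in the group case of~\cite[Section~3.2]{jackson_countable_2002}, together with the attendant Borelness bookkeeping for all the selections involved.
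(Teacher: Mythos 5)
Your overall toolkit is the right one (freeness gives the cocycle/uniqueness of $G_{x,y}$, smoothness plus Corollary~\ref{Corollary: Borel selector} gives a selector for $E_{G}^{\mathcal{A}}$, compactness of stabilizers gives a further Borel choice, and $G$-invariance of $R_{n}^{\mathcal{A}}$ makes the transported structure well defined), but the central step of your plan --- realizing $E$ as the restriction of $E_{G}^{X}$ to a Borel complete section $S\subseteq X$ --- cannot work, and this is not merely the deferred bookkeeping you describe. The classes of $E$ must be isomorphic to fibers of $\mathcal{A}$, whereas the classes of $\left( E_{G}^{X}\right) _{|S}$ are subsets of orbits of $E_{G}^{X}$, and an orbit need not contain a subset of the right cardinality at all. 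Concretely, let $G$ be the trivial groupoid with one object, let $\mathcal{A}$ be a single countably infinite fiber with the trivial action (so $E_{G}^{\mathcal{A}}$ is equality, hence smooth, and every stabilizer is the trivial, hence compact, group), and let $X$ be any uncountable standard Borel space with the trivial (hence free) action. Then $E_{G}^{X}$ is equality on $X$, every orbit is a singleton, and no restriction of $E_{G}^{X}$ to a subset of $X$ has infinite classes, yet the theorem requires every class of $E$ to be countably infinite. Even when the orbits are large, selecting inside each (possibly uncountable) orbit a countable set in canonical bijection with $A_{p(x)}$ would amount to choosing a basepoint in each orbit and would typically force $E_{G}^{X}$ to be smooth.

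The paper's proof avoids this by changing the underlying space rather than cutting down $X$: it forms the fibered product $X\ast \mathcal{A}=\left\{ (x,a):a\in A_{p(x)}\right\} $ with the diagonal action $\gamma (x,a)=(\gamma x,\gamma a)$. Freeness of $G\curvearrowright X$ makes the diagonal action free, so its orbit equivalence relation $\sim $ is Borel; the selector $t$ for $E_{G}^{\mathcal{A}}$ is constant on $\sim $-classes, and compactness of $G_{t(a)}$ makes $\left\{ y:(y,t(a))\in \left[ x,a\right] \right\} $ a compact set (after using Theorem~\ref{Theorem: Becker-Kechris Borel} to make the action continuous), from which a point is chosen Borel-measurably; this yields a Borel selector for $\sim $ and hence a standard quotient $Y=\left( X\ast \mathcal{A}\right) /\sim $. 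The relation $E$ is defined on $Y$ by $\left[ x,a\right] E\left[ y,b\right] $ iff $xE_{G}^{X}y$; each $E$-class is canonically in bijection with $A_{p(x)}$ via $\left[ \gamma x,a\right] \mapsto \gamma ^{-1}a$ (freeness playing the role of your cocycle $\alpha $), the $\mathcal{L}$-structure is transported along this bijection exactly as you propose, and the bireducibility is witnessed by $x\mapsto \left[ x,q(p(x))\right] $ and by sending $\left[ x,a\right] $ to the first coordinate of its selected representative. Your structuring and reducibility arguments go through verbatim once the ambient space is this quotient of $X\ast \mathcal{A}$ instead of a subset of $X$.
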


\begin{proof}
By Corollary~\ref{Corollary: Borel selector} there is a Borel selector $t$
for $E_{G}^{\mathcal{A}}$. Moreover by Theorem~\ref{Theorem: Becker-Kechris
Borel} we can assume without loss of generality that the action $%
G\curvearrowright X$ is continuous. Define 
\begin{equation*}
\mathcal{A}\ast X=\left\{ (x,a)\in X\times A:a\in A_{p(x\mathbb{)}}\right\}
\end{equation*}%
and the action $G\curvearrowright \left( X\ast A\right) $ by $\gamma \left(
x,a\right) =\left( \gamma x,\gamma a\right) $. Observe that such an action
is free and in particular the associated orbit equivalence relation $\sim $
is Borel. We now show that $\sim $ has a Borel selector. Suppose that $%
\left( x,a\right) \in X\ast A$. Observe that if $\left( x,a\right) \sim
\left( x,b\right) $ then $aE_{G}^{A}b$ and hence $t(a)=t\left( b\right) $.
Therefore $t(a)$ depends only on the $\sim $-class $\left[ x,a\right] $ of $%
\left( x,a\right) $. Observe that%
\begin{equation*}
G_{t(a)}x=\left\{ y\in X:\left( y,t(a)\right) \in \left[ x,a\right] \right\}
\end{equation*}%
is a compact subset of $X$. Denote by $\sigma :F(X)\backslash \left\{
\varnothing \right\} \rightarrow X$ a Borel function such that $\sigma
\left( A\right) \in A$ for every $A\in F(X)\backslash \left\{ \varnothing
\right\} $. Define $S\left( x,a\right) =\left( \sigma \left(
G_{t(a)}x\right) ,t(a)\right) $, and observe that $S$ is a Borel selector
for $\sim $. Define the standard Borel space $Y=\left( X\ast A\right) /\sim $
and the countable Borel equivalence relation $E$ on $Y$ by $\left[ x,a\right]
E\left[ y,b\right] $ iff $xE_{G}^{X}y$. We now define for every $E$-class $C=%
\left[ \left[ x,a\right] \right] _{E}$ an $\mathcal{L}$-structure on $C$.
Fix $n\in \omega $ and suppose that $\left[ \gamma _{i}x,a_{i}\right] $ for $%
i\in k_{n}$ are elements of $C$, where $\gamma _{i}\in Gp(x\mathbb{)}$ for $%
i\in k_{n}$. Set $\left( \left[ \gamma _{i}x,a_{i}\right] \right) _{i\in
k_{n}}\in R_{n}^{C}$ iff $\left( \gamma _{i}^{-1}a_{i}\right) _{i\in
k_{n}}\in R_{n}^{\mathcal{A}}$. Using the fact that $G$ acts by $\mathcal{L}$%
-isomorphisms one can verify that this does not depend on the choice of $%
\left[ x,a\right] \in C$. Define now $\left( \left[ x_{i},a_{i}\right]
\right) _{i\in k_{n}}\in R_{n}^{E}$ if and only if $\left[ x_{0},a_{0}\right]
E\left[ x_{1},a_{1}\right] E\cdots E\left[ x_{k_{n}-1},a_{k_{n}-1}\right] $
and $\left( \left[ x_{i},a_{i}\right] \right) _{i\in k_{n}}\in R_{n}^{C}$
where $C=\left[ \left[ x,a\right] \right] _{E}$. This defines Borel
relations $R_{n}^{E}$ on $E$ that make $E$ $\mathcal{L}$-structured.
Moreover the Borel map $f:C\rightarrow \mathcal{A}$ defined by $f\left[
\gamma x,a\right] =\gamma ^{-1}a$, where $C$ is the class of $\left[ x,a%
\right] $, shows that the $\mathcal{L}$-structure 
\begin{equation*}
\left\langle C,\left( R_{n}\cap C^{k_{n}}\right) _{n\in \omega }\right\rangle
\end{equation*}%
is isomorphic to $A_{p(x\mathbb{)}}$. Finally we observe now that $E\sim
E_{G}^{X}$. If $q:G^{0}\rightarrow \mathcal{A}$ such that $q(x\mathbb{)}$
belongs to the fiber $A_{x}$ for every $x\in G^{0}$, then the map $x\mapsto %
\left[ x,q\left( p(x\mathbb{)}\right) \right] $ is a Borel reduction from $%
E_{G}^{X}$ to $E$. Conversely the map $\left[ x,a\right] \mapsto x^{\ast }$
where $\left[ x^{\ast },a^{\ast }\right] =S\left( \left[ x,a\right] \right) $
witnesses that $E$ is Borel reducible to $E_{G}^{X}$.
\end{proof}

Let us now consider the particular case of Theorem~\ref{Theorem: bundle of
structures} when $\mathcal{L}$ is the language with a single binary
relation. Assume further that $G$ is a treeable Borel groupoid. A standard
Borel bundle of trees over $X$ is a standard Borel bundle $\left(
A_{x}\right) _{x\in X}$ of countable $\mathcal{L}$-structures such that $%
A_{x}$ is a tree for every $x\in X$. A treeing of $G$ defines on $G$ a
structure of standard Borel bundle of trees over $G$. Moreover the action of 
$G$ on itself by left translation is compatible with such a bundle of trees
structure, and has a smooth orbit equivalence relation. Therefore by Theorem~%
\ref{Theorem: bundle of structures} $E$ is treeable. This concludes the
proof of the following corollary.

\begin{corollary}
\label{Corollary: free action treeable is treeable}If $G$ is a countable
treeable groupoid, and $G\curvearrowright X$ is a free Borel action, then
the orbit equivalence relation $E_{G}^{X}$ is treeable.
\end{corollary}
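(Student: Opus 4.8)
The plan is to deduce the corollary from Theorem~\ref{Theorem: bundle of structures} applied with $\mathcal{L}$ the language consisting of a single binary relation symbol $R$. By the results of Section~\ref{Section: countable} every countable Borel groupoid is Borel isomorphic to an \'etale Polish groupoid, and both ``$G\curvearrowright X$ is a free Borel action'' and ``$E_G^X$ is treeable'' are Borel-isomorphism invariant notions, so I may assume at the outset that $G$ is an \'etale Polish groupoid; in particular every fibre $xG$ is a countable discrete subset of $G$. Fix a treeing $Q$ of $G$; recall $Q=Q^{-1}$, $Q\cap G^0=\varnothing$, $\bigcup_n Q^n=G$, and every $\gamma\in G\setminus G^0$ is the product of a unique reduced $Q$-word. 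Regard $\mathcal{A}=G$ as a standard Borel space fibred over $G^0$ via the range map $r$, with fibre over $x$ equal to $xG$, and equip it with the Borel relation
\[
R^{\mathcal{A}}=\left\{(\gamma,\rho)\in G\times G:\gamma^{-1}\rho\in Q\right\}.
\]
Since $\gamma^{-1}\rho$ is defined exactly when $r(\gamma)=r(\rho)$, this relation respects the fibration; it is symmetric because $Q=Q^{-1}$ and irreflexive because $Q\cap G^0=\varnothing$. The point to verify here is that each fibre $(xG,R^{\mathcal{A}}\cap(xG)^2)$ is a tree: connectedness follows by writing $\gamma^{-1}\rho$ as a $Q$-word and translating it on the left by $\gamma$ to obtain a path from $\gamma$ to $\rho$ inside $xG$, while acyclicity is precisely the defining property of a treeing (a reduced cycle would yield a nontrivial reduced $Q$-word lying in $G^0$). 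Thus $\mathcal{A}$ is a standard Borel bundle of countable $\mathcal{L}$-structures over $G^0$ all of whose fibres are trees.

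Next I would equip $\mathcal{A}=G$ with the left translation action of $G$, i.e.\ for $\delta\in G$ and $a\in A_{s(\delta)}=s(\delta)G$ put $\delta a=\delta a$ (composition in $G$); this is Borel, and because $(\delta\gamma)^{-1}(\delta\rho)=\gamma^{-1}\rho$ it preserves $R^{\mathcal{A}}$, so $G$ acts by $\mathcal{L}$-isomorphisms. The remaining hypotheses of Theorem~\ref{Theorem: bundle of structures} are then immediate: the orbit equivalence relation $E_G^{\mathcal{A}}$ is the relation ``having the same source'' on $G$, which is smooth since $\gamma\mapsto s(\gamma)$ reduces it to equality on $G^0$; and for $a\in\mathcal{A}$ the stabilizer $\{\delta\in Gr(a):\delta a=a\}$ equals the singleton $\{r(a)\}$ by invertibility of $a$, and a singleton is compact.

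With these verifications in hand, Theorem~\ref{Theorem: bundle of structures} applied to the given free Borel action $G\curvearrowright X$ yields an $\mathcal{L}$-structured countable Borel equivalence relation $E$ with $E\sim_B E_G^X$ and with every $E$-class isomorphic to some fibre of $\mathcal{A}$, hence to a tree. The interpretation $R^E$ is a symmetric irreflexive Borel relation (inherited from $R^{\mathcal{A}}$) whose restriction to each $E$-class is that class's tree structure, so $R^E$ is a treeing of $E$; thus $E$ is treeable. Finally, $E_G^X$ is itself a countable Borel equivalence relation --- its orbits are countable since each $Gp(x)$ is countable, and they are Borel by Corollary~\ref{Corollary: Borel orbits} --- and treeability of countable Borel equivalence relations is preserved downwards under Borel reducibility; since $E_G^X\leq_B E$ and $E$ is treeable, $E_G^X$ is treeable.

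I expect no serious obstacle here: the substantive work is packaged inside Theorem~\ref{Theorem: bundle of structures}, and the only points that genuinely need attention are (i) checking that the fibres $xG$ equipped with the relation induced by $Q$ are trees --- acyclicity being exactly the treeing condition for $G$ --- and (ii) invoking the standard fact that treeability of countable Borel equivalence relations transfers along Borel bireducibility, which is what lets the conclusion $E\sim_B E_G^X$ carry treeability back to $E_G^X$.
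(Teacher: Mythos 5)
Your proposal is correct and follows essentially the same route as the paper: the paper likewise views a treeing of $G$ as making $G$ itself a standard Borel bundle of trees over $G^{0}$, acted on by left translation (with smooth orbit equivalence relation and trivial, hence compact, stabilizers), and then invokes Theorem~\ref{Theorem: bundle of structures}. You have merely spelled out the verifications (fibres $xG$ are trees, the action preserves the edge relation, smoothness via the source map, and the final transfer of treeability along $E\sim_{B}E_{G}^{X}$) that the paper leaves implicit.
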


\subsection{Characterizing treeable equivalence relations}

Denote by $\mathbb{F}_{\infty }$ the free countable group on infinitely many
generators. The following result subsumes~\cite[Theorem 3.7]%
{jackson_countable_2002}.

\begin{theorem}
\label{Theorem: non-treeable}Suppose that $E$ is a countable Borel
equivalence relation on a standard Borel space $X$. The following statements
are equivalent:

\begin{enumerate}
\item $E$ is treeable;

\item $E$ has the lifting property;

\item For every countable Borel groupoid $G$ and Borel action $%
G\curvearrowright X$ such that $E_{G}^{X}=E$, the groupoid $G\ltimes X$ has
the cocycle property;

\item For every Borel action $\mathbb{F}_{\infty }\curvearrowright X$ such
that $E_{\mathbb{F}_{\infty }}^{X}=E$, $E_{\mathbb{F}_{\infty }}^{X}\leq _{B}%
\mathbb{F}_{\infty }\ltimes X$;

\item For every countable Borel groupoid $G$ and Borel action $%
G\curvearrowright X$ such that $E_{G}^{X}=E$, there is a free Borel action $%
G\curvearrowright Y$ such that $E_{G}^{Y}\sim E_{G}^{X}$;

\item For every countable Borel groupoid $G$ and Borel action $%
G\curvearrowright X$ such that $E\subset E_{G}^{X}$ there is a free Borel
action $G\curvearrowright Y$ such that $E\sqsubseteq _{B}E_{G}^{Y}$.
\end{enumerate}
\end{theorem}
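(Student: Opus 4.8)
The plan is to prove $(1)\Rightarrow(2)\Rightarrow(3)$, then $(3)\Rightarrow(4)$ and $(3)\Rightarrow(5)\Rightarrow(6)$, and finally $(4)\Rightarrow(1)$ and $(6)\Rightarrow(1)$; together these make all six statements equivalent. The implication $(1)\Rightarrow(2)$ is immediate from Proposition~\ref{Proposition: treeable implies lifting}, since a countable Borel equivalence relation, regarded as a principal countable Borel groupoid, has no elements of order $2$ (for a principal groupoid $\gamma^{2}$ is defined only when $\gamma$ is a unit). For $(2)\Rightarrow(3)$, given a countable Borel groupoid $G$ and a Borel action $G\curvearrowright X$ with $E_{G}^{X}=E$, I would first equip $G$ with a Polish groupoid structure compatible with its Borel structure (Section~\ref{Section: countable}) and use Theorem~\ref{Theorem: Becker-Kechris Borel} to realize $X$ as a Polish $G$-space, so that $G\ltimes X$ is a Polish groupoid with $E_{G\ltimes X}=E_{G}^{X}=E$ Borel. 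Applying the lifting property of $E$ to the Polish groupoid $H=G\ltimes X$ and to the identity $\mathrm{id}_{X}:X\to(G\ltimes X)^{0}$ (which respects $E$, since $E_{G\ltimes X}=E$) produces a Borel functor $F:E\to G\ltimes X$ with $F(x,x)=x$ for all $x$, that is, $G\ltimes X$ has the cocycle property.

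For $(3)\Rightarrow(4)$ one specializes $(3)$ to $G=\mathbb{F}_{\infty}$: the Borel functor witnessing the cocycle property of $\mathbb{F}_{\infty}\ltimes X$ has object part $\mathrm{id}_{X}$, hence is a Borel reduction from $E$ to $\mathbb{F}_{\infty}\ltimes X$ in the sense of Definition~\ref{Definition: Borel reduction}, and so $E\leq_{B}\mathbb{F}_{\infty}\ltimes X$. The implication $(3)\Rightarrow(5)$ is the first assertion of Proposition~\ref{Proposition: treeable free} applied to the given action. For $(5)\Rightarrow(6)$, let $G\curvearrowright X$ be a Borel action with $E\subset E_{G}^{X}$. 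Since $G\ltimes X$ is a countable Borel groupoid, a Luzin--Novikov selection yields, Borel-uniformly in $(x,y)\in E$, an arrow of $G\ltimes X$ from $x$ to $y$; let $K$ be the Borel subgroupoid of $G\ltimes X$ with unit space $X$ generated by these arrows, so that $E_{K}=E$. Applying $(5)$ to the standard action of $K$ on $K^{0}=X$ (whose action groupoid is $K$ itself) gives a free Borel $K$-space $W$ with $E_{K}^{W}\sim_{B}E$. Using a Borel coset selector for $K$ inside $G\ltimes X$ (Proposition~\ref{Proposition: coset selector}) to build the induced action $\mathrm{Ind}_{K}^{G\ltimes X}W$, and viewing it as a $G$-space along the anchor $X\to G^{0}$, produces a free Borel $G$-space whose orbit equivalence relation is bireducible with $E$; a routine spreading-out of fibers then upgrades the resulting reduction to an injective one, giving $E\sqsubseteq_{B}E_{G}^{Y}$.

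To close the cycle, fix by the Feldman--Moore theorem a Borel action $\mathbb{F}_{\infty}\curvearrowright X$ with $E_{\mathbb{F}_{\infty}}^{X}=E$. Given $(4)$, we have $E\leq_{B}\mathbb{F}_{\infty}\ltimes X$, so Lemma~\ref{Lemma: reduction implies cocycle} applied to $\mathbb{F}_{\infty}\ltimes X$ produces an invariant Borel $Y\subset X$ with $\mathbb{F}_{\infty}\ltimes Y$ having the cocycle property and with the restriction $E_{|Y}=E_{\mathbb{F}_{\infty}}^{Y}$ bireducible with $E$; since $\mathbb{F}_{\infty}$ is treeable as a groupoid, the ``moreover'' clause of Proposition~\ref{Proposition: treeable free} shows $E_{|Y}$ is treeable, so $E\leq_{B}E_{|Y}$ with $E_{|Y}$ treeable. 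Given $(6)$, applying it to $\mathbb{F}_{\infty}\curvearrowright X$ gives a free Borel action $\mathbb{F}_{\infty}\curvearrowright Y$ with $E\sqsubseteq_{B}E_{\mathbb{F}_{\infty}}^{Y}$, and $E_{\mathbb{F}_{\infty}}^{Y}$ is treeable by Corollary~\ref{Corollary: free action treeable is treeable}. In both cases the proof ends with the observation that $F'\leq_{B}F$ with $F$ a treeable countable Borel equivalence relation forces $F'$ to be treeable: a reduction between countable Borel equivalence relations is countable-to-one, so $F'$ is Borel isomorphic to a Borel subequivalence relation of the treeable relation $F\times(\mathbb{N}\times\mathbb{N})$, which is treeable by the Nielsen--Schreier argument of Subsection~\ref{Subsection: treeable groupoids}.

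The step I expect to demand the most work is $(5)\Rightarrow(6)$: producing a genuinely \emph{free} action of the \emph{given} groupoid $G$ (rather than of the auxiliary subgroupoid $K$) with orbit equivalence relation bireducible with $E$, via induction of Borel groupoid actions, and then arranging injectivity of the reduction. A second recurring subtlety, already present in $(2)\Rightarrow(3)$, is that the lifting property is formulated only for target groupoids that are Polish with Borel orbit equivalence relation, so every invocation of it must be preceded by a passage from an abstract countable Borel groupoid to this setting via Section~\ref{Section: countable} and Theorem~\ref{Theorem: Becker-Kechris Borel}.
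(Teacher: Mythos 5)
Your architecture for $(1)\Rightarrow(2)\Rightarrow(3)\Rightarrow(4)\Rightarrow(1)$, $(3)\Rightarrow(5)$, and $(6)\Rightarrow(1)$ matches the paper's proof, and your elaborations (passing to a Polish realization before invoking the lifting property, Feldman--Moore to instantiate $(4)$, and the Luzin--Novikov argument showing treeability is downward closed under $\leq_B$) are correct. The one genuine problem is that you derive $(6)$ from $(5)$, and that derivation has a gap at the very step you flag as delicate. Statement $(5)$ only yields a free $K$-space $W$ with $E_K^W\sim_B E$, i.e.\ a \emph{bireduction}, not an injective reduction. Your induction $\mathrm{Ind}_K^{G\ltimes X}W$ does give a free $G$-space $Y$ with a canonical injective embedding of $E_K^W$ into $E_G^Y$, but composing with the (merely Borel) reduction $E\leq_B E_K^W$ only produces $E\leq_B E_G^Y$. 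The ``routine spreading-out of fibers'' does not close this: the Luzin--Novikov injectivization of a countable-to-one reduction lands in $E_G^Y\times I_{\mathbb{N}}$ (with $I_{\mathbb{N}}$ the complete relation on $\mathbb{N}$), whereas letting $G$ act on $Y\times\mathbb{N}$ through the first coordinate only realizes $E_G^Y\times\Delta_{\mathbb{N}}$; to land in the latter injectively you would need the original reduction to be injective on each $E$-class, and there is no canonical way to arrange that. So as written, $(6)$ is not established.

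The paper avoids this entirely by proving $(2)\Rightarrow(6)$ directly: the lifting property applied to $G\ltimes X$ (legitimate since $E\subset E_G^X$) gives a Borel cocycle $F:E\rightarrow G$ with $s(F(x,y))=p(y)$ and $F(x,y)y=x$; one then quotients $G\ltimes X$ by $(\gamma,x)\sim(\rho,y)$ iff $xEy$ and $\rho=\gamma F(x,y)$ (a smooth relation, by the coset-selector argument of Proposition~\ref{Proposition: coset selector}), obtains a free $G$-action on the quotient $Y$, and the map $x\mapsto\left[p(x),x\right]$ is injective by construction: if $\left[p(x),x\right]=\left[p(y),y\right]$ then $F(x,y)$ is a unit, whence $x=F(x,y)y=y$. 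I recommend you replace your $(5)\Rightarrow(6)$ by this $(2)\Rightarrow(6)$ argument (you have already established $(2)$ within the equivalence class, so the overall scheme still closes); this also spares you the induction-of-groupoid-actions machinery, whose remaining details (standardness of the quotient, identification of orbits) you would otherwise need to write out.
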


\begin{proof}
\begin{description}
\item[(1)$\Rightarrow $(2)] It follows from Proposition~\ref{Proposition:
treeable implies lifting}.

\item[(2)$\Rightarrow $(3)] It follows form the fact that if $E_{G}^{X}$ has
the lifting property, then $G\ltimes X$ has the cocycle property.

\item[(3)$\Rightarrow $(4)] Obvious.

\item[(4)$\Rightarrow $(1)] It follows from Lemma~\ref{Lemma: reduction
implies cocycle} and Proposition~\ref{Proposition: treeable free}.

\item[(3)$\Rightarrow $(5)] This follows from Proposition~\ref{Proposition:
treeable free}.

\item[(5)$\Rightarrow $(1)] Consider an action $\mathbb{F}_{\infty
}\curvearrowright X$ such that $E=E_{\mathbb{F}_{\infty }}^{X}$ and then
apply (5) and Corollary~\ref{Corollary: free action treeable is treeable}.

\item[(2)$\Rightarrow $(6)] Since $E$ has the lifting property, there is a
Borel function $F:E\rightarrow G$ such that $s\left( F\left( x,y\right)
\right) =p(y\mathbb{)}$ and $F(x,y)y=x$ for every $(x,y)\in E$. Consider on $%
G\ltimes X$ the equivalence relation $(\gamma ,x)\sim (\rho ,y)$ iff $xEy$
and $\rho =\gamma F(x,y)$. Proceeding as in the proof of Proposition~\ref%
{Proposition: treeable free} one can show that $\sim $ has a Borel selector.
Thus the quotient $Y$ of $G\ltimes X$ by $\sim $ is standard. Define the
Borel action $G\curvearrowright Y$ by $p\left[ \gamma ,x\right] =r(\gamma )$
and $\rho \left[ \gamma ,x\right] =\left[ \rho \gamma ,x\right] $. As in the
proof of Proposition~\ref{Proposition: treeable free} one can show that such
an action is free. Moreover the map $x\mapsto \left[ p(x),x\right] $ is an
injective Borel reduction from $E$ to $E_{G}^{Y}$.

\item[(6)$\Rightarrow $(1)] It follows from Corollary~\ref{Corollary: free
action treeable is treeable} together with the fact that a subrelation of a
treeable equivalence relation is treeable~\cite[Proposition 3.3]%
{jackson_countable_2002}; see also Subsection~\ref{Subsection: treeable
groupoids}.
\end{description}
\end{proof}

The following corollary is an immediate consequence of the implication (4)$%
\Rightarrow $(1) in Theorem \ref{Theorem: non-treeable}.

\begin{corollary}
\label{Corollary: different complexity}For any nontreeable equivalence
relation $E$ there are countable Borel groupoids $G$ and $H$ that have $E$
as orbit equivalence relation such that $G$ is not Borel reducible to $H$.
Moreover one can take $G=E$ and $H$ to be the action groupoid of an action
of $\mathbb{F}_{\infty }$.
\end{corollary}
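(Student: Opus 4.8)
The plan is to extract the corollary directly from the failure of the implication $(4)\Rightarrow(1)$ in Theorem~\ref{Theorem: non-treeable}, read contrapositively. Suppose $E$ is a countable Borel equivalence relation on a standard Borel space $X$ that is \emph{not} treeable. By Theorem~\ref{Theorem: non-treeable}, since $E$ fails condition (1), it also fails condition (4): there is some Borel action $\mathbb{F}_\infty \curvearrowright X$ with $E_{\mathbb{F}_\infty}^X = E$ such that $E_{\mathbb{F}_\infty}^X \not\leq_B \mathbb{F}_\infty \ltimes X$. (If no action of $\mathbb{F}_\infty$ with orbit equivalence relation exactly $E$ existed, condition (4) would be vacuously true, forcing $E$ treeable; so such an action does exist, and one of them witnesses the failure.) First I would name this action and set $H = \mathbb{F}_\infty \ltimes X$, the associated action groupoid, which is a countable Borel groupoid with $E_H = E_{\mathbb{F}_\infty}^X = E$.

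Next I would take $G$ to be $E$ itself, viewed as a principal countable Borel groupoid on $X$; then trivially $E_G = E$. It remains to observe that $G \not\leq_B H$. Indeed, a Borel reduction from the principal groupoid $E$ to the groupoid $H$ is, by definition (Definition~\ref{Definition: Borel reduction} and the remarks following it), precisely a Borel functor $F: E \to H$ whose object map is a Borel reduction from $E_G = E$ to $E_H = E$; but composing $F$ with the canonical map $H \to E_H$ sending $(\gamma, x) \mapsto (\gamma x, x)$ yields exactly a Borel reduction from $E = E_{\mathbb{F}_\infty}^X$ to $E_H = \mathbb{F}_\infty \ltimes X$ in the sense of condition (4). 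Since we chose the action precisely so that no such reduction exists, there is no Borel functor $F: E \to H$, i.e.\ $G \not\leq_B H$. This gives the first assertion, with $G = E$ and $H$ the action groupoid of an action of $\mathbb{F}_\infty$, which is the "moreover" clause.

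The only subtlety to be careful about is the translation between "$E_{\mathbb{F}_\infty}^X \leq_B \mathbb{F}_\infty \ltimes X$" as it appears in condition (4) and the non-existence of a Borel \emph{functor} $E \to H$: one direction is immediate (a functor gives a reduction of orbit equivalence relations by post-composition with the orbit map $H \to E_H$), and the reverse direction is not needed here. So the main point, and the only real content, is already packaged in Theorem~\ref{Theorem: non-treeable}; the corollary is a one-line unwinding of definitions once that theorem is in hand. I would write this up in two sentences, citing the implication $(4)\Rightarrow(1)$ of Theorem~\ref{Theorem: non-treeable} and Definition~\ref{Definition: Borel reduction}.
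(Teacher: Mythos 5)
Your proposal is correct and is essentially the paper's own argument: the paper derives the corollary as an immediate consequence of the contrapositive of the implication $(4)\Rightarrow(1)$ in Theorem~\ref{Theorem: non-treeable}, taking $G=E$ and $H=\mathbb{F}_{\infty}\ltimes X$ for an action witnessing the failure of (4). One small remark: the intermediate step about post-composing $F$ with the orbit map $H\to E_{H}$ is unnecessary (and slightly misdirected), since condition (4), read with Definition~\ref{Definition: Borel reducibility}, is already literally the assertion that the principal groupoid $E$ is Borel reducible to the groupoid $\mathbb{F}_{\infty}\ltimes X$, so its failure is exactly $G\not\leq_{B}H$.
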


Corollary~\ref{Corollary: different complexity} can be interpreted as
asserting that functorial Borel complexity provides a finer distinction
between the complexity of classification problems in mathematics than the
traditional notion of Borel complexity for equivalence relations

\section{Further directions and open problems}

Polish groupoids seem to be a broad generalization of Polish groups and
Polish group actions. For instance, as shown in Section \ref{Section: better}%
, any Borel action of a Polish groupoid is again a Polish groupoid. However,
no example is currently known of a Polish groupoid whose orbit equivalence
relation is not Borel bireducible with an orbit equivalence relation of a
Polish group action.

\begin{problem}
\label{Problem: groups and groupoids}Is there a Polish groupoid $G$ such
that the orbit equivalence relation $E_{G}$ is not Borel bireducible with an
orbit equivalence relation of a Polish group action? Can one find such a $G$
for which $E_{G}$ is moreover Borel?
\end{problem}

Problem \ref{Problem: groups and groupoids} has a tight connection with the
a conjecture due to Hjorth and Kechris \cite[Conjecture 1]{hjorth_new_1997}.
Recall that $E_{1}$ denotes the relation of tail equivalence for sequences
in $\left[ 0,1\right] $. Theorem 4.2 of \cite{kechris_classification_1997}
asserts that $E_{1}$ is not Borel reducible to the orbit equivalence
relation of a Polish group action. The Hjorth-Kechris conjecture asserts
that is $E$ is a Borel equivalence relation then the converse holds, namely
if $E_{1}$ is not Borel reducible to $E$ then $E$ is Borel bireducible with
the orbit equivalence relation of a Polish group action. By Corollary~\ref%
{Corollary: not reduce E1} $E_{1}$ is not reducible to any Borel orbit
equivalence relation of a Polish groupoid. Therefore a groupoid $G$ as in
Problem \ref{Problem: groups and groupoids} such that $E_{G}$ is moreover
Borel would provide a counterexample to the Hjorth-Kechris conjecture.

In the theory of Borel complexity of equivalence relations, a key role is
played by equivalence relations that are complete (or universal) for a given
class up to Borel reducibility. These equivalence relations provide natural
benchmarks of complexity. It would be interesting to analogously find
universal elements for natural classes of analytic groupoids.

\begin{problem}
Establish whether the following classes have a universal element up to Borel
reducibility: analytic groupoids, Borel groupoids, Polish groupoids,
countable groupoids.
\end{problem}

In the world of equivalence relations, the phenomenon of universality is
widespread. Most natural classes of analytic equivalence relations have
universal elements. Moreover such universal elements admit many different
descriptions. For example the following equivalence relations are all
complete for orbit equivalence relations induced by Polish group actions 
\cite%
{gao_classification_2003,melleray_geometry_2007,elliott_isomorphism_2013,farah_turbulence_2014,sabok_completeness_2013,zielinski_complexity_2014}%
:

\begin{enumerate}
\item Isomorphism of abelian C*-algebras;

\item Isomorphism of amenable, simple, unital, separable C*-algebras;

\item Isometry of separable Banach spaces;

\item Complete order isomorphism of separable operator systems;

\item Isometry of metric spaces;

\item Conjugacy of isometries of the Urysohn sphere.
\end{enumerate}

As shown in Subsections \ref{Subsection: examples} and \ref{Subsection: the
action groupoid} the relations 1-6 above are naturally the orbit equivalence
relation of a Borel groupoid. It would be interesting to known if the
functorial Borel complexity of such groupoids are different.

\begin{problem}
Consider the standard Borel groupoids associated with the relations 1-6
above. Are these groupoids Borel bi-reducible?
\end{problem}

One can consider a similar problem for the classes of countable first-order
structures that are Borel complete \cite{friedman_Borel_1989}. Recall that a
class of countable first-order structures is Borel complete if the
corresponding isomorphism relation is complete for isomorphism relations of
countable structures. Borel complete classes include countable trees,
countable linear orders, countable fields of a fixed characteristic, and
countable groups.

\begin{problem}
Are there two Borel complete classes of first order structures such that the
corresponding groupoids are not Borel bi-reducible?
\end{problem}

\appendix

\section*{\texorpdfstring{Appendix
by
Anush
Tserunyan%
\footnote{Department
of
Mathematics,
University
of
Illinois
at
Urbana-Champaign,
273
Altgeld
Hall,
1409
W.
Green
Street
(MC-382),
Urbana,
IL
61801}}{Appendix
by
Anush
Tserunyan}
\label{Appendix}}

In this appendix we show that, if $X$ is a locally Polish space, then the
Effros Borel structure on the space $F(X)$ of closed subspaces of $X$ is
standard. Recall that, as in Definition~\ref{Definition: locally Polish
space}, a topological space $X$ is \emph{locally Polish }if it has a
countable basis of open sets which are Polish in the relative topology. If $%
U $ is an open subset of $X$, we denote by $U^{-}$ the set 
\begin{equation*}
\left\{ F\in F(X):F\cap U\neq \varnothing \right\} \text{.}
\end{equation*}%
Define the Effros Borel structure on $F(X)$ to be the Borel structure
generated by the sets $U^{-}$ for $U\subset X$ open.


\begin{customthm}{A}
The Effros Borel structure on $F(X)$ is standard.
\end{customthm}

\begin{proof}
Suppose that 
\begin{equation*}
\mathcal{A}=\left\{ U_{n}:n\in \omega \right\}
\end{equation*}%
is a countable basis of Polish open subsets of $X$. For every $n\in \omega $
denote by $d_{n}$ a compatible complete metric on $U_{n}$. Clearly the
Effros Borel structure on $F(X)$ is generated by the sets $U^{-}$ for $U\in 
\mathcal{A}$. Consider the collection $\mathcal{S}_{\mathcal{A}}$%
\begin{equation*}
\left\{ U^{-},X\backslash U^{-}:U\in \mathcal{A}\right\} \text{,}
\end{equation*}%
and the topology $\tau _{\mathcal{A}}$ on $F(X)$ having $\mathcal{S}_{%
\mathcal{A}}$ as subbasis. We will show that the topology $\tau _{\mathcal{A}%
}$ on $F(X)$ is Polish. Consider the map $c$ from $F(X)$ to $2^{\omega }$
assigning to $F$ the characteristic function of $\left\{ n\in \omega :F\cap
U_{n}\neq \varnothing \right\} $. Clearly $c$ is a $\tau _{\mathcal{A}}$%
-homeomorphism onto its image. In view of~\cite[Theorem 3.11]%
{kechris_classical_1995}, in order to conclude that $\left( F(X),\tau _{%
\mathcal{A}}\right) $ is Polish it is enough to show that the image $Y$ of $%
c $ is a $G_{\delta }$ subspace of $2^{\omega }$. We claim that, for $y\in
2^{\omega }$, $y\in Y$ if and only if the following conditions hold:

\begin{enumerate}
\item for all $n,m$ with $U_n \subseteq U_m$, if $y(n) = 1$ then $y(m) = 1$;

\item for all $n$ and $\varepsilon \in \mathbb{Q}_{+}$, if $y(n)=1$ then
there is $m$ such that $y(m)=1$ and for all $i\leq n$ with $U_{i}\supseteq
U_{n}$, we have: 
\begin{equation*}
\overline{U}_{m}^{i}\subseteq U_{n}\text{ and }\mathrm{diam}%
_{i}(U_{m})<\varepsilon ,
\end{equation*}%
where the closure $\overline{U}_{m}^{i}$ and diameter $\mathrm{diam}%
_{i}(U_{m})$ are taken with respect to the metric $d_{i}$.
\end{enumerate}

Since necessity is obvious, we check that these conditions are sufficient.
Let $y\in 2^{\omega }$ satisfy conditions (i) and (ii), and define the $\tau
_{\mathcal{A}}$-closed subset of $X$ 
\begin{equation*}
F=\left\{ x\in X:\forall n\in \omega \text{, }x\in U_{n}\Rightarrow
y(n)=1\right\} \text{.}
\end{equation*}%
We show that $c(F)=y$. Fix $n\in \omega $ and note that if $y(n)=0$, then $%
F\cap U_{n}=\varnothing $ by definition. So suppose $y(n)=1$ and we have to
find an $x\in F\cap U_{n}$. Iterating (ii), we get a sequence $%
(U_{n_{i}})_{i\in \omega }$ with $n_{0}=n$ and such that for all $i\in
\omega $,

\begin{itemize}
\item $y(n_{i})=1$,

\item $\overline{U}_{n_{i+1}}^{n}\subseteq U_{n_{i}}$,

\item $\mathrm{diam}_{n}(U_{n_{i}})\leq 2^{-k}$.
\end{itemize}

Thus, since the metric $d_{n}$ on $U_{n}$ is complete, we get $\left\{
x\right\} =\bigcap_{i}\overline{U}_{n_{i}}^{n},$ for some $x\in U_{n}$. It
remains to show that $x\in F$, but this easily follows from (i).
\end{proof}

\providecommand{\bysame}{\leavevmode\hbox to3em{\hrulefill}\thinspace}
\providecommand{\MR}{\relax\ifhmode\unskip\space\fi MR }
\providecommand{\MRhref}[2]{%
  \href{http://www.ams.org/mathscinet-getitem?mr=#1}{#2}
}
\providecommand{\href}[2]{#2}

\end{document}